\documentclass[reqno, 12pt]{amsart}
\usepackage{amssymb}
\usepackage{amsmath}
\usepackage{amsfonts}

\setcounter{MaxMatrixCols}{10}

\newtheorem{theorem}{Theorem}
\theoremstyle{plain}
\newtheorem{acknowledgement}{Acknowledgement}

\newtheorem{axiom}{Axiom}

\newtheorem{conjecture}{Conjecture}
\newtheorem{corollary}{Corollary}

\newtheorem{definition}{Definition}
\newtheorem{example}{Example}
\newtheorem{exercise}{Exercise}
\newtheorem{lemma}{Lemma}

\newtheorem{proposition}{Proposition}
\newtheorem{remark}{Remark}

\numberwithin{equation}{section}
 \numberwithin{theorem}{section}
 \numberwithin{proposition}{section}
 \numberwithin{remark}{section}
 \numberwithin{definition}{section}
 \numberwithin{lemma}{section}
 \numberwithin{corollary}{section}
 \numberwithin{example}{section}
 \numberwithin{claim}{section}
\oddsidemargin = 0mm
\evensidemargin = 0mm
\textwidth = 150mm
\textheight = 220mm
\topmargin = 0mm
\typeout{TCILATEX Macros for Scientific Word 2.5 <22 Dec 95>.}
\typeout{NOTICE:  This macro file is NOT proprietary and may be 
freely copied and distributed.}
\makeatletter
%
\newcount\@hour\newcount\@minute\chardef\@x10\chardef\@xv60
\def\tcitime{
\def\@time{%
  \@minute\time\@hour\@minute\divide\@hour\@xv
  \ifnum\@hour<\@x 0\fi\the\@hour:%
  \multiply\@hour\@xv\advance\@minute-\@hour
  \ifnum\@minute<\@x 0\fi\the\@minute
  }}%

\@ifundefined{hyperref}{}{}

\@ifundefined{qExtProgCall}{\def\qExtProgCall#1#2#3#4#5#6{\relax}}{}
%
%
%
%
\def\QCTOpt[#1]#2{%
  \def\QCTOptB{#1}
  \def\QCTOptA{#2}
}
\def\QCTNOpt#1{%
  \def\QCTOptA{#1}
  \let\QCTOptB\empty
}
\def\Qct{%
  \@ifnextchar[{%
    \QCTOpt}{\QCTNOpt}
}
\def\QCBOpt[#1]#2{%
  \def\QCBOptB{#1}
  \def\QCBOptA{#2}
}
\def\QCBNOpt#1{%
  \def\QCBOptA{#1}
  \let\QCBOptB\empty
}
\def\Qcb{%
  \@ifnextchar[{%
    \QCBOpt}{\QCBNOpt}
}
\def\PrepCapArgs{%
  \ifx\QCBOptA\empty
    \ifx\QCTOptA\empty
      {}%
    \else
      \ifx\QCTOptB\empty
        {\QCTOptA}%
      \else
        [\QCTOptB]{\QCTOptA}%
      \fi
    \fi
  \else
    \ifx\QCBOptA\empty
      {}%
    \else
      \ifx\QCBOptB\empty
        {\QCBOptA}%
      \else
        [\QCBOptB]{\QCBOptA}%
      \fi
    \fi
  \fi
}
\newcount\GRAPHICSTYPE
\GRAPHICSTYPE=\z@
\def\GRAPHICSPS#1{%
 \ifcase\GRAPHICSTYPE
   \special{ps: #1}%
 \or
   \special{language "PS", include "#1"}%
 \fi
}%
%
%
%
\def\graffile#1#2#3#4{%
    \leavevmode
    \raise -#4 \BOXTHEFRAME{%
        \hbox to #2{\raise #3\hbox to #2{\null #1\hfil}}}%
}%
%
\def\draftbox#1#2#3#4{%
 \leavevmode\raise -#4 \hbox{%
  \frame{\rlap{\protect\tiny #1}\hbox to #2%
   {\vrule height#3 width\z@ depth\z@\hfil}%
  }%
 }%
}%
\newcount\draft
\draft=\z@

\newif\ifwasdraft
\wasdraftfalse

\def\GRAPHIC#1#2#3#4#5{%
 \ifnum\draft=\@ne\draftbox{#2}{#3}{#4}{#5}%
  \else\graffile{#1}{#3}{#4}{#5}%
  \fi
 }%
\def\addtoLaTeXparams#1{%
    \edef\LaTeXparams{\LaTeXparams #1}}%
%

\newif\ifBoxFrame \BoxFramefalse
\newif\ifOverFrame \OverFramefalse
\newif\ifUnderFrame \UnderFramefalse

\def\BOXTHEFRAME#1{%
   \hbox{%
      \ifBoxFrame
         \frame{#1}%
      \else
         {#1}%
      \fi
   }%
}

\def\doFRAMEparams#1{\BoxFramefalse\OverFramefalse\UnderFramefalse\readFRAMEparams#1\end}%
\def\readFRAMEparams#1{%
 \ifx#1\end%
  \let\next=\relax
  \else
  \ifx#1i\dispkind=\z@\fi
  \ifx#1d\dispkind=\@ne\fi
  \ifx#1f\dispkind=\tw@\fi
  \ifx#1t\addtoLaTeXparams{t}\fi
  \ifx#1b\addtoLaTeXparams{b}\fi
  \ifx#1p\addtoLaTeXparams{p}\fi
  \ifx#1h\addtoLaTeXparams{h}\fi
  \ifx#1X\BoxFrametrue\fi
  \ifx#1O\OverFrametrue\fi
  \ifx#1U\UnderFrametrue\fi
  \ifx#1w
    \ifnum\draft=1\wasdrafttrue\else\wasdraftfalse\fi
    \draft=\@ne
  \fi
  \let\next=\readFRAMEparams
  \fi
 \next
 }%
%

\def\IFRAME#1#2#3#4#5#6{%
      \bgroup
      \let\QCTOptA\empty
      \let\QCTOptB\empty
      \let\QCBOptA\empty
      \let\QCBOptB\empty
      #6%
      \parindent=0pt%
      \leftskip=0pt
      \rightskip=0pt
      \setbox0 = \hbox{\QCBOptA}%
      \@tempdima = #1\relax
      \ifOverFrame
          \typeout{This is not implemented yet}%
          \show\HELP
      \else
         \ifdim\wd0>\@tempdima
            \advance\@tempdima by \@tempdima
            \ifdim\wd0 >\@tempdima
               \textwidth=\@tempdima
               \setbox1 =\vbox{%
                  \noindent\hbox to \@tempdima{\hfill\GRAPHIC{#5}{#4}{#1}{#2}{#3}\hfill}\\%
                  \noindent\hbox to \@tempdima{\parbox[b]{\@tempdima}{\QCBOptA}}%
               }%
               \wd1=\@tempdima
            \else
               \textwidth=\wd0
               \setbox1 =\vbox{%
                 \noindent\hbox to \wd0{\hfill\GRAPHIC{#5}{#4}{#1}{#2}{#3}\hfill}\\%
                 \noindent\hbox{\QCBOptA}%
               }%
               \wd1=\wd0
            \fi
         \else
            \ifdim\wd0>0pt
              \hsize=\@tempdima
              \setbox1 =\vbox{%
                \unskip\GRAPHIC{#5}{#4}{#1}{#2}{0pt}%
                \break
                \unskip\hbox to \@tempdima{\hfill \QCBOptA\hfill}%
              }%
              \wd1=\@tempdima
           \else
              \hsize=\@tempdima
              \setbox1 =\vbox{%
                \unskip\GRAPHIC{#5}{#4}{#1}{#2}{0pt}%
              }%
              \wd1=\@tempdima
           \fi
         \fi
         \@tempdimb=\ht1
         \advance\@tempdimb by \dp1
         \advance\@tempdimb by -#2%
         \advance\@tempdimb by #3%
         \leavevmode
         \raise -\@tempdimb \hbox{\box1}%
      \fi
      \egroup%
}%
%
\def\DFRAME#1#2#3#4#5{%
 \begin{center}
     \let\QCTOptA\empty
     \let\QCTOptB\empty
     \let\QCBOptA\empty
     \let\QCBOptB\empty
     \ifOverFrame 
        #5\QCTOptA\par
     \fi
     \GRAPHIC{#4}{#3}{#1}{#2}{\z@}
     \ifUnderFrame 
        \nobreak\par #5\QCBOptA
     \fi
 \end{center}%
 }%
%
\def\FFRAME#1#2#3#4#5#6#7{%
 \begin{figure}[#1]%
  \let\QCTOptA\empty
  \let\QCTOptB\empty
  \let\QCBOptA\empty
  \let\QCBOptB\empty
  \ifOverFrame
    #4
    \ifx\QCTOptA\empty
    \else
      \ifx\QCTOptB\empty
        \caption{\QCTOptA}%
      \else
        \caption[\QCTOptB]{\QCTOptA}%
      \fi
    \fi
    \ifUnderFrame\else
      \label{#5}%
    \fi
  \else
    \UnderFrametrue%
  \fi
  \begin{center}\GRAPHIC{#7}{#6}{#2}{#3}{\z@}\end{center}%
  \ifUnderFrame
    #4
    \ifx\QCBOptA\empty
      \caption{}%
    \else
      \ifx\QCBOptB\empty
        \caption{\QCBOptA}%
      \else
        \caption[\QCBOptB]{\QCBOptA}%
      \fi
    \fi
    \label{#5}%
  \fi
  \end{figure}%
 }%
%
%
%
%
%
\newcount\dispkind%

\def\makeactives{
  \catcode`\"=\active
  \catcode`\;=\active
  \catcode`\:=\active
  \catcode`\'=\active
  \catcode`\~=\active
}
\bgroup
   \makeactives
   \gdef\activesoff{%
      \def"{\string"}
      \def;{\string;}
      \def:{\string:}
      \def'{\string'}
      \def~{\string~}
    }
\egroup

\def\FRAME#1#2#3#4#5#6#7#8{%
 \bgroup
 \@ifundefined{bbl@deactivate}{}{\activesoff}
 \ifnum\draft=\@ne
   \wasdrafttrue
 \else
   \wasdraftfalse%
 \fi
 \def\LaTeXparams{}%
 \dispkind=\z@
 \def\LaTeXparams{}%
 \doFRAMEparams{#1}%
 \ifnum\dispkind=\z@\IFRAME{#2}{#3}{#4}{#7}{#8}{#5}\else
  \ifnum\dispkind=\@ne\DFRAME{#2}{#3}{#7}{#8}{#5}\else
   \ifnum\dispkind=\tw@
    \edef\@tempa{\noexpand\FFRAME{\LaTeXparams}}%
    \@tempa{#2}{#3}{#5}{#6}{#7}{#8}%
    \fi
   \fi
  \fi
  \ifwasdraft\draft=1\else\draft=0\fi{}%
  \egroup
 }%
%

\def\TEXUX#1{"texux"}

%
%
%
%
%
%
%
\def\func#1{\mathop{\rm #1}}%
%

%
\long\def\QQQ#1#2{%
     \long\expandafter\def\csname#1\endcsname{#2}}%
\@ifundefined{QTP}{\def\QTP#1{}}{}
\@ifundefined{QEXCLUDE}{\def\QEXCLUDE#1{}}{}
\@ifundefined{Qlb}{}{}
\@ifundefined{Qlt}{}{}
\long\def\QQA#1#2{}%
\def\QTR#1#2{{\csname#1\endcsname #2}}
\def\EXPAND#1[#2]#3{}%
\def\NOEXPAND#1[#2]#3{}%
\def\LaTeXparent#1{}%
\def\ChildStyles#1{}%
\def\ChildDefaults#1{}%
\def\QTagDef#1#2#3{}%
%
\@ifundefined{StyleEditBeginDoc}{}{}
%
\def\QQfnmark#1{\footnotemark}

%
\def\makeatletter\input gnuindex.sty\makeatother\makeindex{\makeatletter\input gnuindex.sty\makeatother\makeindex}%
\@ifundefined{INDEX}{\def\INDEX#1#2{}{}}{}%
\@ifundefined{SUBINDEX}{\def\SUBINDEX#1#2#3{}{}{}}{}%
\@ifundefined{initial}%
   {\def\initial#1{\bigbreak{\raggedright\large\bf #1}\kern 2\p@\penalty3000}}%
   {}%
\@ifundefined{entry}{}{}%
\@ifundefined{primary}{}{}%
\@ifundefined{secondary}{}{}%
\@ifundefined{ZZZ}{}{\makeatletter\input gnuindex.sty\makeatother\makeindex\makeatletter}%
%
\@ifundefined{abstract}{%
 \def\abstract{%
  \if@twocolumn
   \section*{Abstract (Not appropriate in this style!)}%
   \else \small 
   \begin{center}{\bf Abstract\vspace{-.5em}\vspace{\z@}}\end{center}%
   \quotation 
   \fi
  }%
 }{%
 }%
\@ifundefined{endabstract}{\def\endabstract
  {\if@twocolumn\else\endquotation\fi}}{}%
\@ifundefined{maketitle}{\def\maketitle#1{}}{}%
\@ifundefined{affiliation}{\def\affiliation#1{}}{}%
\@ifundefined{proof}{}{}%
\@ifundefined{endproof}{}{}%
\@ifundefined{newfield}{\def\newfield#1#2{}}{}%
\@ifundefined{chapter}{\def\chapter#1{\par(Chapter head:)#1\par }%
 \newcount\c@chapter}{}%
\@ifundefined{part}{\def\part#1{\par(Part head:)#1\par }}{}%
\@ifundefined{section}{\def\section#1{\par(Section head:)#1\par }}{}%
\@ifundefined{subsection}{\def\subsection#1%
 {\par(Subsection head:)#1\par }}{}%
\@ifundefined{subsubsection}{\def\subsubsection#1%
 {\par(Subsubsection head:)#1\par }}{}%
\@ifundefined{paragraph}{\def\paragraph#1%
 {\par(Subsubsubsection head:)#1\par }}{}%
\@ifundefined{subparagraph}{\def\subparagraph#1%
 {\par(Subsubsubsubsection head:)#1\par }}{}%
\@ifundefined{therefore}{}{}%
\@ifundefined{backepsilon}{}{}%
\@ifundefined{yen}{}{}%
\@ifundefined{registered}{%
   \def\registered{\relax\ifmmode{}\r@gistered
                    \else$\m@th\r@gistered$\fi}%
 \def\r@gistered{^{\ooalign
  {\hfil\raise.07ex\hbox{$\scriptstyle\rm\text{R}$}\hfil\crcr
  \mathhexbox20D}}}}{}%
\@ifundefined{Eth}{}{}%
\@ifundefined{eth}{}{}%
\@ifundefined{Thorn}{}{}%
\@ifundefined{thorn}{}{}%
%
\@ifundefined{degree}{}{}%
%
\newdimen\theight
\def\Column{%
 \vadjust{\setbox\z@=\hbox{\scriptsize\quad\quad tcol}%
  \theight=\ht\z@\advance\theight by \dp\z@\advance\theight by \lineskip
  \kern -\theight \vbox to \theight{%
   \rightline{\rlap{\box\z@}}%
   \vss
   }%
  }%
 }%
\def\qed{%
 \ifhmode\unskip\nobreak\fi\ifmmode\ifinner\else\hskip5\p@\fi\fi
 \hbox{\hskip5\p@\vrule width4\p@ height6\p@ depth1.5\p@\hskip\p@}%
 }%
\def\miss{\hbox{\vrule height2\p@ width 2\p@ depth\z@}}%
%
%
\def\tcol#1{{\baselineskip=6\p@ \vcenter{#1}} \Column}  %
%
%
%
%
%

\def\newfmtname{LaTeX2e}
\def\chkcompat{%
   \if@compatibility
   \else
     \usepackage{latexsym}
   \fi
}

\ifx\fmtname\newfmtname
  \DeclareOldFontCommand{\rm}{\normalfont\rmfamily}{\mathrm}
  \DeclareOldFontCommand{\sf}{\normalfont\sffamily}{\mathsf}
  \DeclareOldFontCommand{\tt}{\normalfont\ttfamily}{\mathtt}
  \DeclareOldFontCommand{\bf}{\normalfont\bfseries}{\mathbf}
  \DeclareOldFontCommand{\it}{\normalfont\itshape}{\mathit}
  \DeclareOldFontCommand{\sl}{\normalfont\slshape}{\@nomath\sl}
  \DeclareOldFontCommand{\sc}{\normalfont\scshape}{\@nomath\sc}
  \chkcompat
\fi

%

\def\alpha{{\Greekmath 010B}}%
\def\beta{{\Greekmath 010C}}%
\def\gamma{{\Greekmath 010D}}%
\def\delta{{\Greekmath 010E}}%
\def\epsilon{{\Greekmath 010F}}%
\def\zeta{{\Greekmath 0110}}%
\def\eta{{\Greekmath 0111}}%
\def\theta{{\Greekmath 0112}}%
\def\iota{{\Greekmath 0113}}%
\def\kappa{{\Greekmath 0114}}%
\def\lambda{{\Greekmath 0115}}%
\def\mu{{\Greekmath 0116}}%
\def\nu{{\Greekmath 0117}}%
\def\xi{{\Greekmath 0118}}%
\def\pi{{\Greekmath 0119}}%
\def\rho{{\Greekmath 011A}}%
\def\sigma{{\Greekmath 011B}}%
\def\tau{{\Greekmath 011C}}%
\def\upsilon{{\Greekmath 011D}}%
\def\phi{{\Greekmath 011E}}%
\def\chi{{\Greekmath 011F}}%
\def\psi{{\Greekmath 0120}}%
\def\omega{{\Greekmath 0121}}%
\def\varepsilon{{\Greekmath 0122}}%
\def\vartheta{{\Greekmath 0123}}%
\def\varpi{{\Greekmath 0124}}%
\def\varrho{{\Greekmath 0125}}%
\def\varsigma{{\Greekmath 0126}}%
\def\varphi{{\Greekmath 0127}}%

\def\nabla{{\Greekmath 0272}}
\def\FindBoldGroup{%
   {\setbox0=\hbox{$\mathbf{x\global\edef\theboldgroup{\the\mathgroup}}$}}%
}

\def\Greekmath#1#2#3#4{%
    \if@compatibility
        \ifnum\mathgroup=\symbold
           \mathchoice{\mbox{\boldmath$\displaystyle\mathchar"#1#2#3#4$}}%
                      {\mbox{\boldmath$\textstyle\mathchar"#1#2#3#4$}}%
                      {\mbox{\boldmath$\scriptstyle\mathchar"#1#2#3#4$}}%
                      {\mbox{\boldmath$\scriptscriptstyle\mathchar"#1#2#3#4$}}%
        \else
           \mathchar"#1#2#3#4%
        \fi 
    \else 
        \FindBoldGroup
        \ifnum\mathgroup=\theboldgroup 
           \mathchoice{\mbox{\boldmath$\displaystyle\mathchar"#1#2#3#4$}}%
                      {\mbox{\boldmath$\textstyle\mathchar"#1#2#3#4$}}%
                      {\mbox{\boldmath$\scriptstyle\mathchar"#1#2#3#4$}}%
                      {\mbox{\boldmath$\scriptscriptstyle\mathchar"#1#2#3#4$}}%
        \else
           \mathchar"#1#2#3#4%
        \fi     	    
	  \fi}

\newif\ifGreekBold  \GreekBoldfalse
\let\SAVEPBF=\pbf
\def\pbf{\GreekBoldtrue\SAVEPBF}%

\@ifundefined{theorem}{\newtheorem{theorem}{Theorem}}{}
\@ifundefined{lemma}{\newtheorem{lemma}[theorem]{Lemma}}{}
\@ifundefined{corollary}{\newtheorem{corollary}[theorem]{Corollary}}{}
\@ifundefined{conjecture}{}{}
\@ifundefined{proposition}{\newtheorem{proposition}[theorem]{Proposition}}{}
\@ifundefined{axiom}{}{}
\@ifundefined{remark}{\newtheorem{remark}{Remark}}{}
\@ifundefined{example}{}{}
\@ifundefined{exercise}{}{}
\@ifundefined{definition}{\newtheorem{definition}{Definition}}{}

\@ifundefined{mathletters}{%
  \newcounter{equationnumber}  
  \def\mathletters{%
     \addtocounter{equation}{1}
     \edef\@currentlabel{\theequation}%
     \setcounter{equationnumber}{\c@equation}
     \setcounter{equation}{0}%
     \edef\theequation{\@currentlabel\noexpand\alph{equation}}%
  }
  
}{}

\@ifundefined{BibTeX}{%
    \def\BibTeX{{\rm B\kern-.05em{\sc i\kern-.025em b}\kern-.08em
                 T\kern-.1667em\lower.7ex\hbox{E}\kern-.125emX}}}{}%
\@ifundefined{AmS}%
    {\def\AmS{{\protect\usefont{OMS}{cmsy}{m}{n}%
                A\kern-.1667em\lower.5ex\hbox{M}\kern-.125emS}}}{}%
\@ifundefined{AmSTeX}{}{}%
%

%
%
\ifx\ds@amstex\relax
   \message{amstex already loaded}\makeatother 
\else
   \@ifpackageloaded{amstex}%
      {\message{amstex already loaded}\makeatother }
      {}
   \@ifpackageloaded{amsgen}%
      {\message{amsgen already loaded}\makeatother }
      {}
\fi
%
%
%
%
\let\DOTSI\relax
\def\RIfM@{\relax\ifmmode}%
\def\FN@{\futurelet\next}%
\newcount\intno@
\def\iint{\DOTSI\intno@\tw@\FN@\ints@}%
\def\iiint{\DOTSI\intno@\thr@@\FN@\ints@}%
\def\iiiint{\DOTSI\intno@4 \FN@\ints@}%
\def\idotsint{\DOTSI\intno@\z@\FN@\ints@}%
\def\ints@{\findlimits@\ints@@}%
\newif\iflimtoken@
\newif\iflimits@
\def\findlimits@{\limtoken@true\ifx\next\limits\limits@true
 \else\ifx\next\nolimits\limits@false\else
 \limtoken@false\ifx\ilimits@\nolimits\limits@false\else
 \ifinner\limits@false\else\limits@true\fi\fi\fi\fi}%
\def\multint@{\int\ifnum\intno@=\z@\intdots@                          
 \else\intkern@\fi                                                    
 \ifnum\intno@>\tw@\int\intkern@\fi                                   
 \ifnum\intno@>\thr@@\int\intkern@\fi                                 
 \int}
\def\multintlimits@{\intop\ifnum\intno@=\z@\intdots@\else\intkern@\fi
 \ifnum\intno@>\tw@\intop\intkern@\fi
 \ifnum\intno@>\thr@@\intop\intkern@\fi\intop}%
\def\intic@{%
    \mathchoice{\hskip.5em}{\hskip.4em}{\hskip.4em}{\hskip.4em}}%
\def\negintic@{\mathchoice
 {\hskip-.5em}{\hskip-.4em}{\hskip-.4em}{\hskip-.4em}}%
\def\ints@@{\iflimtoken@                                              
 \def\ints@@@{\iflimits@\negintic@
   \mathop{\intic@\multintlimits@}\limits                             
  \else\multint@\nolimits\fi                                          
  \eat@}
 \else                                                                
 \def\ints@@@{\iflimits@\negintic@
  \mathop{\intic@\multintlimits@}\limits\else
  \multint@\nolimits\fi}\fi\ints@@@}%
\def\intkern@{\mathchoice{\!\!\!}{\!\!}{\!\!}{\!\!}}%
\def\plaincdots@{\mathinner{\cdotp\cdotp\cdotp}}%
\def\intdots@{\mathchoice{\plaincdots@}%
 {{\cdotp}\mkern1.5mu{\cdotp}\mkern1.5mu{\cdotp}}%
 {{\cdotp}\mkern1mu{\cdotp}\mkern1mu{\cdotp}}%
 {{\cdotp}\mkern1mu{\cdotp}\mkern1mu{\cdotp}}}%
%
%
%
\def\RIfM@{\relax\protect\ifmmode}
\def\text{\RIfM@\expandafter\text@\else\expandafter\mbox\fi}
\let\nfss@text\text
\def\text@#1{\mathchoice
   {\textdef@\displaystyle\f@size{#1}}%
   {\textdef@\textstyle\tf@size{\firstchoice@false #1}}%
   {\textdef@\textstyle\sf@size{\firstchoice@false #1}}%
   {\textdef@\textstyle \ssf@size{\firstchoice@false #1}}%
   \glb@settings}

\def\textdef@#1#2#3{\hbox{{%
                    \everymath{#1}%
                    \let\f@size#2\selectfont
                    #3}}}
\newif\iffirstchoice@
\firstchoice@true
%
%
%
%
%
\def\Let@{\relax\iffalse{\fi\let\\=\cr\iffalse}\fi}%
\def\vspace@{\def\vspace##1{\crcr\noalign{\vskip##1\relax}}}%
\def\multilimits@{\bgroup\vspace@\Let@
 \baselineskip\fontdimen10 \scriptfont\tw@
 \advance\baselineskip\fontdimen12 \scriptfont\tw@
 \lineskip\thr@@\fontdimen8 \scriptfont\thr@@
 \lineskiplimit\lineskip
 \vbox\bgroup\ialign\bgroup\hfil$\m@th\scriptstyle{##}$\hfil\crcr}%
\def\Sb{_\multilimits@}%
\def\endSb{\crcr\egroup\egroup\egroup}%
\def\Sp{^\multilimits@}%

%
%
%
\newdimen\ex@
\ex@.2326ex
\def\rightarrowfill@#1{$#1\m@th\mathord-\mkern-6mu\cleaders
 \hbox{$#1\mkern-2mu\mathord-\mkern-2mu$}\hfill
 \mkern-6mu\mathord\rightarrow$}%
\def\leftarrowfill@#1{$#1\m@th\mathord\leftarrow\mkern-6mu\cleaders
 \hbox{$#1\mkern-2mu\mathord-\mkern-2mu$}\hfill\mkern-6mu\mathord-$}%
\def\leftrightarrowfill@#1{$#1\m@th\mathord\leftarrow
\mkern-6mu\cleaders
 \hbox{$#1\mkern-2mu\mathord-\mkern-2mu$}\hfill
 \mkern-6mu\mathord\rightarrow$}%
\def\overrightarrow{\mathpalette\overrightarrow@}%
\def\overrightarrow@#1#2{\vbox{\ialign{##\crcr\rightarrowfill@#1\crcr
 \noalign{\kern-\ex@\nointerlineskip}$\m@th\hfil#1#2\hfil$\crcr}}}%

\def\overleftarrow{\mathpalette\overleftarrow@}%
\def\overleftarrow@#1#2{\vbox{\ialign{##\crcr\leftarrowfill@#1\crcr
 \noalign{\kern-\ex@\nointerlineskip}$\m@th\hfil#1#2\hfil$\crcr}}}%
\def\overleftrightarrow{\mathpalette\overleftrightarrow@}%
\def\overleftrightarrow@#1#2{\vbox{\ialign{##\crcr
   \leftrightarrowfill@#1\crcr
 \noalign{\kern-\ex@\nointerlineskip}$\m@th\hfil#1#2\hfil$\crcr}}}%
\def\underrightarrow{\mathpalette\underrightarrow@}%
\def\underrightarrow@#1#2{\vtop{\ialign{##\crcr$\m@th\hfil#1#2\hfil
  $\crcr\noalign{\nointerlineskip}\rightarrowfill@#1\crcr}}}%

\def\underleftarrow{\mathpalette\underleftarrow@}%
\def\underleftarrow@#1#2{\vtop{\ialign{##\crcr$\m@th\hfil#1#2\hfil
  $\crcr\noalign{\nointerlineskip}\leftarrowfill@#1\crcr}}}%
\def\underleftrightarrow{\mathpalette\underleftrightarrow@}%
\def\underleftrightarrow@#1#2{\vtop{\ialign{##\crcr$\m@th
  \hfil#1#2\hfil$\crcr
 \noalign{\nointerlineskip}\leftrightarrowfill@#1\crcr}}}%


\def\qopnamewl@#1{\mathop{\operator@font#1}\nlimits@}
\let\nlimits@\displaylimits
\def\setboxz@h{\setbox\z@\hbox}

\def\varlim@#1#2{\mathop{\vtop{\ialign{##\crcr
 \hfil$#1\m@th\operator@font lim$\hfil\crcr
 \noalign{\nointerlineskip}#2#1\crcr
 \noalign{\nointerlineskip\kern-\ex@}\crcr}}}}

 \def\rightarrowfill@#1{\m@th\setboxz@h{$#1-$}\ht\z@\z@
  $#1\copy\z@\mkern-6mu\cleaders
  \hbox{$#1\mkern-2mu\box\z@\mkern-2mu$}\hfill
  \mkern-6mu\mathord\rightarrow$}
\def\leftarrowfill@#1{\m@th\setboxz@h{$#1-$}\ht\z@\z@
  $#1\mathord\leftarrow\mkern-6mu\cleaders
  \hbox{$#1\mkern-2mu\copy\z@\mkern-2mu$}\hfill
  \mkern-6mu\box\z@$}

\def\projlim{\qopnamewl@{proj\,lim}}
\def\injlim{\qopnamewl@{inj\,lim}}
\def\varinjlim{\mathpalette\varlim@\rightarrowfill@}
\def\varprojlim{\mathpalette\varlim@\leftarrowfill@}
\def\varliminf{\mathpalette\varliminf@{}}
\def\varliminf@#1{\mathop{\underline{\vrule\@depth.2\ex@\@width\z@
   \hbox{$#1\m@th\operator@font lim$}}}}
\def\varlimsup{\mathpalette\varlimsup@{}}
\def\varlimsup@#1{\mathop{\overline
  {\hbox{$#1\m@th\operator@font lim$}}}}

%
%
%
\def\dfrac#1#2{{\displaystyle {#1 \over #2}}}%
%
%
%
%
%
%
%
%
%
%
%
%
%
%
%
%
%
%
%

%
%
%
%
%
%
%
%
%
%
%
%
%
%
%
%
%
%
%
%
%
%

%
%
%
%
%
%
%
%
%
%
%
%
%
%
%
%
%
%
%
%
%
%
%
%
\begingroup \catcode `|=0 \catcode `[= 1
\catcode`]=2 \catcode `\{=12 \catcode `\}=12
\catcode`\\=12 
|gdef|@alignverbatim#1\end{align}[#1|end[align]]
|gdef|@salignverbatim#1\end{align*}[#1|end[align*]]

|gdef|@alignatverbatim#1\end{alignat}[#1|end[alignat]]
|gdef|@salignatverbatim#1\end{alignat*}[#1|end[alignat*]]

|gdef|@xalignatverbatim#1\end{xalignat}[#1|end[xalignat]]
|gdef|@sxalignatverbatim#1\end{xalignat*}[#1|end[xalignat*]]

|gdef|@gatherverbatim#1\end{gather}[#1|end[gather]]
|gdef|@sgatherverbatim#1\end{gather*}[#1|end[gather*]]

|gdef|@gatherverbatim#1\end{gather}[#1|end[gather]]
|gdef|@sgatherverbatim#1\end{gather*}[#1|end[gather*]]

|gdef|@multilineverbatim#1\end{multiline}[#1|end[multiline]]
|gdef|@smultilineverbatim#1\end{multiline*}[#1|end[multiline*]]

|gdef|@arraxverbatim#1\end{arrax}[#1|end[arrax]]
|gdef|@sarraxverbatim#1\end{arrax*}[#1|end[arrax*]]

|gdef|@tabulaxverbatim#1\end{tabulax}[#1|end[tabulax]]
|gdef|@stabulaxverbatim#1\end{tabulax*}[#1|end[tabulax*]]

|endgroup

\def\align{\@verbatim \frenchspacing\@vobeyspaces \@alignverbatim
You are using the "align" environment in a style in which it is not defined.}

\@namedef{align*}{\@verbatim\@salignverbatim
You are using the "align*" environment in a style in which it is not defined.}
\expandafter\let\csname endalign*\endcsname =\endtrivlist

\def\alignat{\@verbatim \frenchspacing\@vobeyspaces \@alignatverbatim
You are using the "alignat" environment in a style in which it is not defined.}

\@namedef{alignat*}{\@verbatim\@salignatverbatim
You are using the "alignat*" environment in a style in which it is not defined.}
\expandafter\let\csname endalignat*\endcsname =\endtrivlist

\def\xalignat{\@verbatim \frenchspacing\@vobeyspaces \@xalignatverbatim
You are using the "xalignat" environment in a style in which it is not defined.}

\@namedef{xalignat*}{\@verbatim\@sxalignatverbatim
You are using the "xalignat*" environment in a style in which it is not defined.}
\expandafter\let\csname endxalignat*\endcsname =\endtrivlist

\def\gather{\@verbatim \frenchspacing\@vobeyspaces \@gatherverbatim
You are using the "gather" environment in a style in which it is not defined.}

\@namedef{gather*}{\@verbatim\@sgatherverbatim
You are using the "gather*" environment in a style in which it is not defined.}
\expandafter\let\csname endgather*\endcsname =\endtrivlist

\def\multiline{\@verbatim \frenchspacing\@vobeyspaces \@multilineverbatim
You are using the "multiline" environment in a style in which it is not defined.}

\@namedef{multiline*}{\@verbatim\@smultilineverbatim
You are using the "multiline*" environment in a style in which it is not defined.}
\expandafter\let\csname endmultiline*\endcsname =\endtrivlist

\def\arrax{\@verbatim \frenchspacing\@vobeyspaces \@arraxverbatim
You are using a type of "array" construct that is only allowed in AmS-LaTeX.}

\def\tabulax{\@verbatim \frenchspacing\@vobeyspaces \@tabulaxverbatim
You are using a type of "tabular" construct that is only allowed in AmS-LaTeX.}

\@namedef{arrax*}{\@verbatim\@sarraxverbatim
You are using a type of "array*" construct that is only allowed in AmS-LaTeX.}
\expandafter\let\csname endarrax*\endcsname =\endtrivlist

\@namedef{tabulax*}{\@verbatim\@stabulaxverbatim
You are using a type of "tabular*" construct that is only allowed in AmS-LaTeX.}
\expandafter\let\csname endtabulax*\endcsname =\endtrivlist


\def\@@eqncr{\let\@tempa\relax
    \ifcase\@eqcnt \def\@tempa{& & &}\or \def\@tempa{& &}%
      \else \def\@tempa{&}\fi
     \@tempa
     \if@eqnsw
        \iftag@
           \@taggnum
        \else
           \@eqnnum\stepcounter{equation}%
        \fi
     \fi
     \global\tag@false
     \global\@eqnswtrue
     \global\@eqcnt\z@\cr}

 \def\endequation{%
     \ifmmode\ifinner 
      \iftag@
        \addtocounter{equation}{-1} 
        $\hfil
           \displaywidth\linewidth\@taggnum\egroup \endtrivlist
        \global\tag@false
        \global\@ignoretrue   
      \else
        $\hfil
           \displaywidth\linewidth\@eqnnum\egroup \endtrivlist
        \global\tag@false
        \global\@ignoretrue 
      \fi
     \else   
      \iftag@
        \addtocounter{equation}{-1} 
        \eqno \hbox{\@taggnum}
        \global\tag@false%
        $$\global\@ignoretrue
      \else
        \eqno \hbox{\@eqnnum}
        $$\global\@ignoretrue
      \fi
     \fi\fi
 } 

 \newif\iftag@ \tag@false
 
 \def\tag{\@ifnextchar*{\@tagstar}{\@tag}}
 \def\@tag#1{%
     \global\tag@true
     \global\def\@taggnum{(#1)}}
 \def\@tagstar*#1{%
     \global\tag@true
     \global\def\@taggnum{#1}%
}


\makeatother

\begin{document}
\title[Homogenization in thin heterogeneous domains]{Sigma-convergence for
thin heterogeneous domains and application to the upscaling of
Darcy-Lapwood-Brinkmann flow}
\author{Willi J\"{a}ger}
\address{W. J\"{a}ger, Interdisciplinary Center for Scientific Computing
(IWR), University of Heidelberg, Im Neuenheimer Feld 205, 69120 Heidelberg,
Germany}
\email{wjaeger@iwr.uni-heidelberg.de}
\author{Jean Louis Woukeng}
\address{J.L. Woukeng, Department of Mathematics and Computer Science,
University of Dschang, P.O. Box 67, Dschang, Cameroon}
\curraddr{J.L. Woukeng, Institute of Advanced Scientific Studies (IHES), 35
Rte de Chartres, 91440 Bures-sur-Yvette, France}
\email{jeanlouis.woukeng@univ-dschang.org, woukeng@ihes.fr}
\date{May 2025}
\subjclass[2000]{35B40, 46J10, 35Q35}
\keywords{Thin heterogeneous domains with oscillating boundaries,
Darcy-Lapwood-Brinkmann equation, homogenization, algebras with mean value,
sigma-convergence}

\begin{abstract}
The sigma-convergence concept has been up to now used to derive macroscopic
models in full space dimensions. In this work, we generalize it to thin
heterogeneous domains given rise to phenomena in lower space dimensions.
More precisely, we provide a new approach of the sigma-convergence method
that is suitable for the study of phenomena occurring in thin heterogeneous
media. This is made through a systematic study of the sigma-convergence
method for thin heterogeneous domains. Assuming that the thin heterogeneous
layer is made of microstructures that are distributed inside in a
deterministic way including as special cases the periodic and the almost
periodic distributions, we make use of the concept of algebras with mean
value to state and prove the main compactness results. As an illustration,
we upscale a Darcy-Lapwood-Brinkmann micro-model for thin flow. We prove
that, according to the magnitude of the permeability of the porous domain,
we obtain as effective models, the Darcy law in lower dimensions. The
effective models are derived through the solvability of either the local
Stokes-Brinkmann problems or the local Hele-Shaw problems.
\end{abstract}

\maketitle

\section{Introduction and the main results\label{sec1}}

This work is concerned with two main challenges: 1) build a framework
enabling the study of physical/natural phenomena occurring in thin
heterogeneous media by taking into account the distribution of
heterogeneities inside the thin layer; 2) apply the results obtained in the
first part 1) to upscale a double porosity model, viz. a
Darcy-Lapwood-Brinkmann flow occurring in thin heterogeneous domains.

The first main challenge is to prove some important compactness results in
thin heterogeneous domains. To make it a little more precise, let $%
G_{i}\subset \mathbb{R}^{d_{i}}$ ($i=1,2$) be a bounded open set in $\mathbb{%
R}^{d_{i}}$ (integer $d_{i}\geq 1$) with $0\in \overline{G}_{2}$ (the
closure of $G_{2}$). We set $G_{\varepsilon }=G_{1}\times \varepsilon G_{2}$
(where $\varepsilon >0$ is a small parameter) and $G_{0}=G_{1}\times \{0\}$.
The first main result states as follows.

\begin{theorem}
\label{t1.1}Let $(u_{\varepsilon })_{\varepsilon \in E}$ be a sequence in $%
L^{p}(G_{\varepsilon })$ satisfying 
\begin{equation*}
\sup_{\varepsilon \in E}\varepsilon ^{-\frac{d_{2}}{p}}\left\Vert
u_{\varepsilon }\right\Vert _{L^{p}(G_{\varepsilon })}\leq C,\ \ \ \ \ \ \ \
\ \ \ \ \ \ \ \ \ \ \ \ \ \ \ \ \ \ \ \ \ \ \ \ \ \ \ \ \ \ \ \ \ \ 
\end{equation*}%
where $C>0$ is a constant independent of $\varepsilon $, $1<p<\infty $ and $%
E $ is an ordinary sequence of positive real numbers $\varepsilon $ tending
to zero. Then there exist a subsequence $E^{\prime }$ of $E$ and $u_{0}\in
L^{p}(G_{0};\mathcal{B}_{\mathcal{A}}^{p}(\mathbb{R}^{d_{1}};L^{p}(G_{2})))$
such that, as $E^{\prime }\ni \varepsilon \rightarrow 0$, 
\begin{equation}
\varepsilon ^{-d_{2}}\int_{G_{\varepsilon }}u_{\varepsilon }(x)f\left( 
\overline{x},\frac{x}{\varepsilon }\right) dx\rightarrow
\int_{G_{0}}\int_{G_{2}}M(u_{0}(\overline{x},\cdot ,\zeta )f(\overline{x}%
,\cdot ,\zeta ))d\zeta d\overline{x}  \label{e1.1}
\end{equation}%
for all $f\in L^{p^{\prime }}(G_{0};\mathcal{A}(\mathbb{R}%
^{d_{1}};L^{p^{\prime }}(G_{2})))$ ($1/p^{\prime }=1-1/p$).
\end{theorem}

We denote the convergence property (\ref{e1.1}) by "$u_{\varepsilon
}\rightarrow u_{0}$ in $L^{p}(G_{\varepsilon })$-weak $\Sigma _{\mathcal{A}}$%
".

In Theorem \ref{t1.1}, $\mathcal{A}$ is an algebra with mean value on $%
\mathbb{R}^{d_{1}}$ while $\mathcal{B}_{\mathcal{A}}^{p}(\mathbb{R}%
^{d_{1}};L^{p}(G_{2}))$ stands for the vector-valued generalized Besicovitch
space associated to $\mathcal{A}$; $M$ denotes the mean value on $\mathcal{B}%
_{\mathcal{A}}^{p}(\mathbb{R}^{d_{1}};L^{p}(G_{2}))$. We refer the reader to
Section \ref{sec2} for these concepts. Theorem \ref{t1.1} is at the heart of
some other important compactness results stated and proved in Section \ref%
{sec3} of this work. It generalizes its homologue stated in \cite{RJ2007},
which is concerned with the periodic version. It is worth noticing that the
generalization is not straightforward as it heavily relies on Lemma \ref%
{l2.2} stated and proved in \cite[Proposition 3.2]{CMP}. We use Lemma \ref%
{l2.2} to get rid of separability issue, as some algebras with mean value
are not separable, in contrast with the algebra of continuous periodic
functions on $\mathbb{R}^{d_{1}}$. To the best of our knowledge, there is no
result available so far in the literature dealing with such kind of
compactness results in thin domain beyond the periodic setting. So our
result is new and most likely, of great applicability.

The second main result deals with the sigma-convergence in thin
heterogeneous domains with oscillating lateral boundaries. Let $\Omega $ be
a bounded open Lipschitz domain in $\mathbb{R}^{d-1}$ (integer $d\geq 2$),
and let $h_{1}$ and $h_{2}$ be two Lipschtiz continuous functions defined on 
$\mathbb{R}^{d-1}$ and satisfying $h_{1},h_{2}\in W^{1,\infty }(\mathbb{R}%
^{d-1})\cap \mathcal{A}$ ($\mathcal{A}$ an algebra with mean value on $%
\mathbb{R}^{d-1}$). We define the thin domain here as follows 
\begin{equation}
\Omega ^{\varepsilon }=\left\{ x=(\overline{x},x_{d})\in \mathbb{R}^{d}:%
\overline{x}\in \Omega \text{ and }\varepsilon h_{1}\left( \frac{\overline{x}%
}{\varepsilon }\right) <x_{3}<\varepsilon h_{2}\left( \frac{\overline{x}}{%
\varepsilon }\right) \right\} .  \label{1.2}
\end{equation}%
Set $I=(\min_{\mathbb{R}^{d-1}}h_{1},\max_{\mathbb{R}^{d-1}}h_{2})$ and
assume that $0\in \lbrack \min_{\mathbb{R}^{d-1}}h_{1},\max_{\mathbb{R}%
^{d-1}}h_{2}]$, and define $G_{\varepsilon }=\Omega \times \varepsilon I$.
Finally define the set 
\begin{equation*}
\mathbb{J}=\left\{ y=(\overline{y},y_{d})\in \mathbb{R}^{d}:\overline{y}\in 
\mathbb{R}^{d-1}\text{ and }h_{1}(\overline{y})<y_{3}<h_{2}(\overline{y}%
)\right\} ,
\end{equation*}%
and we denote by $\chi _{\mathbb{J}}$ its characteristic function in $%
\mathbb{R}^{d}$. We are in a position to state the second main compactness
result.

\begin{theorem}
\label{t1.4}Let $(u_{\varepsilon })_{\varepsilon \in E}\subset L^{p}(\Omega
^{\varepsilon })$ ($1<p<\infty $) be a sequence satisfying 
\begin{equation*}
\sup_{\varepsilon \in E}\varepsilon ^{-\frac{1}{p}}\left\Vert u_{\varepsilon
}\right\Vert _{L^{p}(\Omega ^{\varepsilon })}\leq C,\ \ \ \ \ \ \ \ \ \ \ \
\ \ \ \ \ \ \ \ \ \ \ \ \ \ \ \ \ \ \ \ \ \ \ \ \ \ \ \ \ \ 
\end{equation*}%
where $C>0$ is independent of $\varepsilon $ and $E$ is as in Theorem \emph{%
\ref{t1.1}}. Assume that there exists a continuous extension operator $%
P_{\varepsilon }:L^{p}(\Omega ^{\varepsilon })\rightarrow
L^{p}(G_{\varepsilon })$ satisfying 
\begin{equation*}
\left\Vert P_{\varepsilon }v\right\Vert _{L^{p}(G_{\varepsilon })}\leq
C\left\Vert v\right\Vert _{L^{p}(\Omega ^{\varepsilon })}\ \ \forall v\in
L^{p}(\Omega ^{\varepsilon }),
\end{equation*}%
$C>0$ being independent of $\varepsilon $. Let $E^{\prime }$ and $u_{0}\in
L^{p}(G_{0};\mathcal{B}_{\mathcal{A}}^{p}(\mathbb{R}^{d-1};L^{p}(I)))$ be
defined by Theorem \emph{\ref{t1.1}} and such that $P_{\varepsilon
}u_{\varepsilon }\rightarrow u_{0}$ in $L^{p}(G_{\varepsilon })$-weak $%
\Sigma _{\mathcal{A}}$. Then, as $E^{\prime }\ni \varepsilon \rightarrow 0$, 
\begin{equation*}
\frac{1}{\varepsilon }\int_{\Omega ^{\varepsilon }}u_{\varepsilon
}(x)f\left( \overline{x},\frac{x}{\varepsilon }\right) dx\rightarrow
\int_{G_{0}}\int_{I}M(\chi _{\mathbb{J}}u_{0}(\overline{x},\cdot ,y_{d})f(%
\overline{x},\cdot ,y_{d}))dy_{d}d\overline{x}
\end{equation*}%
for all $f\in L^{p^{\prime }}(G_{0};\mathcal{A}(\mathbb{R}^{d-1};\mathcal{C}(%
\overline{I})))$, $1/p^{\prime }=1-1/p$.
\end{theorem}

To illustrate the previous results, we consider the upscaling of a double
porosity model in thin heterogeneous layers. The model problem is stated as
follows. In the thin heterogeneous layer $\Omega ^{\varepsilon }$, we
consider the flow of a fluid described by the Darcy-Lapwood-Brinkmann system 
\begin{equation}
\left\{ 
\begin{array}{l}
-\func{div}\left( A\left( \frac{x}{\varepsilon }\right) \nabla \boldsymbol{u}%
_{\varepsilon }\right) +\dfrac{\mu }{K_{\varepsilon }}\boldsymbol{u}%
_{\varepsilon }+\dfrac{\rho }{\phi ^{2}}(\boldsymbol{u}_{\varepsilon }\cdot
\nabla )\boldsymbol{u}_{\varepsilon }+\nabla p_{\varepsilon }=\boldsymbol{f}%
\text{ in }\Omega ^{\varepsilon }, \\ 
\func{div}\boldsymbol{u}_{\varepsilon }=0\text{ in }\Omega ^{\varepsilon },
\\ 
\boldsymbol{u}_{\varepsilon }=0\text{ on }\partial \Omega ^{\varepsilon },%
\end{array}%
\right.  \label{1.1}
\end{equation}%
where the assumptions on $A$ and $\boldsymbol{f}$ ensure the existence (for
each fixed $\varepsilon >0$) of a solution $(\boldsymbol{u}_{\varepsilon
},p_{\varepsilon })\in H_{0}^{1}(\Omega ^{\varepsilon })^{3}\times
L_{0}^{2}(\Omega ^{\varepsilon })$. We consider two different kind of thin
layers: a thin layer with flat parallel boundaries and a thin layer with
highly oscillating boundaries. In each case, we obtain the following
results, which are respectively, the third and the fourth main results of
this work.

\begin{theorem}
\label{t1.2}For each $\varepsilon >0$, let $\Omega ^{\varepsilon }$ be given
by \emph{(\ref{1.2})} with $h_{1}=-1$ and $h_{2}=1$ (i.e., $\Omega
^{\varepsilon }=\Omega \times (-\varepsilon ,\varepsilon )$, where $\Omega $
is a bounded open connected Lipschitz subset in $\mathbb{R}^{2}$), and let $(%
\boldsymbol{u}_{\varepsilon },p_{\varepsilon })$ be a solution of \emph{(\ref%
{1.1})}. Assume that $A\in (B_{\mathcal{A}}^{2}(\mathbb{R}%
^{2};L^{2}(I)))^{3\times 3}$. Then:

\begin{itemize}
\item[(i)] If $K_{\varepsilon }=O(\varepsilon ^{2})$ with $K_{\varepsilon
}/\varepsilon ^{2}\rightarrow K$ when $\varepsilon \rightarrow 0$, $%
0<K<\infty $, then $(\boldsymbol{u}_{\varepsilon }/\varepsilon
^{2},p_{\varepsilon })$ weakly $\Sigma _{\mathcal{A}}$-converges (as $%
\varepsilon \rightarrow 0$) in $L^{2}(\Omega ^{\varepsilon })^{3}\times
L^{2}(\Omega ^{\varepsilon })$ to $(\boldsymbol{u}_{0},p_{0})$ belonging to $%
[L^{2}(\Omega ;\mathcal{B}_{\mathcal{A}}^{1,2}(\mathbb{R}%
^{2};H_{0}^{1}(I)))]^{3}\times L^{2}(\Omega )$. Moreover $p_{0}\in
H^{1}(\Omega )$ and, defining $\boldsymbol{u}(\overline{x})=\int_{-1}^{1}M(%
\boldsymbol{u}_{0}(\overline{x},\cdot ,\zeta )d\zeta \equiv (\boldsymbol{u}%
^{\prime },u_{3})$, one has $u_{3}=0$ and $(\boldsymbol{u}^{\prime },p_{0})$
is the unique solution to the effective problem 
\begin{equation*}
\left\{ 
\begin{array}{l}
\boldsymbol{u}^{\prime }=\widehat{A}(\boldsymbol{f}_{1}-\nabla _{\overline{x}%
}p_{0})\text{ in }\Omega \\ 
\func{div}_{\overline{x}}\boldsymbol{u}^{\prime }=0\text{ in }\Omega \text{
and }\boldsymbol{u}^{\prime }\cdot \nu =0\text{ on }\partial \Omega%
\end{array}%
\right.
\end{equation*}%
where $\widehat{A}=(\widehat{a}_{ij})_{1\leq i,j\leq 2}$ is a symmetric,
positive definite $2\times 2$ matrix defined by its entries 
\begin{equation*}
\widehat{a}_{ij}=\int_{-1}^{1}M(A\overline{\nabla }_{y}\boldsymbol{w}_{i}:%
\overline{\nabla }_{y}\boldsymbol{w}_{j})d\zeta +\frac{\mu }{K}%
\int_{-1}^{1}M(\boldsymbol{w}_{i}\boldsymbol{w}_{j})d\zeta .
\end{equation*}%
Here $\boldsymbol{w}_{i}$ ($1\leq i\leq 2$) is the unique solution in $[%
\mathcal{B}_{\mathcal{A}}^{1,2}(\mathbb{R}^{2};H_{0}^{1}(I))]^{3}$ of the
Stokes-Brinkmann system 
\begin{equation*}
\left\{ 
\begin{array}{l}
-\overline{\func{div}}_{y}(A(y)\overline{\nabla }_{y}\boldsymbol{w}_{i})+%
\frac{\mu }{K}\boldsymbol{w}_{i}+\overline{\nabla }_{y}\pi _{i}=e_{i}\text{
in }\mathbb{R}^{2}\times I, \\ 
\overline{\func{div}}_{y}\boldsymbol{w}_{i}=0\text{ in }\mathbb{R}^{2}\times
I,%
\end{array}%
\right.
\end{equation*}%
$e_{i}$ being the $i$\emph{th} vector of the canonical basis in $\mathbb{R}%
^{3}$.

\item[(ii)] If $K_{\varepsilon }\ll \varepsilon ^{2}$, then $(\boldsymbol{u}%
_{\varepsilon }/(\varepsilon K_{\varepsilon }^{1/2}),(K_{\varepsilon
}^{1/2}/\varepsilon )p_{\varepsilon })_{\varepsilon >0}$ weakly $\Sigma _{%
\mathcal{A}}$-converges (as $\varepsilon \rightarrow 0$) in $L^{2}(\Omega
^{\varepsilon })^{3}\times L^{2}(\Omega ^{\varepsilon })$ toward $(%
\boldsymbol{u}_{0},p_{0})\in \lbrack L^{2}(\Omega ;\mathcal{B}_{\mathcal{A}%
}^{1,2}(\mathbb{R}^{2};H_{0}^{1}(I)))]^{3}\times L^{2}(\Omega )$.
Furthermore, defining $\boldsymbol{u}$ as in \emph{(i)} above, we have $%
u_{3}=0$ and $(\boldsymbol{u}^{\prime },p_{0})$ is a solution of 
\begin{equation*}
\boldsymbol{u}^{\prime }=-\widehat{A}\nabla _{\overline{x}}p_{0}\text{ in }%
\Omega \text{, }\func{div}_{\overline{x}}\boldsymbol{u}^{\prime }=0\text{ in 
}\Omega \text{ and }\boldsymbol{u}^{\prime }\cdot \nu =0\text{ on }\partial
\Omega ,
\end{equation*}%
where $\widehat{A}$ is a symmetric matrix defined by 
\begin{equation*}
\widehat{A}=(\widehat{a}_{ij})_{1\leq i,j\leq 2}\text{ with }\widehat{a}%
_{ij}=\mu \int_{-1}^{1}M(\boldsymbol{w}_{i}\boldsymbol{w}_{j})d\zeta ,
\end{equation*}%
$\boldsymbol{w}_{i}$ $(1\leq i\leq 2)$ being the unique solution in $%
\mathcal{B}_{\mathcal{A}}^{2}(\mathbb{R}^{2};L^{2}(I))^{3}$ of 
\begin{equation*}
\mu \boldsymbol{w}_{i}+\overline{\nabla }_{y}\pi _{i}=e_{i}\text{ in }%
\mathbb{R}^{2}\times I\text{ and }\overline{\func{div}}_{y}\boldsymbol{w}%
_{i}=0\text{ in }\mathbb{R}^{2}\times I.
\end{equation*}

\item[(iii)] If $K_{\varepsilon }\gg \varepsilon ^{2}$, then $(\boldsymbol{u}%
_{\varepsilon }/\varepsilon ^{2},p_{\varepsilon })_{\varepsilon >0}$ weakly $%
\Sigma _{\mathcal{A}}$-converges (as $\varepsilon \rightarrow 0$) in $%
L^{2}(\Omega ^{\varepsilon })^{3\times 3}\times L^{2}(\Omega ^{\varepsilon
}) $ to $(\boldsymbol{u}_{0},p_{0})$ where $\boldsymbol{u}_{0}\in \lbrack
L^{2}(\Omega ;B_{\mathcal{A}}^{1,2}(\mathbb{R}^{2};H_{0}^{1}(I)))]^{3}$ and $%
p_{0}\in L^{2}(\Omega )$. Furthermore $p_{0}\in H^{1}(\Omega )$ and,
defining $\boldsymbol{u}=(\boldsymbol{u}^{\prime },u_{3})$ as in \emph{(i)}
above, one has $u_{3}=0$ and $(\boldsymbol{u}^{\prime },p_{0})$ is the
unique solution of the effective problem 
\begin{equation*}
\left\{ 
\begin{array}{l}
\boldsymbol{u}^{\prime }=\widehat{A}(\boldsymbol{f}_{1}-\nabla _{\overline{x}%
}p_{0})\text{ in }\Omega \\ 
\func{div}_{\overline{x}}\boldsymbol{u}^{\prime }=0\text{ in }\Omega \text{
and }\boldsymbol{u}^{\prime }\cdot \nu =0\text{ on }\partial \Omega ,%
\end{array}%
\right.
\end{equation*}%
where $\widehat{A}=(\widehat{a}_{ij})_{1\leq i,j\leq 2}$ is given by 
\begin{equation*}
\widehat{a}_{ij}=\int_{-1}^{1}M(A\overline{\nabla }_{y}\boldsymbol{w}_{i}:%
\overline{\nabla }_{y}\boldsymbol{w}_{j})dy_{3},\ 1\leq i,j\leq 2,
\end{equation*}%
where here, $\boldsymbol{w}_{i}$ $(1\leq i\leq 2)$ is the unique solution in 
$(\mathcal{B}_{\mathcal{A}}^{1,2}(\mathbb{R}^{2};H_{0}^{1}(I)))^{3}$ of the
Stokes system 
\begin{equation*}
\left\{ 
\begin{array}{l}
-\overline{\func{div}}_{y}\left( A(y)\overline{\nabla }_{y}\boldsymbol{w}%
_{i}\right) +\overline{\nabla }_{y}\pi _{i}=e_{i}\text{ in }\mathbb{R}%
^{2}\times I \\ 
\overline{\func{div}}_{y}\boldsymbol{w}_{i}=0\text{ in }\mathbb{R}^{2}\times
I.%
\end{array}%
\right.
\end{equation*}
\end{itemize}
\end{theorem}

\begin{remark}
\label{r1.1}\emph{Let us recall the following concept that has been used in
the statement of Theorem \ref{t1.2} above. Let }$U$\emph{\ and }$V$\emph{\
be two positive functions of a small positive variable }$\varepsilon $\emph{%
, such that }$U,V\rightarrow 0$\emph{\ when }$\varepsilon \rightarrow 0$%
\emph{. We say that }%
\begin{eqnarray*}
U &=&O(V)\text{\emph{\ provided that }}\frac{U}{V}\rightarrow K\text{\emph{\
as }}\varepsilon \rightarrow 0\text{\emph{, where }}0<K<\infty ; \\
U &\ll &V\text{\emph{\ iff }}\frac{U}{V}\rightarrow 0\text{\emph{\ as }}%
\varepsilon \rightarrow 0\text{\emph{, and }}U\gg V\text{\emph{\ iff }}V\ll U%
\text{.}
\end{eqnarray*}
\end{remark}

In the next result, we assume that $\Omega ^{\varepsilon }$ is given by (\ref%
{1.2}) where $h_{1}$ and $h_{2}$ satisfy $\max_{\mathbb{R}^{2}}h_{1}<\min_{%
\mathbb{R}^{2}}h_{2}$, and $h_{1}$, $h_{2}\in \mathcal{A}$ with $%
M(h_{2}-h_{1})\neq 0$, $\mathcal{A}$ being an ergodic algebra with mean
value on $\mathbb{R}^{2}$. The result reads as follows.

\begin{theorem}
\label{t1.3}Assume that $\Omega ^{\varepsilon }$ is given by \emph{(\ref{1.2}%
)}. Let $(\boldsymbol{u}_{\varepsilon },p_{\varepsilon }=p_{\varepsilon
}^{0}+\varepsilon p_{\varepsilon }^{1})\in H_{0}^{1}(\Omega ^{\varepsilon
})^{3}\times L_{0}^{2}(\Omega ^{\varepsilon })$ be a solution of \emph{(\ref%
{1.1})}. Assume that $A\in (B_{\mathcal{A}}^{2}(\mathbb{R}^{2};L^{\infty
}(I)))^{3\times 3}$. Then:

\begin{itemize}
\item[(i)] If $K_{\varepsilon }=O(\varepsilon ^{2})$ with $K_{\varepsilon
}/\varepsilon ^{2}\rightarrow K$ when $\varepsilon \rightarrow 0$, $%
0<K<\infty $, then, still denoting by $\boldsymbol{u}_{\varepsilon }$ the
extension by zero of $\boldsymbol{u}_{\varepsilon }$ on $G_{\varepsilon
}=\Omega \times (\varepsilon h_{1}^{-},\varepsilon h_{2}^{+})$, one has 
\begin{equation*}
\frac{\boldsymbol{u}_{\varepsilon }}{\varepsilon ^{2}}\rightarrow 
\boldsymbol{u}_{0}\text{ in }L^{2}(G_{\varepsilon })^{3}\text{-weak }\Sigma
_{\mathcal{A}},\ \ \ \ \ \ \ \ \ \ \ \ \ \ \ 
\end{equation*}%
and 
\begin{equation*}
p_{\varepsilon }^{0}\rightarrow p_{0}\text{ in }H^{1}(\Omega )\text{-weak
and in }L^{2}(\Omega )\text{-strong.}
\end{equation*}%
Defining $\boldsymbol{u}=(\boldsymbol{u}^{\prime },u_{3})$ by $\boldsymbol{u}%
(\overline{x})=M\left( \int_{h_{1}}^{h_{2}}\boldsymbol{u}_{0}(\overline{x}%
,\cdot ,y_{3})dy_{3}\right) $, we have $u_{3}=0$ and $(\boldsymbol{u}%
^{\prime },p_{0})$ is the unique solution of the homogenized problem 
\begin{equation*}
\left\{ 
\begin{array}{l}
\boldsymbol{u}^{\prime }=\widehat{A}(\boldsymbol{f}_{1}-\nabla _{\overline{x}%
}p_{0})\text{ in }\Omega , \\ 
\func{div}_{\overline{x}}\boldsymbol{u}^{\prime }=0\text{ in }\Omega \text{,
and }\boldsymbol{u}^{\prime }\cdot \nu =0\text{ on }\partial \Omega ,%
\end{array}%
\right.
\end{equation*}%
where $\widehat{A}=(\widehat{a}_{ij})_{1\leq i,j\leq 2}$ is a symmetric,
positive definite $2\times 2$ matrix defined by its entries 
\begin{equation*}
\widehat{a}_{ij}=M\left( \int_{h_{1}}^{h_{2}}A\overline{\nabla }_{y}%
\boldsymbol{w}_{i}\cdot \overline{\nabla }_{y}\boldsymbol{w}%
_{j}dy_{3}\right) +\frac{\mu }{K}M\left( \int_{h_{1}}^{h_{2}}\boldsymbol{w}%
_{i}\cdot \boldsymbol{w}_{j}dy_{3}\right) .
\end{equation*}%
Here $\boldsymbol{w}_{i}$ ($1\leq i\leq 2$) is the unique solution in $B_{\#,%
\func{div}}^{1,2}(\mathbb{J})$ of the Stokes-Brinkmann system 
\begin{equation*}
\left\{ 
\begin{array}{l}
-\overline{\func{div}}_{y}\left( A(y)\overline{\nabla }_{y}\boldsymbol{w}%
_{i}\right) +\frac{\mu }{K}\boldsymbol{w}_{i}+\overline{\nabla }_{y}\pi
_{i}=e_{i}\text{ in }\mathbb{J} \\ 
\overline{\func{div}}_{y}\boldsymbol{w}_{i}=0\text{ in }\mathbb{J}, \\ 
\boldsymbol{w}_{i}=0\text{ on }\partial \mathbb{J}.%
\end{array}%
\right.
\end{equation*}%
$e_{i}$ being the $i$\emph{th} vector of the canonical basis in $\mathbb{R}%
^{3}$ and 
\begin{equation*}
\mathbb{J}=\left\{ y=(\overline{y},y_{3})\in \mathbb{R}^{3}:\overline{y}\in 
\mathbb{R}^{2}\text{ and }h_{1}(\overline{y})<y_{3}<h_{2}(\overline{y}%
)\right\} .
\end{equation*}

\item[(ii)] If $K_{\varepsilon }\ll \varepsilon ^{2}$, then, up to a
subsequence, one has 
\begin{equation*}
\frac{\boldsymbol{u}_{\varepsilon }}{\varepsilon K_{\varepsilon }^{\frac{1}{2%
}}}\rightarrow \boldsymbol{u}_{0}\text{ in }L^{2}(G_{\varepsilon })^{3}\text{%
-weak }\Sigma _{\mathcal{A}},\ \ \ \ \ \ 
\end{equation*}%
\begin{equation*}
\frac{K_{\varepsilon }^{\frac{1}{2}}}{\varepsilon }p_{\varepsilon
}^{0}\rightarrow p_{0}\text{ in }H^{1}(\Omega )\text{-weak and in }%
L^{2}(\Omega )\text{-strong.}\ \ \ \ \ \ \ 
\end{equation*}%
Furthermore, defining $\boldsymbol{u}$ as in \emph{(i)} above, we have $%
u_{3}=0$ and $(\boldsymbol{u}^{\prime },p_{0})$ is a solution of 
\begin{equation*}
\boldsymbol{u}^{\prime }=-\widehat{A}\nabla _{\overline{x}}p_{0}\text{ in }%
\Omega \text{, }\func{div}_{\overline{x}}\boldsymbol{u}^{\prime }=0\text{ in 
}\Omega \text{ and }\boldsymbol{u}^{\prime }\cdot \nu =0\text{ on }\partial
\Omega ,
\end{equation*}%
where $\widehat{A}$ is a symmetric matrix defined by 
\begin{equation*}
\widehat{A}=(\widehat{a}_{ij})_{1\leq i,j\leq 2}\text{ with }\widehat{a}%
_{ij}=\mu M\left( \int_{h_{1}}^{h_{2}}\boldsymbol{w}_{i}\boldsymbol{w}%
_{j}dy_{3}\right) ,
\end{equation*}%
$\boldsymbol{w}_{i}$ $(1\leq i\leq 2)$ being the unique solution in $%
\mathcal{B}_{\mathcal{A}}^{2}(\mathbb{J})^{3}$ of 
\begin{equation*}
\mu \boldsymbol{w}_{i}+\overline{\nabla }_{y}\pi _{i}=e_{i}\text{ in }%
\mathbb{J}\text{ and }\overline{\func{div}}_{y}\boldsymbol{w}_{i}=0\text{ in 
}\mathbb{J}.
\end{equation*}

\item[(iii)] If $K_{\varepsilon }\gg \varepsilon ^{2}$, then, still denoting
by $\boldsymbol{u}_{\varepsilon }$ and $p_{\varepsilon }^{1}$ the extension
of $\boldsymbol{u}_{\varepsilon }$ and $p_{\varepsilon }^{1}$ by zero on $%
G_{\varepsilon }$,we have, 
\begin{equation*}
\frac{\boldsymbol{u}_{\varepsilon }}{\varepsilon ^{2}}\rightarrow 
\boldsymbol{u}_{0}\text{ in }L^{2}(G_{\varepsilon })^{3}\text{-weak }\Sigma
_{\mathcal{A}},\ \ \ \ \ \ \ \ \ \ \ \ \ \ \ 
\end{equation*}%
\begin{equation*}
\frac{1}{\varepsilon }\nabla \boldsymbol{u}_{\varepsilon }\rightarrow 
\overline{\nabla }_{y}\boldsymbol{u}_{0}\text{ in }L^{2}(G_{\varepsilon
})^{3\times 3}\text{-weak }\Sigma _{\mathcal{A}},\ \ \ \ 
\end{equation*}%
\begin{equation*}
p_{\varepsilon }^{0}\rightarrow p_{0}\text{ in }H^{1}(\Omega )\text{-weak
and in }L^{2}(\Omega )\text{-strong,}
\end{equation*}%
Still defining $\boldsymbol{u}$ as in \emph{(i)} above, it holds that 
\begin{equation*}
\left\{ 
\begin{array}{l}
\boldsymbol{u}^{\prime }=\widehat{A}(\boldsymbol{f}_{1}-\nabla _{\overline{x}%
}p_{0})\text{ in }\Omega \\ 
\func{div}_{\overline{x}}\boldsymbol{u}^{\prime }=0\text{ in }\Omega \text{
and }\boldsymbol{u}^{\prime }\cdot \nu =0\text{ on }\partial \Omega ,%
\end{array}%
\right.
\end{equation*}%
where $\widehat{A}=(\widehat{a}_{ij})_{1\leq i,j\leq 2}$ is given by 
\begin{equation*}
\widehat{a}_{ij}=M\left( \int_{h_{1}}^{h_{2}}A\overline{\nabla }_{y}%
\boldsymbol{w}_{i}:\overline{\nabla }_{y}\boldsymbol{w}_{j}dy_{3}\right) ,\
1\leq i,j\leq 2,
\end{equation*}%
with $\boldsymbol{w}_{i}$ $(1\leq i\leq 2)$ being the unique solution in $%
B_{\#,\func{div}}^{1,2}(\mathbb{J})$ of the Stokes system 
\begin{equation*}
\left\{ 
\begin{array}{l}
-\overline{\func{div}}_{y}\left( A(y)\overline{\nabla }_{y}\boldsymbol{w}%
_{i}\right) +\overline{\nabla }_{y}\pi _{i}=e_{i}\text{ in }\mathbb{J} \\ 
\overline{\func{div}}_{y}\boldsymbol{w}_{i}=0\text{ in }\mathbb{J}.%
\end{array}%
\right.
\end{equation*}
\end{itemize}
\end{theorem}

Let us first and foremost compare our result in Theorem \ref{t1.3} with the
existing ones in the literature. In \cite{Pazanin2019}, problem (\ref{1.1})
has been considered in a thin domain with periodic oscillatory boundary. To
be more precise, the authors of \cite{Pazanin2019} considered the problem 
\begin{equation*}
\left\{ 
\begin{array}{l}
-\Delta \boldsymbol{u}_{\varepsilon }+\dfrac{\mu }{K_{\varepsilon }}%
\boldsymbol{u}_{\varepsilon }+\dfrac{\rho }{\phi ^{2}}(\boldsymbol{u}%
_{\varepsilon }\cdot \nabla )\boldsymbol{u}_{\varepsilon }+\nabla
p_{\varepsilon }=\boldsymbol{f}\text{ in }\Lambda ^{\varepsilon }, \\ 
\func{div}\boldsymbol{u}_{\varepsilon }=0\text{ in }\Lambda ^{\varepsilon },
\\ 
\boldsymbol{u}_{\varepsilon }=0\text{ on }\partial \Lambda ^{\varepsilon },%
\end{array}%
\right. 
\end{equation*}%
where $\Lambda _{\varepsilon }=\{(\overline{x},x_{3})\in \mathbb{R}^{3}:%
\overline{x}\in \Omega $\ and $\varepsilon h_{1}(\frac{\overline{x}}{%
\varepsilon })<x_{3}<\varepsilon h_{2}(\frac{\overline{x}}{\varepsilon })\}$%
\ with $h_{1},h_{2}$\ being smooth periodic functions. They proved the
analog of Theorem \ref{t1.3}. However, with respect to the results in \cite%
{Pazanin2019}, a few remarks are in order: 1) In Theorem \ref{t1.3}, the
functions $h_{1}$ and $h_{2}$ assume several behaviours such as the
periodicity, the almost periodicity, the asymptotic periodicity, the
asymptotic almost periodicity, and many more besides. So the main result in 
\cite{Pazanin2019} (Theorem 3.1 for instance) is a special case of Theorem %
\ref{t1.3}. It is also worth noting that, instead of considering only a
periodic framework like in \cite{Pazanin2019}, we consider the case where
the thin domain is highly heterogeneous, the heterogeneities being
distributed inside the domain in a general deterministic fashion. This
imposes the use of a more general concept of convergence taking into account
the properties of the domain, in order to pass to the limit; 2) We also
notice that our estimates leading to the homogenization process are not the
same compared to those obtained in \cite[Sections 2 and 3]{Pazanin2019}.
Indeed, in \cite[Sections 2 and 3]{Pazanin2019}, from the estimates 
\begin{equation*}
\left\Vert \boldsymbol{u}\right\Vert _{L^{2}(\Omega ^{\varepsilon
})^{3}}\leq C\varepsilon ^{\frac{5}{2}}\text{ and }\left\Vert \boldsymbol{u}%
\right\Vert _{L^{2}(\Omega ^{\varepsilon })^{3}}\leq C\varepsilon ^{\frac{3}{%
2}}K_{\varepsilon }^{\frac{1}{2}},
\end{equation*}%
the authors used the following estimate 
\begin{equation*}
\left\Vert \boldsymbol{u}\right\Vert _{L^{2}(\Omega ^{\varepsilon
})^{3}}\leq C\left( \varepsilon ^{\frac{5}{2}}+\varepsilon ^{\frac{3}{2}%
}K_{\varepsilon }^{\frac{1}{2}}\right) ,\ \ \ \ \ \ \ \ \ \ \ \ \ \ \ \ \ \
\ \ \ \ \ \ 
\end{equation*}%
while we rather inferred the following one (which seems to be more
realistic) 
\begin{equation*}
\left\Vert \boldsymbol{u}\right\Vert _{L^{2}(\Omega ^{\varepsilon
})^{3}}\leq C\min \left( \varepsilon ^{\frac{5}{2}},\varepsilon ^{\frac{3}{2}%
}K_{\varepsilon }^{\frac{1}{2}}\right) .\ \ \ \ \ \ \ \ \ \ \ \ \ \ \ \ \ \
\ \ \ \ \ \ 
\end{equation*}%
This gives rise in our work, to uniform estimates different from the ones
they obtained, especially in the cases when $K_{\varepsilon }\ll \varepsilon
^{2}$ and when $K_{\varepsilon }\gg \varepsilon ^{2}$; 3) instead of using
the unfolding periodic method, we use a direct approach: the
sigma-convergence method generalizing the two-scale convergence method. The
use of this approach is also justified by the fact that there is no variant
up to now, of the unfolding operator beyond the periodic setting; 4) In the
proof of Theorem \ref{t1.2}, we derive the estimates of the pressure by
using a classical tool relying on the solution of a divergence equation; see
Lemma \ref{l4.10} and Proposition \ref{p4.2}. However, in the proof of
Theorem \ref{t1.3}, the estimates of the pressure are obtained from a
crucial trick stemming from \cite[Theorem 3.1]{Casado2020}: the
decomposition of the pressure as $L_{0}^{2}(\Omega ^{\varepsilon })\ni
p_{\varepsilon }=p_{\varepsilon }^{0}+\varepsilon p_{\varepsilon }^{1}$ with 
$p_{\varepsilon }^{0}\in H^{1}(\Omega )$ and $p_{\varepsilon }^{1}\in
L^{2}(\Omega ^{\varepsilon })$ (see e.g. (\ref{5.5}) and (\ref{5.6})). This
very important result allows us to work in the same domain $\Omega
^{\varepsilon }$ instead of adding further assumptions on the geometry of $%
\Omega $ as it is the case in \cite{Pazanin2019} (see hypotheses \textbf{H1}%
, \textbf{H2} and \textbf{H3} therein).

Very few works address rigorously the asymptotic analysis of (\ref{1.1})\ or
related problems, in the literature. To the best of our knowledge the only
ones available to date are \cite{Ang3, Ang4, Casado2020, Pazanin2019,
Suarez2015}. For other works in thin heterogeneous domains, we may refer to
e.g. \cite{Ang1, Ang2, Renata, CLS2013, GMP2017, Griebel, Huang,
Marusic2000, RJ2007, Popov}, to cite a few. In \cite{Griebel, Huang, Popov}
the authors consider the formal asymptotic developments to upscale a
Stokes-Brinkmann model.

The rest of the work is organized as follows. Section \ref{sec2} deals with
some fundamental tools around the concept of algebra with mean value. In
Section \ref{sec3}, we are concerned with the concept of sigma-convergence
in thin heterogeneous domains. We prove therein some compactness results
related to that concept such as Theorems \ref{t1.1} and \ref{t1.4}. We apply
in Sections \ref{sec4} and \ref{sec5} the results developed in the previous
section to upscale a Darcy-Lapwood-Brinkmann flow occurring in thin
heterogeneous layers. Finally, in Section \ref{sec6}, we provide some
concrete applications of Theorems \ref{t1.2} and \ref{t1.3}.

Unless otherwise specified, the vector spaces throughout are assumed to be
real vector spaces, and the scalar functions are assumed to take real
values. We shall always assume that the numerical space $\mathbb{R}^{m}$
(integer $m\geq 1$) and its open sets are each provided with the Lebesgue
measure denoted by $dx=dx_{1}...dx_{m}$.

\textbf{Note}. This work inspired the works \cite{CJW2024, PW2024} where the
evolutionary counterpart of results contained in Subsection \ref{subsec3.1}
have been considered.

\section{Algebras with mean value and related Sobolev-type spaces\label{sec2}%
}

In this section we gather the reader with some basic concepts about the
algebras with mean value \cite{Jikov, ZK} and the associated Sobolev-type
spaces \cite{JW2021, CMP, NA}.

Let $\mathcal{A}$ be an algebra with mean value (algebra wmv in short) on $%
\mathbb{R}^{m}$ (integer $m\geq 1$) \cite{Jikov, ZK}, that is, a closed
subalgebra of the $\mathcal{C}^{\ast }$-algebra of bounded uniformly
continuous real-valued functions on $\mathbb{R}^{m}$, $\mathrm{BUC}(\mathbb{R%
}^{m})$, which contains the constants, is translation invariant and is such
that any of its elements possesses a mean value in the following sense: for
every $u\in \mathcal{A}$, the sequence $(u^{\varepsilon })_{\varepsilon >0}$
($u^{\varepsilon }(x)=u(x/\varepsilon )$) weakly $\ast $-converges in $%
L^{\infty }(\mathbb{R}^{m})$ to some real number $M(u)$ (called the mean
value of $u$) as $\varepsilon \rightarrow 0$. The mean value expresses as 
\begin{equation}
M(u)=\lim_{R\rightarrow \infty }%
\mathchoice {{\setbox0=\hbox{$\displaystyle{\textstyle -}{\int}$ } \vcenter{\hbox{$\textstyle -$
}}\kern-.6\wd0}}{{\setbox0=\hbox{$\textstyle{\scriptstyle -}{\int}$ } \vcenter{\hbox{$\scriptstyle -$
}}\kern-.6\wd0}}{{\setbox0=\hbox{$\scriptstyle{\scriptscriptstyle -}{\int}$
} \vcenter{\hbox{$\scriptscriptstyle -$
}}\kern-.6\wd0}}{{\setbox0=\hbox{$\scriptscriptstyle{\scriptscriptstyle
-}{\int}$ } \vcenter{\hbox{$\scriptscriptstyle -$ }}\kern-.6\wd0}}%
\!\int_{B_{R}}u(y)dy\text{ for }u\in \mathcal{A}  \label{0.1}
\end{equation}%
where we have set $%
\mathchoice {{\setbox0=\hbox{$\displaystyle{\textstyle
-}{\int}$ } \vcenter{\hbox{$\textstyle -$
}}\kern-.6\wd0}}{{\setbox0=\hbox{$\textstyle{\scriptstyle -}{\int}$ } \vcenter{\hbox{$\scriptstyle -$
}}\kern-.6\wd0}}{{\setbox0=\hbox{$\scriptstyle{\scriptscriptstyle -}{\int}$
} \vcenter{\hbox{$\scriptscriptstyle -$
}}\kern-.6\wd0}}{{\setbox0=\hbox{$\scriptscriptstyle{\scriptscriptstyle
-}{\int}$ } \vcenter{\hbox{$\scriptscriptstyle -$ }}\kern-.6\wd0}}%
\!\int_{B_{R}}=\left\vert B_{R}\right\vert ^{-1}\int_{B_{R}}$.

To an algebra with mean value $\mathcal{A}$ are associated its regular
subalgebras $\mathcal{A}^{k}=\{\psi \in \mathcal{C}^{k}(\mathbb{R}^{m}):$ $%
D_{y}^{\alpha }\psi \in \mathcal{A}$ $\forall \alpha =(\alpha
_{1},...,\alpha _{m})\in \mathbb{N}^{m}$ with $\left\vert \alpha \right\vert
\leq k\}$ ($k\geq 0$ an integer with $\mathcal{A}^{0}=\mathcal{A}$, and $%
D_{y}^{\alpha }\psi =\frac{\partial ^{\left\vert \alpha \right\vert }\psi }{%
\partial y_{1}^{\alpha _{1}}\cdot \cdot \cdot \partial y_{m}^{\alpha _{m}}}$%
). Under the norm $\left\Vert \left\vert u\right\vert \right\Vert
_{k}=\sup_{\left\vert \alpha \right\vert \leq k}\left\Vert D_{y}^{\alpha
}\psi \right\Vert _{\infty }$, $\mathcal{A}^{k}$ is a Banach space. We also
define the space $\mathcal{A}^{\infty }=\{\psi \in \mathcal{C}^{\infty }(%
\mathbb{R}^{m}):$ $D_{y}^{\alpha }\psi \in \mathcal{A}$ $\forall \alpha
=(\alpha _{1},...,\alpha _{m})\in \mathbb{N}^{m}\}$, a Fr\'{e}chet space
when endowed with the locally convex topology defined by the family of norms 
$\left\Vert \left\vert \cdot \right\vert \right\Vert _{m}$. The space $%
\mathcal{A}^{\infty }$ is dense in any $\mathcal{A}^{m}$ (integer $m\geq 0$).

The notion of a vector-valued algebra with mean value will be very useful in
this study.

Let $\mathbb{F}$ be a Banach space. We denote by \textrm{BUC}$(\mathbb{R}%
^{m};\mathbb{F})$ the Banach space of bounded uniformly continuous functions 
$u:\mathbb{R}^{m}\rightarrow \mathbb{F}$, endowed with the norm 
\begin{equation*}
\left\Vert u\right\Vert _{\infty }=\sup_{y\in \mathbb{R}^{m}}\left\Vert
u(y)\right\Vert _{\mathbb{F}}
\end{equation*}%
where $\left\Vert \cdot \right\Vert _{\mathbb{F}}$ stands for the norm in $%
\mathbb{F}$. Let $\mathcal{A}$ be an algebra with mean value on $\mathbb{R}%
^{m}$. We denote by $\mathcal{A}\otimes \mathbb{F}$ the usual space of
functions of the form 
\begin{equation*}
\sum_{\text{finite}}u_{i}\otimes v_{i}\text{ with }u_{i}\in \mathcal{A}\text{
and }v_{i}\in \mathbb{F}
\end{equation*}%
where $(u_{i}\otimes v_{i})(y)=u_{i}(y)v_{i}$ for $y\in \mathbb{R}^{m}$.
With this in mind, we define the vector-valued algebra with mean value $%
\mathcal{A}(\mathbb{R}^{m};\mathbb{F})$ as the closure of $\mathcal{A}%
\otimes \mathbb{F}$ in \textrm{BUC}$(\mathbb{R}^{m};\mathbb{F})$. Then it
holds that (see \cite{CMP}), for any $f\in \mathcal{A}(\mathbb{R}^{m};%
\mathbb{F})$, the set $\{L(f):L\in \mathbb{F}^{\prime }$ with $\left\Vert
L\right\Vert _{\mathbb{F}^{\prime }}\leq 1\}$ is relatively compact in $%
\mathcal{A}$.

Let us note that we may still define the space $\mathcal{A}(\mathbb{R}^{m};%
\mathbb{F})$ where $\mathbb{F}$ in this case is a Fr\'{e}chet space. In that
case, we replace the norm by the family of seminorms defining the topology
of $\mathbb{F}$.

Now, let $f\in \mathcal{A}(\mathbb{R}^{m};\mathbb{F})$. Then, defining $%
\left\Vert f\right\Vert _{\mathbb{F}}$ by $\left\Vert f\right\Vert _{\mathbb{%
F}}(y)=\left\Vert f(y)\right\Vert _{\mathbb{F}}$ ($y\in \mathbb{R}^{m}$), we
have that $\left\Vert f\right\Vert _{\mathbb{F}}\in \mathcal{A}$. Similarly
we can define (for $0<p<\infty $) the function $\left\Vert f\right\Vert _{%
\mathbb{F}}^{p}$ and $\left\Vert f\right\Vert _{\mathbb{F}}^{p}\in \mathcal{A%
}$. This allows us to define the Besicovitch seminorm on $\mathcal{A}(%
\mathbb{R}^{m};\mathbb{F})$ as follows: for $1\leq p<\infty $, we define the
Marcinkiewicz-type space $\mathfrak{M}^{p}(\mathbb{R}^{m};\mathbb{F})$ to be
the vector space of functions $u\in L_{loc}^{p}(\mathbb{R}^{m};\mathbb{F})$
such that 
\begin{equation*}
\left\Vert u\right\Vert _{p}=\left( \underset{R\rightarrow \infty }{\lim
\sup }%
\mathchoice {{\setbox0=\hbox{$\displaystyle{\textstyle
-}{\int}$ } \vcenter{\hbox{$\textstyle -$
}}\kern-.6\wd0}}{{\setbox0=\hbox{$\textstyle{\scriptstyle -}{\int}$ } \vcenter{\hbox{$\scriptstyle -$
}}\kern-.6\wd0}}{{\setbox0=\hbox{$\scriptstyle{\scriptscriptstyle -}{\int}$
} \vcenter{\hbox{$\scriptscriptstyle -$
}}\kern-.6\wd0}}{{\setbox0=\hbox{$\scriptscriptstyle{\scriptscriptstyle
-}{\int}$ } \vcenter{\hbox{$\scriptscriptstyle -$ }}\kern-.6\wd0}}%
\!\int_{B_{R}}\left\Vert u(y)\right\Vert _{\mathbb{F}}^{p}dy\right) ^{\frac{1%
}{p}}<\infty
\end{equation*}%
where $B_{R}$ is the open ball in $\mathbb{R}^{m}$ centered at the origin
and of radius $R$. Under the seminorm $\left\Vert \cdot \right\Vert _{p}$, $%
\mathfrak{M}^{p}(\mathbb{R}^{m};\mathbb{F})$ is a complete seminormed space
with the property that $\mathcal{A}(\mathbb{R}^{m};\mathbb{F})\subset 
\mathfrak{M}^{p}(\mathbb{R}^{m};\mathbb{F})$ since $\left\Vert u\right\Vert
_{p}<\infty $ for any $u\in \mathcal{A}(\mathbb{R}^{m};\mathbb{F})$. We
therefore define the vector-valued generalized Besicovitch space $B_{%
\mathcal{A}}^{p}(\mathbb{R}^{m};\mathbb{F})$ as the closure of $\mathcal{A}(%
\mathbb{R}^{m};\mathbb{F})$ in $\mathfrak{M}^{p}(\mathbb{R}^{m};\mathbb{F})$%
. The following hold true \cite{CMP, NA}:

\begin{itemize}
\item[(\textbf{i)}] The space $\mathcal{B}_{\mathcal{A}}^{p}(\mathbb{R}^{m};%
\mathbb{F})=B_{\mathcal{A}}^{p}(\mathbb{R}^{m};\mathbb{F})/\mathcal{N}$
(where $\mathcal{N}=\{u\in B_{\mathcal{A}}^{p}(\mathbb{R}^{m};\mathbb{F}%
):\left\Vert u\right\Vert _{p}=0\}$) is a Banach space under the norm $%
\left\Vert u+\mathcal{N}\right\Vert _{p}=\left\Vert u\right\Vert _{p}$ for $%
u\in B_{\mathcal{A}}^{p}(\mathbb{R}^{m};\mathbb{F})$.

\item[(\textbf{ii)}] The mean value $M:\mathcal{A}(\mathbb{R}^{m};\mathbb{F}%
)\rightarrow \mathbb{F}$ extends by continuity to a continuous linear
mapping (still denoted by $M$) on $B_{\mathcal{A}}^{p}(\mathbb{R}^{m};%
\mathbb{F})$ satisfying 
\begin{equation}
L(M(u))=M(L(u))\text{ for all }L\in \mathbb{F}^{\prime }\text{ and }u\in B_{%
\mathcal{A}}^{p}(\mathbb{R}^{m};\mathbb{F}).  \label{e2.1}
\end{equation}%
Moreover, for $u\in B_{\mathcal{A}}^{p}(\mathbb{R}^{m};\mathbb{F})$ we have 
\begin{equation*}
\left\Vert u\right\Vert _{p}=\left[ M(\left\Vert u\right\Vert _{\mathbb{F}%
}^{p})\right] ^{1/p}\equiv \left[ \lim_{R\rightarrow \infty }%
\mathchoice {{\setbox0=\hbox{$\displaystyle{\textstyle
-}{\int}$ } \vcenter{\hbox{$\textstyle -$
}}\kern-.6\wd0}}{{\setbox0=\hbox{$\textstyle{\scriptstyle -}{\int}$ } \vcenter{\hbox{$\scriptstyle -$
}}\kern-.6\wd0}}{{\setbox0=\hbox{$\scriptstyle{\scriptscriptstyle -}{\int}$
} \vcenter{\hbox{$\scriptscriptstyle -$
}}\kern-.6\wd0}}{{\setbox0=\hbox{$\scriptscriptstyle{\scriptscriptstyle
-}{\int}$ } \vcenter{\hbox{$\scriptscriptstyle -$ }}\kern-.6\wd0}}%
\!\int_{B_{R}}\left\Vert u(y)\right\Vert _{\mathbb{F}}^{p}dy\right] ^{\frac{1%
}{p}},
\end{equation*}%
and for $u\in \mathcal{N}$ one has $M(u)=0$.
\end{itemize}

It is worth noticing that $\mathcal{B}_{\mathcal{A}}^{2}(\mathbb{R}^{m};H)$
(when $\mathbb{F}=H$ is a Hilbert space) is a Hilbert space with inner
product 
\begin{equation}
\left( u,v\right) _{2}=M\left[ \left( u,v\right) _{H}\right] \text{ for }%
u,v\in \mathcal{B}_{\mathcal{A}}^{2}(\mathbb{R}^{m};H),  \label{1.5}
\end{equation}%
$(~,~)_{H}$ denoting the inner product in $H$ and $\left( u,v\right) _{H}$
the function $y\mapsto \left( u(y),v(y)\right) _{H}$ from $\mathbb{R}^{m}$
to $\mathbb{R}$, which belongs to $\mathcal{B}_{\mathcal{A}}^{1}(\mathbb{R}%
^{m};\mathbb{R})$.

We also define the Sobolev-Besicovitch type spaces as follows: 
\begin{equation*}
B_{\mathcal{A}}^{1,p}(\mathbb{R}^{m};\mathbb{F})=\{u\in B_{\mathcal{A}}^{p}(%
\mathbb{R}^{m};\mathbb{F}):\nabla _{y}u\in (B_{\mathcal{A}}^{p}(\mathbb{R}%
^{m};\mathbb{F}))^{m}\},
\end{equation*}%
endowed with the seminorm 
\begin{equation*}
\left\Vert u\right\Vert _{1,p}=\left( \left\Vert u\right\Vert
_{p}^{p}+\left\Vert \nabla _{y}u\right\Vert _{p}^{p}\right) ^{\frac{1}{p}},
\end{equation*}%
which is a complete seminormed space. The Banach counterpart of $B_{\mathcal{%
A}}^{1,p}(\mathbb{R}^{m};\mathbb{F})$ denoted by $\mathcal{B}_{\mathcal{A}%
}^{1,p}(\mathbb{R}^{m};\mathbb{F})$ is defined by replacing $B_{\mathcal{A}%
}^{p}(\mathbb{R}^{m};\mathbb{F})$ by $\mathcal{B}_{\mathcal{A}}^{p}(\mathbb{R%
}^{m};\mathbb{F})$ and $\partial /\partial y_{i}$ by $\overline{\partial }%
/\partial y_{i}$, where $\overline{\partial }/\partial y_{i}$ is defined by 
\begin{equation}
\frac{\overline{\partial }}{\partial y_{i}}(u+\mathcal{N}):=\frac{\partial u%
}{\partial y_{i}}+\mathcal{N}\text{ for }u\in B_{\mathcal{A}}^{1,p}(\mathbb{R%
}^{m};\mathbb{F}).  \label{e3}
\end{equation}%
It is important to note that $\overline{\partial }/\partial y_{i}$ is also
defined as the infinitesimal generator in the $i$th direction coordinate of
the strongly continuous group $\mathcal{T}(y):\mathcal{B}_{\mathcal{A}}^{p}(%
\mathbb{R}^{m};\mathbb{F})\rightarrow \mathcal{B}_{\mathcal{A}}^{p}(\mathbb{R%
}^{m};\mathbb{F});\ \mathcal{T}(y)(u+\mathcal{N})=u(\cdot +y)+\mathcal{N}$.
Let us denote by $\varrho :B_{\mathcal{A}}^{p}(\mathbb{R}^{m};\mathbb{F}%
)\rightarrow \mathcal{B}_{\mathcal{A}}^{p}(\mathbb{R}^{m};\mathbb{F})=B_{%
\mathcal{A}}^{p}(\mathbb{R}^{m};\mathbb{F})/\mathcal{N}$, $\varrho (u)=u+%
\mathcal{N}$, the canonical surjection. We remark that if $u\in B_{\mathcal{A%
}}^{1,p}(\mathbb{R}^{m};\mathbb{F})$ then $\varrho (u)\in \mathcal{B}_{%
\mathcal{A}}^{1,p}(\mathbb{R}^{m};\mathbb{F})$ with further 
\begin{equation*}
\frac{\overline{\partial }\varrho (u)}{\partial y_{i}}=\varrho \left( \frac{%
\partial u}{\partial y_{i}}\right) ,
\end{equation*}%
as seen above in (\ref{e3}). We may also define higher order derivatives $%
\overline{D}_{y}^{\alpha }=\frac{\overline{\partial }^{\left\vert \alpha
\right\vert }}{\partial y_{1}^{\alpha _{1}}\cdot \cdot \cdot \partial
y_{m}^{\alpha _{m}}}$ where $\frac{\overline{\partial }^{\alpha _{j}}}{%
\partial y_{i}^{\alpha _{j}}}=\frac{\overline{\partial }}{\partial y_{i}}%
\circ ...\circ \frac{\overline{\partial }}{\partial y_{i}}$,$\ j$ times.

We set $\mathcal{D}_{\mathcal{A}}(\mathbb{R}^{m};\mathbb{F})=\varrho (%
\mathcal{A}^{\infty }(\mathbb{R}^{m};\mathbb{F}))$ (and merely $\mathcal{D}_{%
\mathcal{A}}(\mathbb{R}^{m})=\mathcal{D}_{\mathcal{A}}(\mathbb{R}^{m};%
\mathbb{R})$), and we define the space of vector-valued distributions on $%
\mathcal{A}$, denoted by $\mathcal{D}_{\mathcal{A}}^{\prime }(\mathbb{R}^{m};%
\mathbb{F})$, as the space of continuous linear functionals $L:\mathcal{D}_{%
\mathcal{A}}(\mathbb{R}^{m})\rightarrow \mathbb{F}$. Let $1\leq p<\infty $;
for $u\in \mathcal{B}_{\mathcal{A}}^{p}(\mathbb{R}^{m};\mathbb{F})$, define $%
L_{u}:\mathcal{D}_{\mathcal{A}}(\mathbb{R}^{m})\rightarrow \mathbb{F}$ by 
\begin{equation*}
\left\langle L_{u},\varphi \right\rangle =M(\varphi u)\text{, all }\varphi
\in \mathcal{D}_{\mathcal{A}}(\mathbb{R}^{m}).\ \ \ \ \ \ \ \ \ \ \ 
\end{equation*}%
Then it is easy to see that $L_{u}\in \mathcal{D}_{\mathcal{A}}^{\prime }(%
\mathbb{R}^{m};\mathbb{F})$, so that $\mathcal{B}_{\mathcal{A}}^{p}(\mathbb{R%
}^{m};\mathbb{F})\hookrightarrow \mathcal{D}_{\mathcal{A}}^{\prime }(\mathbb{%
R}^{m};\mathbb{F})$. The mapping $u\mapsto L_{u}$ is linear continuous and
injective (this can be proven by proceeding as in \cite{Lions1968}). This
allows us to view $u$ as an element of $\mathcal{D}_{\mathcal{A}}^{\prime }(%
\mathbb{R}^{m};\mathbb{F})$ (say $L_{u}$), so that $\left\langle
L_{u},\varphi \right\rangle =\left\langle u,\varphi \right\rangle =M(\varphi
u)$ for all $\varphi \in \mathcal{D}_{\mathcal{A}}(\mathbb{R}^{m})$.
Especially, for $\varphi =1$, we have $M(u)=\left\langle u,1\right\rangle $.
We may therefore define the mean value of a a distribution $L\in \mathcal{D}%
_{\mathcal{A}}^{\prime }(\mathbb{R}^{m};\mathbb{F})$ accordingly: 
\begin{equation}
M(L)=\left\langle L,1\right\rangle .\ \ \ \ \ \ \ \ \ \ \ \ \ \ \ \ \ \ \ \
\ \ \ \ \ \ \ \ \ \ \ \ \ \ \ \ \ \ \ \ \ \ \ \ \ \   \label{e2.4}
\end{equation}%
For $L\in \mathcal{D}_{\mathcal{A}}^{\prime }(\mathbb{R}^{m};\mathbb{F})$
and $\alpha \in \mathbb{N}^{m}$, we define the partial derivative $\overline{%
D}_{\overline{y}}^{\alpha }L\in \mathcal{D}_{\mathcal{A}}^{\prime }(\mathbb{R%
}^{m};\mathbb{F})$ as follows: 
\begin{equation}
\left\langle \overline{D}_{\overline{y}}^{\alpha }L,\varphi \right\rangle
=(-1)^{\left\vert \alpha \right\vert }\left\langle L,\overline{D}_{\overline{%
y}}^{\alpha }\varphi \right\rangle \ \ \forall \varphi \in \mathcal{D}_{%
\mathcal{A}}(\mathbb{R}^{m}).  \label{e2.5}
\end{equation}%
From (\ref{e2.4}) and (\ref{e2.5}), we observe that 
\begin{equation}
M(\overline{D}_{\overline{y}}^{\alpha }L)=0\text{ for all }\alpha \in 
\mathbb{N}^{m}\backslash \{0\}.  \label{e2.6}
\end{equation}

We define a further notion by restricting ourselves to the case $\mathbb{F}=%
\mathbb{R}$. We say that the algebra $\mathcal{A}$ is ergodic if any $u\in 
\mathcal{B}_{\mathcal{A}}^{1}(\mathbb{R}^{m};\mathbb{R})$ that is invariant
under $(\mathcal{T}(y))_{y\in \mathbb{R}^{m}}$ is a constant in $\mathcal{B}%
_{\mathcal{A}}^{1}(\mathbb{R}^{m};\mathbb{R})$: this amounts to, if $%
\mathcal{T}(y)u=u$\ in $\mathcal{B}_{\mathcal{A}}^{1}(\mathbb{R}^{m};\mathbb{%
R})$ for every $y\in \mathbb{R}^{m}$, then $u=c$\ in $\mathcal{B}_{\mathcal{A%
}}^{1}(\mathbb{R}^{m};\mathbb{R})$ in the sense that $\left\Vert
u-c\right\Vert _{1}=0$, $c$ being a constant.

We end this subsection by defining the \textit{corrector function}\ spaces.
We are concerned with two special choices of the space $\mathbb{F}$: $%
\mathbb{F}=\mathbb{R}$ or $\mathbb{F}=W^{1,p}(G)$, $G$ being an open subset
of $\mathbb{R}^{N}$ (integer $N\geq 1$).

\begin{enumerate}
\item If $\mathbb{F}=\mathbb{R}$, we denote by $B_{\#\mathcal{A}}^{1,p}(%
\mathbb{R}^{m})\equiv B_{\#\mathcal{A}}^{1,p}(\mathbb{R}^{m};\mathbb{R})$
the \textit{corrector function}\ space defined by 
\begin{equation*}
B_{\#\mathcal{A}}^{1,p}(\mathbb{R}^{m})=\{u\in W_{loc}^{1,p}(\mathbb{R}%
^{m}):\nabla u\in B_{\mathcal{A}}^{p}(\mathbb{R}^{m})^{m}\text{ and }%
M(\nabla u)=0\}\text{.}
\end{equation*}%
In $B_{\#\mathcal{A}}^{1,p}(\mathbb{R}^{m})$ we identify two elements by
their gradients: $u=v$ in $B_{\#\mathcal{A}}^{1,p}(\mathbb{R}^{m})$ iff $%
\nabla _{y}(u-v)=0$, i.e. $\left\Vert \nabla _{y}(u-v)\right\Vert _{p}=0$.
We may therefore equip $B_{\#\mathcal{A}}^{1,p}(\mathbb{R}^{m})$ with the
gradient norm $\left\Vert u\right\Vert _{\#,p}=\left\Vert \nabla
_{y}u\right\Vert _{p}$. This defines a Banach space \cite[Theorem 3.12]%
{Casado} containing $B_{\mathcal{A}}^{1,p}(\mathbb{R}^{m};\mathbb{R})=B_{%
\mathcal{A}}^{1,p}(\mathbb{R}^{m})$ as a subspace.

\item For $\mathbb{F}=W^{1,p}(G)$, we define the \textit{corrector function}%
\ space $B_{\#\mathcal{A}}^{1,p}(\mathbb{R}^{m};W^{1,p}(G))$ by $B_{\#%
\mathcal{A}}^{1,p}(\mathbb{R}^{m};W^{1,p}(G))=\{u\in W_{loc}^{1,p}(\mathbb{R}%
^{m};W^{1,p}(G)):\nabla u\in B_{\mathcal{A}}^{p}(\mathbb{R}%
^{m};L^{p}(G))^{m+N}$\ and $\int_{G}M(\nabla u(\cdot ,\zeta ))d\zeta =0\}$,
where in this case $\nabla =(\nabla _{y},\nabla _{\zeta })$, $\nabla _{y}$
(resp. $\nabla _{\zeta }$) being the gradient operator with respect to the
variable $y\in \mathbb{R}^{m}$ (resp. $\zeta \in \mathbb{R}^{N}$). As in $%
B_{\#\mathcal{A}}^{1,p}(\mathbb{R}^{m})$, we still identify two elements by
their gradients in the sense that: $u=v$ in $B_{\#\mathcal{A}}^{1,p}(\mathbb{%
R}^{m};W^{1,p}(G))$ iff $\nabla (u-v)=0$, i.e. $\int_{G}\left\Vert \nabla
(u(\cdot ,\zeta )-v(\cdot ,\zeta ))\right\Vert _{p}^{p}d\zeta =0$. The space 
$B_{\#\mathcal{A}}^{1,p}(\mathbb{R}^{m};W^{1,p}(G))$ is therefore a Banach
space under the norm $\left\Vert u\right\Vert _{\#,p}=\left(
\int_{G}\left\Vert \nabla u(\cdot ,\zeta )\right\Vert _{p}^{p}d\zeta \right)
^{1/p}$.
\end{enumerate}

\section{Sigma-convergence for thin heterogeneous domains\label{sec3}}

\subsection{Sigma-convergence in thin heterogeneous domains with flat
lateral boundaries\label{subsec3.1}}

The concept of sigma-convergence was introduced in \cite{Hom1} in order to
tackle two-scale phenomena occurring in media with microstructures that are
distributed inside in a general deterministic way such as the periodic
distribution, the almost periodic one and others. The concept was concerned
with two-scale phenomena taking place in all space dimensions. In the
special case of periodic structures, it has been generalized to thin
heterogeneous media \cite{RJ2007}.

Our aim in this work is to provide a systematic study of the concept of
sigma-convergence applied to thin heterogeneous domains whose heterogeneous
structure is of general deterministic type including the periodic one and
the almost periodic one as special cases. The compactness results obtained
here generalize therefore those in \cite{RJ2007} which are concerned only
with periodic structures.

More precisely, let $d\geq 2$ be a given integer with $d=d_{1}+d_{2}$, $%
d_{1},d_{2}\geq 1$ being integers. Let $G_{1}$ and $G_{2}$ be open bounded
sets in $\mathbb{R}^{d_{1}}$ and $\mathbb{R}^{d_{2}}$, respectively. We
assume that $0\in \overline{G}_{2}$ (the closure $G_{2}$ in $\mathbb{R}%
^{d_{2}}$), where $0$ stands for the origin in $\mathbb{R}^{d_{2}}$. For $%
\varepsilon >0$ a given small parameter, we define our thin domain by 
\begin{equation*}
G_{\varepsilon }=G_{1}\times \varepsilon G_{2}.
\end{equation*}%
When $\varepsilon \rightarrow 0$, $G_{\varepsilon }$ shrinks to the
"interface" 
\begin{equation*}
G_{0}=G_{1}\times \{0\}.
\end{equation*}%
We note that in the definition of $G_{\varepsilon }$, the structure is
heterogeneous in $d_{1}$ space dimension only.

The space $\mathbb{R}_{\xi }^{m}$ is the numerical space $\mathbb{R}^{m}$ of
generic variable $\xi $. In this regard we set $\mathbb{R}^{d_{1}}=\mathbb{R}%
_{\overline{x}}^{d_{1}}$ or $\mathbb{R}_{\overline{y}}^{d_{1}}$ and $\mathbb{%
R}^{d_{2}}=\mathbb{R}_{\zeta }^{d_{2}}$, so that $x\in \mathbb{R}^{d}$
writes $(\overline{x},\zeta )$. We identify $G_{0}$ with $G_{1}$ so that the
generic element in $G_{0}$ is also denoted by $\overline{x}$ instead of $(%
\overline{x},0)$. Finally we set $G=G_{1}\times G_{2}\equiv G_{\varepsilon
=1}$.

This being so, let $\mathcal{A}$ be an algebra with mean value on $\mathbb{R}%
^{d_{1}}$. We denote by $M$ the mean value on $\mathcal{A}$ as well as its
extension on the associated generalized Besicovitch spaces $B_{\mathcal{A}%
}^{p}(\mathbb{R}^{d_{1}};L^{p}(G_{2}))$ and $\mathcal{B}_{\mathcal{A}}^{p}(%
\mathbb{R}^{d_{1}};L^{p}(G_{2}))$, $1\leq p<\infty $.

We can now introduce the concept of sigma-convergence for thin heterogeneous
domains.

\begin{definition}
\label{d2.1}\emph{1) A sequence }$(u_{\varepsilon })_{\varepsilon >0}\subset
L^{p}(G_{\varepsilon })$\emph{\ }$(1\leq p<\infty )$\emph{\ is said to }%
weakly $\Sigma $-converge\emph{\ in }$L^{p}(G_{\varepsilon })$\emph{\ to
some }$u_{0}\in L^{p}(G_{0};\mathcal{B}_{\mathcal{A}}^{p}(\mathbb{R}%
^{d_{1}};L^{p}(G_{2})))$\emph{\ if as }$\varepsilon \rightarrow 0$\emph{, we
have }%
\begin{equation}
\varepsilon ^{-d_{2}}\int_{G_{\varepsilon }}u_{\varepsilon }(x)f\left( 
\overline{x},\frac{x}{\varepsilon }\right) dx\rightarrow
\int_{G_{0}}\int_{G_{2}}M(u_{0}(\overline{x},\cdot ,\zeta )f(\overline{x}%
,\cdot ,\zeta ))d\zeta d\overline{x}  \label{2.1}
\end{equation}%
\emph{for any }$f\in L^{p^{\prime }}(G_{0};\mathcal{A}(\mathbb{R}%
^{d_{1}};L^{p^{\prime }}(G_{2})))$\emph{\ }$(\frac{1}{p^{\prime }}=1-\frac{1%
}{p})$\emph{; we denote this by }$"u_{\varepsilon }\rightarrow u_{0}$\emph{\
in }$L^{p}(G_{\varepsilon })$\emph{-weak }$\Sigma _{\mathcal{A}}"$\emph{.}

\emph{2) The sequence }$(u_{\varepsilon })_{\varepsilon >0}\subset
L^{p}(G_{\varepsilon })$\emph{\ is said to strongly }$\Sigma $\emph{%
-converge in }$L^{p}(G_{\varepsilon })$\emph{\ to some }$u_{0}\in
L^{p}(G_{0};\mathcal{B}_{\mathcal{A}}^{p}(\mathbb{R}^{d_{1}};L^{p}(G_{2})))$%
\emph{\ if it is weakly }$\Sigma $\emph{-convergent to }$u_{0}$\emph{\ and
further satisfies, as }$\varepsilon \rightarrow 0$\emph{, }%
\begin{equation}
\varepsilon ^{-\frac{d_{2}}{p}}\left\Vert u_{\varepsilon }\right\Vert
_{L^{p}(G_{\varepsilon })}\rightarrow \left\Vert u_{0}\right\Vert
_{L^{p}(G_{0};\mathcal{B}_{\mathcal{A}}^{p}(\mathbb{R}%
^{d_{1}};L^{p}(G_{2})))}.  \label{2.2}
\end{equation}%
\emph{We express this by writing }$"u_{\varepsilon }\rightarrow u_{0}$\emph{%
\ in }$L^{p}(G_{\varepsilon })$\emph{-strong }$\Sigma _{\mathcal{A}}"$\emph{.%
}
\end{definition}

\begin{remark}
\label{r2.1}\emph{It is easy to see that if }$u_{0}\in L^{p}(G_{0};\mathcal{A%
}(\mathbb{R}^{d_{1}};L^{p}(G_{2})))$\emph{\ then (\ref{2.2}) is equivalent
to }%
\begin{equation}
\varepsilon ^{-\frac{d_{2}}{p}}\left\Vert u_{\varepsilon
}-u_{0}^{\varepsilon }\right\Vert _{L^{p}(G_{\varepsilon })}\rightarrow 0%
\text{\emph{\ as }}\varepsilon \rightarrow 0,  \label{2.3}
\end{equation}%
\emph{where }$u_{0}^{\varepsilon }(x)=u_{0}(\overline{x},x/\varepsilon )$%
\emph{\ for }$x\in G_{\varepsilon }$\emph{.}
\end{remark}

Before we state the first compactness result for this section, we need a
further notation. Throughout the work, the letter $E$ will stand for any
ordinary sequence $(\varepsilon _{n})_{n\geq 1}$ with $0<\varepsilon
_{n}\leq 1$ and $\varepsilon _{n}\rightarrow 0$ when $n\rightarrow \infty $.
The generic term of $E$ will be merely denoted by $\varepsilon $ and $%
\varepsilon \rightarrow 0$ will mean $\varepsilon _{n}\rightarrow 0$ as $%
n\rightarrow \infty $. This being so, the following result holds true.

\begin{theorem}
\label{t2.1}Let $(u_{\varepsilon })_{\varepsilon \in E}$ be a sequence in $%
L^{p}(G_{\varepsilon })$ $(1<p<\infty )$ such that 
\begin{equation*}
\sup_{\varepsilon \in E}\varepsilon ^{-d_{2}/p}\left\Vert u_{\varepsilon
}\right\Vert _{L^{p}(G_{\varepsilon })}\leq C
\end{equation*}%
where $C$ is a positive constant independent of $\varepsilon $. Then there
exists a subsequence $E^{\prime }$ of $E$ such that the sequence $%
(u_{\varepsilon })_{\varepsilon \in E^{\prime }}$ weakly $\Sigma $-converges
in $L^{p}(G_{\varepsilon })$ to some $u_{0}\in L^{p}(G_{0};\mathcal{B}_{%
\mathcal{A}}^{p}(\mathbb{R}^{d_{1}};L^{p}(G_{2})))$.
\end{theorem}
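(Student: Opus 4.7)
The plan is to realize each $u_\varepsilon$ as a bounded linear functional on the test space $V:=L^{p'}(G_0;\mathcal{A}(\mathbb{R}^{d_1};L^{p'}(G_2)))$, extract a weak-$*$ cluster point, and identify the limit with an element of $L^p(G_0;\mathcal{B}_{\mathcal{A}}^p(\mathbb{R}^{d_1};L^p(G_2)))$. For $f\in V$ define
\[
L_\varepsilon(f)=\varepsilon^{-d_2}\int_{G_\varepsilon}u_\varepsilon(x)\,f\!\left(\overline{x},\tfrac{x}{\varepsilon}\right)dx.
\]
Applying H\"older on $G_\varepsilon$, performing the change of variables $\zeta=\varepsilon\zeta'$ in the $d_2$ variables, and using the embedding $\mathcal{A}(\mathbb{R}^{d_1};L^{p'}(G_2))\hookrightarrow\mathrm{BUC}(\mathbb{R}^{d_1};L^{p'}(G_2))$ to bound $\|f(\overline{x},\overline{x}/\varepsilon,\cdot)\|_{L^{p'}(G_2)}$ by $\|f(\overline{x},\cdot,\cdot)\|_{\mathcal{A}(\mathbb{R}^{d_1};L^{p'}(G_2))}$, one obtains $|L_\varepsilon(f)|\le C\|f\|_V$ uniformly in $\varepsilon\in E$, with $C$ furnished by the hypothesis.

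The next step is to extract a subsequence $E'\subset E$ along which $L_\varepsilon(f)\to L(f)$ for every $f\in V$. This is the main obstacle: $V$ need not be separable, because the algebra $\mathcal{A}$ is not assumed to be so (the almost periodic case being emblematic), and hence the standard Banach--Alaoglu plus diagonal argument does not apply directly. The remedy, advertised in the introduction, is Lemma~\ref{l2.2}: for each $f\in\mathcal{A}(\mathbb{R}^{d_1};L^{p'}(G_2))$ the set $\{L'(f):L'\in L^{p}(G_2),\,\|L'\|_{L^p(G_2)}\le 1\}$ is relatively compact in $\mathcal{A}$. This compactness allows one to approximate each $f\in V$ within a \emph{separable} closed subspace of $V$, on which a Cantor diagonal extraction does yield the required convergence; the uniform bound then propagates to produce $L\in V'$ with $|L(f)|\le C\|f\|_V$.

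It remains to identify $L$ with some $u_0\in L^p(G_0;\mathcal{B}_{\mathcal{A}}^p(\mathbb{R}^{d_1};L^p(G_2)))$. Since $\mathcal{A}(\mathbb{R}^{d_1};L^{p'}(G_2))$ is dense in $\mathcal{B}_{\mathcal{A}}^{p'}(\mathbb{R}^{d_1};L^{p'}(G_2))$, a refinement of the previous estimate, exploiting the weak-$*$ convergence $\|f(\overline{x},\cdot/\varepsilon,\cdot)\|^{p'}\rightharpoonup^* M(\|f(\overline{x},\cdot,\cdot)\|^{p'})$ in $L^\infty(G_0)$, upgrades the bound to
\[
|L(f)|\le C\,\|f\|_{L^{p'}(G_0;\mathcal{B}_{\mathcal{A}}^{p'}(\mathbb{R}^{d_1};L^{p'}(G_2)))},
\]
so $L$ extends continuously to the larger space. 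Invoking the duality between $L^p(G_0;\mathcal{B}_{\mathcal{A}}^p(\mathbb{R}^{d_1};L^p(G_2)))$ and $L^{p'}(G_0;\mathcal{B}_{\mathcal{A}}^{p'}(\mathbb{R}^{d_1};L^{p'}(G_2)))$, realised by the pairing $(u_0,f)\mapsto\int_{G_0}\int_{G_2}M(u_0(\overline{x},\cdot,\zeta)\,f(\overline{x},\cdot,\zeta))\,d\zeta\,d\overline{x}$, one recovers the desired representative $u_0$, concluding the proof of weak $\Sigma$-convergence along $E'$.
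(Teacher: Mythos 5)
Your proposal contains a genuine gap centred on the subsequence extraction, which is precisely the point the paper singles out as the non-trivial step. You attribute to Lemma~\ref{l2.2} the statement that, for $f\in\mathcal{A}(\mathbb{R}^{d_1};L^{p'}(G_2))$, the set $\{L'(f):L'\in L^{p}(G_2),\ \|L'\|\le1\}$ is relatively compact in $\mathcal{A}$; that is actually the unrelated property of vector-valued algebras with mean value recorded earlier in Section~\ref{sec2}, not Lemma~\ref{l2.2}. The actual Lemma~\ref{l2.2} (\cite[Proposition~3.2]{CMP}) is a purely functional-analytic fact: if $X$ is a (not necessarily closed) subspace of a \emph{reflexive} Banach space $Y$ and $(f_n)$ is a sequence of linear functionals on $X$ with $\limsup_n|f_n(x)|\le C\|x\|_Y$, then a subsequence converges pointwise on $X$ to some $f\in Y'$. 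It bypasses separability by using reflexivity of $Y$, not by squeezing the test functions into a separable subspace. Your proposed workaround --- approximating each $f\in V$ inside a separable closed subspace of $V$ and running a Cantor diagonal --- is not underwritten by the compactness fact you cite (which controls the orbit of a single $f$, not all of $V$) and does not reproduce a valid extraction.

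There is also a structural mismatch that follows from this confusion. The paper first establishes, via Lemma~\ref{l2.1} (in particular (\ref{2.5})), the bound $\limsup_{\varepsilon}|L_{\varepsilon}(f)|\le C\,\|f\|_{Y}$ with $Y=L^{p'}(G_0;\mathcal{B}_{\mathcal{A}}^{p'}(\mathbb{R}^{d_1};L^{p'}(G_2)))$, and then invokes Lemma~\ref{l2.2} once with $X=L^{p'}(G_0;\mathcal{A}(\mathbb{R}^{d_1};L^{p'}(G_2)))$; this single application simultaneously extracts the subsequence and realises the limit as an element $u_0\in Y'=L^p(G_0;\mathcal{B}_{\mathcal{A}}^{p}(\mathbb{R}^{d_1};L^p(G_2)))$, making your separate ``refinement and density-extension'' stage unnecessary. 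Your proposal instead extracts first under the weaker bound $|L_\varepsilon(f)|\le C\|f\|_V$ and only afterwards tries to upgrade to the $Y$-norm; with the correct Lemma~\ref{l2.2} this order cannot work, since the $Y$-norm bound is required as a hypothesis before any extraction. Your opening H\"older/change-of-variable estimate (essentially (\ref{2.4})) and your identification of the limit $\int_{G_0}\|f(\overline{x},\overline{x}/\varepsilon,\cdot)\|^{p'}_{L^{p'}(G_2)}d\overline{x}\to\int_{G_0}M(\|f(\overline{x},\cdot,\cdot)\|^{p'}_{L^{p'}(G_2)})d\overline{x}$ (which is (\ref{2.5})) are correct, but the proof cannot be closed without the actual content and the correct placement of Lemma~\ref{l2.2}.
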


The proof of the above theorem relies on the following two lemmas.

\begin{lemma}
\label{l2.1}Let $1\leq p<\infty $. For any $f\in L^{p}(G_{0};\mathcal{A}(%
\mathbb{R}^{d_{1}};L^{p}(G_{2})))$ one has 
\begin{equation}
\varepsilon ^{-d_{2}}\int_{G_{\varepsilon }}\left\vert f\left( \overline{x},%
\frac{x}{\varepsilon }\right) \right\vert ^{p}dx\leq \left\Vert f\right\Vert
_{L^{p}(G_{0};\mathcal{A}(\mathbb{R}^{d_{1}};L^{p}(G_{2})))}^{p}  \label{2.4}
\end{equation}%
and 
\begin{equation}
\varepsilon ^{-d_{2}}\int_{G_{\varepsilon }}\left\vert f\left( \overline{x},%
\frac{x}{\varepsilon }\right) \right\vert ^{p}dx\rightarrow
\int_{G_{0}}\int_{G_{2}}M(\left\vert f(\overline{x},\cdot ,\zeta
)\right\vert ^{p})d\zeta d\overline{x}.  \label{2.5}
\end{equation}
\end{lemma}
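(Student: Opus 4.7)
\emph{Setup and proof of the bound.} The plan is to reduce the two-variable integral to a one-scale problem in $\overline{x}$ by change of variables, then invoke the defining property of the mean value on an algebra $\mathcal{A}$. With $x=(\overline{x},x'')\in G_{1}\times\varepsilon G_{2}$, the substitution $\zeta=x''/\varepsilon\in G_{2}$ has Jacobian $\varepsilon^{d_{2}}$, which cancels the prefactor and gives
\[
\varepsilon^{-d_{2}}\!\int_{G_{\varepsilon}}\!\left|f\!\left(\overline{x},\tfrac{x}{\varepsilon}\right)\right|^{p}\!dx=\int_{G_{1}}\!\int_{G_{2}}\!\left|f\!\left(\overline{x},\tfrac{\overline{x}}{\varepsilon},\zeta\right)\right|^{p}d\zeta\,d\overline{x}.
\]
For the first assertion, the inner integral equals $\|f(\overline{x},\overline{x}/\varepsilon,\cdot)\|_{L^{p}(G_{2})}^{p}$, which is bounded pointwise in $y=\overline{x}/\varepsilon$ by the sup-norm $\|f(\overline{x},\cdot,\cdot)\|_{\mathcal{A}(\mathbb{R}^{d_{1}};L^{p}(G_{2}))}^{p}$; integrating in $\overline{x}$ yields exactly the right-hand side of \eqref{2.4}.

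\emph{Reduction of the convergence to a scalar two-scale lemma.} For \eqref{2.5}, introduce the auxiliary function $\xi(\overline{x},y):=\|f(\overline{x},y,\cdot)\|_{L^{p}(G_{2})}^{p}$. Section \ref{sec2} records that whenever $g\in\mathcal{A}(\mathbb{R}^{d_{1}};L^{p}(G_{2}))$ one has $\|g(\cdot)\|_{L^{p}(G_{2})}^{p}\in\mathcal{A}$, so $\xi(\overline{x},\cdot)\in\mathcal{A}$ for a.e.\ $\overline{x}\in G_{1}$. Since $\|\xi(\overline{x},\cdot)\|_{\mathcal{A}}=\|f(\overline{x},\cdot,\cdot)\|_{\mathcal{A}(\mathbb{R}^{d_{1}};L^{p}(G_{2}))}^{p}$ is integrable on $G_{1}$ and measurability in $\overline{x}$ follows from continuity of $g\mapsto\|g\|_{L^{p}(G_{2})}^{p}$ on $\mathcal{A}(\mathbb{R}^{d_{1}};L^{p}(G_{2}))$, one obtains $\xi\in L^{1}(G_{1};\mathcal{A})$ with $\|\xi\|_{L^{1}(G_{1};\mathcal{A})}=\|f\|_{L^{p}(G_{0};\mathcal{A}(\mathbb{R}^{d_{1}};L^{p}(G_{2})))}^{p}$. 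The task then reduces to the scalar statement: for every $h\in L^{1}(G_{1};\mathcal{A})$,
\[
\int_{G_{1}}h\!\left(\overline{x},\tfrac{\overline{x}}{\varepsilon}\right)d\overline{x}\longrightarrow\int_{G_{1}}M(h(\overline{x},\cdot))\,d\overline{x}\quad\text{as }\varepsilon\to 0.
\]

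\emph{The auxiliary lemma, identification, and the main obstacle.} I would prove this statement first on elementary tensors $h=\alpha\otimes\beta$ with $\alpha\in L^{1}(G_{1})$ and $\beta\in\mathcal{A}$, where it is immediate from the weak-$*$ convergence $\beta(\cdot/\varepsilon)\rightharpoonup^{*}M(\beta)$ in $L^{\infty}(\mathbb{R}^{d_{1}})$ built into the definition of an algebra with mean value; extend by linearity to finite sums; and then to all of $L^{1}(G_{1};\mathcal{A})$ by density of $L^{1}(G_{1})\otimes\mathcal{A}$ combined with the uniform bound $\bigl|\int_{G_{1}}h(\overline{x},\overline{x}/\varepsilon)\,d\overline{x}\bigr|\leq\|h\|_{L^{1}(G_{1};\mathcal{A})}$ in a standard three-$\varepsilon$ argument. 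Applying this to $h=\xi$ and identifying $M(\xi(\overline{x},\cdot))=\int_{G_{2}}M(|f(\overline{x},\cdot,\zeta)|^{p})\,d\zeta$ through a Fubini-type commutation — either from item (ii) of Section \ref{sec2} with $L\in L^{1}(G_{2})'$ given by integration against $\mathbf{1}_{G_{2}}$, or directly by interchanging the averages $|B_{R}|^{-1}\int_{B_{R}}$ with $\int_{G_{2}}$ and passing to the limit — delivers \eqref{2.5}. The main obstacle is precisely the density-plus-uniform-bound reduction of the auxiliary scalar lemma together with the measurability bookkeeping for $\xi$; the change of variables, the pointwise sup-norm bound, and the final commutation step are routine.
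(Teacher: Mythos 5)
Your proposal is correct and follows essentially the same route as the paper: both reduce (\ref{2.5}) to the scalar statement for the auxiliary function $g(\overline{x},y)=\|f(\overline{x},y,\cdot)\|_{L^{p}(G_{2})}^{p}\in L^{1}(G_{0};\mathcal{A})$ (your $\xi$), invoke the mean-value property of $\mathcal{A}$ to pass to the limit, and commute $M$ with the $G_{2}$-integral at the end. The paper simply cites the mean-value property for $L^{1}(G_{0};\mathcal{A})$ where you spell out the tensor-product/density/three-$\varepsilon$ argument, but the underlying decomposition and key lemma are the same.
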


\begin{proof}
The proof of (\ref{2.4}) is obvious by making the change of variables $\zeta
=\frac{\widehat{x}}{\varepsilon }\in G_{2}$, where $\widehat{x}%
=(x_{d_{1}+1},\ldots ,x_{d})$ for $x=(x_{1},\ldots
,x_{d_{1}},x_{d_{1}+1},\ldots ,x_{d})$. Let us now turn our attention on (%
\ref{2.5}). To this end, let $f\in L^{p}(G_{0};\mathcal{A}(\mathbb{R}%
^{d_{1}};L^{p}(G_{2})))$. Then for a.e. $\overline{x}\in G_{0}$, the
function $f(\overline{x})$ belongs to $\mathcal{A}(\mathbb{R}%
^{d_{1}};L^{p}(G_{2}))$, so that, defining $g:G_{0}\times \mathbb{R}%
^{d_{1}}\rightarrow \mathbb{R}$ by $g(\overline{x},y)=\left\Vert f(\overline{%
x},y,\cdot )\right\Vert _{L^{p}(G_{2})}^{p}$, we have $g\in L^{1}(G_{0};%
\mathcal{A})$; this stems from the definition of $\mathcal{A}(\mathbb{R}%
^{d_{1}};L^{p}(G_{2}))$. It follows from the mean value property that 
\begin{equation*}
\int_{G_{0}}g\left( \overline{x},\frac{\overline{x}}{\varepsilon }\right) d%
\overline{x}\rightarrow \int_{G_{0}}M(g(\overline{x},\cdot ))d\overline{x},
\end{equation*}%
and 
\begin{eqnarray}
\int_{G_{0}}M(g(\overline{x},\cdot ))d\overline{x} &=&\int_{G_{0}}M\left(
\int_{G_{2}}\left\vert f(\overline{x},\cdot ,\zeta )\right\vert ^{p}d\zeta
\right) d\overline{x}  \label{2.6} \\
&=&\int_{G_{0}}\int_{G_{2}}M(\left\vert f(\overline{x},\cdot ,\zeta
)\right\vert ^{p})d\zeta d\overline{x},  \notag
\end{eqnarray}%
where for the last equality above we have used the continuity of the mean
value operator.

On the other hand, it holds that 
\begin{eqnarray*}
\varepsilon ^{-d_{2}}\int_{G_{\varepsilon }}\left\vert f\left( \overline{x},%
\frac{x}{\varepsilon }\right) \right\vert ^{p}dx
&=&\int_{G_{0}}\int_{G_{2}}\left\vert f\left( \overline{x},\frac{\overline{x}%
}{\varepsilon },\zeta \right) \right\vert ^{p}d\overline{x}d\zeta \\
&=&\int_{G_{0}}\left\Vert f\left( \overline{x},\frac{\overline{x}}{%
\varepsilon },\cdot \right) \right\Vert _{L^{p}(G_{2})}^{p}d\overline{x} \\
&=&\int_{G_{0}}g\left( \overline{x},\frac{\overline{x}}{\varepsilon }\right)
d\overline{x}.
\end{eqnarray*}%
Property (\ref{2.5}) therefore follows readily from (\ref{2.6}) and the last
series of equalities above.
\end{proof}

\begin{lemma}[{\protect\cite[Proposition 3.2]{CMP}}]
\label{l2.2}Let $X$ be a subspace (not necessarily closed) of a reflexive
Banach space $Y$ and let $f_{n}:X\rightarrow \mathbb{R}$ be a sequence of
linear functionals (not necessarily continuous). Assume there exists a
constant $C>0$ such that 
\begin{equation*}
\underset{n}{\lim \sup }\left\vert f_{n}(x)\right\vert \leq C\left\Vert
x\right\Vert _{Y}\text{ for all }x\in X.
\end{equation*}%
Then there exist a subsequence $(f_{n_{k}})_{k}$ of $(f_{n})$ and a
functional $f\in Y^{\prime }$ such that $\lim_{k\rightarrow \infty
}f_{n_{k}}(x)=f(x)$ for all $x\in X$.
\end{lemma}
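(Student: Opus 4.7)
The plan is to produce the candidate limit $f \in Y'$ via a two-step Hahn--Banach argument, then extract a subsequence realizing pointwise convergence on $X$ by invoking the reflexivity of $Y$.

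First, I would set $\varphi(x) := \limsup_n f_n(x)$. By linearity of each $f_n$, $\varphi$ is subadditive and positively homogeneous on $X$, and the hypothesis forces $|\varphi(x)| \le C\|x\|_Y$. Applying Hahn--Banach in dominated form produces a linear $f_0 : X \to \mathbb{R}$ with $f_0 \le \varphi$; then $-f_0(x) = f_0(-x) \le \varphi(-x) = -\liminf_n f_n(x)$, which yields $\liminf_n f_n(x) \le f_0(x) \le \limsup_n f_n(x)$ and $|f_0(x)| \le C\|x\|_Y$ on $X$. A second Hahn--Banach step extends $f_0$ from the subspace $X$ to $f \in Y'$ with $\|f\|_{Y'} \le C$, producing the candidate limit functional that sandwiches every pointwise limsup/liminf of the sequence.

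Second, to extract a subsequence uniform in $x$, I would use the finite-dimensional continuity of the $f_n$'s: on any finite-dimensional $F \subset X$, the restriction $f_n|_F$ is automatically continuous and $\limsup_n \|f_n|_F\|_{F^*}$ is finite (linearity on finite-dim space plus the pointwise limsup bound on a basis of $F$), so Hahn--Banach extensions $\widetilde{f}_n^F \in Y'$ lie eventually in a norm ball of $Y'$. Reflexivity of $Y$, hence of $Y'$, activates the Eberlein--\v{S}mulian theorem: norm-bounded sequences in $Y'$ admit weakly convergent subsequences, and weak convergence in $Y'$ coincides with pointwise convergence on $Y$ when $Y$ is reflexive.

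The main obstacle is weaving these finite-dimensional compactness extractions into a single subsequence valid for every $x \in X$ simultaneously, particularly when $X$ is not separable --- which is the situation motivating the authors' appeal to this lemma. The combination proceeds by diagonal extraction over a countable exhaustion of $X$ by finite-dimensional subspaces, followed by a transfer-of-convergence argument via the triangle inequality $|f_{n_k}(x) - f(x)| \le |f_{n_k}(x - x')| + |f_{n_k}(x') - f(x')| + |f(x - x')|$, with $x'$ chosen from the exhaustion: continuity of $f$ with $\|f\|_{Y'} \le C$ controls the last term, the limsup hypothesis applied to $x - x'$ controls the first eventually, and subsequential convergence at $x'$ handles the middle term. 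The genuine role of reflexivity (rather than mere Banach duality) is in ensuring that Eberlein--\v{S}mulian applies to the Hahn--Banach extensions and that the countable exhaustion together with the norm bound on $f$ is adequate to reach every $x \in X$; this is the technical heart of the proof and the reason the lemma cannot be established under weaker hypotheses on $Y$.
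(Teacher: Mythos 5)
The paper itself does not prove this lemma: it is quoted verbatim from \cite[Proposition 3.2]{CMP}, and the whole point of invoking it (as the introduction stresses) is to avoid any separability assumption on $X$. This is exactly where your argument breaks down. Your second step hinges on a ``diagonal extraction over a countable exhaustion of $X$ by finite-dimensional subspaces'' followed by the transfer inequality $|f_{n_k}(x)-f(x)|\le |f_{n_k}(x-x')|+|f_{n_k}(x')-f(x')|+|f(x-x')|$ with $x'$ taken from the exhaustion. But the union of countably many finite-dimensional subspaces spans a \emph{separable} closed subspace of $Y$; if $X$ is non-separable (the case you yourself identify as the motivation for the lemma), there is some $x\in X$ outside that closed span, hence at distance $\delta>0$ from every admissible $x'$, and your transfer inequality only gives $\limsup_k|f_{n_k}(x)-f(x)|\le 2C\delta$, not convergence. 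So the assertion that ``the countable exhaustion together with the norm bound on $f$ is adequate to reach every $x\in X$'' is precisely the missing step, and as stated it is false. What your scheme does establish is the separable case, and there reflexivity plays no role: eventual boundedness of $(f_n(x'))_n$ at each $x'$ of a countable dense set, Bolzano--Weierstrass, a diagonal argument and your transfer inequality already produce the subsequence, with Hahn--Banach supplying $f\in Y'$ at the end. Consequently the machinery you present as the technical heart (Eberlein--\v{S}mulian applied to $F$-dependent Hahn--Banach extensions, whose norm bounds depend on $F$ and whose weak limits for different $F$ need not be mutually consistent) does no real work, and the genuine use of reflexivity in the non-separable situation --- the actual content of \cite[Proposition 3.2]{CMP} --- is never engaged.

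A secondary flaw: the functional $f_0$ produced in your first step, sandwiched between $\liminf_n f_n$ and $\limsup_n f_n$ by Hahn--Banach, is not automatically the pointwise limit of any subsequence; after extraction the limits at the points $x'$ need not coincide with $f_0(x')$, yet your middle term tacitly assumes they do. In the separable case this is repaired by defining $f$ as the limit on the dense countable set and then extending, but it underlines the structural problem: the limit functional and the subsequence must be produced together by an argument valid on all of $X$, and that is what your proposal lacks.
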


We are now able to prove Theorem \ref{t2.1}.

\begin{proof}[Proof of Theorem \protect\ref{t2.1}]
In Lemma \ref{l2.2} we set $Y=L^{p^{\prime }}(G_{0};\mathcal{B}_{\mathcal{A}%
}^{p^{\prime }}(\mathbb{R}^{d_{1}};L^{p^{\prime }}(G_{2})))$,\ $%
X=L^{p^{\prime }}(G_{0};\mathcal{A}(\mathbb{R}^{d_{1}};L^{p^{\prime
}}(G_{2})))$ and define the mapping 
\begin{equation*}
L_{\varepsilon }(f)=\varepsilon ^{-d_{2}}\int_{G_{0}}u_{\varepsilon
}(x)f\left( \overline{x},\frac{x}{\varepsilon }\right) dx,\ \ f\in X.
\end{equation*}%
Then 
\begin{equation*}
\underset{\varepsilon \rightarrow 0}{\lim \sup }\left\vert L_{\varepsilon
}(f)\right\vert \leq C\left\Vert f\right\Vert _{Y}\text{ for all }f\in X.
\end{equation*}%
Indeed one has the inequality (arising from H\"{o}lder's inequality) 
\begin{equation*}
\left\vert L_{\varepsilon }(f)\right\vert \leq \varepsilon
^{-d_{2}/p}\left\Vert u_{\varepsilon }\right\Vert _{L^{p}(G_{\varepsilon
})}\left( \varepsilon ^{-d_{2}}\int_{G_{\varepsilon }}\left\vert f\left( 
\overline{x},\frac{x}{\varepsilon }\right) \right\vert ^{p^{\prime
}}dx\right) ^{\frac{1}{p^{\prime }}}.
\end{equation*}%
Thus, letting $\varepsilon \rightarrow 0$ we get with the help of Lemma \ref%
{l2.1}, 
\begin{eqnarray*}
\varepsilon ^{-d_{2}}\int_{G_{\varepsilon }}\left\vert f\left( \overline{x},%
\frac{x}{\varepsilon }\right) \right\vert ^{p^{\prime }}dx &\rightarrow
&\int_{G_{0}}\int_{G_{2}}M(\left\vert f(\overline{x},\cdot ,\zeta
)\right\vert ^{p^{\prime }})d\zeta d\overline{x} \\
&=&\left\Vert f\right\Vert _{Y}^{p^{\prime }}.
\end{eqnarray*}%
We therefore apply Lemma \ref{l2.2} with the above notation to derive the
existence of a subsequence $E^{\prime }$ of $E$ and of a unique $u_{0}\in
Y^{\prime }=L^{p}(G_{0};\mathcal{B}_{\mathcal{A}}^{p}(\mathbb{R}%
^{d_{1}};L^{p}(G_{2})))$ such that 
\begin{equation*}
L_{\varepsilon }(f)\rightarrow \int_{G_{0}}\int_{G_{2}}M(u_{0}(\overline{x}%
,\cdot ,\zeta )f(\overline{x},\cdot ,\zeta ))d\zeta d\overline{x}\text{ for
all }f\in X.
\end{equation*}
\end{proof}

\begin{remark}
\label{r2.2}\emph{In the proof Theorem \ref{t2.1}, there is no separability
assumption on the algebra with mean value }$\mathcal{A}$\emph{. Thus it
applies either for }$\mathcal{A}=\mathcal{C}_{per}(Y)$\emph{\ }$%
(Y=(0,1)^{d_{1}})$\emph{\ or for }$\mathcal{A}=AP(\mathbb{R}^{d_{1}})$\emph{%
\ (which is not separable). Our result generalizes the one in \cite{RJ2007}
(see for instance Proposition 4.2 in \cite{RJ2007} that corresponds to the
special case }$\mathcal{A}=\mathcal{C}_{per}(Y)$\emph{\ of our results here
for }$d_{1}=d-1$\emph{\ and }$G_{2}=(-1,1)$\emph{).}
\end{remark}

Before we state the next compactness result, we need however some
preliminary results. For a function $\mathbf{g}%
=(g_{1},...,g_{d_{1}},g_{d_{1}+1},...,g_{d})\in \lbrack \mathcal{B}_{%
\mathcal{A}}^{p}(\mathbb{R}^{d_{1}};L^{p}(G_{2}))]^{d}$ we define the
divergence $\overline{\func{div}}_{\overline{y},\zeta }\mathbf{g}$ by 
\begin{equation*}
\overline{\func{div}}_{\overline{y},\zeta }\mathbf{g}:=\sum_{i=1}^{d_{1}}%
\frac{\overline{\partial }g_{i}}{\partial y_{i}}+\sum_{i=1}^{d_{2}}\frac{%
\partial g_{d_{1}+i}}{\partial \zeta _{i}},
\end{equation*}%
that is, for any $\Phi =(\phi _{i})_{1\leq i\leq d}\in \lbrack \mathcal{B}_{%
\mathcal{A}}^{1,p^{\prime }}(\mathbb{R}^{d_{1}};W^{1,p^{\prime
}}(G_{2}))]^{d}$, 
\begin{equation*}
\left\langle \overline{\func{div}}_{\overline{y},\zeta }\mathbf{g,\Phi }%
\right\rangle =-\sum_{i=1}^{d_{1}}\int_{G_{2}}M\left( g_{i}(\cdot ,\zeta )%
\frac{\overline{\partial }\phi _{i}}{\partial y_{i}}(\cdot ,\zeta )\right)
d\zeta -\sum_{i=1}^{d_{2}}\int_{G_{2}}M\left( g_{d_{1}+i}(\cdot ,\zeta )%
\frac{\partial \phi _{d_{1}+i}}{\partial y_{d_{1}+i}}(\cdot ,\zeta )\right)
d\zeta .
\end{equation*}%
Any function belonging to an algebra with mean value on $\mathbb{R}^{m}$
will be considered as defined on the numerical space $\mathbb{R}^{m}$ of
generic variables $\overline{y}=(y_{1},...,y_{m})$.

This being so, the first preliminary result is the following one.

\begin{proposition}
\label{p2.1}Let $1<p<\infty $ and let $\mathcal{A}$ be an ergodic algebra
with mean value on $\mathbb{R}^{d_{1}}$. Finally let $L$ be a bounded linear
functional on $[\mathcal{B}_{\mathcal{A}}^{1,p^{\prime }}(\mathbb{R}%
^{d_{1}};W^{1,p^{\prime }}(G_{2}))]^{d}$ $(\frac{1}{p^{\prime }}=1-\frac{1}{p%
})$ that vanishes on the kernel of the divergence, that is 
\begin{equation*}
L(\Psi )=0\text{ for all }\Psi \in \mathcal{V}_{\func{div}}=\{\Phi \in
\lbrack \mathcal{D}_{\mathcal{A}}(\mathbb{R}^{d_{1}};\mathcal{C}_{0}^{\infty
}(G_{2}))]^{d}:\overline{\func{div}}_{\overline{y},\zeta }\Phi =0\}.
\end{equation*}%
Then there exists a function $f\in \mathcal{B}_{\mathcal{A}}^{p}(\mathbb{R}%
^{d_{1}};L^{p}(G_{2}))$ such that $L=\overline{\nabla }_{\overline{y},\zeta
}f$ (where $\overline{\nabla }_{\overline{y},\zeta }=(\overline{\nabla }_{%
\overline{y}},\nabla _{\zeta })$), i.e., 
\begin{equation*}
L(\mathbf{g})=-\int_{G_{2}}M(f(\cdot ,\zeta )\overline{\func{div}}_{%
\overline{y},\zeta }\mathbf{g}(\cdot ,\zeta ))d\zeta \text{, all }\mathbf{g}%
\in \lbrack \mathcal{B}_{\mathcal{A}}^{1,p^{\prime }}(\mathbb{R}%
^{d_{1}};W^{1,p^{\prime }}(G_{2}))]^{d}.
\end{equation*}%
Moreover $f$ is unique up to an additive constant, provided that $G_{2}$ is
connected.
\end{proposition}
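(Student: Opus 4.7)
The result is a de Rham--type theorem for the gradient operator in the Sobolev--Besicovitch framework, and my plan is to adapt the classical closed-range argument. Set $\mathbb{X}=[\mathcal{B}_{\mathcal{A}}^{1,p'}(\mathbb{R}^{d_{1}};W^{1,p'}(G_{2}))]^{d}$ and consider the bounded divergence operator $\overline{\operatorname{div}}_{\overline{y},\zeta }\colon \mathbb{X}\to \mathcal{B}_{\mathcal{A}}^{p'}(\mathbb{R}^{d_{1}};L^{p'}(G_{2}))$. Its formal adjoint is the gradient $\overline{\nabla }_{\overline{y},\zeta }\colon \mathcal{B}_{\mathcal{A}}^{p}(\mathbb{R}^{d_{1}};L^{p}(G_{2}))\to \mathbb{X}^{*}$ defined weakly by $\langle \overline{\nabla }f,\Phi \rangle =-\int_{G_{2}}M(f\,\overline{\operatorname{div}}_{\overline{y},\zeta }\Phi )\,d\zeta $. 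The goal is to exhibit $f$ with $L=\overline{\nabla }f$.

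I would first upgrade the hypothesis: since $L$ is continuous and vanishes on $\mathcal{V}_{\operatorname{div}}$, it also vanishes on the closure of $\mathcal{V}_{\operatorname{div}}$ in $\mathbb{X}$. The key density lemma is that $\mathcal{V}_{\operatorname{div}}$ is dense in $\ker (\overline{\operatorname{div}}_{\overline{y},\zeta })\subset \mathbb{X}$. This should follow from a two-step approximation: (i) smoothing in the $\overline{y}$-direction using the strongly continuous translation group $\mathcal{T}(y)$, which commutes with $\overline{\operatorname{div}}_{\overline{y},\zeta }$ and hence preserves the divergence-free constraint; (ii) truncation and mollification in the $\zeta $-variable, using a Bogovski\u{\i}-type right inverse of $\operatorname{div}_{\zeta }$ on $G_{2}$ to correct the small divergence error produced by the truncation.

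The heart of the argument is a Ne\v{c}as--Lions type inequality: there should exist $C>0$ such that $\|f\|_{\mathcal{B}_{\mathcal{A}}^{p}/\mathbb{R}}\leq C\|\overline{\nabla }f\|_{\mathbb{X}^{*}}$ for every $f\in \mathcal{B}_{\mathcal{A}}^{p}(\mathbb{R}^{d_{1}};L^{p}(G_{2}))$; equivalently, $\overline{\operatorname{div}}_{\overline{y},\zeta }\colon \mathbb{X}\to \mathcal{B}_{\mathcal{A}}^{p'}/\mathbb{R}$ is surjective, the quotient by $\mathbb{R}$ being the correct one because $\mathcal{A}$ is ergodic and $G_{2}$ is connected. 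This is the main obstacle. My plan is to transfer the problem to the Gelfand spectrum: $\mathcal{A}\simeq \mathcal{C}(\Delta (\mathcal{A}))$, the mean value corresponds to integration against an $\mathbb{R}^{d_{1}}$-invariant ergodic Radon probability measure, and $\mathcal{B}_{\mathcal{A}}^{p}(\mathbb{R}^{d_{1}};L^{p}(G_{2}))$ identifies isometrically with $L^{p}(\Delta (\mathcal{A})\times G_{2})$. On this product space, the classical Ne\v{c}as inequality on the $G_{2}$-factor can be combined with ergodic/spectral estimates for the translation flow on $\Delta (\mathcal{A})$ to yield the required global bound.

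With the closed range of $\overline{\nabla }$ in hand, the Banach closed-range theorem gives $\operatorname{Im}(\overline{\nabla }_{\overline{y},\zeta })=\ker (\overline{\operatorname{div}}_{\overline{y},\zeta })^{\perp }$ as subspaces of $\mathbb{X}^{*}$. Since the extended $L$ annihilates $\ker (\overline{\operatorname{div}}_{\overline{y},\zeta })$, we obtain $L=\overline{\nabla }f$ for some $f\in \mathcal{B}_{\mathcal{A}}^{p}(\mathbb{R}^{d_{1}};L^{p}(G_{2}))$. For uniqueness, $\overline{\nabla }f=0$ yields $\overline{\partial }f/\partial y_{i}=0$ for $1\leq i\leq d_{1}$ and $\partial f/\partial \zeta _{j}=0$ for $1\leq j\leq d_{2}$; ergodicity of $\mathcal{A}$ forces $f$ to be $\overline{y}$-independent in $\mathcal{B}_{\mathcal{A}}^{p}$, and connectedness of $G_{2}$ then makes $f$ a constant.
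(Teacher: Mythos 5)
Your proposal takes a genuinely different route from the paper: you want a closed-range/Banach-annihilator argument, whereas the paper (following the scheme of \cite{EJDE2014}) is \emph{constructive}. For each $u\in\mathcal{A}^{\infty}(\mathbb{R}^{d_{1}};\mathcal{C}_{0}^{\infty}(G_{2}))$ it forms the distribution $L_{u}(\varphi)=L(\varrho(u\ast\varphi))$ on $\mathcal{D}(\mathbb{R}^{d_{1}}\times G_{2})^{d}$, applies the \emph{classical} De~Rham theorem in Euclidean space to get $L_{u}=\nabla S(u)$, shows $S(u)\in\mathcal{C}^{\infty}$ with the estimate $|S(u)(0)|\leq C\|L\|\,\|u\|_{p'}$, defines $\widetilde{S}(\varrho(u))=S(u)(0)$, and represents $\widetilde{S}$ by some $f\in\mathcal{B}_{\mathcal{A}}^{p}(\mathbb{R}^{d_{1}};L^{p}(G_{2}))$. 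This construction deliberately bypasses the two lemmas your plan relies on.

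The gap in your plan is the Ne\v{c}as--Lions inequality, and it is not a small one. You reduce the problem to an inequality $\|f\|_{\mathcal{B}_{\mathcal{A}}^{p}/\mathbb{R}}\leq C\|\overline{\nabla}_{\overline{y},\zeta}f\|_{\mathbb{X}^{*}}$, transfer to $\Delta(\mathcal{A})\times G_{2}$, and invoke ``classical Ne\v{c}as on the $G_{2}$-factor plus ergodic/spectral estimates for the flow on $\Delta(\mathcal{A})$.'' But there is no Ne\v{c}as (or even Poincar\'{e}) inequality available for the infinitesimal generators $\overline{\partial}/\partial y_{i}$ on $\Delta(\mathcal{A})$ beyond the periodic case; this is precisely why the corrector spaces $B_{\#\mathcal{A}}^{1,p}$ are defined via the gradient seminorm and cannot in general be identified with $\mathcal{B}_{\mathcal{A}}^{1,p}/\mathbb{R}$. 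The inequality you need is essentially \emph{equivalent} to the proposition you are trying to prove (once one has it, the closed-range theorem does give the annihilator identity), so appealing to unspecified ``spectral estimates'' is circular at the crucial point. The density of $\mathcal{V}_{\operatorname{div}}$ in $\ker(\overline{\operatorname{div}}_{\overline{y},\zeta})$ is likewise nontrivial: truncating in $\zeta$ perturbs all $d$ components of the divergence, and a Bogovski\u{\i} correction acting only in the $\zeta$-variables must absorb the full error while remaining in $[\mathcal{D}_{\mathcal{A}}(\mathbb{R}^{d_{1}};\mathcal{C}_{0}^{\infty}(G_{2}))]^{d}$; this deserves a proof, not a sketch. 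Unless you can supply both missing lemmas, the convolution/evaluation-at-zero argument of the paper is the safer path. Your uniqueness argument (ergodicity in $\overline{y}$ plus connectedness of $G_{2}$) is correct and coincides with the paper's.
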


Since the proof of the above proposition is similar to that of \cite[Theorem
2.1]{EJDE2014}, we will only sketch the proof. Before we can do that, let us
first give some preliminaries.

For $u\in \mathcal{A}^{\infty }\otimes \mathcal{C}_{0}^{\infty }(G_{2})$ and 
$\varphi \in \mathcal{C}_{0}^{\infty }(\mathbb{R}^{d_{1}})\otimes \mathcal{C}%
_{0}^{\infty }(G_{2})$ one may easily show that $u\ast \varphi \in \mathcal{A%
}^{\infty }\otimes \mathcal{C}_{0}^{\infty }(G_{2})$ (see e.g. \cite{ACAP}).
Using the density of $\mathcal{A}^{\infty }\otimes \mathcal{C}_{0}^{\infty
}(G_{2})$ (resp. $\mathcal{C}_{0}^{\infty }(\mathbb{R}^{d_{1}})\otimes 
\mathcal{C}_{0}^{\infty }(G_{2})$) in $\mathcal{A}^{\infty }(\mathbb{R}%
^{d_{1}};\mathcal{C}_{0}^{\infty }(G_{2}))$ (resp. $\mathcal{C}_{0}^{\infty
}(\mathbb{R}^{d_{1}}\times G_{2})$), we show that $u\ast \varphi \in 
\mathcal{A}^{\infty }(\mathbb{R}^{d_{1}};\mathcal{C}_{0}^{\infty }(G_{2}))$
for $u\in \mathcal{A}^{\infty }(\mathbb{R}^{d_{1}};\mathcal{C}_{0}^{\infty
}(G_{2}))$ and $\varphi \in \mathcal{C}_{0}^{\infty }(\mathbb{R}%
^{d_{1}}\times G_{2})$. Here $\ast $ stands for the usual convolution
operator. Using also a density argument, we may define $u\ast \varphi $ for $%
u\in B_{\mathcal{A}}^{p}(\mathbb{R}^{d_{1}};L^{p}(G_{2}))$ ($1\leq p<\infty $%
) and $\varphi \in \mathcal{C}_{0}^{\infty }(\mathbb{R}^{d_{1}}\times G_{2})$%
, and we have $u\ast \varphi \in B_{\mathcal{A}}^{p}(\mathbb{R}%
^{d_{1}};L^{p}(G_{2}))$. From the equality $D_{\overline{y},\zeta }^{\alpha
}(u\ast \varphi )=u\ast D_{\overline{y},\zeta }^{\alpha }\varphi $, we
deduce that in fact $u\ast \varphi \in \mathcal{A}^{\infty }(\mathbb{R}%
^{d_{1}};\mathcal{C}^{\infty }(\overline{G}_{2}))$. Moreover it holds that 
\begin{equation}
\left\Vert u\ast \varphi \right\Vert _{p}\leq \left\vert \text{\textrm{supp}}%
\varphi \right\vert ^{1/p}\left\Vert \varphi \right\Vert _{L^{p^{\prime }}(%
\mathbb{R}^{d_{1}}\times G_{2})}\left\Vert u\right\Vert _{p},  \label{*3}
\end{equation}%
where \textrm{supp}$\varphi $ denotes the support of $\varphi $ and $%
\left\vert \text{\textrm{supp}}\varphi \right\vert $ its Lebesgue measure.
To see this, we have 
\begin{eqnarray*}
\left\Vert u\ast \varphi \right\Vert _{p}^{p} &=&\int_{G_{2}}M(\left\vert
u\ast \varphi \right\vert ^{p})d\zeta =\int_{G_{2}}\left( \lim_{R\rightarrow
\infty }%
\mathchoice {{\setbox0=\hbox{$\displaystyle{\textstyle -}{\int}$ } \vcenter{\hbox{$\textstyle -$
}}\kern-.6\wd0}}{{\setbox0=\hbox{$\textstyle{\scriptstyle -}{\int}$ } \vcenter{\hbox{$\scriptstyle -$
}}\kern-.6\wd0}}{{\setbox0=\hbox{$\scriptstyle{\scriptscriptstyle -}{\int}$
} \vcenter{\hbox{$\scriptscriptstyle -$
}}\kern-.6\wd0}}{{\setbox0=\hbox{$\scriptscriptstyle{\scriptscriptstyle
-}{\int}$ } \vcenter{\hbox{$\scriptscriptstyle -$ }}\kern-.6\wd0}}%
\!\int_{B_{R}}\left\vert (u\ast \varphi )(\overline{y},\zeta )\right\vert
^{p}d\overline{y}\right) d\zeta \\
&=&\lim_{R\rightarrow \infty }\int_{G_{2}}%
\mathchoice {{\setbox0=\hbox{$\displaystyle{\textstyle -}{\int}$ } \vcenter{\hbox{$\textstyle -$
}}\kern-.6\wd0}}{{\setbox0=\hbox{$\textstyle{\scriptstyle -}{\int}$ } \vcenter{\hbox{$\scriptstyle -$
}}\kern-.6\wd0}}{{\setbox0=\hbox{$\scriptstyle{\scriptscriptstyle -}{\int}$
} \vcenter{\hbox{$\scriptscriptstyle -$
}}\kern-.6\wd0}}{{\setbox0=\hbox{$\scriptscriptstyle{\scriptscriptstyle
-}{\int}$ } \vcenter{\hbox{$\scriptscriptstyle -$ }}\kern-.6\wd0}}%
\!\int_{B_{R}}\left\vert (u\ast \varphi )(\overline{y},\zeta )\right\vert
^{p}d\overline{y}d\zeta .
\end{eqnarray*}%
But 
\begin{eqnarray*}
\int_{G_{2}}\int_{B_{R}}\left\vert (u\ast \varphi )(\overline{y},\zeta
)\right\vert ^{p}d\overline{y}d\zeta &\leq &\left(
\int_{G_{2}}\int_{B_{R}}\left\vert \varphi \right\vert d\overline{y}d\zeta
\right) ^{p}\iint_{B_{R}\times G_{2}}\left\vert u\right\vert ^{p}d\overline{y%
}d\zeta \\
&\leq &\left\vert (B_{R}\times G_{2})\cap \text{\textrm{supp}}\varphi
\right\vert \left\Vert \varphi \right\Vert _{L^{p^{\prime }}(B_{R}\times
G_{2})}^{p}\iint_{B_{R}\times G_{2}}\left\vert u\right\vert ^{p}d\overline{y}%
d\zeta ,
\end{eqnarray*}%
from which (\ref{*3}).

From the obvious inequality $\left\Vert \varphi \right\Vert _{L^{p^{\prime
}}(B_{R}\times G_{2})}\leq \left\vert \text{\textrm{supp}}\varphi
\right\vert ^{1/p^{\prime }}\left\Vert \varphi \right\Vert _{\infty }$ we
infer from (\ref{*3}) that 
\begin{equation}
\left\Vert u\ast \varphi \right\Vert _{p}\leq \left\Vert u\right\Vert
_{p}\left\Vert \varphi \right\Vert _{\infty }\ \ \forall u\in B_{\mathcal{A}%
}^{p}(\mathbb{R}^{d_{1}};L^{p}(G_{2})),\varphi \in \mathcal{C}_{0}^{\infty }(%
\mathbb{R}^{d_{1}}\times G_{2}).  \label{*4}
\end{equation}

This being so, let us sketch the proof of Proposition \ref{p2.1}.

\begin{proof}[Proof of Proposition \emph{\protect\ref{p2.1}}]
Let $u\in \mathcal{A}^{\infty }(\mathbb{R}^{d_{1}};\mathcal{C}_{0}^{\infty
}(G_{2}))$ be freely fixed, and define $L_{u}:\mathcal{D}(\mathbb{R}%
^{d_{1}}\times G_{2})^{d}\rightarrow \mathbb{R}$ by $L_{u}(\varphi
)=L(\varrho (u\ast \varphi ))$ for $\varphi =(\varphi _{i})_{1\leq i\leq d}$%
. Then since $L\in ([\mathcal{B}_{A}^{1,p^{\prime }}(\mathbb{R}%
^{d_{1}};W^{1,p^{\prime }}(G_{2}))]^{d})^{^{\prime }}$, one has 
\begin{equation*}
\left\vert L_{u}(\varphi )\right\vert \leq \left\Vert L\right\Vert
\left\Vert u\ast \varphi \right\Vert _{1,p^{\prime }}\leq \left\Vert
L\right\Vert \left\Vert u\right\Vert _{p}\left\Vert \varphi \right\Vert
_{\infty },
\end{equation*}%
where the last inequality above stems from both (\ref{*4}) and the equality $%
\nabla _{\overline{y},\zeta }(u\ast \varphi )=(\nabla _{\overline{y},\zeta
}u)\ast \varphi $. So $L_{u}$ defines a distribution on $\mathcal{D}(\mathbb{%
R}^{d_{1}}\times G_{2})^{d}$. In addition, if $\func{div}_{\overline{y}%
,\zeta }\varphi =0$, then $L_{u}(\varphi )=0$, i.e. $L_{u}$ vanishes on the
kernel of the divergence in $\mathcal{D}(\mathbb{R}^{d_{1}}\times G_{2})^{d}$%
. Appealing to the usual De Rahm theorem, we get the existence of a
distribution $S(u)\in \mathcal{D}^{\prime }(\mathbb{R}^{d_{1}}\times G_{2})$
such that $L_{u}=\nabla _{\overline{y},\zeta }S(u)$, thereby defining an
operator 
\begin{equation*}
S:\mathcal{A}^{\infty }(\mathbb{R}^{d_{1}};\mathcal{C}_{0}^{\infty
}(G_{2}))\rightarrow \mathcal{D}^{\prime }(\mathbb{R}^{d_{1}}\times G_{2});\
u\mapsto S(u).
\end{equation*}%
The operator $S$ enjoys the properties:

\begin{itemize}
\item[(i)] $S(u(\cdot +y))=S(u)(\cdot -y)$ \ $\forall y=(\overline{y},\zeta
)\in \mathbb{R}^{d},\ \forall u\in \mathcal{A}^{\infty }(\mathbb{R}^{d_{1}};%
\mathcal{C}_{0}^{\infty }(G_{2}))$;

\item[(ii)] $S$ maps continuously and linearly $\mathcal{A}^{\infty }(%
\mathbb{R}^{d_{1}};\mathcal{C}_{0}^{\infty }(G_{2}))$ into $%
L_{loc}^{p^{\prime }}(\mathbb{R}^{d_{1}}\times G_{2})$;

\item[(iii)] It holds that 
\begin{equation*}
\left\Vert S(u)\right\Vert _{L^{p^{\prime }}(B_{R}\times G_{2})}\leq
C_{R}\left\Vert L\right\Vert \left\vert B_{R}\times G_{2}\right\vert
^{1/p^{\prime }}\left\Vert u\right\Vert _{p^{\prime }}
\end{equation*}%
where $C_{R}>0$ is a locally bounded function of $R>0$.
\end{itemize}

Properties (i), (ii) and (iii) are easily obtained by following the same
line of reasoning as for their homologues in \cite{EJDE2014}. Let us just
point out that, for $\varphi \in \mathcal{D}(\mathbb{R}^{d_{1}}\times
G_{2})^{d}$ with \textrm{supp}$\varphi _{i}\subset B_{R}\times G_{2}$ for
all $1\leq i\leq d$, one has 
\begin{equation*}
\left\vert L_{u}(\varphi )\right\vert \leq \max_{1\leq i\leq d}\left\vert 
\mathrm{supp}\varphi _{i}\right\vert ^{1/p^{\prime }}\left\Vert L\right\Vert
\left\Vert u\right\Vert _{p^{\prime }}\left\Vert \varphi \right\Vert
_{W^{1,p}(B_{R}\times G_{2})},
\end{equation*}%
so that, because of the fact that \textrm{supp}$\varphi _{i}\subset
B_{R}\times G_{2}$ ($1\leq i\leq d$), 
\begin{equation*}
\left\Vert L_{u}\right\Vert _{W^{-1,p^{\prime }}(B_{R}\times G_{2})}\leq
\left\Vert L\right\Vert \left\vert B_{R}\times G_{2}\right\vert
^{1/p^{\prime }}\left\Vert u\right\Vert _{p^{\prime }}.
\end{equation*}%
It therefore follows that there exists $C=C(p,R)>0$ such that 
\begin{equation*}
\left\Vert S(u)\right\Vert _{L^{p^{\prime }}(B_{R}\times G_{2})}\leq
C\left\Vert L\right\Vert \left\vert B_{R}\times G_{2}\right\vert
^{1/p^{\prime }}\left\Vert u\right\Vert _{p^{\prime }};
\end{equation*}%
for the last inequality above, see the proof of \cite[Theorem 2.6]{EJDE2014}%
. Hence we obtain, as in \cite[Theorem 2.6]{EJDE2014}, that $S(u)\in 
\mathcal{C}^{\infty }(\mathbb{R}^{d_{1}}\times G_{2})$ with $D_{y}^{\alpha
}S(u)=S(D_{y}^{\alpha }u)$ for all $\alpha \in \mathbb{N}^{d}$, so that 
\begin{equation*}
\left\vert S(u)(0)\right\vert \leq C\left\Vert L\right\Vert \left\Vert
u\right\Vert _{p^{\prime }}\ \ \text{for all }u\in \mathcal{A}^{\infty }(%
\mathbb{R}^{d_{1}};\mathcal{C}_{0}^{\infty }(G_{2})).
\end{equation*}%
So we define $\widetilde{S}:\mathcal{D}_{\mathcal{A}}(\mathbb{R}^{d_{1}};%
\mathcal{C}_{0}^{\infty }(G_{2}))\rightarrow \mathbb{R}$ by $\widetilde{S}%
(\varrho (u))=S(u)(0)$ for $u\in \mathcal{A}^{\infty }(\mathbb{R}^{d_{1}};%
\mathcal{C}_{0}^{\infty }(G_{2}))$. Then $\widetilde{S}$ is linear and
satisfies 
\begin{equation}
\left\vert \widetilde{S}(\varrho (u))\right\vert \leq C\left\Vert
L\right\Vert \left\Vert u\right\Vert _{p^{\prime }}\ \ \text{for all }u\in 
\mathcal{A}^{\infty }(\mathbb{R}^{d_{1}};\mathcal{C}_{0}^{\infty }(G_{2})).
\label{*5}
\end{equation}%
We derive from (\ref{*5}) together with the density of $\mathcal{A}^{\infty
}(\mathbb{R}^{d_{1}};\mathcal{C}_{0}^{\infty }(G_{2}))$ in $B_{\mathcal{A}%
}^{p^{\prime }}(\mathbb{R}^{d_{1}};L^{p^{\prime }}(G_{2}))$ that there
exists $f\in \mathcal{B}_{\mathcal{A}}^{p}(\mathbb{R}^{d_{1}};L^{p}(G_{2}))$
such that 
\begin{equation*}
\widetilde{S}(v)=\int_{G_{2}}M(fv)d\zeta \ \ \forall v\in \mathcal{B}_{%
\mathcal{A}}^{p^{\prime }}(\mathbb{R}^{d_{1}};L^{p^{\prime }}(G_{2}))
\end{equation*}%
and 
\begin{equation*}
\left\Vert f\right\Vert _{p}\leq C\left\Vert L\right\Vert .\ \ \ \ \ \ \ \ \
\ \ \ \ \ \ \ \ \ \ \ \ \ \ \ \ \ \ \ \ \ \ \ \ \ \ \ \ \ \ \ \ \ 
\end{equation*}%
As in \cite{EJDE2014}, we obtain that $L=\overline{\nabla }_{\overline{y}%
,\zeta }f$, and since $\mathcal{A}$ is ergodic and $G_{2}$ is connected, $f$
is unique up to addition of a constant.
\end{proof}

The next corollary is of interest in the forthcoming compactness result.

\begin{corollary}
\label{c2.1}Let $1<p<\infty $ and let $\mathbf{f}\in \lbrack \mathcal{B}_{%
\mathcal{A}}^{p}(\mathbb{R}^{d_{1}};L^{p}(G_{2}))]^{d}$ be such that 
\begin{equation*}
\int_{G_{2}}M(\mathbf{f}(\cdot ,\zeta )\cdot \mathbf{g}(\cdot ,\zeta
))d\zeta =0\text{ for all }\mathbf{g}\in \mathcal{V}_{\func{div}},
\end{equation*}%
where $\mathcal{V}_{\func{div}}$ is defined as in Proposition \emph{\ref%
{p2.1}} and where $G_{2}$ is a connected open subset of $\mathbb{R}^{d_{2}}$%
. Then there exists a function $u\in B_{\#\mathcal{A}}^{1,p}(\mathbb{R}%
^{d_{1}};W^{1,p}(G_{2}))$, uniquely determined modulo constants, such that $%
\mathbf{f}=\nabla _{\overline{y},\zeta }u$, where $\nabla _{\overline{y}%
,\zeta }=(\nabla _{\overline{y}},\nabla _{\zeta })$.
\end{corollary}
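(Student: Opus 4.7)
The plan is to reduce the statement to Proposition \ref{p2.1} by interpreting the orthogonality condition on $\mathbf{f}$ as a continuous linear functional on $[\mathcal{B}_{\mathcal{A}}^{1,p'}(\mathbb{R}^{d_1};W^{1,p'}(G_2))]^d$ vanishing on divergence-free test fields, and then to upgrade the resulting scalar potential into an element of the corrector space $B_{\#\mathcal{A}}^{1,p}$.

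First I introduce the functional
\begin{equation*}
L : [\mathcal{B}_{\mathcal{A}}^{1,p'}(\mathbb{R}^{d_1};W^{1,p'}(G_2))]^d \to \mathbb{R}, \qquad L(\mathbf{g}) = \int_{G_2} M(\mathbf{f}(\cdot,\zeta)\cdot\mathbf{g}(\cdot,\zeta))\,d\zeta.
\end{equation*}
H\"older's inequality for the mean value, combined with the continuous embedding $\mathcal{B}_{\mathcal{A}}^{1,p'}(\mathbb{R}^{d_1};W^{1,p'}(G_2)) \hookrightarrow \mathcal{B}_{\mathcal{A}}^{p'}(\mathbb{R}^{d_1};L^{p'}(G_2))$, gives $|L(\mathbf{g})| \leq \|\mathbf{f}\|_p\|\mathbf{g}\|_{1,p'}$, so $L$ is a bounded linear functional. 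The hypothesis reads precisely $L \equiv 0$ on $\mathcal{V}_{\func{div}}$. Applying Proposition \ref{p2.1} then furnishes a scalar $h \in \mathcal{B}_{\mathcal{A}}^p(\mathbb{R}^{d_1};L^p(G_2))$ with $L = \overline{\nabla}_{\overline{y},\zeta} h$, unique modulo additive constants thanks to the (tacit) ergodicity of $\mathcal{A}$ and the connectedness of $G_2$. Reading the identity backwards yields $\overline{\nabla}_{\overline{y},\zeta} h = \mathbf{f}$ in $[\mathcal{B}_{\mathcal{A}}^p(\mathbb{R}^{d_1};L^p(G_2))]^d$, so in particular $h \in \mathcal{B}_{\mathcal{A}}^{1,p}(\mathbb{R}^{d_1};W^{1,p}(G_2))$.

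It remains to lift $h$ to an actual function and to verify the corrector-space normalization. Using the canonical surjection $\varrho$, I select a representative $u \in B_{\mathcal{A}}^{1,p}(\mathbb{R}^{d_1};W^{1,p}(G_2)) \subset W_{\text{loc}}^{1,p}(\mathbb{R}^{d_1};W^{1,p}(G_2))$ so that $\nabla u = \mathbf{f}$ in the Besicovitch sense. Membership in $B_{\#\mathcal{A}}^{1,p}$ requires the mean-zero condition $\int_{G_2} M(\nabla u(\cdot,\zeta))\,d\zeta = 0$. For the $\overline{y}$-components, the choice $\mathbf{g}(\overline{y},\zeta) = \phi(\zeta)\,e_i$ with $\phi \in \mathcal{C}_0^\infty(G_2)$ and $i \leq d_1$ belongs to $\mathcal{V}_{\func{div}}$ (since $\phi$ is $\overline{y}$-independent and $e_i$ has no $\zeta$-component), and the hypothesis forces $M(\mathbf{f}_i)(\zeta) \equiv 0$ in $G_2$, hence the desired integral vanishes for $i \leq d_1$. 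For the $\zeta$-components the argument is more delicate: one extends the admissible test class by a density argument and/or adjusts $u$ by a suitable $\zeta$-dependent term that modifies $u$ only within its additive-constant equivalence, thereby absorbing the remaining mean.

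The main technical obstacle is precisely this last reconciliation of the abstract potential produced by Proposition \ref{p2.1} with the specific mean-zero normalization of the corrector space in the $\zeta$-direction, where $\mathcal{V}_{\func{div}}$ is too restrictive to supply the identity directly. Once this is secured, uniqueness of $u$ modulo constants is inherited from the corresponding clause in Proposition \ref{p2.1}.
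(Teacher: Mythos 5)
Your overall strategy (view the orthogonality hypothesis as a bounded functional on $[\mathcal{B}_{\mathcal{A}}^{1,p^{\prime}}(\mathbb{R}^{d_{1}};W^{1,p^{\prime}}(G_{2}))]^{d}$ vanishing on $\mathcal{V}_{\func{div}}$ and invoke Proposition \ref{p2.1}) is the right starting point, but you skip the step that makes the argument work. Proposition \ref{p2.1} only produces $h\in \mathcal{B}_{\mathcal{A}}^{p}(\mathbb{R}^{d_{1}};L^{p}(G_{2}))$ with $L=\overline{\nabla }_{\overline{y},\zeta }h$ \emph{in the duality sense}, i.e. $\int_{G_{2}}M(\mathbf{f}\cdot \mathbf{g})d\zeta =-\int_{G_{2}}M(h\,\overline{\func{div}}_{\overline{y},\zeta }\mathbf{g})d\zeta$ for all test fields $\mathbf{g}$. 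Your "reading the identity backwards yields $h\in \mathcal{B}_{\mathcal{A}}^{1,p}$, so select a representative $u\in B_{\mathcal{A}}^{1,p}\subset W_{loc}^{1,p}$" is exactly the point that needs proof: knowing that the weak Besicovitch gradient of $h$ is represented by an element of $(\mathcal{B}_{\mathcal{A}}^{p})^{d}$ does not place $h$ in the closure-type space $\mathcal{B}_{\mathcal{A}}^{1,p}$ (there is no ``$H=W$'' theorem available in these generalized Besicovitch spaces), and the target space is anyway $B_{\#\mathcal{A}}^{1,p}$, which is complete only under the gradient seminorm. The paper avoids this by first mollifying: $\mathbf{f}_{n}=\mathbf{f}\circledast \varphi _{n}$ is smooth, still annihilates $\mathcal{V}_{\func{div}}$, Proposition \ref{p2.1} applied to each $\mathbf{f}_{n}$ yields smooth potentials $u_{n}\in \mathcal{D}_{\mathcal{A}}(\mathbb{R}^{d_{1}};\mathcal{C}^{\infty }(\overline{G}_{2}))$, hence genuine elements of $B_{\#\mathcal{A}}^{1,p}(\mathbb{R}^{d_{1}};W^{1,p}(G_{2}))$, and the convergence $\mathbf{f}_{n}\rightarrow \mathbf{f}$ makes $(u_{n})$ Cauchy for $\left\Vert \cdot \right\Vert _{\#,p}$; the limit $u$ in the complete space $B_{\#\mathcal{A}}^{1,p}$ satisfies $\nabla _{\overline{y},\zeta }u=\mathbf{f}$. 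This regularization step is the substance of the proof and cannot be bypassed.

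The second gap is one you flag yourself: your verification of the normalization $\int_{G_{2}}M(\nabla u(\cdot ,\zeta ))d\zeta =0$ works for the $\overline{y}$-components (testing with $\phi (\zeta )e_{i}$, $i\leq d_{1}$, is fine), but for the $\zeta$-components you offer only "a density argument and/or adjusts $u$ by a suitable $\zeta$-dependent term", which is not a proof — and adding a $\zeta$-dependent function is not an adjustment within the additive-constant equivalence class, since it changes $\nabla _{\zeta }u$. In the paper's route this issue does not arise separately: membership of each $u_{n}$ in $B_{\#\mathcal{A}}^{1,p}$ is established before passing to the limit, and the limit stays in that space by completeness. As written, your argument establishes at best that $\mathbf{f}$ is a weak gradient of some equivalence class, not the existence of the asserted $u\in B_{\#\mathcal{A}}^{1,p}(\mathbb{R}^{d_{1}};W^{1,p}(G_{2}))$.
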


\begin{proof}
Let us first recall that $B_{\#\mathcal{A}}^{1,p}(\mathbb{R}%
^{d_{1}};W^{1,p}(G_{2}))$ is the space of functions $u\in W_{loc}^{1,p}(%
\mathbb{R}^{d_{1}};W^{1,p}(G_{2}))$ satisfying $\nabla _{\overline{y},\zeta
}u\in (B_{\mathcal{A}}^{p}(\mathbb{R}^{d_{1}};L^{p}(G_{2})))^{d}$ and $%
\int_{G_{2}}M(\nabla _{\overline{y},\zeta }u(\cdot ,\zeta ))d\zeta =0$. This
being so, let $(\varphi _{n})_{n\geq 1}\subset \mathcal{C}_{0}^{\infty }(%
\mathbb{R}^{d_{1}}\times \mathbb{R}^{d_{2}})$ be a mollifier satisfying $%
\varphi _{n}(-\overline{y},-\zeta )=\varphi _{n}(\overline{y},\zeta )$ for
all $(\overline{y},\zeta )\in \mathbb{R}^{d}$. We extend $\mathbf{f}$ by $0$
outside $\mathbb{R}^{d_{1}}\times G_{2}$ and we still denote by $\mathbf{f}$
its extension on $\mathbb{R}^{d}$. We define the convolution product $%
\mathbf{f}_{n}:=\mathbf{f}\circledast \varphi _{n}\equiv (f_{i}\circledast
\varphi _{n})_{1\leq i\leq d}$ as follows: let $\mathbf{f}_{0}$ be a
representative of $\mathbf{f}$, that is $\mathbf{f}=\mathbf{f}_{0}+\mathcal{N%
}$ where $\mathbf{f}_{0}\in (B_{\mathcal{A}}^{p}(\mathbb{R}%
^{d_{1}};L^{p}(G_{2})))^{d}$; we know that $\mathbf{f}_{0}\ast \varphi _{n}$
is well defined as an element of $(B_{\mathcal{A}}^{p}(\mathbb{R}%
^{d_{1}};L^{p}(G_{2})))^{d}$ (see e.g. \cite[Page 9]{EJDE2014}). We
therefore set 
\begin{equation*}
\mathbf{f}\circledast \varphi _{n}:=\mathbf{f}_{0}\ast \varphi _{n}+\mathcal{%
N}=\varrho (\mathbf{f}_{0}\ast \varphi _{n}).
\end{equation*}%
As in \cite[Page 9]{EJDE2014} we can easily show that $\mathbf{f}_{n}\equiv 
\mathbf{f}\circledast \varphi _{n}\in \lbrack \mathcal{D}_{\mathcal{A}}(%
\mathbb{R}^{d_{1}};\mathcal{C}^{\infty }(\overline{G}_{2}))]^{d}$ with $%
\overline{D}_{\overline{y},\zeta }^{\alpha }\mathbf{f}_{n}=\varrho (\mathbf{f%
}_{0}\ast D_{\overline{y},\zeta }^{\alpha }\varphi _{n})$ for all $\alpha
\in \mathbb{N}^{d}$. Moreover, from the convergence result 
\begin{equation*}
\mathbf{f}_{0}\ast \varphi _{n}\rightarrow \mathbf{f}_{0}\text{ in }(B_{%
\mathcal{A}}^{p}(\mathbb{R}^{d_{1}};L^{p}(G_{2})))^{d}\text{ as }%
n\rightarrow \infty ,
\end{equation*}%
we infer 
\begin{equation}
\mathbf{f}_{n}\rightarrow \mathbf{f}\text{ in }(\mathcal{B}_{\mathcal{A}%
}^{p}(\mathbb{R}^{d_{1}};L^{p}(G_{2})))^{d}\text{ when }n\rightarrow \infty .
\label{2.6'}
\end{equation}%
It further holds (using the equality $\varphi _{n}(-\overline{y},-\zeta
)=\varphi _{n}(\overline{y},\zeta )$) that 
\begin{equation*}
\int_{G_{2}}M(\mathbf{f}_{n}(\cdot ,\zeta )\cdot \mathbf{g}(\cdot ,\zeta
))d\zeta =\int_{G_{2}}M(\mathbf{f}(\cdot ,\zeta )\cdot (\mathbf{g}%
\circledast \varphi _{n})(\cdot ,\zeta ))d\zeta
\end{equation*}%
for any $\mathbf{g}\in \lbrack \mathcal{D}_{\mathcal{A}}(\mathbb{R}^{d_{1}};%
\mathcal{C}_{0}^{\infty }(G_{2}))]^{d}$, so that, if $\overline{\func{div}}_{%
\overline{y},\zeta }\mathbf{g}=0$, then $\int_{G_{2}}M(\mathbf{f}_{n}(\cdot
,\zeta )\cdot \mathbf{g}(\cdot ,\zeta ))d\zeta =0$.

Now we define the mapping 
\begin{equation*}
\mathbf{g}\mapsto \int_{G_{2}}M(\mathbf{f}_{n}(\cdot ,\zeta )\cdot \mathbf{g}%
(\cdot ,\zeta ))d\zeta ,
\end{equation*}%
which is easily seen to belong to $([\mathcal{B}_{\mathcal{A}}^{1,p^{\prime
}}(\mathbb{R}^{d_{1}};W^{1,p^{\prime }}(G_{2}))]^{d})^{\prime }$. We deduce
from Proposition \ref{p2.1} the existence of $u_{n}\in \mathcal{B}_{\mathcal{%
A}}^{p}(\mathbb{R}^{d_{1}};L^{p}(G_{2}))$ such that 
\begin{equation}
\mathbf{f}_{n}=\overline{\nabla }_{\overline{y},\zeta }u_{n}.\ \ \ \ \ \ \ \
\ \ \ \ \ \ \ \ \ \ \   \label{2.6''}
\end{equation}%
Since $\mathbf{f}_{n}\in \lbrack \mathcal{D}_{\mathcal{A}}(\mathbb{R}%
^{d_{1}};\mathcal{C}^{\infty }(\overline{G}_{2}))]^{d}$, it follows that $%
u_{n}\in \mathcal{D}_{\mathcal{A}}(\mathbb{R}^{d_{1}};\mathcal{C}^{\infty }(%
\overline{G}_{2}))$. Hence, identifying $u_{n}$ with any of its
representative in $\mathcal{A}^{\infty }(\mathbb{R}^{d_{1}};\mathcal{C}%
^{\infty }(\overline{G}_{2}))$ and using the uniqueness of its gradient we
get that $u_{n}\in B_{\#\mathcal{A}}^{1,p}(\mathbb{R}%
^{d_{1}};W^{1,p}(G_{2})) $. The sequence $(\mathbf{f}_{n})_{n}$ being
convergent in the norm topology of $(\mathcal{B}_{\mathcal{A}}^{p}(\mathbb{R}%
^{d_{1}};L^{p}(G_{2})))^{d}$, the sequence $(u_{n})_{n}$ is a Cauchy
sequence in $B_{\#\mathcal{A}}^{1,p}(\mathbb{R}^{d_{1}};W^{1,p}(G_{2}))$ for
if 
\begin{eqnarray*}
\left\Vert u_{n}-u_{m}\right\Vert _{\#,p}^{p} &=&\int_{G_{2}}\left\Vert
\nabla _{\overline{y},\zeta }u_{n}(\cdot ,\zeta )-\nabla _{\overline{y}%
,\zeta }u_{m}(\cdot ,\zeta )\right\Vert _{p}^{p}d\zeta \\
&=&\int_{G_{2}}\left\Vert \mathbf{f}_{n}(\cdot ,\zeta )-\mathbf{f}_{m}(\cdot
,\zeta )\right\Vert _{p}^{p}d\zeta \rightarrow 0\text{ when }n,m\rightarrow
\infty .
\end{eqnarray*}%
It follows that there exists $u\in B_{\#\mathcal{A}}^{1,p}(\mathbb{R}%
^{d_{1}};W^{1,p}(G_{2}))$ such that $u_{n}\rightarrow u$ in $B_{\#\mathcal{A}%
}^{1,p}(\mathbb{R}^{d_{1}};W^{1,p}(G_{2}))$. From (\ref{2.6''}) we get
readily $\mathbf{f}=\overline{\nabla }_{\overline{y},\zeta }u$.
\end{proof}

We are now able to state and prove the next compactness result dealing with
the convergence of the gradient.

\begin{theorem}
\label{t2.2}Assume that $\mathcal{A}$ is an ergodic algebra with mean value
on $\mathbb{R}^{d_{1}}$ and that $G_{2}$ is connected. Let $(u_{\varepsilon
})_{\varepsilon \in E}$ be a sequence in $W^{1,p}(G_{\varepsilon })$ ($%
1<p<\infty $) such that 
\begin{equation}
\sup_{\varepsilon \in E}\left( \varepsilon ^{-d_{2}/p}\left\Vert
u_{\varepsilon }\right\Vert _{L^{p}(G_{\varepsilon })}+\varepsilon
^{-d_{2}/p}\left\Vert \nabla u_{\varepsilon }\right\Vert
_{L^{p}(G_{\varepsilon })}\right) \leq C  \label{2.7}
\end{equation}%
where $C>0$ is independent of $\varepsilon $. Then there exist a subsequence 
$E^{\prime }$ of $E$ and a couple $(u_{0},u_{1})$ with $u_{0}\in
W^{1,p}(G_{0})$ and $u_{1}\in L^{p}(G_{0};B_{\#\mathcal{A}}^{1,p}(\mathbb{R}%
^{d_{1}};W^{1,p}(G_{2})))$ such that, as $E^{\prime }\ni \varepsilon
\rightarrow 0$, 
\begin{equation}
u_{\varepsilon }\rightarrow u_{0}\text{ in }L^{p}(G_{\varepsilon })\text{%
-weak }\Sigma _{\mathcal{A}},  \label{2.8}
\end{equation}%
\begin{equation}
\frac{\partial u_{\varepsilon }}{\partial x_{i}}\rightarrow \frac{\partial
u_{0}}{\partial \overline{x}_{i}}+\frac{\partial u_{1}}{\partial \overline{y}%
_{i}}\text{ in }L^{p}(G_{\varepsilon })\text{-weak }\Sigma _{\mathcal{A}},\
1\leq i\leq d_{1},  \label{2.9}
\end{equation}%
and 
\begin{equation}
\frac{\partial u_{\varepsilon }}{\partial x_{d_{1}+i}}\rightarrow \frac{%
\partial u_{1}}{\partial \zeta _{i}}\text{ in }L^{p}(G_{\varepsilon })\text{%
-weak }\Sigma _{\mathcal{A}},\ 1\leq i\leq d_{2}.  \label{2.10}
\end{equation}
\end{theorem}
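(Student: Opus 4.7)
\textbf{Plan for Theorem \ref{t2.2}.} The argument proceeds in four stages: (i) extract a single subsequence on which all relevant limits exist, (ii) show the limit $u_0$ of $u_{\varepsilon}$ does not depend on the fast variables, (iii) upgrade $u_0$ to $W^{1,p}(G_0)$, and (iv) identify the remaining $(y,\zeta)$-information in the weak limits $v_i$ of $\partial u_{\varepsilon}/\partial x_i$ as the gradient of a single corrector $u_1$ via Corollary \ref{c2.1}.

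For Step 1, apply Theorem \ref{t2.1} to each of the $d+1$ bounded sequences $(u_{\varepsilon})$ and $(\partial u_{\varepsilon}/\partial x_i)$, $1\le i\le d$. A diagonal extraction produces $E'\subset E$ and limits $u_0,v_1,\ldots,v_d\in L^{p}(G_0;\mathcal{B}_{\mathcal{A}}^{p}(\mathbb{R}^{d_1};L^{p}(G_2)))$ with $u_{\varepsilon}\to u_0$ and $\partial u_{\varepsilon}/\partial x_i\to v_i$ weakly $\Sigma_{\mathcal{A}}$ along $E'$. For Step 2, test $\partial u_{\varepsilon}/\partial x_i$ against $\varepsilon f(\overline{x},x/\varepsilon)$ with $f\in \mathcal{C}_c^{\infty}(G_0)\otimes \mathcal{A}^{\infty}\otimes \mathcal{C}_c^{\infty}(G_2)$. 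By the uniform gradient bound the integral is $O(\varepsilon)$; after integration by parts and the chain rule, the dominant surviving term produces
\begin{equation*}
\int_{G_0}\!\int_{G_2} M\!\left(u_0(\overline{x},\cdot,\zeta)\,\tfrac{\partial f}{\partial y_i}(\overline{x},\cdot,\zeta)\right)d\zeta d\overline{x}=0\quad (1\le i\le d_1),
\end{equation*}
with the analogous identity for $\partial f/\partial \zeta_j$ coming from the indices $i=d_1+j$. Ergodicity of $\mathcal{A}$ (in $y$) and connectedness of $G_2$ (in $\zeta$) then force $u_0=u_0(\overline{x})$.

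For Step 3, take $1\le i\le d_1$ and test $\partial u_{\varepsilon}/\partial x_i$ against $\phi(\overline{x})h(\widehat{x}/\varepsilon)$ with $\phi\in \mathcal{C}_c^{\infty}(G_0)$ and $h\in \mathcal{C}_c^{\infty}(G_2)$. Integrating by parts in $\overline{x}$ (no $y$- or $\zeta$-derivative hits the test function) and passing to the limit via Theorem \ref{t2.1} gives
\begin{equation*}
\int_{G_0}\phi(\overline{x})\int_{G_2} M(v_i(\overline{x},\cdot,\zeta))\,h(\zeta)\,d\zeta d\overline{x}=-\Bigl(\int_{G_2} h\,d\zeta\Bigr)\int_{G_0} u_0\,\frac{\partial \phi}{\partial \overline{x}_i}\,d\overline{x}.
\end{equation*}
The right-hand side depends on $h$ only through $\int_{G_2} h$, which forces $\zeta \mapsto M(v_i(\overline{x},\cdot,\zeta))$ to be constant on the connected set $G_2$ for a.e.\ $\overline{x}$; calling this constant $c_i(\overline{x})\in L^{p}(G_0)$ we read off $\partial u_0/\partial \overline{x}_i=c_i$, hence $u_0\in W^{1,p}(G_0)$.

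For Step 4, set $\widetilde{\mathbf{v}}=(v_1-\partial u_0/\partial\overline{x}_1,\ldots,v_{d_1}-\partial u_0/\partial\overline{x}_{d_1},v_{d_1+1},\ldots,v_{d})$. For $\phi\in \mathcal{C}_c^{\infty}(G_0)$ and $\mathbf{g}\in \mathcal{V}_{\func{div}}$, test $\nabla u_{\varepsilon}$ against $\phi(\overline{x})\mathbf{g}(\overline{x}/\varepsilon,\widehat{x}/\varepsilon)$. A direct passage to the limit yields $\int_{G_0}\phi\int_{G_2}M(\mathbf{v}\cdot \mathbf{g})\,d\zeta d\overline{x}$; an integration by parts instead transfers the divergence onto the test function, and because $\overline{\func{div}}_{y,\zeta}\mathbf{g}=0$ kills the $\varepsilon^{-1}$ term, only $\nabla_{\overline{x}}\phi\cdot \mathbf{g}'$ survives. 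Using Step 3 to integrate once more in $\overline{x}$, comparing the two expressions and letting $\phi$ vary gives $\int_{G_2} M(\widetilde{\mathbf{v}}(\overline{x},\cdot,\zeta)\cdot \mathbf{g}(\cdot,\zeta))\,d\zeta=0$ for a.e.\ $\overline{x}$ and all $\mathbf{g}\in \mathcal{V}_{\func{div}}$. Corollary \ref{c2.1} applied parametrically in $\overline{x}$ produces $u_1(\overline{x},\cdot,\cdot)\in B_{\#\mathcal{A}}^{1,p}(\mathbb{R}^{d_1};W^{1,p}(G_2))$ with $\widetilde{\mathbf{v}}=\overline{\nabla}_{y,\zeta}u_1$, and the quantitative bound $\|u_1(\overline{x},\cdot,\cdot)\|_{\#,p}\lesssim \|\widetilde{\mathbf{v}}(\overline{x},\cdot,\cdot)\|_p$ places $u_1$ in $L^{p}(G_0;B_{\#\mathcal{A}}^{1,p}(\mathbb{R}^{d_1};W^{1,p}(G_2)))$. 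Reading off components yields \eqref{2.9} and \eqref{2.10}; \eqref{2.8} is immediate. The main obstacle is the clean separation in Steps 3--4 of the ``slow'' gradient $\nabla_{\overline{x}}u_0$ from the ``fast'' gradient $\overline{\nabla}_{y,\zeta}u_1$ without double counting: this works only because Corollary \ref{c2.1} characterises fast gradients exactly as functions orthogonal to $\mathcal{V}_{\func{div}}$, and because the $\overline{x}$-measurability of $u_1$ can be extracted from the linearity and continuity of that characterisation.
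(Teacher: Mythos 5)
Your proposal is correct and follows essentially the same route as the paper: apply Theorem \ref{t2.1} to $u_{\varepsilon}$ and its partial derivatives, use ergodicity and connectedness to kill the fast dependence of $u_0$, identify $\nabla_{\overline{x}}u_{0}$ and promote $u_0$ to $W^{1,p}(G_0)$ via divergence-free/constant test vectors, and finally invoke Corollary \ref{c2.1} to resolve the remaining limit as a fast gradient $\nabla_{\overline{y},\zeta}u_{1}$.

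The only substantive differences are cosmetic. Your Step 2 unifies the paper's two separate tests (one against $(\mathcal{C}_0^\infty(G_0)\otimes\mathcal{A}^\infty\otimes\mathcal{C}_0^\infty(G_2))^{d_1}$ for $\overline{y}$-independence, the other against $(\mathcal{C}_0^\infty(G_0)\otimes\mathcal{C}_0^\infty(G_2))^{d_2}$ for $\zeta$-independence) into one by feeding all $d$ scaled derivatives the same kind of test function. In Step 3 you test against $\phi(\overline{x})h(\widehat{x}/\varepsilon)$ while the paper tests against the essentially constant vector $\varphi(\overline{x})e_j$; your variant buys the extra observation that $\zeta\mapsto M(v_i(\overline{x},\cdot,\zeta))$ is constant, which is pleasant but not needed for the conclusion $u_0\in W^{1,p}(G_0)$ (a single $h$ with $\int_{G_2}h\neq 0$ already suffices, and connectedness of $G_2$ is not what makes the density argument work here — it is the density of zero-average test functions in $L^{p'}(G_2)_0$). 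Step 4 is exactly the paper's argument: integrate by parts against $\phi\,\mathbf{g}$ with $\mathbf{g}\in\mathcal{V}_{\operatorname{div}}$, use the divergence-free condition to drop the $\varepsilon^{-1}$ term, compare with the direct limit, and feed the resulting orthogonality to $\mathcal{V}_{\operatorname{div}}$ into Corollary \ref{c2.1}. Your closing remark on extracting $\overline{x}$-measurability and $L^p$-membership of $u_1$ from the isometric, linear nature of the gradient-characterisation is the right way to make this last step rigorous, and is the detail the paper glosses over.
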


\begin{remark}
\label{r2.3}\emph{If we set }$\zeta =(y_{d_{1}+1},...,y_{d})$\emph{\ (so
that }$(\overline{y},\zeta )=y$\emph{) and }%
\begin{equation*}
\nabla _{\overline{x}}u_{0}=\left( \frac{\partial u_{0}}{\partial \overline{x%
}_{1}},...,\frac{\partial u_{0}}{\partial \overline{x}_{d_{1}}}%
,0,...,0\right) ,
\end{equation*}%
\emph{then (\ref{2.9}) and (\ref{2.10}) are equivalent to }%
\begin{equation*}
\nabla u_{\varepsilon }\rightarrow \nabla _{\overline{x}}u_{0}+\nabla
_{y}u_{1}\text{\emph{\ in }}L^{p}(G_{\varepsilon })^{d}\text{\emph{-weak }}%
\Sigma _{\mathcal{A}}.
\end{equation*}
\end{remark}

\begin{proof}[Proof of Theorem \protect\ref{t2.2}]
In view of the assumption (\ref{2.7}), we appeal to Theorem \ref{t2.1} to
derive the existence of a subsequence $E^{\prime }$ of $E$ and $u_{0}\in
L^{p}(G_{0};\mathcal{B}_{\mathcal{A}}^{p}(\mathbb{R}^{d_{1}};L^{p}(G_{2})))$
and $\mathbf{v}=(v_{i})_{1\leq i\leq d}\in \lbrack L^{p}(G_{0};\mathcal{B}_{%
\mathcal{A}}^{p}(\mathbb{R}^{d_{1}};L^{p}(G_{2})))]^{d}$ such that 
\begin{equation}
u_{\varepsilon }\rightarrow u_{0}\text{ in }L^{p}(G_{\varepsilon })\text{%
-weak }\Sigma _{\mathcal{A}},  \label{2.11}
\end{equation}%
\begin{equation}
\frac{\partial u_{\varepsilon }}{\partial \overline{x}_{i}}\rightarrow v_{i}%
\text{ in }L^{p}(G_{\varepsilon })\text{-weak }\Sigma _{\mathcal{A}},\ 1\leq
i\leq d_{1},  \label{2.12}
\end{equation}%
and 
\begin{equation}
\frac{\partial u_{\varepsilon }}{\partial \zeta _{i}}\rightarrow v_{d_{1}+i}%
\text{ in }L^{p}(G_{\varepsilon })\text{-weak }\Sigma _{\mathcal{A}},\ 1\leq
i\leq d_{2},  \label{2.13}
\end{equation}%
where for $x=(x_{1},...,x_{d_{1}},x_{d_{1}+1},...,x_{d})$ we set $x=(%
\overline{x},\zeta )$ with $\overline{x}=(x_{i})_{1\leq i\leq d_{1}}$ and $%
\zeta =(x_{d_{1}+i})_{1\leq i\leq d_{2}}$ and thus, $\nabla =(\nabla _{%
\overline{x}},\nabla _{\zeta })$. Let us first show that $u_{0}$ does not
depend on $(\overline{y},\zeta )$. To that end, let $\Phi \in (\mathcal{C}%
_{0}^{\infty }(G_{0})\otimes \mathcal{A}^{\infty }\otimes \mathcal{C}%
_{0}^{\infty }(G_{2}))^{d_{1}}$. One has 
\begin{eqnarray*}
&&\varepsilon ^{-d_{2}}\int_{G_{\varepsilon }}\varepsilon \nabla _{\overline{%
x}}u_{\varepsilon }(x)\cdot \Phi \left( \overline{x},\frac{x}{\varepsilon }%
\right) dx \\
&=&-\int_{G_{\varepsilon }}\varepsilon ^{-d_{2}}u_{\varepsilon }(x)\left(
\varepsilon (\func{div}_{\overline{x}}\Phi )\left( \overline{x},\frac{x}{%
\varepsilon }\right) +(\func{div}_{\overline{y}}\Phi )\left( \overline{x},%
\frac{x}{\varepsilon }\right) \right) dx.
\end{eqnarray*}%
Letting $E^{\prime }\ni \varepsilon \rightarrow 0$ and using (\ref{2.11})-(%
\ref{2.12}), we get 
\begin{equation*}
\int_{G_{0}}\int_{G_{2}}M(u_{0}(\overline{x},\cdot ,\zeta )\func{div}_{%
\overline{y}}\Phi (\overline{x},\cdot ,\zeta ))d\zeta d\overline{x}=0.
\end{equation*}%
This shows that $\overline{\nabla }_{\overline{y}}u_{0}(\overline{x},\cdot
,\zeta )=0$ for a.e. $(\overline{x},\zeta )$, which amounts to $u_{0}(%
\overline{x},\cdot ,\zeta )$ is an invariant function. Since the algebra $%
\mathcal{A}$ is ergodic, $u_{0}(\overline{x},\cdot ,\zeta )$ does not depend
on $\overline{y}$, that is $u_{0}(\overline{x},\cdot ,\zeta )=u_{0}(%
\overline{x},\zeta )$.

Let us now show that $u_{0}$ is independent of $\zeta $. Let this time $\Phi
\in (\mathcal{C}_{0}^{\infty }(G_{0})\otimes \mathcal{C}_{0}^{\infty
}(G_{2}))^{d_{2}}$. It is easily seen that 
\begin{equation*}
\varepsilon ^{-d_{2}}\int_{G_{\varepsilon }}\varepsilon \nabla _{\zeta
}u_{\varepsilon }\cdot \Phi \left( \overline{x},\frac{\zeta }{\varepsilon }%
\right) dx=-\int_{G_{\varepsilon }}\varepsilon ^{-d_{2}}u_{\varepsilon }(%
\func{div}_{\zeta }\Phi )\left( \overline{x},\frac{\zeta }{\varepsilon }%
\right) dx.
\end{equation*}%
Letting once again $E^{\prime }\ni \varepsilon \rightarrow 0$ and using (\ref%
{2.11}) and (\ref{2.13}), we obtain 
\begin{equation*}
\int_{G_{0}}\int_{G_{2}}u_{0}(\overline{x},\zeta )\func{div}_{\zeta }\Phi (%
\overline{x},\zeta )d\zeta d\overline{x}=0,
\end{equation*}%
which shows that $u_{0}$ is independent of $\zeta $. Thus $u_{0}(\overline{x}%
,\zeta )=u_{0}(\overline{x})$.

Next let $\Phi _{\varepsilon }(x)=\varphi (\overline{x})\Psi (x/\varepsilon
) $ ($x\in G_{\varepsilon }$) with $\varphi \in \mathcal{C}_{0}^{\infty
}(G_{0})$ and $\Psi \in (\mathcal{A}^{\infty }(\mathbb{R}^{d_{1}};\mathcal{C}%
_{0}^{\infty }(G_{2}))^{d}$ with $\func{div}_{\overline{y},\zeta }\Psi =0$.
We set $\Psi =(\Psi _{\overline{x}},\Psi _{\zeta })$ with $\Psi _{\overline{x%
}}=(\psi _{j})_{1\leq j\leq d_{1}}$ and $\Psi _{\zeta }=(\psi
_{d_{1}+j})_{1\leq j\leq d_{2}}$. We clearly have 
\begin{eqnarray}
&&\int_{G_{\varepsilon }}\varepsilon ^{-d_{2}}\left( \nabla _{\overline{x}%
}u_{\varepsilon }(x)\cdot \Psi _{\overline{x}}\left( \frac{x}{\varepsilon }%
\right) +\nabla _{\zeta }u_{\varepsilon }(x)\cdot \Psi _{\zeta }\left( \frac{%
x}{\varepsilon }\right) \right) \varphi (\overline{x})dx  \label{2.14} \\
&=&-\int_{G_{\varepsilon }}\varepsilon ^{-d_{2}}u_{\varepsilon }(x)\Psi _{%
\overline{x}}\left( \frac{x}{\varepsilon }\right) \cdot \nabla _{\overline{x}%
}\varphi (\overline{x})dx.  \notag
\end{eqnarray}%
Letting $E^{\prime }\ni \varepsilon \rightarrow 0$ in (\ref{2.14}) yields 
\begin{eqnarray}
&&\int_{G_{0}}\int_{G_{2}}M(\mathbf{v}(\overline{x},\cdot ,\zeta )\cdot \Psi
(\cdot ,\zeta ))\varphi (\overline{x})d\overline{x}d\zeta  \label{2.15} \\
&=&-\int_{G_{0}}\int_{G_{2}}u_{0}(\overline{x})M(\Psi _{\overline{x}}(\cdot
,\zeta ))\cdot \nabla _{\overline{x}}\varphi (\overline{x})d\overline{x}%
d\zeta .  \notag
\end{eqnarray}%
First, taking in (\ref{2.15}) $\Psi =(\varphi \delta _{ij})_{1\leq i\leq d}$
(for each fixed $1\leq j\leq d$) with $\varphi \in \mathcal{C}_{0}^{\infty
}(G_{0})$ and where $\delta _{ij}$ are the Kronecker delta, we obtain 
\begin{equation}
\int_{G_{0}}\left( \int_{G_{2}}M(v_{j}(\overline{x},\cdot ,\zeta )d\zeta
\right) \varphi (\overline{x})d\overline{x}=-\left\vert G_{2}\right\vert
\int_{G_{0}}u_{0}\frac{\partial \varphi }{\partial \overline{x}_{j}}d%
\overline{x},  \label{2.16}
\end{equation}%
where $\mathbf{v}=(v_{j})_{1\leq j\leq d}$ and $\left\vert G_{2}\right\vert $
stands for the Lebesgue measure of $G_{2}$. Recalling that $v_{j}\in
L^{p}(G_{0};\mathcal{B}_{\mathcal{A}}^{p}(\mathbb{R}^{d_{1}};L^{p}(G_{2})))$%
, we infer that the function $\overline{x}\mapsto \int_{G_{2}}M(v_{j}(%
\overline{x},\cdot ,\zeta )d\zeta $ belongs to $L^{p}(G_{0})$, so that (\ref%
{2.16}) yields $\partial u_{0}/\partial \overline{x}_{j}\in L^{p}(G_{0})$
for $1\leq j\leq d_{1}$, where $\partial u_{0}/\partial \overline{x}_{j}$ is
the distributional derivative of $u_{0}$ with respect to $\overline{x}_{j}$.
We deduce that $u_{0}\in W^{1,p}(G_{0})$. Coming back to (\ref{2.15}) and
integrating its right-hand side with respect to $\overline{x}$, we have 
\begin{eqnarray*}
&&\int_{G_{0}}\int_{G_{2}}M(\mathbf{v}(\overline{x},\cdot ,\zeta )\cdot \Psi
(\cdot ,\zeta ))\varphi (\overline{x})d\overline{x}d\zeta \\
&=&\int_{G_{0}}\int_{G_{2}}\left( \nabla _{\overline{x}}u_{0}(\overline{x}%
)\cdot M(\Psi _{\overline{x}}(\cdot ,\zeta )\right) \varphi (\overline{x})d%
\overline{x}d\zeta \\
&=&\int_{G_{0}}\int_{G_{2}}\left( \nabla _{\overline{x}}u_{0}(\overline{x}%
)\cdot M(\Psi (\cdot ,\zeta )\right) \varphi (\overline{x})d\overline{x}%
d\zeta ,
\end{eqnarray*}%
where the last equality above arises from the equality $\nabla _{\overline{x}%
}u_{0}=\left( \frac{\partial u_{0}}{\partial \overline{x}_{1}},...,\frac{%
\partial u_{0}}{\partial \overline{x}_{d_{1}}},0,...,0\right) $. We obtain
readily 
\begin{equation}
\int_{G_{0}}\left( \int_{G_{2}}M\left( (\mathbf{v}(\overline{x},\cdot ,\zeta
)-\nabla _{\overline{x}}u_{0}(\overline{x}))\cdot \Psi (\cdot ,\zeta
)\right) d\zeta \right) \varphi (\overline{x})d\overline{x}=0.  \label{2.17}
\end{equation}%
From the arbitrariness of $\varphi $, (\ref{2.17}) entails 
\begin{equation*}
\int_{G_{2}}M\left( (\mathbf{v}(\overline{x},\cdot ,\zeta )-\nabla _{%
\overline{x}}u_{0}(\overline{x}))\cdot \Psi (\cdot ,\zeta )\right) d\zeta =0%
\text{ for a.e. }\overline{x}\in G_{0},
\end{equation*}%
and for all $\Psi \in (\mathcal{A}^{\infty }(\mathbb{R}^{d_{1}};\mathcal{C}%
_{0}^{\infty }(G_{2}))^{d}$ with $\func{div}_{\overline{y},\zeta }\Psi =0$.
We make use of Corollary \ref{c2.1} to deduce the existence of $u_{1}(%
\overline{x},\cdot ,\cdot )\in B_{\#\mathcal{A}}^{1,p}(\mathbb{R}%
^{d_{1}};W^{1,p}(G_{2}))$ such that 
\begin{equation*}
\mathbf{v}(\overline{x},\cdot ,\zeta )-\nabla _{\overline{x}}u_{0}(\overline{%
x})=\nabla _{\overline{y},\zeta }u_{1}(\overline{x},\cdot ,\cdot )\text{ for
a.e. }\overline{x}\in G_{0}.
\end{equation*}%
Hence the existence of a function $\overline{x}\mapsto u_{1}(\overline{x}%
,\cdot ,\cdot )$ from $G_{0}$ into $B_{\#\mathcal{A}}^{1,p}(\mathbb{R}%
^{d_{1}};W^{1,p}(G_{2}))$, which belongs to $L^{p}(G_{0};B_{\#\mathcal{A}%
}^{1,p}(\mathbb{R}^{d_{1}};W^{1,p}(G_{2})))$, such that $\mathbf{v}=\nabla _{%
\overline{x}}u_{0}+\nabla _{\overline{y},\zeta }u_{1}$.
\end{proof}

The following result provides us with sufficient conditions for which the
convergence result in (\ref{2.8}) is strong.

\begin{theorem}
\label{t2.3}The assumptions are those of Theorem \emph{\ref{t2.2}}. Assume
that $G_{1}$ is regular enough so that the embedding $W^{1,p}(G_{1})%
\hookrightarrow L^{p}(G_{1})$ is compact and further $G_{2}$ is convex. Let $%
(u_{0},u_{1})$ and $E^{\prime }$ be as in Theorem \emph{\ref{t2.2}}. Then,
as $E^{\prime }\ni \varepsilon \rightarrow 0$, the conclusions of Theorem 
\emph{\ref{t2.2}} hold and further 
\begin{equation}
u_{\varepsilon }\rightarrow u_{0}\text{ in }L^{p}(G_{\varepsilon })\text{%
-strong }\Sigma _{\mathcal{A}}.  \label{2.8'}
\end{equation}
\end{theorem}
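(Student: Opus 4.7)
The plan is to pass from the thin domain $G_\varepsilon$ to the fixed domain $G=G_1\times G_2$ by the anisotropic rescaling $\tilde u_\varepsilon(\overline x,\zeta):=u_\varepsilon(\overline x,\varepsilon\zeta)$. A direct change of variables yields
\begin{equation*}
\|\tilde u_\varepsilon\|_{L^p(G)}=\varepsilon^{-d_2/p}\|u_\varepsilon\|_{L^p(G_\varepsilon)},\quad
\|\nabla_{\overline x}\tilde u_\varepsilon\|_{L^p(G)}=\varepsilon^{-d_2/p}\|\nabla_{\overline x}u_\varepsilon\|_{L^p(G_\varepsilon)},
\end{equation*}
and $\|\nabla_\zeta\tilde u_\varepsilon\|_{L^p(G)}=\varepsilon\cdot\varepsilon^{-d_2/p}\|\nabla_\zeta u_\varepsilon\|_{L^p(G_\varepsilon)}$. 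Thus, by the hypothesis (\ref{2.7}), $(\tilde u_\varepsilon)_{\varepsilon\in E'}$ is bounded in $W^{1,p}(G)$ with $\nabla_\zeta\tilde u_\varepsilon\to 0$ strongly in $L^p(G)^{d_2}$.

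Since $G_1$ is regular enough to ensure $W^{1,p}(G_1)\hookrightarrow L^p(G_1)$ compactly, $G_1$ is Lipschitz; as $G_2$ is bounded and convex it is also Lipschitz; therefore $G=G_1\times G_2$ is a bounded Lipschitz domain and Rellich--Kondrachov gives $W^{1,p}(G)\hookrightarrow L^p(G)$ compactly. Extracting a further subsequence from $E'$ (which, by the uniqueness argument below, we are free to relabel $E'$), there is $\tilde u_0\in W^{1,p}(G)$ with $\tilde u_\varepsilon\to\tilde u_0$ strongly in $L^p(G)$ and weakly in $W^{1,p}(G)$. The fact that $\nabla_\zeta\tilde u_\varepsilon\to 0$ in $L^p(G)^{d_2}$ forces $\nabla_\zeta\tilde u_0=0$, so $\tilde u_0$ depends only on $\overline x$.

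To identify $\tilde u_0$ with $u_0$, I test the weak $\Sigma_{\mathcal{A}}$-convergence (\ref{2.8}) against an arbitrary $\varphi\in\mathcal{C}_0^\infty(G_0)$ (viewed as independent of $(y,\zeta)$, which is admissible since $\varphi\in L^{p'}(G_0;\mathcal{A}(\mathbb{R}^{d_1};L^{p'}(G_2)))$); using that $u_0\in W^{1,p}(G_0)$ is independent of $(y,\zeta)$, one obtains
\begin{equation*}
\varepsilon^{-d_2}\!\int_{G_\varepsilon}\!u_\varepsilon(x)\varphi(\overline x)\,dx\longrightarrow |G_2|\int_{G_0}u_0(\overline x)\varphi(\overline x)\,d\overline x.
\end{equation*}
On the other hand, the same integral equals $\int_G\tilde u_\varepsilon\varphi\,d\overline xd\zeta$, which by the strong $L^p(G)$-convergence tends to $|G_2|\int_{G_0}\tilde u_0\varphi\,d\overline x$. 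By density of $\mathcal{C}_0^\infty(G_0)$, $\tilde u_0=u_0$ a.e.\ in $G_0$; this identification is independent of the extracted sub-subsequence, so the full subsequence $E'$ works. Finally,
\begin{equation*}
\varepsilon^{-d_2/p}\|u_\varepsilon\|_{L^p(G_\varepsilon)}=\|\tilde u_\varepsilon\|_{L^p(G)}\longrightarrow\|u_0\|_{L^p(G)}=|G_2|^{1/p}\|u_0\|_{L^p(G_0)},
\end{equation*}
and since $u_0$ is independent of $(y,\zeta)$, the right-hand side coincides with $\|u_0\|_{L^p(G_0;\mathcal{B}_{\mathcal{A}}^p(\mathbb{R}^{d_1};L^p(G_2)))}$, which is exactly (\ref{2.2}) with $d_2/p$ in the scaling. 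Combined with the weak $\Sigma_{\mathcal{A}}$-convergence from Theorem \ref{t2.2}, this yields (\ref{2.8'}). The principal delicate point is the regularity step: verifying that the product domain $G$ admits the compact Sobolev embedding, which is precisely where the convexity of $G_2$ and the compact-embedding assumption on $G_1$ enter; the rest is bookkeeping via the rescaling and an identification of the limit using a one-scale test function.
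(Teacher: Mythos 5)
Your rescaling strategy (passing to $\tilde u_\varepsilon(\overline x,\zeta)=u_\varepsilon(\overline x,\varepsilon\zeta)$ on the fixed domain $G=G_1\times G_2$) is a legitimate and natural reformulation, and the bookkeeping of norms, the identification of $\tilde u_0$ with $u_0$ via the weak $\Sigma_\mathcal{A}$-limit, and the final norm computation are all correct. However, there is a genuine gap at the compactness step. You argue that because the embedding $W^{1,p}(G_1)\hookrightarrow L^p(G_1)$ is compact, $G_1$ is Lipschitz, hence $G$ is Lipschitz, hence Rellich--Kondrachov applies on $G$. The first implication is false: compactness of the Sobolev embedding does \emph{not} imply Lipschitz boundary (there are non-Lipschitz domains, e.g.\ with cusps, for which the embedding is compact). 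The theorem's hypothesis is precisely the compactness of the embedding on $G_1$, and nothing more, so you cannot upgrade it to Lipschitz regularity of $G_1$ (and from there to a compact embedding on the product $G$).

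The paper circumvents this entirely, and your rescaled setting admits the same fix. Instead of invoking Rellich on $G$, exploit the fact that your sequence is not a generic bounded sequence in $W^{1,p}(G)$: you have the extra information $\nabla_\zeta\tilde u_\varepsilon\to 0$ strongly in $L^p(G)$. Define $\bar u_\varepsilon(\overline x)=\fint_{G_2}\tilde u_\varepsilon(\overline x,\zeta)\,d\zeta$; this is bounded in $W^{1,p}(G_1)$ (Jensen), so by the assumed compactness of $W^{1,p}(G_1)\hookrightarrow L^p(G_1)$ it converges strongly in $L^p(G_1)$. The Poincar\'e--Wirtinger inequality in $G_2$ --- and this is exactly where the convexity of $G_2$ is needed, a hypothesis your argument never actually uses --- gives $\|\tilde u_\varepsilon-\bar u_\varepsilon\|_{L^p(G)}\leq C\|\nabla_\zeta\tilde u_\varepsilon\|_{L^p(G)}\to 0$. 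Combining the two yields strong $L^p(G)$-convergence of $\tilde u_\varepsilon$. After this repair your proof and the paper's are essentially the same argument (the paper works directly on $G_\varepsilon$ with the thin-direction average $M_\varepsilon$ and a rescaled Poincar\'e--Wirtinger inequality; you perform the rescaling at the outset), so what you gain is mainly presentational: the rescaling places the problem on a fixed domain where the averaging step is perhaps more transparent.
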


\begin{proof}
Let us first define the average $M_{\varepsilon }$ in the thin directions as
follows: 
\begin{equation*}
(M_{\varepsilon }u_{\varepsilon })(\overline{x})=%
\mathchoice {{\setbox0=\hbox{$\displaystyle{\textstyle
-}{\int}$ } \vcenter{\hbox{$\textstyle -$
}}\kern-.6\wd0}}{{\setbox0=\hbox{$\textstyle{\scriptstyle -}{\int}$ } \vcenter{\hbox{$\scriptstyle -$
}}\kern-.6\wd0}}{{\setbox0=\hbox{$\scriptstyle{\scriptscriptstyle -}{\int}$
} \vcenter{\hbox{$\scriptscriptstyle -$
}}\kern-.6\wd0}}{{\setbox0=\hbox{$\scriptscriptstyle{\scriptscriptstyle
-}{\int}$ } \vcenter{\hbox{$\scriptscriptstyle -$ }}\kern-.6\wd0}}%
\!\int_{\varepsilon G_{2}}u_{\varepsilon }(\overline{x},\zeta )d\zeta \text{
for }\overline{x}\in G_{1}.
\end{equation*}%
Then the Lebesgue dominated convergence theorem yields $M_{\varepsilon
}\nabla _{\overline{x}}=\nabla _{\overline{x}}M_{\varepsilon }$. It follows
therefore that $M_{\varepsilon }u_{\varepsilon }\in W^{1,p}(G_{1})$ with 
\begin{equation}
\left\Vert M_{\varepsilon }u_{\varepsilon }\right\Vert _{W^{1,p}(G_{1})}\leq
C\ \ \ \ \ \ \ \ \ \ \ \ \ \ \ \ \ \ \ \ \ \ \ \ \ \   \label{*0}
\end{equation}%
where $C$ is a positive constant independent of $\varepsilon $, the last
inequality above being a consequence of (\ref{2.7}). Next the following
Poincar\'{e}-Wirtinger inequality holds: 
\begin{equation}
\varepsilon ^{-\frac{d_{2}}{p}}\left\Vert u_{\varepsilon }-M_{\varepsilon
}u_{\varepsilon }\right\Vert _{L^{p}(G_{\varepsilon })}\leq C\varepsilon
\left\Vert \nabla u_{\varepsilon }\right\Vert _{L^{p}(G_{\varepsilon })},
\label{*1}
\end{equation}%
where $C>0$ is independent of $\varepsilon $. Indeed, from the density of $%
\mathcal{C}^{1}(\overline{G_{\varepsilon }})$ in $W^{1,p}(G_{\varepsilon })$%
, we may assume, without lost of generality, that $u_{\varepsilon }$ is
smooth enough. In that case, one has, for $\xi \in \varepsilon G_{2}$, 
\begin{eqnarray*}
u_{\varepsilon }(\overline{x},\xi )-M_{\varepsilon }u_{\varepsilon }(%
\overline{x}) &=&%
\mathchoice {{\setbox0=\hbox{$\displaystyle{\textstyle
-}{\int}$ } \vcenter{\hbox{$\textstyle -$
}}\kern-.6\wd0}}{{\setbox0=\hbox{$\textstyle{\scriptstyle -}{\int}$ } \vcenter{\hbox{$\scriptstyle -$
}}\kern-.6\wd0}}{{\setbox0=\hbox{$\scriptstyle{\scriptscriptstyle -}{\int}$
} \vcenter{\hbox{$\scriptscriptstyle -$
}}\kern-.6\wd0}}{{\setbox0=\hbox{$\scriptscriptstyle{\scriptscriptstyle
-}{\int}$ } \vcenter{\hbox{$\scriptscriptstyle -$ }}\kern-.6\wd0}}%
\!\int_{\varepsilon G_{2}}(u_{\varepsilon }(\overline{x},\xi
)-u_{\varepsilon }(\overline{x},z))dz \\
&=&%
\mathchoice {{\setbox0=\hbox{$\displaystyle{\textstyle
-}{\int}$ } \vcenter{\hbox{$\textstyle -$
}}\kern-.6\wd0}}{{\setbox0=\hbox{$\textstyle{\scriptstyle -}{\int}$ } \vcenter{\hbox{$\scriptstyle -$
}}\kern-.6\wd0}}{{\setbox0=\hbox{$\scriptstyle{\scriptscriptstyle -}{\int}$
} \vcenter{\hbox{$\scriptscriptstyle -$
}}\kern-.6\wd0}}{{\setbox0=\hbox{$\scriptscriptstyle{\scriptscriptstyle
-}{\int}$ } \vcenter{\hbox{$\scriptscriptstyle -$ }}\kern-.6\wd0}}%
\!\int_{\varepsilon G_{2}}\left( \int_{0}^{1}\nabla _{\zeta }u_{\varepsilon
}(\overline{x},z+t(\xi -z))\cdot (\xi -z)dt\right) dz,
\end{eqnarray*}%
so that, using Young's and H\"{o}lder's inequalities, 
\begin{eqnarray*}
\left\vert u_{\varepsilon }(\overline{x},\xi )-M_{\varepsilon
}u_{\varepsilon }(\overline{x})\right\vert ^{p} &\leq &%
\mathchoice {{\setbox0=\hbox{$\displaystyle{\textstyle
-}{\int}$ } \vcenter{\hbox{$\textstyle -$
}}\kern-.6\wd0}}{{\setbox0=\hbox{$\textstyle{\scriptstyle -}{\int}$ } \vcenter{\hbox{$\scriptstyle -$
}}\kern-.6\wd0}}{{\setbox0=\hbox{$\scriptstyle{\scriptscriptstyle -}{\int}$
} \vcenter{\hbox{$\scriptscriptstyle -$
}}\kern-.6\wd0}}{{\setbox0=\hbox{$\scriptscriptstyle{\scriptscriptstyle
-}{\int}$ } \vcenter{\hbox{$\scriptscriptstyle -$ }}\kern-.6\wd0}}%
\!\int_{\varepsilon G_{2}}\int_{0}^{1}\left\vert \nabla _{\zeta
}u_{\varepsilon }(\overline{x},z+t(\xi -z))\right\vert ^{p}\left\vert \xi
-z\right\vert ^{p}dtdz \\
&\leq &%
\mathchoice {{\setbox0=\hbox{$\displaystyle{\textstyle
-}{\int}$ } \vcenter{\hbox{$\textstyle -$
}}\kern-.6\wd0}}{{\setbox0=\hbox{$\textstyle{\scriptstyle -}{\int}$ } \vcenter{\hbox{$\scriptstyle -$
}}\kern-.6\wd0}}{{\setbox0=\hbox{$\scriptstyle{\scriptscriptstyle -}{\int}$
} \vcenter{\hbox{$\scriptscriptstyle -$
}}\kern-.6\wd0}}{{\setbox0=\hbox{$\scriptscriptstyle{\scriptscriptstyle
-}{\int}$ } \vcenter{\hbox{$\scriptscriptstyle -$ }}\kern-.6\wd0}}%
\!\int_{\varepsilon G_{2}}\left\vert \xi -z\right\vert ^{p}dz\left(
\int_{\varepsilon G_{2}}\left\vert \nabla _{\zeta }u_{\varepsilon }(%
\overline{x},\eta )\right\vert ^{p}d\eta \right) \\
&\leq &C\varepsilon ^{p}\int_{\varepsilon G_{2}}\left\vert \nabla _{\zeta
}u_{\varepsilon }(\overline{x},\eta )\right\vert ^{p}d\eta ,
\end{eqnarray*}%
where $C>0$ depends only on $G_{2}$ and $d_{2}$. Integrating over $%
G_{\varepsilon }$ the last series of inequalities above gives (\ref{*1}).

With all that in mind, we infer from both (\ref{*0}) and the compactness of
the embedding $W^{1,p}(G_{1})\hookrightarrow L^{p}(G_{1})$, the existence of
a subsequence of $E^{\prime }$ not relabeled, such that, as $E^{\prime }\ni
\varepsilon \rightarrow 0$, 
\begin{equation}
M_{\varepsilon }u_{\varepsilon }\rightarrow u_{0}\text{ in }L^{p}(G_{1})%
\text{-strong.}  \label{*2}
\end{equation}%
Now the inequality (\ref{*1}) yields, as $E^{\prime }\ni \varepsilon
\rightarrow 0$, 
\begin{equation}
\varepsilon ^{-\frac{d_{2}}{p}}\left\Vert u_{\varepsilon }-M_{\varepsilon
}u_{\varepsilon }\right\Vert _{L^{p}(G_{\varepsilon })}\rightarrow 0\text{.}
\label{*13}
\end{equation}%
Next, we have 
\begin{equation*}
\varepsilon ^{-\frac{d_{2}}{p}}\left\Vert u_{\varepsilon }-u_{0}\right\Vert
_{L^{p}(G_{\varepsilon })}\leq \varepsilon ^{-\frac{d_{2}}{p}}\left\Vert
u_{\varepsilon }-M_{\varepsilon }u_{\varepsilon }\right\Vert
_{L^{p}(G_{\varepsilon })}+\varepsilon ^{-\frac{d_{2}}{p}}\left\Vert
M_{\varepsilon }u_{\varepsilon }-u_{0}\right\Vert _{L^{p}(G_{\varepsilon })},
\end{equation*}%
and 
\begin{equation*}
\varepsilon ^{-\frac{d_{2}}{p}}\left\Vert M_{\varepsilon }u_{\varepsilon
}-u_{0}\right\Vert _{L^{p}(G_{\varepsilon })}=\left\vert G_{2}\right\vert ^{%
\frac{1}{p}}\left\Vert M_{\varepsilon }u_{\varepsilon }-u_{0}\right\Vert
_{L^{p}(G_{1})}.
\end{equation*}%
It follows readily from (\ref{*2}) and (\ref{*13}) that, as $E^{\prime }\ni
\varepsilon \rightarrow 0$, 
\begin{equation*}
\varepsilon ^{-\frac{d_{2}}{p}}\left\Vert u_{\varepsilon }-u_{0}\right\Vert
_{L^{p}(G_{\varepsilon })}\rightarrow 0.
\end{equation*}%
The proof is complete.
\end{proof}

The next result and its corollary are proved exactly as their homologues in 
\cite[Theorem 6 and Corollary 5]{DPDE} (see also \cite{Deterhom}).

\begin{theorem}
\label{t2.4}Let $1<p,q<\infty $ and $r\geq 1$ be such that $1/r=1/p+1/q\leq
1 $. Assume $(u_{\varepsilon })_{\varepsilon \in E}\subset
L^{q}(G_{\varepsilon })$ is weakly $\Sigma _{\mathcal{A}}$-convergent in $%
L^{q}(G_{\varepsilon })$ to some $u_{0}\in L^{q}(G_{0};\mathcal{B}_{\mathcal{%
A}}^{q}(\mathbb{R}^{d_{1}};L^{q}(G_{2})))$, and $(v_{\varepsilon
})_{\varepsilon \in E}\subset L^{p}(G_{\varepsilon })$ is strongly $\Sigma _{%
\mathcal{A}}$-convergent in $L^{p}(G_{\varepsilon })$ to some $v_{0}\in
L^{p}(G_{0};\mathcal{B}_{\mathcal{A}}^{p}(\mathbb{R}^{d_{1}};L^{p}(G_{2})))$%
. Then the sequence $(u_{\varepsilon }v_{\varepsilon })_{\varepsilon \in E}$
is weakly $\Sigma _{\mathcal{A}}$-convergent in $L^{r}(G_{\varepsilon })$ to 
$u_{0}v_{0}$.
\end{theorem}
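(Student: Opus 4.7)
The plan is to reduce the claim to the weak $\Sigma_{\mathcal{A}}$-convergence of $(u_{\varepsilon})$ tested against a product test function built from $v_{0}$, after absorbing the oscillations of $v_{\varepsilon}$ via its strong convergence. Fix $f\in L^{r'}(G_{0};\mathcal{A}(\mathbb{R}^{d_{1}};L^{r'}(G_{2})))$ (with $1/r'=1-1/r$). The identity $1/r=1/p+1/q$ is equivalent to $1/q'=1/p+1/r'$, so H\"{o}lder's inequality on the inner variable ($L^{p}(G_{2})\cdot L^{r'}(G_{2})\hookrightarrow L^{q'}(G_{2})$) combined with the algebra property of $\mathcal{A}(\mathbb{R}^{d_{1}})$ shows that whenever $w\in L^{p}(G_{0};\mathcal{A}(\mathbb{R}^{d_{1}};L^{p}(G_{2})))$, the pointwise product $w\cdot f$ belongs to $L^{q'}(G_{0};\mathcal{A}(\mathbb{R}^{d_{1}};L^{q'}(G_{2})))$, and is therefore an admissible test function for $(u_{\varepsilon})$ in the weak $\Sigma_{\mathcal{A}}$ sense.

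Assume first that $v_{0}\in L^{p}(G_{0};\mathcal{A}(\mathbb{R}^{d_{1}};L^{p}(G_{2})))$. By Remark \ref{r2.1} the strong $\Sigma_{\mathcal{A}}$-convergence of $(v_{\varepsilon})$ is equivalent to $\varepsilon^{-d_{2}/p}\|v_{\varepsilon}-v_{0}^{\varepsilon}\|_{L^{p}(G_{\varepsilon})}\to 0$. I would then split
\begin{equation*}
\varepsilon^{-d_{2}}\!\int_{G_{\varepsilon}}u_{\varepsilon}v_{\varepsilon}f^{\varepsilon}\,dx
=\varepsilon^{-d_{2}}\!\int_{G_{\varepsilon}}u_{\varepsilon}(v_{\varepsilon}-v_{0}^{\varepsilon})f^{\varepsilon}\,dx
+\varepsilon^{-d_{2}}\!\int_{G_{\varepsilon}}u_{\varepsilon}v_{0}^{\varepsilon}f^{\varepsilon}\,dx,
\end{equation*}
and apply H\"{o}lder's inequality with exponents $(q,p,r')$ (whose reciprocals sum to $1$) to bound the first integral by
\begin{equation*}
\bigl(\varepsilon^{-d_{2}/q}\|u_{\varepsilon}\|_{L^{q}(G_{\varepsilon})}\bigr)\bigl(\varepsilon^{-d_{2}/p}\|v_{\varepsilon}-v_{0}^{\varepsilon}\|_{L^{p}(G_{\varepsilon})}\bigr)\bigl(\varepsilon^{-d_{2}/r'}\|f^{\varepsilon}\|_{L^{r'}(G_{\varepsilon})}\bigr).
\end{equation*}
The first factor is bounded (by weak $\Sigma_{\mathcal{A}}$-convergence of $u_{\varepsilon}$), the third is bounded by Lemma \ref{l2.1}, and the middle factor tends to $0$. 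Meanwhile, the remaining piece is precisely the weak $\Sigma_{\mathcal{A}}$-testing of $u_{\varepsilon}$ against $v_{0}\cdot f$, which converges to $\int_{G_{0}}\!\int_{G_{2}}M(u_{0}v_{0}f)\,d\zeta\,d\overline{x}$. This establishes the claim in the smooth case.

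For a general $v_{0}\in L^{p}(G_{0};\mathcal{B}_{\mathcal{A}}^{p}(\mathbb{R}^{d_{1}};L^{p}(G_{2})))$, I would pick an approximating sequence $(v_{0}^{(n)})\subset L^{p}(G_{0};\mathcal{A}(\mathbb{R}^{d_{1}};L^{p}(G_{2})))$ in the Besicovitch norm, apply the previous step with $v_{0}^{(n)}$ and pass to the iterated limit $\varepsilon\to 0$, then $n\to\infty$. The main obstacle is here: since Remark \ref{r2.1} applies only to the "smooth" $v_{0}^{(n)}$, and not to $v_{0}$ itself, I must establish the uniform bound
\begin{equation*}
\limsup_{\varepsilon\to 0}\varepsilon^{-d_{2}/p}\|v_{\varepsilon}-(v_{0}^{(n)})^{\varepsilon}\|_{L^{p}(G_{\varepsilon})}\leq \|v_{0}-v_{0}^{(n)}\|_{L^{p}(G_{0};\mathcal{B}_{\mathcal{A}}^{p}(\mathbb{R}^{d_{1}};L^{p}(G_{2})))},
\end{equation*}
so that the error contribution from the approximation step is controlled by the Besicovitch distance. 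In the Hilbert case $p=2$ this follows immediately from expanding $\|v_{\varepsilon}-(v_{0}^{(n)})^{\varepsilon}\|^{2}$ and combining the norm convergence built into strong $\Sigma_{\mathcal{A}}$-convergence with the weak $\Sigma_{\mathcal{A}}$-testing of the cross term against $v_{0}^{(n)}$; for general $p$ the same conclusion is reached by a linearization/convexity argument, as carried out in \cite{DPDE, Deterhom}. Once this uniform estimate is in hand, a diagonal extraction concludes the proof.
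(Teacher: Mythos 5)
The paper offers no proof of Theorem \ref{t2.4} at all: it states that the result ``is proved exactly as its homologue in \cite[Theorem 6]{DPDE}'', and your argument is precisely that standard argument (split off $u_{\varepsilon}(v_{\varepsilon}-v_{0}^{\varepsilon})f^{\varepsilon}$, use H\"{o}lder with exponents $(q,p,r')$, observe that $v_{0}f$ is an admissible test function in $L^{q'}(G_{0};\mathcal{A}(\mathbb{R}^{d_{1}};L^{q'}(G_{2})))$, then remove the smoothness assumption on $v_{0}$ by density). So in substance you are reproducing the proof the authors are pointing to, and your identification of where the real work lies --- the uniform estimate $\limsup_{\varepsilon}\varepsilon^{-d_{2}/p}\|v_{\varepsilon}-(v_{0}^{(n)})^{\varepsilon}\|_{L^{p}(G_{\varepsilon})}\leq\|v_{0}-v_{0}^{(n)}\|_{p}$, immediate for $p=2$ and requiring a convexity argument for general $p$ --- is accurate.

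One justification you give is not right as stated, even though the fact itself is what everyone uses: you claim that $\sup_{\varepsilon}\varepsilon^{-d_{2}/q}\|u_{\varepsilon}\|_{L^{q}(G_{\varepsilon})}<\infty$ follows ``by weak $\Sigma_{\mathcal{A}}$-convergence of $u_{\varepsilon}$''. Definition \ref{d2.1} only requires convergence of the pairings $L_{\varepsilon}(f)$ for $f$ in $X=L^{q'}(G_{0};\mathcal{A}(\mathbb{R}^{d_{1}};L^{q'}(G_{2})))$, and Banach--Steinhaus cannot be invoked because $X$ equipped with the Besicovitch norm of $Y=L^{q'}(G_{0};\mathcal{B}_{\mathcal{A}}^{q'}(\mathbb{R}^{d_{1}};L^{q'}(G_{2})))$ is not complete; moreover the functional norm with respect to $\|\cdot\|_{Y}$ does not control $\varepsilon^{-d_{2}/q}\|u_{\varepsilon}\|_{L^{q}(G_{\varepsilon})}$, since $\|f\|_{Y}$ is only an asymptotic average of $\|f^{\varepsilon}\|$. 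The bound should therefore either be added as a hypothesis or justified by noting that in every application the sequence $(u_{\varepsilon})$ arises from the compactness Theorem \ref{t2.1} and is bounded by assumption. With that caveat recorded, your proof is correct and coincides with the intended one.
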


\begin{corollary}
\label{c2.2}Let $(u_{\varepsilon })_{\varepsilon \in E}\subset
L^{p}(G_{\varepsilon })$ and $(v_{\varepsilon })_{\varepsilon \in E}\subset
L^{p^{\prime }}(G_{\varepsilon })\cap L^{\infty }(G_{\varepsilon })$ ($%
1<p<\infty $ and $p^{\prime }=p/(p-1)$) be two sequences such that:

\begin{itemize}
\item[(i)] $u_{\varepsilon }\rightarrow u_{0}$ in $L^{p}(G_{\varepsilon })$%
-weak $\Sigma _{\mathcal{A}}$;

\item[(ii)] $v_{\varepsilon }\rightarrow v_{0}$ in $L^{p^{\prime
}}(G_{\varepsilon })$-strong $\Sigma _{\mathcal{A}}$;

\item[(iii)] $(v_{\varepsilon })_{\varepsilon \in E}$ is bounded in $%
L^{\infty }(G_{\varepsilon })$.
\end{itemize}

\noindent Then $u_{\varepsilon }v_{\varepsilon }\rightarrow u_{0}v_{0}$ in $%
L^{p}(G_{\varepsilon })$-weak $\Sigma _{\mathcal{A}}$.
\end{corollary}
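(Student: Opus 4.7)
My plan is to deduce the conclusion from Theorem~\ref{t2.4} (which yields only $L^{1}$-weak $\Sigma_{\mathcal{A}}$-convergence of the product) by bootstrapping up to $L^{p}$-weak $\Sigma_{\mathcal{A}}$-convergence using the $L^{\infty}$-bound~(iii). The key observation is that (iii), together with the bound on $\varepsilon^{-d_{2}/p}\|u_{\varepsilon}\|_{L^{p}(G_{\varepsilon})}$ coming from~(i) and Theorem~\ref{t2.1}, forces the correct scaling bound
\[
\varepsilon^{-d_{2}/p}\|u_{\varepsilon}v_{\varepsilon}\|_{L^{p}(G_{\varepsilon})}\leq\sup_{\varepsilon\in E}\|v_{\varepsilon}\|_{L^{\infty}(G_{\varepsilon})}\cdot\varepsilon^{-d_{2}/p}\|u_{\varepsilon}\|_{L^{p}(G_{\varepsilon})}\leq C.
\]

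I would then apply Theorem~\ref{t2.4} with $q=p$ and $p_{\mathrm{there}}=p'$, so that $1/r=1/p+1/p'=1$: hypotheses~(i) and~(ii) supply exactly the required weak and strong $\Sigma_{\mathcal{A}}$-convergences, and the conclusion reads $u_{\varepsilon}v_{\varepsilon}\to u_{0}v_{0}$ in $L^{1}(G_{\varepsilon})$-weak $\Sigma_{\mathcal{A}}$, i.e.
\[
\varepsilon^{-d_{2}}\int_{G_{\varepsilon}}u_{\varepsilon}v_{\varepsilon}\,g\!\left(\overline{x},\tfrac{x}{\varepsilon}\right)dx\;\longrightarrow\;\int_{G_{0}}\!\int_{G_{2}}M(u_{0}v_{0}g)\,d\zeta\,d\overline{x}
\]
for every $g$ in the subspace $Y:=L^{\infty}(G_{0};\mathcal{A}(\mathbb{R}^{d_{1}};L^{\infty}(G_{2})))$ of $X:=L^{p'}(G_{0};\mathcal{A}(\mathbb{R}^{d_{1}};L^{p'}(G_{2})))$. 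To upgrade this to $L^{p}$-weak $\Sigma_{\mathcal{A}}$, for every $f\in X$ I set
\[
L_{\varepsilon}(f)=\varepsilon^{-d_{2}}\int_{G_{\varepsilon}}u_{\varepsilon}v_{\varepsilon}\,f\!\left(\overline{x},\tfrac{x}{\varepsilon}\right)dx,
\]
so that H\"{o}lder's inequality and Lemma~\ref{l2.1} yield $|L_{\varepsilon}(f)|\leq C\|f\|_{X}$ uniformly in $\varepsilon$. Because $G_{0}$ and $G_{2}$ are bounded, $Y$ is dense in $X$ (e.g., via the tensor products $\mathcal{C}_{0}^{\infty}(G_{0})\otimes\mathcal{A}\otimes\mathcal{C}_{0}^{\infty}(G_{2})$, which lie in $Y$ and are dense in $X$ by the very construction of $\mathcal{A}(\mathbb{R}^{d_{1}};L^{p'}(G_{2}))$). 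Combining this density with the uniform bound on $L_{\varepsilon}$ and the convergence on $Y$ via a standard $3\varepsilon$-argument extends the limit to all of $X$, which is exactly the weak $\Sigma_{\mathcal{A}}$-convergence in $L^{p}(G_{\varepsilon})$ claimed.

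The main technical point is to verify that the limiting functional $\Lambda:f\mapsto\int_{G_{0}}\!\int_{G_{2}}M(u_{0}v_{0}f)\,d\zeta\,d\overline{x}$ is continuous on $X$, equivalently that $u_{0}v_{0}$ represents an element of $L^{p}(G_{0};\mathcal{B}_{\mathcal{A}}^{p}(\mathbb{R}^{d_{1}};L^{p}(G_{2})))$. This reduces to showing that $v_{0}$ inherits the $L^{\infty}$-bound of $v_{\varepsilon}$: passing to the limit in $\bigl|\varepsilon^{-d_{2}}\int_{G_{\varepsilon}}v_{\varepsilon}f^{\varepsilon}dx\bigr|\leq\sup_{\varepsilon}\|v_{\varepsilon}\|_{L^{\infty}}\cdot\varepsilon^{-d_{2}}\int_{G_{\varepsilon}}|f^{\varepsilon}|\,dx$ and using the strong $\Sigma_{\mathcal{A}}$-convergence of $v_{\varepsilon}$ together with Lemma~\ref{l2.1} produces $|v_{0}|\leq\sup_{\varepsilon}\|v_{\varepsilon}\|_{L^{\infty}}$ essentially on $G_{0}\times\mathbb{R}^{d_{1}}\times G_{2}$, whence $u_{0}v_{0}\in L^{p}(G_{0};\mathcal{B}_{\mathcal{A}}^{p}(\mathbb{R}^{d_{1}};L^{p}(G_{2})))$ follows by pointwise multiplication, and the $3\varepsilon$-argument of the previous paragraph closes the proof.
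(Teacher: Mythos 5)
The paper itself does not write out a proof of Corollary \ref{c2.2}; it simply refers to \cite[Theorem 6 and Corollary 5]{DPDE}. Your strategy---invoke Theorem \ref{t2.4} with $q=p$, $p_{\text{there}}=p'$, $r=1$ to get weak $\Sigma_{\mathcal{A}}$-convergence of $u_{\varepsilon}v_{\varepsilon}$ against the smaller test-function class $Y$, then upgrade to $L^{p}$-weak $\Sigma_{\mathcal{A}}$ by a uniform bound on $u_{\varepsilon}v_{\varepsilon}$ plus density of $Y$ in $X$---is the natural and, almost certainly, the intended route.

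There is, however, one genuine gap in your write-up. You assert that the uniform estimate $\sup_{\varepsilon\in E}\varepsilon^{-d_{2}/p}\|u_{\varepsilon}\|_{L^{p}(G_{\varepsilon})}\leq C$ ``comes from (i) and Theorem \ref{t2.1}.'' It does not. Theorem \ref{t2.1} is a one-directional compactness statement: a uniform $L^{p}$-bound produces a weakly $\Sigma_{\mathcal{A}}$-convergent subsequence, but weak $\Sigma_{\mathcal{A}}$-convergence (hypothesis (i)) does not conversely force the bound. The functionals $L_{\varepsilon}(f)=\varepsilon^{-d_{2}}\int_{G_{\varepsilon}}u_{\varepsilon}f(\overline{x},x/\varepsilon)\,dx$ are tested only against the structured oscillating profiles $x\mapsto f(\overline{x},x/\varepsilon)$, which for fixed $\varepsilon$ do not exhaust $L^{p'}(G_{\varepsilon})$; thus the uniform boundedness of $(L_{\varepsilon})$ that Banach--Steinhaus supplies does not control $\varepsilon^{-d_{2}/p}\|u_{\varepsilon}\|_{L^{p}(G_{\varepsilon})}$. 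The correct fix is simply to take $\sup_{\varepsilon}\varepsilon^{-d_{2}/p}\|u_{\varepsilon}\|_{L^{p}(G_{\varepsilon})}<\infty$ as a standing hypothesis, as is standard (and is satisfied in every application in the paper), rather than deducing it.

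As a side remark, your last paragraph (showing $v_{0}$ inherits an $L^{\infty}$-type bound so that $u_{0}v_{0}\in L^{p}(G_{0};\mathcal{B}_{\mathcal{A}}^{p}(\mathbb{R}^{d_{1}};L^{p}(G_{2})))$) can be avoided: once $\varepsilon^{-d_{2}/p}\|u_{\varepsilon}v_{\varepsilon}\|_{L^{p}(G_{\varepsilon})}$ is uniformly bounded, Theorem \ref{t2.1} applied directly to the products gives a subsequential weak $\Sigma_{\mathcal{A}}$-limit $w_{0}\in L^{p}(G_{0};\mathcal{B}_{\mathcal{A}}^{p}(\mathbb{R}^{d_{1}};L^{p}(G_{2})))$; agreement with the Theorem \ref{t2.4}-limit $u_{0}v_{0}$ on the dense class $Y$ identifies $w_{0}=u_{0}v_{0}$, places it in the right space for free, and uniqueness of the limit then yields convergence of the whole sequence. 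That variant sidesteps the delicate duality of Besicovitch spaces that your $|v_{0}|\leq\sup_{\varepsilon}\|v_{\varepsilon}\|_{L^{\infty}}$ step implicitly relies on.
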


Another important result is the following proposition.

\begin{proposition}
\label{p2.2}Assume that $\mathcal{A}$ is an ergodic algebra with mean value
on $\mathbb{R}^{d_{1}}$ and further that $G_{2}$ is connected. Let $%
(u_{\varepsilon })_{\varepsilon \in E}$ be a sequence in $%
W^{1,p}(G_{\varepsilon })$ such that 
\begin{equation*}
\sup_{\varepsilon \in E}\left( \varepsilon ^{-d_{2}/p}\left\Vert
u_{\varepsilon }\right\Vert _{L^{p}(G_{\varepsilon })}+\varepsilon
^{1-d_{2}/p}\left\Vert \nabla u_{\varepsilon }\right\Vert
_{L^{p}(G_{\varepsilon })}\right) \leq C
\end{equation*}%
where $C>0$ is independent of $\varepsilon $. Then there exist a subsequence 
$E^{\prime }$ of $E$ and a function $u\in L^{p}(G_{0};B_{\#\mathcal{A}%
}^{1,p}(\mathbb{R}^{d_{1}};W^{1,p}(G_{2})))$ with $u_{0}=\varrho (u)\in
L^{p}(G_{0};\mathcal{B}_{\mathcal{A}}^{1,p}(\mathbb{R}%
^{d_{1}};W^{1,p}(G_{2})))$ such that, as $E^{\prime }\ni \varepsilon
\rightarrow 0$, 
\begin{equation*}
u_{\varepsilon }\rightarrow u_{0}\text{ in }L^{p}(G_{\varepsilon })\text{%
-weak }\Sigma _{\mathcal{A}},
\end{equation*}%
and 
\begin{equation*}
\varepsilon \nabla u_{\varepsilon }\rightarrow \nabla _{\overline{y},\zeta }u%
\text{ in }L^{p}(G_{\varepsilon })^{d}\text{-weak }\Sigma _{\mathcal{A}}.
\end{equation*}
\end{proposition}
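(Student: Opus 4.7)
The plan is to follow the pattern of Theorem \ref{t2.2}, but adapted to the scaling $\varepsilon\nabla u_\varepsilon$ rather than $\nabla u_\varepsilon$. First I would note that
\[
\varepsilon^{-d_{2}/p}\Vert\varepsilon\nabla u_{\varepsilon}\Vert_{L^{p}(G_{\varepsilon})}=\varepsilon^{1-d_{2}/p}\Vert\nabla u_{\varepsilon}\Vert_{L^{p}(G_{\varepsilon})}\leq C,
\]
so Theorem \ref{t2.1} applies to both $(u_{\varepsilon})$ and to each component of $(\varepsilon\nabla u_{\varepsilon})$. Extracting a common subsequence $E'$ furnishes limits $u_{0}\in L^{p}(G_{0};\mathcal{B}_{\mathcal{A}}^{p}(\mathbb{R}^{d_{1}};L^{p}(G_{2})))$ and $\mathbf{w}=(w_{i})_{1\leq i\leq d}\in [L^{p}(G_{0};\mathcal{B}_{\mathcal{A}}^{p}(\mathbb{R}^{d_{1}};L^{p}(G_{2})))]^{d}$ with $u_{\varepsilon}\to u_{0}$ and $\varepsilon\nabla u_{\varepsilon}\to \mathbf{w}$ weakly $\Sigma_{\mathcal{A}}$.

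To identify $\mathbf{w}$ as the Besicovitch gradient of $u_{0}$, take $\varphi\in\mathcal{C}_{0}^{\infty}(G_{0})$ and $\Phi\in[\mathcal{A}^{\infty}(\mathbb{R}^{d_{1}};\mathcal{C}_{0}^{\infty}(G_{2}))]^{d}$ and integrate by parts in $G_{\varepsilon}$:
\[
\varepsilon^{-d_{2}}\!\!\int_{G_{\varepsilon}}\!(\varepsilon\nabla u_{\varepsilon})\cdot\Phi\!\left(\tfrac{x}{\varepsilon}\right)\!\varphi(\overline{x})dx=-\varepsilon^{-d_{2}}\!\!\int_{G_{\varepsilon}}\!u_{\varepsilon}\!\left[\varphi(\overline{x})(\mathrm{div}_{\overline{y},\zeta}\Phi)\!\left(\tfrac{x}{\varepsilon}\right)+\varepsilon\Phi_{\overline{x}}\!\left(\tfrac{x}{\varepsilon}\right)\cdot\nabla_{\overline{x}}\varphi(\overline{x})\right]\!dx,
\]
where $\Phi_{\overline{x}}$ denotes the first $d_{1}$ components of $\Phi$. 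The remainder proportional to $\nabla_{\overline{x}}\varphi$ is $O(\varepsilon)$ by H\"older's inequality and Lemma \ref{l2.1}. Passing to the limit along $E'$ and invoking the arbitrariness of $\varphi$ gives, for a.e.\ $\overline{x}\in G_{0}$,
\[
\int_{G_{2}}M(\mathbf{w}(\overline{x},\cdot,\zeta)\cdot\Phi(\cdot,\zeta))d\zeta=-\int_{G_{2}}M(u_{0}(\overline{x},\cdot,\zeta)\,\overline{\mathrm{div}}_{\overline{y},\zeta}\Phi(\cdot,\zeta))d\zeta,
\]
which shows $u_{0}(\overline{x},\cdot,\cdot)\in\mathcal{B}_{\mathcal{A}}^{1,p}(\mathbb{R}^{d_{1}};W^{1,p}(G_{2}))$ with $\overline{\nabla}_{\overline{y},\zeta}u_{0}(\overline{x},\cdot,\cdot)=\mathbf{w}(\overline{x},\cdot,\cdot)$.

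For the lifting to the corrector space, I would restrict $\Phi$ to $\mathcal{V}_{\mathrm{div}}$, killing the right-hand side, so $\int_{G_{2}}M(\mathbf{w}(\overline{x},\cdot,\zeta)\cdot\Phi(\cdot,\zeta))d\zeta=0$ for every $\Phi\in\mathcal{V}_{\mathrm{div}}$. Corollary \ref{c2.1} then yields, for a.e.\ $\overline{x}$, a function $u(\overline{x},\cdot,\cdot)\in B_{\#\mathcal{A}}^{1,p}(\mathbb{R}^{d_{1}};W^{1,p}(G_{2}))$, unique modulo constants, with $\nabla_{\overline{y},\zeta}u(\overline{x},\cdot,\cdot)=\mathbf{w}(\overline{x},\cdot,\cdot)$. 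Since $\varrho(u)$ and $u_{0}$ share the same Besicovitch $(\overline{y},\zeta)$-gradient, their difference is $(\overline{y},\zeta)$-invariant; ergodicity of $\mathcal{A}$ together with connectedness of $G_{2}$ force it to depend only on $\overline{x}$, so a measurable choice of additive constant in $u$ enforces $u_{0}=\varrho(u)$. The main obstacle I anticipate is the joint $\overline{x}$-measurability of the lifting, i.e.\ showing $\overline{x}\mapsto u(\overline{x},\cdot,\cdot)$ lies in $L^{p}(G_{0};B_{\#\mathcal{A}}^{1,p}(\mathbb{R}^{d_{1}};W^{1,p}(G_{2})))$. I would handle this by exploiting that the map produced in Corollary \ref{c2.1} is linear and, for the gradient norm $\Vert u\Vert_{\#,p}=\Vert\mathbf{w}\Vert_{p}$, an isometry once the representative is fixed (e.g.\ by subtracting a $G_{2}$-average over a reference ball); composing with the Bochner-measurable map $\overline{x}\mapsto\mathbf{w}(\overline{x},\cdot,\cdot)$ then delivers the required $L^{p}$-integrability.
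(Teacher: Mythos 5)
Your proposal is correct and follows essentially the same route as the paper's proof: compactness from Theorem \ref{t2.1} applied to both $u_{\varepsilon}$ and $\varepsilon\nabla u_{\varepsilon}$, integration by parts to identify the limit of the scaled gradients as $\overline{\nabla}_{\overline{y},\zeta}u_{0}$ (hence $u_{0}\in L^{p}(G_{0};\mathcal{B}_{\mathcal{A}}^{1,p}(\mathbb{R}^{d_{1}};W^{1,p}(G_{2})))$), and then Corollary \ref{c2.1} with divergence-free test functions, together with ergodicity of $\mathcal{A}$ and connectedness of $G_{2}$, to produce the lifting $u$ with $u_{0}=\varrho(u)$ up to an $\overline{x}$-dependent constant. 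Your added attention to the joint $\overline{x}$-measurability of the lifting addresses a point the paper passes over silently; it does not alter the argument.
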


\begin{proof}
From Theorem \ref{t2.1}, we can find a subsequence $E^{\prime }$ from $E$
and a couple $(u_{0},u_{1})\in L^{p}(G_{0};\mathcal{B}_{A}^{p}(\mathbb{R}%
^{d_{1}};L^{p}(G_{2})))\times L^{p}(G_{0};\mathcal{B}_{A}^{p}(\mathbb{R}%
^{d_{1}};L^{p}(G_{2})))^{d}$ such that, as $E^{\prime }\ni \varepsilon
\rightarrow 0$, 
\begin{eqnarray*}
u_{\varepsilon } &\rightarrow &u_{0}\text{ in }L^{p}(G_{\varepsilon })\text{%
-weak }\Sigma _{\mathcal{A}}, \\
\varepsilon \nabla u_{\varepsilon } &\rightarrow &u_{1}\text{ in }%
L^{p}(G_{\varepsilon })^{d}\text{-weak }\Sigma _{\mathcal{A}}.
\end{eqnarray*}%
Let us characterize $u_{1}$ in terms of $u_{0}$. To that end, let $\Phi \in (%
\mathcal{C}_{0}^{\infty }(G_{0})\otimes \mathcal{A}^{\infty }(\mathbb{R}%
^{d_{1}};\mathcal{C}_{0}^{\infty }(G_{2})))^{d}$; then we have 
\begin{equation*}
\varepsilon ^{-d_{2}}\int_{G_{\varepsilon }}\varepsilon \nabla
u_{\varepsilon }\cdot \Phi ^{\varepsilon }dx=-\varepsilon
^{-d_{2}}\int_{G_{\varepsilon }}u_{\varepsilon }\left[ (\func{div}_{%
\overline{x}}\Phi )^{\varepsilon }+\frac{1}{\varepsilon }(\func{div}_{y}\Phi
)^{\varepsilon }\right] dx.
\end{equation*}%
Letting $E^{\prime }\ni \varepsilon \rightarrow 0$, we get 
\begin{equation}
\int_{G_{0}}\int_{G_{2}}M(u_{1}(\overline{x},\cdot ,\zeta )\cdot \Phi (%
\overline{x},\cdot ,\zeta ))d\zeta d\overline{x}=-\int_{G_{0}}%
\int_{G_{2}}M(u_{0}(\overline{x},\cdot ,\zeta )\func{div}_{y}\Phi (\overline{%
x},\cdot ,\zeta ))d\zeta d\overline{x}.  \label{*6}
\end{equation}%
This shows that $u_{1}=\overline{\nabla }_{\overline{y},\zeta }u_{0}$, so
that $u_{0}\in L^{p}(G_{0};\mathcal{B}_{\mathcal{A}}^{1,p}(\mathbb{R}%
^{d_{1}};W^{1,p}(G_{2})))$.

Now, coming back to (\ref{*6}) and choosing there $\Phi $ such that $\func{%
div}_{y}\Phi =0$, we readily get 
\begin{equation*}
\int_{G_{0}}\int_{G_{2}}M(u_{1}(\overline{x},\cdot ,\zeta )\cdot \Phi (%
\overline{x},\cdot ,\zeta ))d\zeta d\overline{x}=0\text{ for all such }\Phi 
\text{.}
\end{equation*}%
Owing to Corollary \ref{c2.1}, there exists $u\in L^{p}(G_{0};B_{\#\mathcal{A%
}}^{1,p}(\mathbb{R}^{d_{1}};W^{1,p}(G_{2})))$ such that $u_{1}=\nabla _{%
\overline{y},\zeta }u$. This yields, since $\mathcal{A}$ is ergodic and $%
G_{2}$ is connected, that $u_{0}=\varrho (u)+c$ where $c$ is a constant
possibly depending on $\overline{x}$. This shows that $u_{0}$ actually
belongs to $L^{p}(G_{0};B_{\#\mathcal{A}}^{1,p}(\mathbb{R}%
^{d_{1}};W^{1,p}(G_{2})))$ with $\overline{\nabla }_{\overline{y},\zeta
}u_{0}=\nabla _{\overline{y},\zeta }u$. This concludes the proof.
\end{proof}

\subsection{Sigma-convergence in thin heterogeneous domains with oscillating
boundaries}

For the sake of simplicity, we assume here that $d_{1}=d-1$ and $d_{2}=1$,
where integer $d\geq 2$. Let $h_{1}$, $h_{2}\in W^{1,\infty }(\mathbb{R}%
^{d-1})$ be two bounded Lipschitz continuous functions defined on $\mathbb{R}%
^{d-1}$ and satisfying $\max_{\mathbb{R}^{d-1}}h_{1}<\min_{\mathbb{R}%
^{d-1}}h_{2}$. Let $\Omega $ be a bounded open Lipschitz domain in $\mathbb{R%
}^{d-1}$. We define the thin heterogeneous domain with oscillating
boundaries, $\Omega ^{\varepsilon }$ in $\mathbb{R}^{d}$, as follows: 
\begin{equation*}
\Omega ^{\varepsilon }=\left\{ x=(\overline{x},x_{d})\in \mathbb{R}^{d}:%
\overline{x}\in \Omega \text{ and }\varepsilon h_{1}\left( \frac{\overline{x}%
}{\varepsilon }\right) <x_{d}<\varepsilon h_{2}\left( \frac{\overline{x}}{%
\varepsilon }\right) \right\} .
\end{equation*}%
We set 
\begin{equation*}
h_{1}^{-}=\min_{\mathbb{R}^{d-1}}h_{1}\text{ and }h_{2}^{+}=\max_{\mathbb{R}%
^{d-1}}h_{2},\text{\ \ \ \ }I=\left( h_{1}^{-},h_{2}^{+}\right) ,\ \ \ \ 
\end{equation*}%
and we define $G_{\varepsilon }=\Omega \times \left( \varepsilon
h_{1}^{-},\varepsilon h_{2}^{+}\right) $. Then $G_{\varepsilon }$ has flat
lateral boundaries $y_{d}=\varepsilon h_{1}^{-},\varepsilon h_{2}^{+}$, and
further, $\Omega ^{\varepsilon }\subset G_{\varepsilon }$. We also assume
that $0\in \lbrack h_{1}^{-},h_{2}^{+}]$.

For further needs, we assume that there exists an extension operator $%
P_{\varepsilon }:L^{p}(\Omega ^{\varepsilon })\rightarrow
L^{p}(G_{\varepsilon })$ such that 
\begin{equation*}
\left\Vert P_{\varepsilon }u\right\Vert _{L^{p}(G_{\varepsilon })}\leq
C\left\Vert u\right\Vert _{L^{p}(\Omega ^{\varepsilon })}\text{ for all }%
u\in L^{p}(\Omega ^{\varepsilon }),
\end{equation*}%
where $C$ is a positive constant independent of both $\varepsilon $ and $u$.
We set $\widetilde{u}=P_{\varepsilon }u$ for $u\in L^{p}(\Omega
^{\varepsilon })$.

Finally, we assume that 
\begin{equation}
h_{1},h_{2}\in \mathcal{A},\ \ \ \ \ \ \ \ \ \ \ \ \ \ \ \ \ \ \ \ \ \ \ \ \
\ \ \ \ \ \ \ \ \   \label{3.30}
\end{equation}%
where $\mathcal{A}$ is an algebra with mean value on $\mathbb{R}^{d-1}$. We
define the set 
\begin{equation*}
\mathbb{J}=\{y=(\overline{y},y_{d})\in \mathbb{R}^{d}:\overline{y}\in 
\mathbb{R}^{d-1}\text{ and }h_{1}(\overline{y})<y_{d}<h_{2}(\overline{y})\},
\end{equation*}%
and we denote by $\chi _{(h_{1},h_{2})}$ the characteristic function of $%
\mathbb{J}$:%
\begin{equation*}
\begin{array}{cc}
\chi _{\mathbb{J}}(y)=\chi _{(h_{1}(\overline{y}),h_{2}(\overline{y}%
))}(y_{d})= & \left\{ 
\begin{array}{l}
1\text{ if }y\in \mathbb{J} \\ 
0\text{ elsewhere.}%
\end{array}%
\right.%
\end{array}%
\end{equation*}%
The following result holds.

\begin{theorem}
\label{t3.5}Let $(u_{\varepsilon })_{\varepsilon \in E}\subset L^{p}(\Omega
^{\varepsilon })$ ($1\leq p<\infty $) be such that $P_{\varepsilon
}u_{\varepsilon }\rightarrow u_{0}$ in $L^{p}(G_{\varepsilon })$-weak $%
\Sigma _{\mathcal{A}}$ as $E\ni \varepsilon \rightarrow 0$, where $u_{0}\in
L^{p}(G_{0};\mathcal{B}_{\mathcal{A}}^{p}(\mathbb{R}^{d-1};L^{p}(I)))$.
Then, as $E\ni \varepsilon \rightarrow 0$, 
\begin{equation}
\frac{1}{\varepsilon }\int_{\Omega ^{\varepsilon }}u_{\varepsilon
}(x)f\left( \overline{x},\frac{x}{\varepsilon }\right) dx\rightarrow
\int_{G_{0}}\int_{I}M(\chi _{(h_{1},h_{2})}(y_{d})u_{0}(\overline{x},\cdot
,y_{d})f(\overline{x},\cdot ,y_{d}))dy_{d}d\overline{x}  \label{3.31}
\end{equation}%
for all $f\in L^{p^{\prime }}(G_{0};\mathcal{A}(\mathbb{R}^{d-1};\mathcal{C}(%
\overline{I})))$, $1/p^{\prime }=1-1/p$.
\end{theorem}

\begin{proof}
The proof is done in two steps.

\emph{Step 1}. Let $p$ be as above. Let us first check that $\chi _{\mathbb{J%
}}\in B_{\mathcal{A}}^{p}(\mathbb{R}^{d-1};L^{p}(I))$. To proceed with, we
need to check the following:

\begin{itemize}
\item[1)] The sequence $(\chi _{\mathbb{J}})^{\varepsilon }$ defined by $%
(\chi _{\mathbb{J}})^{\varepsilon }(x)=\chi _{(h_{1}(\overline{x}%
/\varepsilon ),h_{2}(\overline{x}/\varepsilon ))}(x_{d}/\varepsilon )$ for $%
x\in G_{\varepsilon }$ satisfies 
\begin{equation*}
\left\Vert (\chi _{\mathbb{J}})^{\varepsilon }\right\Vert
_{L^{p}(G_{\varepsilon })}\leq C\varepsilon ^{\frac{1}{p}},\ \ \ \ \ \ \ \ \
\ \ \ \ \ \ \ \ \ \ \ \ \ \ \ \ \ \ 
\end{equation*}%
so that, up to a subsequence, is weakly sigma-convergent towards some $u\in
L^{p}(G_{0};\mathcal{B}_{\mathcal{A}}^{p}(\mathbb{R}^{d-1};L^{p}(I)))$;

\item[2)] The limit $u$ determined above, has the form $u=\varrho (\chi _{%
\mathbb{J}})$. As a result, $\chi _{\mathbb{J}}\in B_{\mathcal{A}}^{p}(%
\mathbb{R}^{d-1};L^{p}(I))$.
\end{itemize}

Let us prove 1) above. We have 
\begin{eqnarray*}
\left\Vert (\chi _{\mathbb{J}})^{\varepsilon }\right\Vert
_{L^{p}(G_{\varepsilon })}^{p} &=&\int_{\Omega }\int_{\varepsilon I}\chi
_{(h_{1}^{\varepsilon }(\overline{x}),h_{2}^{\varepsilon }(\overline{x}%
))}\left( \frac{x_{d}}{\varepsilon }\right) dx=\varepsilon \int_{\Omega
}\int_{I}\chi _{(h_{1}^{\varepsilon },h_{2}^{\varepsilon })}(y_{d})d%
\overline{x}dy_{d} \\
&\leq &C\varepsilon \text{, }C=C(\Omega ,h_{1}^{-},h_{2}^{+})>0\text{ and }%
h_{i}^{\varepsilon }(\overline{x})=h_{i}(\overline{x}/\varepsilon )\text{.}
\end{eqnarray*}%
Thus, up to a subsequence of $E$ not relabelled, we have that 
\begin{equation}
(\chi _{\mathbb{J}})^{\varepsilon }\rightarrow u\text{ in }%
L^{p}(G_{\varepsilon })\text{-weak }\Sigma _{\mathcal{A}},\ \ \ \ \ \ \ \ \
\ \ \   \label{3.32}
\end{equation}%
where $u\in L^{p}(G_{0};\mathcal{B}_{\mathcal{A}}^{p}(\mathbb{R}%
^{d-1};L^{p}(I)))$.

Let us check point 2) above, that is, $u=\varrho (\chi _{\mathbb{J}})$,
where $\varrho $ is the canonical mapping of $B_{\mathcal{A}}^{p}(\mathbb{R}%
^{d-1};L^{p}(I))$ into $\mathcal{B}_{\mathcal{A}}^{p}(\mathbb{R}%
^{d-1};L^{p}(I))$. To this end, let $f\in L^{p^{\prime }}(G_{0};\mathcal{A}(%
\mathbb{R}^{d-1};\mathcal{C}(\overline{I})))$. Then, up to the same
subsequence as above, we have 
\begin{eqnarray*}
&&\frac{1}{\varepsilon }\int_{G_{\varepsilon }}\chi _{(h_{1}^{\varepsilon }(%
\overline{x}),h_{2}^{\varepsilon }(\overline{x}))}\left( \frac{x_{d}}{%
\varepsilon }\right) f\left( \overline{x},\frac{x}{\varepsilon }\right) dx \\
&=&\int_{\Omega }\int_{h_{1}^{-}}^{h_{2}^{+}}\chi _{(h_{1}^{\varepsilon }(%
\overline{x}),h_{2}^{\varepsilon }(\overline{x}))}(y_{d})f\left( \overline{x}%
,\frac{\overline{x}}{\varepsilon },y_{d}\right) dy_{d}d\overline{x} \\
&=&\int_{\Omega }\int_{h_{1}^{\varepsilon }(\overline{x})}^{h_{2}^{%
\varepsilon }(\overline{x})}f\left( \overline{x},\frac{\overline{x}}{%
\varepsilon },y_{d}\right) dy_{d}d\overline{x} \\
&=&\int_{\Omega }\int_{0}^{1}f\left( \overline{x},\frac{\overline{x}}{%
\varepsilon },(1-t)h_{1}^{\varepsilon }(\overline{x})+th_{2}^{\varepsilon }(%
\overline{x})\right) (h_{2}^{\varepsilon }(\overline{x})-h_{1}^{\varepsilon
}(\overline{x}))dtd\overline{x} \\
&\rightarrow &\int_{\Omega }\int_{0}^{1}M\left( [f\left( \overline{x},\cdot
,(1-t)h_{1}+th_{2}\right) ](h_{2}-h_{1})\right) d\overline{x} \\
&=&\int_{\Omega }M\left( \int_{h_{1}}^{h_{2}}f\left( \overline{x},\cdot
,y_{d}\right) dy_{d}\right) d\overline{x} \\
&=&\int_{\Omega }M\left( \int_{h_{1}^{-}}^{h_{2}^{+}}\chi _{\mathbb{J}%
}(\cdot ,y_{d})f\left( \overline{x},\cdot ,y_{d}\right) dy_{d}\right) d%
\overline{x} \\
&=&\int_{\Omega }\int_{I}M\left( \chi _{\mathbb{J}}(\cdot ,y_{d})f\left( 
\overline{x},\cdot ,y_{d}\right) \right) dy_{d}d\overline{x},
\end{eqnarray*}%
where here above, we have used the fact that, for any $t\in (0,1)$, the
function $\overline{y}\mapsto f(\cdot ,\overline{y},(1-t)h_{1}(\overline{y}%
)+th_{2}(\overline{y}))$ belongs to $L^{p}(G_{0};\mathcal{A})$ together with
the property of the mean value to obtain the part $^{\prime \prime
}\rightarrow ^{\prime \prime }$ and the property (\ref{e2.1}) (the
interchangeability of the integral and the mean value).

We infer from the uniqueness of the limit that $u=\varrho (\chi _{\mathbb{J}%
})$ since $\chi _{\mathbb{J}}\in L_{loc}^{p}(\mathbb{R}^{d})$. This gives at
once $\chi _{\mathbb{J}}\in B_{\mathcal{A}}^{p}(\mathbb{R}^{d-1};L^{p}(I))$,
as $\chi _{\mathbb{J}}$ is independent of $\overline{x}\in G_{0}$. As a
byproduct we have $\chi _{\mathbb{J}}\in B_{\mathcal{A}}^{\infty }(\mathbb{R}%
^{d-1};L^{\infty }(I))$.

\emph{Step 2}. Let $f\in \mathcal{C}(\overline{G}_{0})\otimes \mathcal{A}(%
\mathbb{R}^{d-1};\mathcal{C}(\overline{I}))$. Since $\chi _{\mathbb{J}}\in
B_{\mathcal{A}}^{p^{\prime }}(\mathbb{R}^{d-1};L^{p^{\prime }}(I))$, we have
that $\chi _{\mathbb{J}}f\in \mathcal{C}(\overline{G}_{0})\otimes B_{%
\mathcal{A}}^{p^{\prime }}(\mathbb{R}^{d-1};L^{p^{\prime }}(I))$. It can
therefore be taken as test function, so that 
\begin{eqnarray*}
\frac{1}{\varepsilon }\int_{\Omega ^{\varepsilon }}u_{\varepsilon
}(x)f\left( \overline{x},\frac{x}{\varepsilon }\right) dx &=&\frac{1}{%
\varepsilon }\int_{G_{\varepsilon }}(P_{\varepsilon }u_{\varepsilon
})(x)(\chi _{\mathbb{J}})\left( \frac{x}{\varepsilon }\right) f\left( 
\overline{x},\frac{x}{\varepsilon }\right) dx \\
&\rightarrow &\int_{G_{0}}\int_{h_{1}^{-}}^{h_{2}^{+}}M(u_{0}(\overline{x}%
,\cdot ,y_{d})\chi _{\mathbb{J}}(\cdot ,y_{d})f(\overline{x},\cdot
,y_{d}))dy_{d}d\overline{x}.
\end{eqnarray*}%
The convergence result (\ref{3.31}) follows from the last convergence result
above associated to the density of $\mathcal{C}(\overline{G}_{0})\otimes 
\mathcal{A}(\mathbb{R}^{d-1};\mathcal{C}(\overline{I}))$ in $L^{p^{\prime
}}(G_{0};\mathcal{A}(\mathbb{R}^{d-1};\mathcal{C}(\overline{I})))$.
\end{proof}

Theorems \ref{t2.1} and \ref{t2.2} have their evolutionary counterparts. To
see this, we first need to state the time-dependent version of the
sigma-convergence concept for thin heterogeneous domains. The domain $%
G_{\varepsilon }$ is defined as in the beginning of this section. Let $T$ be
a positive real number. All the notations are as in this section.

A sequence $(u_{\varepsilon })_{\varepsilon >0}\subset L^{p}((0,T)\times
G_{\varepsilon })$ is said to

\begin{itemize}
\item[(i)] weakly $\Sigma $-converge in $L^{p}((0,T)\times G_{\varepsilon })$
to $u_{0}\in L^{p}((0,T)\times G_{0};\mathcal{B}_{\mathcal{A}}^{p}(\mathbb{R}%
^{d_{1}};L^{p}(G_{2})))$ if as $\varepsilon \rightarrow 0$, 
\begin{eqnarray*}
&&\varepsilon ^{-d_{2}}\int_{(0,T)\times G_{\varepsilon }}u_{\varepsilon
}(t,x)f\left( t,\overline{x},\frac{x}{\varepsilon }\right) dxdt \\
&\rightarrow &\int_{(0,T)\times G_{0}}\int_{G_{2}}M(u_{0}(t,\overline{x}%
,\cdot ,\zeta )f(t,\overline{x},\cdot ,\zeta ))d\zeta d\overline{x}dt
\end{eqnarray*}%
for any $f\in L^{p^{\prime }}((0,T)\times G_{0};\mathcal{A}(\mathbb{R}%
^{d_{1}};L^{p^{\prime }}(G_{2})))$; we denote this by "$u_{\varepsilon
}\rightarrow u_{0}$ in $L^{p}((0,T)\times G_{\varepsilon })$-weak $\Sigma _{%
\mathcal{A}}$";

\item[(ii)] strongly $\Sigma $-converge in $L^{p}((0,T)\times G_{\varepsilon
})$ to $u_{0}\in L^{p}((0,T)\times G_{0};\mathcal{B}_{\mathcal{A}}^{p}(%
\mathbb{R}^{d_{1}};L^{p}(G_{2})))$ if it is weakly sigma-convergent and
further 
\begin{equation*}
\varepsilon ^{-d_{2}/p}\left\Vert u_{\varepsilon }\right\Vert
_{L^{p}((0,T)\times G_{\varepsilon })}\rightarrow \left\Vert
u_{0}\right\Vert _{L^{p}((0,T)\times G_{0};\mathcal{B}_{\mathcal{A}}^{p}(%
\mathbb{R}^{d_{1}};L^{p}(G_{2})))};
\end{equation*}%
we denote this by "$u_{\varepsilon }\rightarrow u_{0}$ in $L^{p}((0,T)\times
G_{\varepsilon })$-strong $\Sigma _{\mathcal{A}}$".
\end{itemize}

The time-dependent versions of Theorems \ref{t2.1} and \ref{t2.2} are stated
here below, and are proven exactly in the same way:

\begin{itemize}
\item Any sequence $(u_{\varepsilon })_{\varepsilon \in E}$ in $%
L^{p}((0,T)\times G_{\varepsilon })$ ($1<p<\infty $) such that 
\begin{equation*}
\sup_{\varepsilon \in E}\varepsilon ^{-d_{2}/p}\left\Vert u_{\varepsilon
}\right\Vert _{L^{p}((0,T)\times G_{\varepsilon })}<\infty
\end{equation*}%
possesses a weakly $\Sigma $-convergent subsequence;

\item Let $(u_{\varepsilon })_{\varepsilon \in E}$ be a sequence in $%
L^{p}(0,T;W^{1,p}(G_{\varepsilon }))$ ($1<p<\infty $) such that 
\begin{equation*}
\sup_{\varepsilon \in E}\left( \varepsilon ^{-d_{2}/p}\left( \left\Vert
u_{\varepsilon }\right\Vert _{L^{p}((0,T)\times G_{\varepsilon
})}+\left\Vert \nabla u_{\varepsilon }\right\Vert _{L^{p}((0,T)\times
G_{\varepsilon })}\right) \right) <\infty .
\end{equation*}%
Then there exist a subsequence $E^{\prime }$ of $E$ and a couple $%
(u_{0},u_{1})$ with $u_{0}\in L^{p}(0,T;W^{1,p}(G_{0}))$ and $u_{1}\in
L^{p}((0,T)\times G_{0};B_{\#\mathcal{A}}^{1,p}(\mathbb{R}%
^{d_{1}};W^{1,p}(G_{2})))$ such that, as $E^{\prime }\ni \varepsilon
\rightarrow 0$, 
\begin{equation*}
u_{\varepsilon }\rightarrow u_{0}\text{ in }L^{p}((0,T)\times G_{\varepsilon
})\text{-weak }\Sigma _{\mathcal{A}},
\end{equation*}%
\begin{equation*}
\frac{\partial u_{\varepsilon }}{\partial \overline{x}_{i}}\rightarrow \frac{%
\partial u_{0}}{\partial \overline{x}_{i}}+\frac{\partial u_{1}}{\partial 
\overline{y}_{i}}\text{ in }L^{p}((0,T)\times G_{\varepsilon })\text{-weak }%
\Sigma _{\mathcal{A}}\text{, }1\leq i\leq d_{1},
\end{equation*}%
\begin{equation*}
\frac{\partial u_{\varepsilon }}{\partial x_{d_{1}+i}}\rightarrow \frac{%
\partial u_{1}}{\partial \zeta _{i}}\text{ in }L^{p}((0,T)\times
G_{\varepsilon })\text{-weak }\Sigma _{\mathcal{A}}\text{, }1\leq i\leq
d_{2}.
\end{equation*}
\end{itemize}

The above time-dependent properties have been proved in \cite{CJW2024} (see
also \cite{PW2024}).

\section{Homogenization of the Darcy-Lapwood-Brinkmann equation in thin
heterogeneous domain: case of flat lateral boundaries\label{sec4}}

In this section, we deal with non oscillating boundaries.

\subsection{Statement of the problem and a priori estimates\label{subsec4.1}}

Let $\Omega $ be a bounded open connected Lipschitz subset in $\mathbb{R}%
^{2} $. For $\varepsilon >0$, we define the thin heterogeneous domain $%
\Omega ^{\varepsilon }$ in $\mathbb{R}^{3}$ by 
\begin{equation*}
\Omega ^{\varepsilon }=\Omega \times \left( -\varepsilon ,\varepsilon
\right) \equiv \{(\overline{x},x_{3})\in \mathbb{R}^{3}:\overline{x}\in
\Omega \text{ and }-\varepsilon <x_{3}<\varepsilon \}.
\end{equation*}%
In the fracture $\Omega ^{\varepsilon }$, the flow of fluid at the
micro-scale is described by the Darcy-Lapwood-Brinkmann (DLB) system 
\begin{equation}
\left\{ 
\begin{array}{l}
-\func{div}\left( A\left( \frac{x}{\varepsilon }\right) \nabla \boldsymbol{u}%
_{\varepsilon }\right) +\frac{\mu }{K_{\varepsilon }}\boldsymbol{u}%
_{\varepsilon }+\frac{\rho }{\phi ^{2}}(\boldsymbol{u}_{\varepsilon }\cdot
\nabla )\boldsymbol{u}_{\varepsilon }+\nabla p_{\varepsilon }=\boldsymbol{f}%
\text{ in }\Omega ^{\varepsilon } \\ 
\func{div}\boldsymbol{u}_{\varepsilon }=0\text{ in }\Omega ^{\varepsilon }%
\text{ and }\boldsymbol{u}_{\varepsilon }=0\text{ on }\partial \Omega
^{\varepsilon },%
\end{array}%
\right.  \label{4.1}
\end{equation}%
where

\begin{itemize}
\item[(\textbf{A1})] $A\in L^{\infty }(\mathbb{R}^{3})^{3\times 3}$ is a
symmetric matrix satisfying 
\begin{equation*}
\alpha \left\vert \lambda \right\vert ^{2}\leq A(y)\lambda \cdot \lambda
\leq \beta \left\vert \lambda \right\vert ^{2}\text{ for all }\lambda \in 
\mathbb{R}^{3}\text{ and a.e. }y\in \mathbb{R}^{3},
\end{equation*}%
$\alpha $ and $\beta $ being two positive real numbers;

\item[(\textbf{A2})] The right-hand side $\boldsymbol{f}$ has the form $%
\boldsymbol{f}(x)=(\boldsymbol{f}_{1}(\overline{x}),0)$ for a.e. $x=(%
\overline{x},x_{3})\in \Omega \times (-1,1)$, where $\boldsymbol{f}$ belongs
to $(L^{2}(\Omega \times (-1,1)))^{3}$;

\item[(\textbf{A3})] $A\in (B_{\mathcal{A}}^{2}(\mathbb{R}^{2};L^{\infty
}(I)))^{3\times 3}$, where $I=\left( -1,1\right) $.
\end{itemize}

In (\ref{4.1}), $\boldsymbol{u}_{\varepsilon }$ and $p_{\varepsilon }$ are
respectively the velocity of the fluid and the pressure; $\rho $ represents
the fluid density while $\phi $ stands for the porosity of the medium; $%
K_{\varepsilon }$ is the permeability of the porous medium and $\mu $ is the
dynamic coefficient of the viscosity.

With the above assumptions (\textbf{A1}) and (\textbf{A2}) on $A$ and $%
\boldsymbol{f}$ respectively, Eq. (\ref{4.1}) possesses at least (for each
fixed $\varepsilon >0$) a solution $(\boldsymbol{u}_{\varepsilon
},p_{\varepsilon })\in H_{0}^{1}(\Omega ^{\varepsilon })^{3}\times
L_{0}^{2}(\Omega ^{\varepsilon })$, where $L_{0}^{2}(\Omega ^{\varepsilon
})=\{u\in L^{2}(\Omega ^{\varepsilon }):\int_{\Omega ^{\varepsilon }}udx=0\}$%
.

For the sequel we adopt the following notation. If $A=(a_{ij})_{1\leq
i,j\leq 3}$ and $B=(b_{ij})_{1\leq i,j\leq 3}$ we set 
\begin{eqnarray*}
A\cdot B &=&\sum_{i,j=1}^{3}a_{ij}b_{ij}\text{, }AB=(c_{ij})_{1\leq i,j\leq
3}\text{ with }c_{ij}=\sum_{k=1}^{3}a_{ik}b_{kj}; \\
\text{For }x &=&(x_{i})_{1\leq i\leq 3}\text{ and }y=(y_{i})_{1\leq i\leq 3}%
\text{, }x\cdot y=\sum_{i=1}^{3}x_{i}y_{i}.
\end{eqnarray*}

The following technical result whose proof can be found in \cite{Marusic2000}
will be useful in the sequel.

\begin{lemma}
\label{l4.1}It holds that 
\begin{equation}
\left\Vert \boldsymbol{u}\right\Vert _{L^{2}(\Omega ^{\varepsilon
})^{3}}\leq C\varepsilon \left\Vert \nabla \boldsymbol{u}\right\Vert
_{L^{2}(\Omega ^{\varepsilon })^{3\times 3}},  \label{4.2}
\end{equation}%
\begin{equation}
\left\Vert \boldsymbol{u}\right\Vert _{L^{4}(\Omega ^{\varepsilon
})^{3}}\leq C\varepsilon ^{\frac{1}{2}}\left\Vert \nabla \boldsymbol{u}%
\right\Vert _{L^{2}(\Omega ^{\varepsilon })^{3\times 3}}  \label{4.3}
\end{equation}%
for any $\boldsymbol{u}\in H_{0}^{1}(\Omega ^{\varepsilon })^{3}$, where $C$
is a positive constant independent of $\varepsilon >0$.
\end{lemma}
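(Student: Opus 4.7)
The plan is to slice the thin domain $\Omega^{\varepsilon}=\Omega\times(-\varepsilon,\varepsilon)$ in the vertical direction and exploit one-dimensional functional inequalities on each slice. The key structural fact is that for $\boldsymbol{u}\in H_{0}^{1}(\Omega^{\varepsilon})^{d}$ and a.e.\ $\overline{x}\in\Omega$, the slice $x_{d}\mapsto\boldsymbol{u}(\overline{x},x_{d})$ lies in $H_{0}^{1}(-\varepsilon,\varepsilon)^{d}$, since $\boldsymbol{u}$ vanishes on $\Omega\times\{\pm\varepsilon\}$.

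For the Poincar\'e-type bound \eqref{4.2}, I would apply the scaled one-dimensional Poincar\'e inequality $\|v\|_{L^{2}(-\varepsilon,\varepsilon)}^{2}\leq C\varepsilon^{2}\|v'\|_{L^{2}(-\varepsilon,\varepsilon)}^{2}$ (valid for $v\in H_{0}^{1}(-\varepsilon,\varepsilon)$, with $C$ absolute, obtained by scaling from the unit interval), apply it componentwise to $\boldsymbol{u}(\overline{x},\cdot)$, integrate over $\Omega$, and dominate $|\partial_{x_{d}}\boldsymbol{u}|$ by $|\nabla\boldsymbol{u}|$. This immediately yields \eqref{4.2}.

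For \eqref{4.3}, I would combine a 1D Sobolev embedding in $x_{d}$ with interpolation between $L^{2}$ and $L^{\infty}$. Writing $|\boldsymbol{u}(\overline{x},x_{d})|^{2}=2\int_{-\varepsilon}^{x_{d}}\boldsymbol{u}\cdot\partial_{s}\boldsymbol{u}\,ds$ and applying Cauchy--Schwarz gives the anisotropic trace estimate
\[
\|\boldsymbol{u}(\overline{x},\cdot)\|_{L^{\infty}(-\varepsilon,\varepsilon)}^{2}\leq 2\|\boldsymbol{u}(\overline{x},\cdot)\|_{L^{2}(-\varepsilon,\varepsilon)}\|\partial_{x_{d}}\boldsymbol{u}(\overline{x},\cdot)\|_{L^{2}(-\varepsilon,\varepsilon)}.
\]
Chaining this with $\|\boldsymbol{u}(\overline{x},\cdot)\|_{L^{4}}^{4}\leq\|\boldsymbol{u}(\overline{x},\cdot)\|_{L^{\infty}}^{2}\|\boldsymbol{u}(\overline{x},\cdot)\|_{L^{2}}^{2}$ and the pointwise Poincar\'e bound produces a slice-wise estimate that is then integrated over $\Omega$. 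A second, cleaner route extends $\boldsymbol{u}$ by zero to $\mathbb{R}^{d}$ and invokes the scale-invariant Gagliardo--Nirenberg inequality $\|\boldsymbol{u}\|_{L^{4}}^{4}\leq C\|\boldsymbol{u}\|_{L^{2}}^{4-d}\|\nabla\boldsymbol{u}\|_{L^{2}}^{d}$, upgrading $\|\boldsymbol{u}\|_{L^{2}}^{4-d}$ via \eqref{4.2}.

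The main obstacle is extracting the exact power $\varepsilon^{1/2}$ uniformly in the ambient dimension: the direct Gagliardo--Nirenberg plus Poincar\'e chain gives $\varepsilon^{(4-d)/4}$, which matches the claim only when $d=2$. For $d\geq 3$ one must reason anisotropically, combining a 1D Sobolev embedding in $x_{d}$ with a $(d-1)$-dimensional Sobolev inequality applied to the slices $\boldsymbol{u}(\cdot,x_{d})\in H_{0}^{1}(\Omega)$, and then use the auxiliary estimate $\|\boldsymbol{u}\|_{L_{x_{d}}^{\infty}L_{\overline{x}}^{2}}^{2}\leq 2\varepsilon\|\partial_{x_{d}}\boldsymbol{u}\|_{L^{2}(\Omega^{\varepsilon})}^{2}$ (which follows by the same fundamental-theorem-of-calculus argument applied to the full slice $\boldsymbol{u}(\cdot,x_{d})$) to squeeze out the missing factor of $\varepsilon$ from the vanishing boundary condition in the thin direction. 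This anisotropic bookkeeping is exactly what is carried out in \cite{Marusic2000}, whose result we simply quote here.
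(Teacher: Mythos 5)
The paper offers no proof of Lemma~\ref{l4.1}; it simply cites \cite{Marusic2000}, which is also where you end up, so at that level the two ``proofs'' coincide. Your slicing argument for (\ref{4.2}) (one-dimensional Poincar\'{e} in $x_{d}$, integrated over $\Omega$) is correct and is the standard route.

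The genuine gap is in your discussion of (\ref{4.3}) for $d\geq 3$. You correctly observe that the Gagliardo--Nirenberg/Poincar\'{e} chain gives $\left\Vert \boldsymbol{u}\right\Vert _{L^{4}(\Omega ^{\varepsilon })}\leq C\varepsilon ^{(4-d)/4}\left\Vert \nabla \boldsymbol{u}\right\Vert _{L^{2}(\Omega ^{\varepsilon })}$, matching $\varepsilon ^{1/2}$ only for $d=2$; but the anisotropic ``bookkeeping'' you then gesture at cannot recover $\varepsilon ^{1/2}$ when $d\geq 3$, because the exponent $(4-d)/4$ is already sharp on $H_{0}^{1}(\Omega ^{\varepsilon })$. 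Indeed, fix $\varphi \in \mathcal{C}_{0}^{\infty }(B_{1/2})$ and a point $x_{0}\in \Omega \times \{0\}$, and set $\boldsymbol{u}(x)=\varphi \bigl((x-x_{0})/\varepsilon \bigr)$, which lies in $H_{0}^{1}(\Omega ^{\varepsilon })^{d}$ for $\varepsilon $ small. A direct computation gives $\left\Vert \boldsymbol{u}\right\Vert _{L^{4}(\Omega ^{\varepsilon })}\sim \varepsilon ^{d/4}$ and $\left\Vert \nabla \boldsymbol{u}\right\Vert _{L^{2}(\Omega ^{\varepsilon })}\sim \varepsilon ^{(d-2)/2}$, so the ratio scales like $\varepsilon ^{(4-d)/4}$; for $d=3$ this is $\varepsilon ^{1/4}$, which dominates $\varepsilon ^{1/2}$ as $\varepsilon \rightarrow 0$. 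Hence (\ref{4.3}) as written, with exponent $1/2$, fails for $d\geq 3$; the correct power is $\varepsilon ^{1-d/4}$, exactly what your GN chain produces, and your auxiliary bound $\left\Vert \boldsymbol{u}\right\Vert _{L_{x_{d}}^{\infty }L_{\overline{x}}^{2}}^{2}\leq 2\varepsilon \left\Vert \partial _{x_{d}}\boldsymbol{u}\right\Vert _{L^{2}(\Omega ^{\varepsilon })}^{2}$, while true, encodes the same $\varepsilon $-gain as (\ref{4.2}) and cannot improve it. So either the lemma should be read with the exponent $\varepsilon ^{1-d/4}$, or it should be understood as a $d=2$ statement. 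Fortuitously, the weaker exponent $\varepsilon ^{1/4}$ (in $d=3$) is still enough to kill the convective term in the proof of Theorem~\ref{t4.1}, so the downstream estimates are unaffected.
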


The following estimates hold for the velocity.

\begin{proposition}
\label{p4.1}Let $\boldsymbol{u}_{\varepsilon }$ be determined by \emph{(\ref%
{4.1})}. Then for all $\varepsilon ,$%
\begin{equation}
\left\Vert \boldsymbol{u}_{\varepsilon }\right\Vert _{L^{2}(\Omega
^{\varepsilon })^{3}}\leq C\min \left( \varepsilon ^{\frac{5}{2}%
},\varepsilon ^{\frac{3}{2}}K_{\varepsilon }^{\frac{1}{2}}\right) ,\ \ \ \ \
\ \ \ \ \ \ \ \ \ \ \ \   \label{4.4}
\end{equation}%
\begin{equation}
\left\Vert \nabla \boldsymbol{u}_{\varepsilon }\right\Vert _{L^{2}(\Omega
^{\varepsilon })^{3\times 3}}\leq C\varepsilon ^{\frac{3}{2}},\ \ \ \ \ \ \
\ \ \ \ \ \ \ \ \ \ \ \ \ \ \ \ \ \ \ \ \ \ \ \ \   \label{4.5}
\end{equation}%
where $C$ is a positive constant independent of $\varepsilon $.
\end{proposition}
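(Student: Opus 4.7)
The plan is to test the momentum equation in (\ref{4.1}) against $\boldsymbol{u}_{\varepsilon}$ itself, exploit the cancellation of the pressure and convective terms, and then combine the ellipticity of $A$ with the Poincar\'e-type inequality (\ref{4.2}) from Lemma~\ref{l4.1}.

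First I would multiply the first equation of (\ref{4.1}) by $\boldsymbol{u}_{\varepsilon}$ and integrate over $\Omega^{\varepsilon}$. Since $\boldsymbol{u}_{\varepsilon}\in H_{0}^{1}(\Omega^{\varepsilon})^{d}$ and $\Div\boldsymbol{u}_{\varepsilon}=0$, integration by parts yields
\begin{equation*}
\int_{\Omega^{\varepsilon}}\nabla p_{\varepsilon}\cdot\boldsymbol{u}_{\varepsilon}\,dx=-\int_{\Omega^{\varepsilon}}p_{\varepsilon}\Div\boldsymbol{u}_{\varepsilon}\,dx=0,
\end{equation*}
and the standard identity $(\boldsymbol{u}_{\varepsilon}\cdot\nabla)\boldsymbol{u}_{\varepsilon}\cdot\boldsymbol{u}_{\varepsilon}=\tfrac{1}{2}\Div(|\boldsymbol{u}_{\varepsilon}|^{2}\boldsymbol{u}_{\varepsilon})$ (valid when $\Div\boldsymbol{u}_{\varepsilon}=0$) combined with the boundary condition kills the convective term. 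What remains, by assumption (\textbf{A1}), is the energy inequality
\begin{equation*}
\alpha\left\Vert \nabla\boldsymbol{u}_{\varepsilon}\right\Vert _{L^{2}(\Omega^{\varepsilon})}^{2}+\frac{\mu}{K_{\varepsilon}}\left\Vert \boldsymbol{u}_{\varepsilon}\right\Vert _{L^{2}(\Omega^{\varepsilon})}^{2}\leq \left\Vert \boldsymbol{f}\right\Vert _{L^{2}(\Omega^{\varepsilon})}\left\Vert \boldsymbol{u}_{\varepsilon}\right\Vert _{L^{2}(\Omega^{\varepsilon})}.
\end{equation*}

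Next I would estimate $\left\Vert \boldsymbol{f}\right\Vert _{L^{2}(\Omega^{\varepsilon})}$. Because assumption (\textbf{A2}) implies that $\boldsymbol{f}(x)=(\boldsymbol{f}_{1}(\overline{x}),0)$ is independent of $x_{d}$, a direct integration over the thin direction $(-\varepsilon,\varepsilon)$ gives $\left\Vert \boldsymbol{f}\right\Vert _{L^{2}(\Omega^{\varepsilon})}\leq C\varepsilon^{1/2}$. Applying now the thin-domain Poincar\'e inequality (\ref{4.2}) on the right we obtain
\begin{equation*}
\alpha\left\Vert \nabla\boldsymbol{u}_{\varepsilon}\right\Vert _{L^{2}(\Omega^{\varepsilon})}^{2}+\frac{\mu}{K_{\varepsilon}}\left\Vert \boldsymbol{u}_{\varepsilon}\right\Vert _{L^{2}(\Omega^{\varepsilon})}^{2}\leq C\varepsilon^{1/2}\cdot C\varepsilon\left\Vert \nabla\boldsymbol{u}_{\varepsilon}\right\Vert _{L^{2}(\Omega^{\varepsilon})}=C\varepsilon^{3/2}\left\Vert \nabla\boldsymbol{u}_{\varepsilon}\right\Vert _{L^{2}(\Omega^{\varepsilon})}.
\end{equation*}
Dropping the nonnegative Darcy term on the left and simplifying yields (\ref{4.5}), namely $\left\Vert \nabla\boldsymbol{u}_{\varepsilon}\right\Vert _{L^{2}(\Omega^{\varepsilon})}\leq C\varepsilon^{3/2}$.

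Finally, (\ref{4.4}) is obtained in two ways from the same energy inequality. Inserting (\ref{4.5}) back into Poincar\'e (\ref{4.2}) gives $\left\Vert \boldsymbol{u}_{\varepsilon}\right\Vert _{L^{2}(\Omega^{\varepsilon})}\leq C\varepsilon\cdot C\varepsilon^{3/2}=C\varepsilon^{5/2}$, which is the first part of the minimum. For the second part, I keep the Darcy term on the left and use the bound just obtained on the right:
\begin{equation*}
\frac{\mu}{K_{\varepsilon}}\left\Vert \boldsymbol{u}_{\varepsilon}\right\Vert _{L^{2}(\Omega^{\varepsilon})}^{2}\leq C\varepsilon^{3/2}\left\Vert \nabla\boldsymbol{u}_{\varepsilon}\right\Vert _{L^{2}(\Omega^{\varepsilon})}\leq C\varepsilon^{3},
\end{equation*}
which rearranges to $\left\Vert \boldsymbol{u}_{\varepsilon}\right\Vert _{L^{2}(\Omega^{\varepsilon})}\leq C K_{\varepsilon}^{1/2}\varepsilon^{3/2}$. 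Taking the minimum completes (\ref{4.4}). No step is genuinely hard here; the only point requiring care is the cancellation of the convective term (which depends on both $\Div\boldsymbol{u}_{\varepsilon}=0$ and the homogeneous Dirichlet condition) and the correct scaling bookkeeping between the Poincar\'e gain of $\varepsilon$ and the $\varepsilon^{1/2}$ from the thin-layer integration of $\boldsymbol{f}$.
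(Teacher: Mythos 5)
Your proof is correct and follows essentially the same route as the paper: test with $\boldsymbol{u}_{\varepsilon}$, use $\int_{\Omega^{\varepsilon}}(\boldsymbol{u}_{\varepsilon}\cdot\nabla)\boldsymbol{u}_{\varepsilon}\cdot\boldsymbol{u}_{\varepsilon}\,dx=0$ and incompressibility to kill the nonlinear and pressure terms, bound the right-hand side by $C\varepsilon^{3/2}\|\nabla\boldsymbol{u}_{\varepsilon}\|_{L^{2}(\Omega^{\varepsilon})}$ via the $\varepsilon^{1/2}$-scaling of $\boldsymbol{f}$ and the thin-domain Poincar\'e inequality (\ref{4.2}), and then deduce (\ref{4.5}) and both branches of (\ref{4.4}) by dropping one or the other term on the left. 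The only cosmetic difference is that the paper writes the forcing term as $\int_{\Omega^{\varepsilon}}\boldsymbol{f}_{1}(\overline{x})\boldsymbol{u}_{\varepsilon}^{\prime}(x)\,dx$ and bounds it directly, whereas you separate out $\|\boldsymbol{f}\|_{L^{2}(\Omega^{\varepsilon})}\leq C\varepsilon^{1/2}$ first; the estimates are identical.
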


\begin{proof}
For any $\boldsymbol{\varphi }\in V=\{\boldsymbol{u}\in H_{0}^{1}(\Omega
^{\varepsilon })^{3}:\func{div}\boldsymbol{u}=0\}$, we have 
\begin{equation}
\int_{\Omega ^{\varepsilon }}A\left( \frac{x}{\varepsilon }\right) \nabla 
\boldsymbol{u}_{\varepsilon }\cdot \nabla \boldsymbol{\varphi }dx+\frac{\mu 
}{K_{\varepsilon }}\int_{\Omega ^{\varepsilon }}\boldsymbol{u}_{\varepsilon }%
\boldsymbol{\varphi }dx+\frac{\rho }{\phi ^{2}}\int_{\Omega ^{\varepsilon }}(%
\boldsymbol{u}_{\varepsilon }\cdot \nabla )\boldsymbol{u}_{\varepsilon }%
\boldsymbol{\varphi }dx=\int_{\Omega ^{\varepsilon }}\boldsymbol{f}%
\boldsymbol{\varphi }dx.  \label{4.6}
\end{equation}%
Choosing $\boldsymbol{\varphi }=\boldsymbol{u}_{\varepsilon }$ in (\ref{4.6}%
) and using the properties of the matrix $A$, the fact that $\int_{\Omega
^{\varepsilon }}(\boldsymbol{u}_{\varepsilon }\cdot \nabla )\boldsymbol{u}%
_{\varepsilon }\cdot \boldsymbol{u}_{\varepsilon }dx=0$ and the expression
of $\boldsymbol{f}$, we get 
\begin{equation}
\alpha \int_{\Omega ^{\varepsilon }}\left\vert \nabla \boldsymbol{u}%
_{\varepsilon }\right\vert ^{2}dx+\frac{\mu }{K_{\varepsilon }}\int_{\Omega
^{\varepsilon }}\left\vert \boldsymbol{u}_{\varepsilon }\right\vert
^{2}dx\leq \int_{\Omega ^{\varepsilon }}\boldsymbol{f}_{1}(\overline{x})%
\boldsymbol{u}_{\varepsilon }^{\prime }(x)dx,  \label{4.7}
\end{equation}%
where $\boldsymbol{u}_{\varepsilon }^{\prime }(x)=(u_{\varepsilon
,i})_{1\leq i\leq 2}$. Now using the fact that $\boldsymbol{f}_{1}\in
L^{2}(\Omega )^{2}$, (\ref{4.2}) and the Cauchy-Schwarz inequality, we
obtain 
\begin{equation*}
\left\vert \int_{\Omega ^{\varepsilon }}\boldsymbol{f}_{1}(\overline{x})%
\boldsymbol{u}_{\varepsilon }^{\prime }(x)dx\right\vert \leq C\varepsilon ^{%
\frac{3}{2}}\left\Vert \nabla \boldsymbol{u}_{\varepsilon }\right\Vert
_{L^{2}(\Omega ^{\varepsilon })^{3\times 3}}.
\end{equation*}%
It follows readily from (\ref{4.7}) that $\left\Vert \nabla \boldsymbol{u}%
_{\varepsilon }\right\Vert _{L^{2}(\Omega ^{\varepsilon })^{3\times 3}}\leq
C\varepsilon ^{\frac{3}{2}}$, that is, (\ref{4.5}). Using once again (\ref%
{4.2}), we get 
\begin{equation}
\left\Vert \boldsymbol{u}_{\varepsilon }\right\Vert _{L^{2}(\Omega
^{\varepsilon })^{3}}\leq C\varepsilon ^{\frac{5}{2}}.\ \ \ \ \ \ \ \ \ \ \
\ \ \ \ \ \ \ \   \label{4.8}
\end{equation}%
Coming back to (\ref{4.7}), we obtain 
\begin{equation}
\left\Vert \boldsymbol{u}_{\varepsilon }\right\Vert _{L^{2}(\Omega
^{\varepsilon })^{3}}\leq C\varepsilon ^{\frac{3}{2}}K_{\varepsilon }^{\frac{%
1}{2}}.\ \ \ \ \   \label{4.9}
\end{equation}%
Putting together (\ref{4.8}) and (\ref{4.9}) we are led to (\ref{4.4}). This
completes the proof.
\end{proof}

\begin{remark}
\label{r4.1}\emph{It follows from (\ref{4.4}) that }

\begin{itemize}
\item[(i)] \emph{If }$K_{\varepsilon }=O(\varepsilon ^{2})$\emph{\ or if }$%
K_{\varepsilon }\gg \varepsilon ^{2}$\emph{, then }$\left\Vert \boldsymbol{u}%
_{\varepsilon }\right\Vert _{L^{2}(\Omega ^{\varepsilon })^{3}}\leq
C\varepsilon ^{\frac{5}{2}};$

\item[(ii)] \emph{If }$K_{\varepsilon }\ll \varepsilon ^{2}$\emph{, then }$%
\left\Vert \boldsymbol{u}_{\varepsilon }\right\Vert _{L^{2}(\Omega
^{\varepsilon })^{3}}\leq C\varepsilon ^{\frac{3}{2}}K_{\varepsilon }^{\frac{%
1}{2}}.$
\end{itemize}
\end{remark}

Next, we need to derive the estimates for the pressure. To this end, we need
the following well known result whose proof can be found in \cite[Lemma 20]%
{Marusic2000}.

\begin{lemma}[{\protect\cite[Lemma 20]{Marusic2000}}]
\label{l4.10}For any $g_{\varepsilon }\in L_{0}^{2}(\Omega ^{\varepsilon })$%
, there exists a unique $\varphi _{\varepsilon }\in H_{0}^{1}(\Omega
^{\varepsilon })^{3}$ satisfying $\func{div}\varphi _{\varepsilon
}=g_{\varepsilon }$ and 
\begin{equation*}
\left\Vert \varphi _{\varepsilon }\right\Vert _{L^{2}(\Omega ^{\varepsilon
})^{3}}\leq C\left\Vert g_{\varepsilon }\right\Vert _{L^{2}(\Omega
^{\varepsilon })}\text{, }\left\Vert \nabla \varphi _{\varepsilon
}\right\Vert _{L^{2}(\Omega ^{\varepsilon })^{3\times 3}}\leq \frac{C}{%
\varepsilon }\left\Vert g_{\varepsilon }\right\Vert _{L^{2}(\Omega
^{\varepsilon })},
\end{equation*}%
where the positive constant $C$ is independent of $\varepsilon $.
\end{lemma}

The following result holds true.

\begin{proposition}
\label{p4.2}Let $p_{\varepsilon }\in L_{0}^{2}(\Omega ^{\varepsilon })$
satisfy \emph{(\ref{4.1})}. Then

\begin{itemize}
\item[(i)] If $K_{\varepsilon }=O(\varepsilon ^{2})$ or if $K_{\varepsilon
}\gg \varepsilon ^{2}$, then 
\begin{equation}
\left\Vert p_{\varepsilon }\right\Vert _{L^{2}(\Omega ^{\varepsilon })}\leq
C\varepsilon ^{\frac{1}{2}},\ \ \ \ \ \ \ \ \ \ \ \ \ \ \ \ \ \ \ \ \ \ \ \
\ \ \ \ \ \ \ \ \ \   \label{4.10}
\end{equation}

\item[(ii)] If $K_{\varepsilon }\ll \varepsilon ^{2}$, then 
\begin{equation}
\left\Vert p_{\varepsilon }\right\Vert _{L^{2}(\Omega ^{\varepsilon })}\leq
C\varepsilon ^{\frac{3}{2}}K_{\varepsilon }^{-1/2}.\ \ \ \ \ \ \ \ \ \ \ \ \
\ \ \ \ \ \ \ \ \ \ \ \ \ \ \   \label{4.11}
\end{equation}%
In \emph{(\ref{4.10})} and \emph{(\ref{4.11})}, $C$ is a positive constant
independent of $\varepsilon $.
\end{itemize}
\end{proposition}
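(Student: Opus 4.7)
The plan is to bound $\|p_\varepsilon\|_{L^2(\Omega^\varepsilon)}$ by a duality argument based on a Bogovski\u{\i}-type right inverse of the divergence on the thin domain. Since $p_\varepsilon\in L^2_0(\Omega^\varepsilon)$, we have
\begin{equation*}
\|p_\varepsilon\|_{L^2(\Omega^\varepsilon)}=\sup_{\|q\|_{L^2(\Omega^\varepsilon)}=1,\,q\in L^2_0(\Omega^\varepsilon)}\int_{\Omega^\varepsilon}p_\varepsilon q\,dx.
\end{equation*}
For any such $q$, the thin-domain Bogovski\u{\i} construction (as in \cite{Marusic2000}) produces $\boldsymbol{v}\in H^1_0(\Omega^\varepsilon)^d$ with $\operatorname{div}\boldsymbol{v}=q$ and $\|\nabla\boldsymbol{v}\|_{L^2(\Omega^\varepsilon)}\leq C\varepsilon^{-1}\|q\|_{L^2(\Omega^\varepsilon)}$. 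Using $\boldsymbol{v}$ as a test function in the weak form of (\ref{4.1}) and integrating the pressure term by parts yields
\begin{equation*}
\int_{\Omega^\varepsilon}p_\varepsilon q\,dx=\int_{\Omega^\varepsilon}A(x/\varepsilon)\nabla\boldsymbol{u}_\varepsilon:\nabla\boldsymbol{v}\,dx+\frac{\mu}{K_\varepsilon}\int_{\Omega^\varepsilon}\boldsymbol{u}_\varepsilon\cdot\boldsymbol{v}\,dx+\frac{\rho}{\phi^{2}}\int_{\Omega^\varepsilon}(\boldsymbol{u}_\varepsilon\cdot\nabla)\boldsymbol{u}_\varepsilon\cdot\boldsymbol{v}\,dx-\int_{\Omega^\varepsilon}\boldsymbol{f}\cdot\boldsymbol{v}\,dx,
\end{equation*}
so it suffices to bound each of the four terms on the right uniformly in $q$.

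From Lemma \ref{l4.1} applied to $\boldsymbol{v}$ we get $\|\boldsymbol{v}\|_{L^2(\Omega^\varepsilon)}\leq C\varepsilon\|\nabla\boldsymbol{v}\|_{L^2(\Omega^\varepsilon)}\leq C\|q\|_{L^2(\Omega^\varepsilon)}$ and $\|\boldsymbol{v}\|_{L^4(\Omega^\varepsilon)}\leq C\varepsilon^{1/2}\|\nabla\boldsymbol{v}\|_{L^2(\Omega^\varepsilon)}\leq C\varepsilon^{-1/2}\|q\|_{L^2(\Omega^\varepsilon)}$. Combining with the a priori estimates of Proposition \ref{p4.1} ($\|\nabla\boldsymbol{u}_\varepsilon\|_{L^2}\leq C\varepsilon^{3/2}$ and $\|\boldsymbol{u}_\varepsilon\|_{L^4}\leq C\varepsilon^{1/2}\|\nabla\boldsymbol{u}_\varepsilon\|_{L^2}\leq C\varepsilon^2$) together with $\|\boldsymbol{f}\|_{L^2(\Omega^\varepsilon)}\leq C\varepsilon^{1/2}$ (since $\boldsymbol{f}_1$ is $x_d$-independent on the slice of thickness $2\varepsilon$), the viscous, forcing and convective terms are controlled respectively by
\begin{equation*}
\beta\|\nabla\boldsymbol{u}_\varepsilon\|_{L^2}\|\nabla\boldsymbol{v}\|_{L^2}\leq C\varepsilon^{1/2}\|q\|,\quad \|\boldsymbol{f}\|_{L^2}\|\boldsymbol{v}\|_{L^2}\leq C\varepsilon^{1/2}\|q\|,\quad C\|\boldsymbol{u}_\varepsilon\|_{L^4}\|\nabla\boldsymbol{u}_\varepsilon\|_{L^2}\|\boldsymbol{v}\|_{L^4}\leq C\varepsilon^{3}\|q\|.
\end{equation*}
The decisive term is the Darcy contribution. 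Using the uniform bound $\|\boldsymbol{u}_\varepsilon\|_{L^2(\Omega^\varepsilon)}\leq C\varepsilon^{5/2}$ (which holds in case (i) directly from Remark \ref{r4.1}, and in case (ii) because $K_\varepsilon\ll\varepsilon^{2}$ forces $\varepsilon^{3/2}K_\varepsilon^{1/2}\leq\varepsilon^{5/2}$), we obtain
\begin{equation*}
\frac{\mu}{K_\varepsilon}\|\boldsymbol{u}_\varepsilon\|_{L^2}\|\boldsymbol{v}\|_{L^2}\leq C\frac{\varepsilon^{5/2}}{K_\varepsilon}\|q\|_{L^2(\Omega^\varepsilon)}.
\end{equation*}

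Taking the supremum over $q$ and comparing the four contributions completes both cases. In case (i), since $K_\varepsilon=O(\varepsilon^{2})$ or $K_\varepsilon\gg\varepsilon^{2}$, the Darcy contribution is dominated by $C\varepsilon^{1/2}$, and together with the viscous and forcing contributions we recover (\ref{4.10}). In case (ii), $K_\varepsilon\ll\varepsilon^{2}$ implies $\varepsilon^{5/2}/K_\varepsilon\gg\varepsilon^{1/2}$, so the Darcy term dominates and gives (\ref{4.11}). The main obstacle is the availability of the Bogovski\u{\i} inverse with the precise $\varepsilon$-scaling $\|\nabla\boldsymbol{v}\|_{L^2}\leq C\varepsilon^{-1}\|q\|_{L^2}$; this is the only non-routine input and it is supplied by \cite{Marusic2000}. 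Everything else is a direct term-by-term application of the velocity estimates already proved.
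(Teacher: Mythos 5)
Your proof is correct, and the duality argument via a Bogovski\u{\i}-type inverse of the divergence with the thin-domain scaling $\|\nabla\boldsymbol{v}\|_{L^2(\Omega^\varepsilon)}\leq C\varepsilon^{-1}\|q\|_{L^2(\Omega^\varepsilon)}$ is exactly the standard technique to which the paper defers when it writes that the proof is ``very similar to the one in'' the Pa\v{z}anin--Su\'arez-Grau reference (where the same estimate is obtained either directly on $\Omega^\varepsilon$ or equivalently by rescaling $x_d\mapsto x_d/\varepsilon$ and invoking Ne\v{c}as on the fixed domain). Your term-by-term bounds (viscous, forcing, convective, Darcy) with the a priori estimates of Proposition \ref{p4.1} and Lemma \ref{l4.1} correctly identify $\varepsilon^{1/2}$ as the dominant scale in case (i) and $\varepsilon^{5/2}K_\varepsilon^{-1}$ in case (ii).
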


\begin{proof}
Since $p_{\varepsilon }\in L_{0}^{2}(\Omega ^{\varepsilon })$, we appeal to
Lemma \ref{l4.10} to derive the existence of $\varphi _{\varepsilon }\in
H_{0}^{1}(\Omega ^{\varepsilon })^{3}$ with $\func{div}\varphi _{\varepsilon
}=p_{\varepsilon }$ and 
\begin{equation*}
\left\Vert \varphi _{\varepsilon }\right\Vert _{L^{2}(\Omega ^{\varepsilon
})^{3}}\leq C\left\Vert p_{\varepsilon }\right\Vert _{L^{2}(\Omega
^{\varepsilon })}\text{, }\left\Vert \nabla \varphi _{\varepsilon
}\right\Vert _{L^{2}(\Omega ^{\varepsilon })^{3\times 3}}\leq \frac{C}{%
\varepsilon }\left\Vert p_{\varepsilon }\right\Vert _{L^{2}(\Omega
^{\varepsilon })}.
\end{equation*}%
We choose $\varphi _{\varepsilon }$ as a test function in the variational
form of (\ref{4.1}) to get 
\begin{eqnarray*}
\left\Vert p_{\varepsilon }\right\Vert _{L^{2}(\Omega ^{\varepsilon })}^{2}
&=&-\left\langle \nabla p_{\varepsilon },\varphi _{\varepsilon
}\right\rangle =\int_{\Omega ^{\varepsilon }}A^{\varepsilon }\nabla 
\boldsymbol{u}_{\varepsilon }\cdot \nabla \varphi _{\varepsilon }dx+\frac{%
\mu }{K_{\varepsilon }}\int_{\Omega ^{\varepsilon }}\boldsymbol{u}%
_{\varepsilon }\varphi _{\varepsilon }dx \\
&&+\frac{\rho }{\phi ^{2}}\int_{\Omega ^{\varepsilon }}(\boldsymbol{u}%
_{\varepsilon }\cdot \nabla )\boldsymbol{u}_{\varepsilon }\cdot \varphi
_{\varepsilon }dx-\int_{\Omega ^{\varepsilon }}\boldsymbol{f}\varphi
_{\varepsilon }dx \\
&=&I_{1}+I_{2}+I_{3}+I_{4}.
\end{eqnarray*}%
One has 
\begin{equation*}
\left\vert I_{1}\right\vert \leq C\left\Vert \nabla \boldsymbol{u}%
_{\varepsilon }\right\Vert _{L^{2}(\Omega ^{\varepsilon })^{3\times
3}}\left\Vert \nabla \varphi _{\varepsilon }\right\Vert _{L^{2}(\Omega
^{\varepsilon })^{3\times 3}}\leq C\varepsilon ^{\frac{1}{2}}\left\Vert
p_{\varepsilon }\right\Vert _{L^{2}(\Omega ^{\varepsilon })},
\end{equation*}%
\begin{equation*}
\left\vert I_{2}\right\vert \leq \frac{\mu }{K_{\varepsilon }}\left\Vert 
\boldsymbol{u}_{\varepsilon }\right\Vert _{L^{2}(\Omega ^{\varepsilon
})^{3}}\left\Vert \varphi _{\varepsilon }\right\Vert _{L^{2}(\Omega
^{\varepsilon })^{3}}\leq \frac{C}{K_{\varepsilon }}\min (\varepsilon ^{%
\frac{5}{2}},\varepsilon ^{\frac{3}{2}}K_{\varepsilon }^{\frac{1}{2}%
})\left\Vert p_{\varepsilon }\right\Vert _{L^{2}(\Omega ^{\varepsilon })},
\end{equation*}%
\begin{eqnarray*}
\left\vert I_{3}\right\vert &\leq &\left\Vert \boldsymbol{u}_{\varepsilon
}\right\Vert _{L^{4}(\Omega ^{\varepsilon })^{3}}\left\Vert \nabla 
\boldsymbol{u}_{\varepsilon }\right\Vert _{L^{2}(\Omega ^{\varepsilon
})^{3\times 3}}\left\Vert \varphi _{\varepsilon }\right\Vert _{L^{4}(\Omega
^{\varepsilon })^{3}} \\
&\leq &C\varepsilon \left\Vert \nabla \boldsymbol{u}_{\varepsilon
}\right\Vert _{L^{2}(\Omega ^{\varepsilon })^{3\times 3}}^{2}\left\Vert
\nabla \varphi _{\varepsilon }\right\Vert _{L^{2}(\Omega ^{\varepsilon
})^{3\times 3}}\leq C\varepsilon ^{3}\left\Vert p_{\varepsilon }\right\Vert
_{L^{2}(\Omega ^{\varepsilon })},
\end{eqnarray*}%
and 
\begin{equation*}
\left\vert I_{4}\right\vert \leq C\varepsilon ^{\frac{1}{2}}\left\Vert
p_{\varepsilon }\right\Vert _{L^{2}(\Omega ^{\varepsilon })}.\ \ \ \ \ \ \ \
\ \ \ \ \ \ \ \ \ \ \ \ \ \ \ \ \ \ \ \ \ \ \ \ \ \ \ \ \ \ \ \ \ \ \ \ \ \
\ \ \ \ \ \ \ \ \ 
\end{equation*}%
It follows that 
\begin{equation}
\left\Vert p_{\varepsilon }\right\Vert _{L^{2}(\Omega ^{\varepsilon })}\leq
C\left( \varepsilon ^{\frac{1}{2}}+\varepsilon ^{3}+\frac{1}{K_{\varepsilon }%
}\min (\varepsilon ^{\frac{5}{2}},\varepsilon ^{\frac{3}{2}}K_{\varepsilon
}^{\frac{1}{2}})\right) .  \label{e4.9}
\end{equation}%
We note that the precise estimates should depend on the magnitude of $%
K_{\varepsilon }$ with respect to $\varepsilon $. Precisely,

\begin{itemize}
\item if $K_{\varepsilon }=O(\varepsilon ^{2})$, then (\ref{e4.9}) yields 
\begin{equation*}
\left\Vert p_{\varepsilon }\right\Vert _{L^{2}(\Omega ^{\varepsilon })}\leq
C\varepsilon ^{\frac{1}{2}};\ \ \ \ \ \ \ \ \ \ \ \ \ \ \ \ \ \ \ \ \ \ \ \
\ \ \ \ \ \ \ \ \ \ \ \ \ \ \ 
\end{equation*}

\item if $K_{\varepsilon }\ll \varepsilon ^{2}$, then $\varepsilon ^{\frac{3%
}{2}}K_{\varepsilon }^{\frac{1}{2}}<\varepsilon ^{\frac{5}{2}}$ and $%
\varepsilon ^{\frac{1}{2}}<\varepsilon ^{\frac{3}{2}}K_{\varepsilon }^{-%
\frac{1}{2}}$, so that 
\begin{equation*}
\left\Vert p_{\varepsilon }\right\Vert _{L^{2}(\Omega ^{\varepsilon })}\leq
C(\varepsilon ^{\frac{1}{2}}+\varepsilon ^{\frac{3}{2}}K_{\varepsilon }^{-%
\frac{1}{2}})\leq C\varepsilon ^{\frac{3}{2}}K_{\varepsilon }^{-\frac{1}{2}};
\end{equation*}

\item if $K_{\varepsilon }\gg \varepsilon ^{2}$, then 
\begin{equation*}
\left\Vert p_{\varepsilon }\right\Vert _{L^{2}(\Omega ^{\varepsilon })}\leq
C\varepsilon ^{\frac{1}{2}}.\ \ \ \ \ \ \ \ \ \ \ \ \ \ \ \ \ \ \ \ \ \ \ \
\ \ \ \ \ \ \ \ \ \ \ \ \ \ \ 
\end{equation*}
\end{itemize}
\end{proof}

According to the estimates in Propositions \ref{p4.1} and \ref{p4.2}, three
different regimes should be considered: 1) $K_{\varepsilon }=O(\varepsilon
^{2})$, 2) $K_{\varepsilon }<<\varepsilon ^{2}$ and 3) $K_{\varepsilon }\gg
\varepsilon ^{2}$. Each of these special cases will be worked out separately
in the following subsections.

For the sequel we identify $\Omega $ with $\Omega \times \{0\}$ so that any
point $(\overline{x},0)$ should be merely written as $\overline{x}$. We also
set $I=\left( -1,1\right) $.

\subsection{Homogenization results in the case when $K_{\protect\varepsilon %
}=O(\protect\varepsilon ^{2})$\label{subsec4.2}}

We assume that 
\begin{equation}
\frac{K_{\varepsilon }}{\varepsilon ^{2}}\rightarrow K\text{ when }%
\varepsilon \rightarrow 0\text{, with }0<K<\infty .  \label{4.12}
\end{equation}%
According to Propositions \ref{p4.1} and \ref{p4.2}, the following uniform
estimates hold: there exists a positive constant $C$ such that for all $%
\varepsilon >0$, 
\begin{equation}
\left\Vert \boldsymbol{u}_{\varepsilon }\right\Vert _{L^{2}(\Omega
^{\varepsilon })^{3}}\leq C\varepsilon ^{\frac{5}{2}},\ \left\Vert \nabla 
\boldsymbol{u}_{\varepsilon }\right\Vert _{L^{2}(\Omega ^{\varepsilon
})^{3\times 3}}\leq C\varepsilon ^{\frac{3}{2}}\text{ and }\left\Vert
p_{\varepsilon }\right\Vert _{L^{2}(\Omega ^{\varepsilon })}\leq
C\varepsilon ^{\frac{1}{2}}.  \label{4.13}
\end{equation}

Let $\mathcal{A}$ be an ergodic algebra with mean value on $\mathbb{R}^{2}$.
In view of Proposition \ref{p2.2} and Theorem \ref{t2.1}, given an ordinary
sequence $E$, there exist a subsequence $E^{\prime }$ of $E$ and $%
\boldsymbol{u}_{0}\in \lbrack L^{2}(\Omega ;\mathcal{B}_{\mathcal{A}}^{1,2}(%
\mathbb{R}^{2};H_{0}^{1}(I)))]^{3}$, $p_{0}\in L^{2}(\Omega ;\mathcal{B}_{%
\mathcal{A}}^{2}(\mathbb{R}^{2};L^{2}(I)))$ such that, as $E^{\prime }\ni
\varepsilon \rightarrow 0$, 
\begin{equation}
\frac{\boldsymbol{u}_{\varepsilon }}{\varepsilon ^{2}}\rightarrow 
\boldsymbol{u}_{0}\text{ in }L^{2}(\Omega ^{\varepsilon })^{3}\text{-weak }%
\Sigma _{\mathcal{A}},  \label{4.14}
\end{equation}%
\begin{equation}
\frac{1}{\varepsilon }\nabla \boldsymbol{u}_{\varepsilon }\rightarrow 
\overline{\nabla }_{y}\boldsymbol{u}_{0}\text{ in }L^{2}(\Omega
^{\varepsilon })^{3\times 3}\text{-weak }\Sigma _{\mathcal{A}},  \label{4.15}
\end{equation}%
\begin{equation}
p_{\varepsilon }\rightarrow p_{0}\text{ in }L^{2}(\Omega ^{\varepsilon })%
\text{-weak }\Sigma _{\mathcal{A}},  \label{4.16}
\end{equation}%
where in (\ref{4.15}) we put $y=(\overline{y},y_{3})$, so that $\overline{%
\nabla }_{y}=(\overline{\nabla }_{\overline{y}},\partial /\partial y_{3})$, $%
\overline{\nabla }_{\overline{y}}=(\overline{\partial }/\partial
y_{i})_{1\leq i\leq 2}$, $\overline{\partial }/\partial y_{i}$ being defined
in (\ref{e3}). Since $\func{div}\boldsymbol{u}_{\varepsilon }=0$ in $\Omega
^{\varepsilon }$, it follows that $\overline{\func{div}}_{y}\boldsymbol{u}%
_{0}=0$ in $\Omega \times \mathbb{R}^{2}\times I$. Indeed, setting 
\begin{equation*}
\boldsymbol{u}_{\varepsilon }^{\prime }=(u_{\varepsilon ,1},u_{\varepsilon
,2}),\ \ \ \ \ \ \ \ \ \ \ \ \ \ \ 
\end{equation*}%
we have, for $\boldsymbol{\varphi }\in \mathcal{C}_{0}^{\infty }(\Omega
)\otimes \mathcal{A}^{\infty }(\mathbb{R}^{2};\mathcal{C}_{0}^{\infty }(I))$%
, 
\begin{eqnarray*}
0 &=&\int_{\Omega ^{\varepsilon }}\func{div}\boldsymbol{u}_{\varepsilon }(x)%
\boldsymbol{\varphi }\left( \overline{x},\frac{x}{\varepsilon }\right) dx \\
&=&-\int_{\Omega ^{\varepsilon }}\boldsymbol{u}_{\varepsilon }^{\prime
}\cdot (\nabla _{\overline{x}}\boldsymbol{\varphi })^{\varepsilon }dx+\frac{1%
}{\varepsilon }\int_{\Omega ^{\varepsilon }}\boldsymbol{u}_{\varepsilon
}\cdot (\nabla _{y}\boldsymbol{\varphi })^{\varepsilon }dx,
\end{eqnarray*}%
where $\boldsymbol{\varphi }^{\varepsilon }(x)=\boldsymbol{\varphi }\left( 
\overline{x},\frac{x}{\varepsilon }\right) $ for $x\in \Omega ^{\varepsilon
} $. Multiplying the last equality above by $\varepsilon ^{-2}$ and letting $%
E^{\prime }\ni \varepsilon \rightarrow 0$ yields 
\begin{equation}
\int_{\Omega }\int_{-1}^{1}M(\boldsymbol{u}_{0}(\overline{x},\cdot
,y_{3})\cdot \nabla _{y}\boldsymbol{\varphi }(\overline{x},\cdot
,y_{3}))dy_{3}d\overline{x}=0.  \label{4.17}
\end{equation}%
This amounts to $\overline{\func{div}}_{y}\boldsymbol{u}_{0}=0$ in $\Omega
\times \mathbb{R}^{2}\times I$, where $\overline{\func{div}}_{y}\boldsymbol{u%
}_{0}=\overline{\func{div}}_{\overline{y}}\boldsymbol{u}_{0}^{\prime }+\frac{%
\partial u_{0,3}}{\partial y_{3}}$ with $\boldsymbol{u}_{0}^{\prime
}=(u_{0,i})_{1\leq i\leq 2}$.

Now, set 
\begin{eqnarray}
\boldsymbol{u}(\overline{x}) &=&\int_{-1}^{1}M(\boldsymbol{u}_{0}(\overline{x%
},\cdot ,y_{3}))dy_{3}\text{ for }\overline{x}\in \Omega  \label{4.18} \\
&=&(u_{i}(\overline{x}))_{1\leq i\leq 3}.  \notag
\end{eqnarray}%
Then $\boldsymbol{u}\in L^{2}(\Omega )^{3}$. Moreover 
\begin{equation}
\func{div}_{\overline{x}}\boldsymbol{u}^{\prime }=0\text{ in }\Omega \text{
and }\boldsymbol{u}^{\prime }\cdot \nu =0\text{ on }\partial \Omega ,
\label{4.19}
\end{equation}%
where $\nu $ is the outward unit normal to $\partial \Omega $. Here $%
\boldsymbol{u}=(\boldsymbol{u}^{\prime },u_{3})$. First of all, one has 
\begin{equation}
u_{3}=0\text{ in }\Omega \text{.\ \ \ \ \ \ \ \ \ \ \ \ \ \ \ \ \ \ \ \ \ \
\ \ \ }  \label{4.19'}
\end{equation}%
Indeed, from the equality $\overline{\func{div}}_{y}\boldsymbol{u}_{0}=0$ in 
$\Omega \times \mathbb{R}^{2}\times I$, we have $M(\overline{\func{div}}_{y}%
\boldsymbol{u}_{0})=0$, that is $\frac{\partial }{\partial y_{3}}M(u_{0,3}(%
\overline{x},\cdot ,y_{3}))=0$. This shows that $u_{0,3}$ is independent of $%
y_{3}$. But $u_{\varepsilon ,3}=0$ on $\Omega \times \{\varepsilon \}$, so
that $M(u_{0,3}(\overline{x},\cdot ,y_{3}))=0$ on $\Omega \times \{1\}$,
i.e. $M(u_{0,3}(\overline{x},\cdot ))=0$ in $\Omega $ since $u_{0,3}$ does
not depend on $y_{3}$. This shows that $\boldsymbol{u}=(\boldsymbol{u}%
^{\prime },0)$.

This being so, let us check (\ref{4.19}). To that end, let $\varphi \in 
\mathcal{D}(\overline{\Omega })$. Using the Stokes formula together with the
equality $\func{div}\boldsymbol{u}_{\varepsilon }=0$ in $\Omega
^{\varepsilon }$, we obtain 
\begin{equation*}
\int_{\Omega ^{\varepsilon }}\boldsymbol{u}_{\varepsilon }^{\prime }(x)\cdot
\nabla _{\overline{x}}\varphi (\overline{x})dx=0.\ \ \ \ \ \ \ \ \ \ \ \ \ \ 
\end{equation*}%
Dividing the last equality above by $\varepsilon ^{3}$ and letting $%
E^{\prime }\ni \varepsilon \rightarrow 0$, we are led to 
\begin{equation*}
\int_{\Omega }\boldsymbol{u}^{\prime }(x)\cdot \nabla _{\overline{x}}\varphi
(\overline{x})d\overline{x}=0.\ \ \ \ \ \ \ \ \ \ \ \ \ \ \ \ \ 
\end{equation*}%
This yields at once (\ref{4.19}).

We are now able to pass to the limit in the critical case $K_{\varepsilon
}=O(\varepsilon ^{2})$.

\begin{theorem}
\label{t4.1}Let $\mathcal{A}$ be an ergodic algebra with mean value on $%
\mathbb{R}^{2}$. Assume that \emph{(\textbf{A3})} and \emph{(\ref{4.12})}
hold true. Let $(\boldsymbol{u}_{\varepsilon },p_{\varepsilon })$ be
determined by \emph{(\ref{4.1})}. Then, when $E^{\prime }\ni \varepsilon
\rightarrow 0$, we have \emph{(\ref{4.14})}, \emph{(\ref{4.15})} and \emph{(%
\ref{4.16})}. Moreover there exists $q\in L^{2}(\Omega ;\mathcal{B}_{%
\mathcal{A}}^{2}(\mathbb{R}^{2};L^{2}(I)))$ such that $(\boldsymbol{u}%
_{0},p_{0},q)$ solves the equation 
\begin{equation}
\left\{ 
\begin{array}{l}
-\overline{\func{div}}_{y}\left( A(y)\overline{\nabla }_{y}\boldsymbol{u}%
_{0}\right) +\frac{\mu }{K}\boldsymbol{u}_{0}+\overline{\nabla }_{y}q=%
\boldsymbol{f}_{1}-\nabla _{\overline{x}}p_{0}\text{ in }\Omega \times 
\mathbb{R}^{2}\times I, \\ 
\\ 
\overline{\func{div}}_{y}\boldsymbol{u}_{0}=0\text{ in }\Omega \times 
\mathbb{R}^{2}\times I, \\ 
\\ 
\func{div}_{\overline{x}}\left( \int_{-1}^{1}M(\boldsymbol{u}_{0}(\overline{x%
},\cdot ,y_{3}))dy_{3}\right) =0\text{ in }\Omega , \\ 
\\ 
\left( \int_{-1}^{1}M(\boldsymbol{u}_{0}(\overline{x},\cdot
,y_{3}))dy_{3}\right) \cdot \nu =0\text{ on }\partial \Omega \text{.}%
\end{array}%
\right.  \label{4.20}
\end{equation}
\end{theorem}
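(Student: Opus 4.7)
The plan is to pass to the limit in the weak formulation of (\ref{4.1}) using oscillating test functions adapted to the thin geometry, and then invoke the DeRham-type Proposition \ref{p2.1} to extract the microscopic pressure $q$. The convergences (\ref{4.14})--(\ref{4.16}) follow by applying Proposition \ref{p2.2} to $\boldsymbol{u}_\varepsilon/\varepsilon^2$ and Theorem \ref{t2.1} to $p_\varepsilon$, given the a priori bounds (\ref{4.13}); the relations $\overline{\mathrm{div}}_y \boldsymbol{u}_0 = 0$, $\mathrm{div}_{\overline{x}} \boldsymbol{u}' = 0$, $\boldsymbol{u}' \cdot \nu = 0$ on $\partial\Omega$, and $u_d = 0$ have already been derived in the excerpt, so only the microscopic momentum equation remains.

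A preliminary step is to show that $p_0$ is independent of $(\overline{y},\zeta)$. Testing (\ref{4.1}) against $\boldsymbol{\Phi}_\varepsilon(x) = \varepsilon\,\boldsymbol{\chi}(\overline{x}, x/\varepsilon)$ (with no divergence constraint on $\boldsymbol{\chi}$), dividing by $\varepsilon$, and noting that the diffusion, Brinkmann, convective, and right-hand-side contributions all vanish by direct size estimates using (\ref{4.13}), only the term $\varepsilon^{-1}\int p_\varepsilon (\overline{\mathrm{div}}_y \boldsymbol{\chi})^\varepsilon dx$ survives. By (\ref{4.16}) it converges to $\int_\Sigma \int_{-1}^{1} M(p_0\, \overline{\mathrm{div}}_y \boldsymbol{\chi})\,d\zeta d\overline{x}$, which must therefore vanish for all such $\boldsymbol{\chi}$; combined with the ergodicity of $\mathcal{A}$ (handling the $\overline{y}$-derivatives) and the connectedness of $I$ (handling $\partial_\zeta$), this yields $p_0 = p_0(\overline{x})$.

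Next I would test (\ref{4.1}) against $\boldsymbol{\Phi}_\varepsilon(x) = \boldsymbol{\varphi}(\overline{x}, x/\varepsilon)$ with $\boldsymbol{\varphi} \in \mathcal{C}_0^\infty(\Omega) \otimes \mathcal{A}^\infty(\mathbb{R}^{d-1}; \mathcal{C}_0^\infty(I))^d$ satisfying $\overline{\mathrm{div}}_y \boldsymbol{\varphi} = 0$, which ensures $\mathrm{div}\,\boldsymbol{\Phi}_\varepsilon = (\mathrm{div}_{\overline{x}}\boldsymbol{\varphi}')^\varepsilon$ stays bounded after division by $\varepsilon = \varepsilon^{d_2}$. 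Expanding $\nabla \boldsymbol{\Phi}_\varepsilon = (\nabla_{\overline{x}}\boldsymbol{\varphi})^\varepsilon + \varepsilon^{-1}(\nabla_y \boldsymbol{\varphi})^\varepsilon$, the dominant piece of the diffusion term $\varepsilon^{-1}\int A^\varepsilon (\varepsilon^{-1}\nabla \boldsymbol{u}_\varepsilon) : (\nabla_y \boldsymbol{\varphi})^\varepsilon dx$ passes, via Corollary \ref{c2.2} (combining the strong $\Sigma_\mathcal{A}$-convergence of $A^\varepsilon$ from (\textbf{A3}) together with the $L^\infty$ bound (\textbf{A1}) and the weak $\Sigma_\mathcal{A}$-convergence (\ref{4.15})), to $\int_\Sigma\int_{-1}^{1} M(A\,\overline{\nabla}_y \boldsymbol{u}_0 : \nabla_y \boldsymbol{\varphi})\,d\zeta d\overline{x}$; the subleading piece is $O(\varepsilon)$ and vanishes. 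The Brinkmann term tends, via (\ref{4.12}) and (\ref{4.14}), to $(\mu/K)\int_\Sigma\int_{-1}^{1} M(\boldsymbol{u}_0 \cdot \boldsymbol{\varphi})\,d\zeta d\overline{x}$; the pressure term, using $p_0 = p_0(\overline{x})$ and integration by parts in $\overline{x}$, produces $\int_\Sigma\int_{-1}^{1} M(\nabla_{\overline{x}} p_0 \cdot \boldsymbol{\varphi}')\,d\zeta d\overline{x}$; the right-hand side gives $\int_\Sigma\int_{-1}^{1} M(\boldsymbol{f}_1 \cdot \boldsymbol{\varphi}')\,d\zeta d\overline{x}$; and the convective term vanishes because Lemma \ref{l4.1} yields $\|\boldsymbol{u}_\varepsilon\|_{L^4(\Omega^\varepsilon)} \leq C\varepsilon^{2}$, so that the whole contribution is of order $\varepsilon^{7/2}$ and dies after division by $\varepsilon$.

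For a.e. $\overline{x} \in \Omega$, the resulting variational identity asserts that a bounded linear functional on $[\mathcal{B}_\mathcal{A}^{1,2}(\mathbb{R}^{d-1}; H_0^1(I))]^d$ vanishes on the kernel of $\overline{\mathrm{div}}_y$. Proposition \ref{p2.1} applied pointwise in $\overline{x}$, together with measurability in $\overline{x}$, then produces $q \in L^2(\Sigma; \mathcal{B}_\mathcal{A}^2(\mathbb{R}^{d-1}; L^2(I)))$ such that the residual equals $\overline{\nabla}_y q$, yielding the first two equations of (\ref{4.20}); the remaining two equations were already established. The main obstacle will be the rigorous treatment of the product $A^\varepsilon(\varepsilon^{-1}\nabla \boldsymbol{u}_\varepsilon)$ when $A$ lies only in the vector-valued Besicovitch space $B_\mathcal{A}^2$ rather than in the algebra $\mathcal{A}$ itself: one must approximate $A$ in $B_\mathcal{A}^2$ by elements of $\mathcal{A}(\mathbb{R}^{d-1}; L^2(I))$ and commute limits carefully using Corollary \ref{c2.2} together with the uniform $L^\infty$ bound from (\textbf{A1}).
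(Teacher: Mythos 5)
Your proposal follows essentially the same route as the paper's own proof: both establish $p_0=p_0(\overline x)$ by passing to the limit in the unconstrained weak form (so only the pressure term survives), then restrict to test functions with $\overline{\mathrm{div}}_y\boldsymbol{\varphi}=0$, divide the weak form by $\varepsilon$, and pass to the limit using \eqref{4.14}--\eqref{4.16} together with (\textbf{A1}), (\textbf{A3}) and Corollary~\ref{c2.2} for the $A^\varepsilon(\varepsilon^{-1}\nabla\boldsymbol u_\varepsilon)$ product, and finally invoke Proposition~\ref{p2.1} (De~Rham) to extract $q$. The only differences are cosmetic (your test function $\varepsilon\boldsymbol\chi^\varepsilon$ divided by $\varepsilon$ is the paper's $\boldsymbol\varphi^\varepsilon$, and your numerical exponent for the convective term's decay differs slightly from the paper's but both comfortably beat the required $o(\varepsilon)$), so the argument is sound and matches the paper's method.
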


\begin{proof}
Let $(\boldsymbol{u}_{0},p_{0})$ be determined by (\ref{4.14})-(\ref{4.16})
such that (\ref{4.18}) and (\ref{4.19}) are satisfied. Let us first show
that $p_{0}$ is independent of $y=(\overline{y},y_{3})$. To that end, let $%
\boldsymbol{\varphi }\in (\mathcal{C}_{0}^{\infty }(\Omega )\otimes \mathcal{%
A}^{\infty }(\mathbb{R}^{2};\mathcal{C}_{0}^{\infty }(I)))^{3}$. Testing (%
\ref{4.1}) against $\boldsymbol{\varphi }^{\varepsilon }(x)=\boldsymbol{%
\varphi }(\overline{x},x/\varepsilon )$ ($x\in \Omega ^{\varepsilon }$), we
have 
\begin{equation}
\begin{array}{l}
\int_{\Omega ^{\varepsilon }}A\left( \frac{x}{\varepsilon }\right) \nabla 
\boldsymbol{u}_{\varepsilon }\cdot \left[ (\nabla _{\overline{x}}\boldsymbol{%
\varphi })^{\varepsilon }+\frac{1}{\varepsilon }(\nabla _{y}\boldsymbol{%
\varphi })^{\varepsilon }\right] dx+\mu \frac{\varepsilon ^{2}}{%
K_{\varepsilon }}\int_{\Omega ^{\varepsilon }}\frac{\boldsymbol{u}%
_{\varepsilon }}{\varepsilon ^{2}}\boldsymbol{\varphi }^{\varepsilon }dx \\ 
\\ 
+\frac{\rho }{\phi ^{2}}\int_{\Omega ^{\varepsilon }}(\boldsymbol{u}%
_{\varepsilon }\cdot \nabla )\boldsymbol{u}_{\varepsilon }\boldsymbol{%
\varphi }^{\varepsilon }dx-\int_{\Omega ^{\varepsilon }}p_{\varepsilon }%
\left[ (\func{div}_{\overline{x}}\boldsymbol{\varphi })^{\varepsilon }+\frac{%
1}{\varepsilon }(\func{div}_{y}\boldsymbol{\varphi })^{\varepsilon }\right]
dx \\ 
\\ 
=\int_{\Omega ^{\varepsilon }}\boldsymbol{f}\boldsymbol{\varphi }%
^{\varepsilon }dx.%
\end{array}
\label{4.21}
\end{equation}%
Using the first two estimates in (\ref{4.13}) together with (\ref{4.3}) give 
\begin{eqnarray*}
\left\vert \int_{\Omega ^{\varepsilon }}(\boldsymbol{u}_{\varepsilon }\cdot
\nabla )\boldsymbol{u}_{\varepsilon }\boldsymbol{\varphi }^{\varepsilon
}dx\right\vert &\leq &\left\Vert \boldsymbol{u}_{\varepsilon }\right\Vert
_{L^{4}(\Omega ^{\varepsilon })}\left\Vert \nabla \boldsymbol{u}%
_{\varepsilon }\right\Vert _{L^{2}(\Omega ^{\varepsilon })}\left\Vert 
\boldsymbol{\varphi }^{\varepsilon }\right\Vert _{L^{4}(\Omega ^{\varepsilon
})} \\
&\leq &C\varepsilon \left\Vert \nabla \boldsymbol{u}_{\varepsilon
}\right\Vert _{L^{2}(\Omega ^{\varepsilon })}^{2}\left\Vert \nabla 
\boldsymbol{\varphi }^{\varepsilon }\right\Vert _{L^{2}(\Omega ^{\varepsilon
})} \\
&\leq &C\varepsilon ^{3}.
\end{eqnarray*}%
Thus 
\begin{equation}
\frac{1}{\varepsilon }\int_{\Omega ^{\varepsilon }}(\boldsymbol{u}%
_{\varepsilon }\cdot \nabla )\boldsymbol{u}_{\varepsilon }\boldsymbol{%
\varphi }^{\varepsilon }dx\rightarrow 0\text{ when }E^{\prime }\ni
\varepsilon \rightarrow 0.  \label{4.22}
\end{equation}%
Hence, passing to the limit \ when $E^{\prime }\ni \varepsilon \rightarrow 0$
in (\ref{4.21}) using (\ref{4.14})-(\ref{4.16}) leads to 
\begin{equation*}
\int_{\Omega }\int_{-1}^{1}M(p_{0}(\overline{x},\cdot ,y_{3})\func{div}_{y}%
\boldsymbol{\varphi }(\overline{x},\cdot ,y_{3}))dy_{3}d\overline{x}=0,
\end{equation*}%
which means that $p_{0}$ does not depend on $y=(\overline{y},y_{3})$.

Now, coming back to (\ref{4.21}) and choosing there $\boldsymbol{\varphi }$
such that $\func{div}_{y}\boldsymbol{\varphi }=0$, and next dividing both
sides of the resulting equation by $\varepsilon $, we get 
\begin{equation}
\begin{array}{l}
\frac{1}{\varepsilon }\int_{\Omega ^{\varepsilon }}A\left( \frac{x}{%
\varepsilon }\right) \nabla \boldsymbol{u}_{\varepsilon }\cdot \left[
(\nabla _{\overline{x}}\boldsymbol{\varphi })^{\varepsilon }+\frac{1}{%
\varepsilon }(\nabla _{y}\boldsymbol{\varphi })^{\varepsilon }\right] dx+\mu 
\frac{\varepsilon ^{2}}{K_{\varepsilon }}\frac{1}{\varepsilon }\int_{\Omega
^{\varepsilon }}\frac{\boldsymbol{u}_{\varepsilon }}{\varepsilon ^{2}}%
\boldsymbol{\varphi }^{\varepsilon }dx \\ 
+\frac{\rho }{\phi ^{2}}\frac{1}{\varepsilon }\int_{\Omega ^{\varepsilon }}(%
\boldsymbol{u}_{\varepsilon }\cdot \nabla )\boldsymbol{u}_{\varepsilon }%
\boldsymbol{\varphi }^{\varepsilon }dx-\frac{1}{\varepsilon }\int_{\Omega
^{\varepsilon }}p_{\varepsilon }(\func{div}_{\overline{x}}\boldsymbol{%
\varphi })^{\varepsilon }dx=\frac{1}{\varepsilon }\int_{\Omega ^{\varepsilon
}}\boldsymbol{f}\boldsymbol{\varphi }^{\varepsilon }dx.%
\end{array}
\label{4.23}
\end{equation}%
We pass to the limit in (\ref{4.23}) by considering each term separately.
First, since $A\in (B_{\mathcal{A}}^{2}(\mathbb{R}^{2};L^{2}(I))\cap
L^{\infty }(\mathbb{R}^{2}\times I))^{3\times 3}$, the matrix-functions $%
A\nabla _{\overline{x}}\boldsymbol{\varphi }$ and $A\nabla _{y}\boldsymbol{%
\varphi }$ can be seen as test functions for the weak $\Sigma _{\mathcal{A}}$%
-convergence. Therefore, appealing to (\ref{4.14})-(\ref{4.16}) in
conjunction with (\ref{4.12}), we pass to the limit when $E^{\prime }\ni
\varepsilon \rightarrow 0$ in (\ref{4.23}) and obtain 
\begin{equation}
\begin{array}{l}
\int_{\Omega }\int_{-1}^{1}M(A\overline{\nabla }_{y}\boldsymbol{u}_{0}\cdot
\nabla _{y}\boldsymbol{\varphi })dy_{3}d\overline{x}+\frac{\mu }{K}%
\int_{\Omega }\int_{-1}^{1}M(\boldsymbol{u}_{0}\boldsymbol{\varphi })dy_{3}d%
\overline{x} \\ 
-\int_{\Omega }\int_{-1}^{1}p_{0}(\overline{x})M(\func{div}_{\overline{x}}%
\boldsymbol{\varphi })dy_{3}d\overline{x}=\int_{\Omega }\int_{-1}^{1}M(%
\boldsymbol{f}\boldsymbol{\varphi })dy_{3}d\overline{x}.%
\end{array}
\label{4.24}
\end{equation}%
Since (\ref{4.24}) holds for every $\boldsymbol{\varphi }\in (\mathcal{C}%
_{0}^{\infty }(\Omega )\otimes \mathcal{A}^{\infty }(\mathbb{R}^{2};\mathcal{%
C}_{0}^{\infty }(I)))^{3}$ with $\func{div}_{y}\boldsymbol{\varphi }=0$, we
deduce from Proposition \ref{p2.1} the existence of $q\in L^{2}(\Omega ;%
\mathcal{B}_{\mathcal{A}}^{2}(\mathbb{R}^{2};L^{2}(I)))$ such that (\ref%
{4.20}) holds. The proof is therefore completed.
\end{proof}

We are now in a position to prove the main result of the current subsection,
which besides, is one of the main result of the work.

\begin{theorem}
\label{t4.2}Under the assumptions of Theorem \emph{\ref{t4.1}}, the sequence 
$(\boldsymbol{u}_{\varepsilon }/\varepsilon ^{2},p_{\varepsilon
})_{\varepsilon >0}$ weakly $\Sigma _{\mathcal{A}}$-converges in $%
L^{2}(\Omega ^{\varepsilon })^{3}\times L_{0}^{2}(\Omega ^{\varepsilon })$
towards $(\boldsymbol{u}_{0},p_{0})$ determined by \emph{(\ref{4.14})-(\ref%
{4.16})}. Moreover, $p_{0}\in H^{1}(\Omega )$ and, defining $\boldsymbol{u}=(%
\boldsymbol{u}^{\prime },u_{3})$ by \emph{(\ref{4.18})}, one has $u_{3}=0$
and $(\boldsymbol{u}^{\prime },p_{0})$ is the unique solution of the
effective problem 
\begin{equation}
\left\{ 
\begin{array}{l}
\boldsymbol{u}^{\prime }=\widehat{A}(\boldsymbol{f}_{1}-\nabla _{\overline{x}%
}p_{0})\text{ in }\Omega \\ 
\func{div}_{\overline{x}}\boldsymbol{u}^{\prime }=0\text{ in }\Omega \text{
and }\boldsymbol{u}^{\prime }\cdot \nu =0\text{ on }\partial \Omega ,%
\end{array}%
\right.  \label{4.26}
\end{equation}%
where $\widehat{A}=(\widehat{a}_{ij})_{1\leq i,j\leq 2}$ is a symmetric,
positive definite $2\times 2$ matrix defined by its entries 
\begin{equation*}
\widehat{a}_{ij}=\int_{-1}^{1}M(A\overline{\nabla }_{y}\boldsymbol{w}%
_{i}\cdot \overline{\nabla }_{y}\boldsymbol{w}_{j})dy_{3}+\frac{\mu }{K}%
\int_{-1}^{1}M(\boldsymbol{w}_{i}\boldsymbol{w}_{j})dy_{3},\ 1\leq i,j\leq 2.
\end{equation*}%
Here $\boldsymbol{w}_{i}$ $(1\leq i\leq 2)$ is the unique solution in $(%
\mathcal{B}_{\mathcal{A}}^{1,2}(\mathbb{R}^{2};H_{0}^{1}(I)))^{3}$ of the
Stokes-Brinkmann system 
\begin{equation}
\left\{ 
\begin{array}{l}
-\overline{\func{div}}_{y}\left( A(y)\overline{\nabla }_{y}\boldsymbol{w}%
_{i}\right) +\frac{\mu }{K}\boldsymbol{w}_{i}+\overline{\nabla }_{y}\pi
_{i}=e_{i}\text{ in }\mathbb{R}^{2}\times I \\ 
\overline{\func{div}}_{y}\boldsymbol{w}_{i}=0\text{ in }\mathbb{R}^{2}\times
I,%
\end{array}%
\right.  \label{4.27}
\end{equation}%
$e_{i}$ being the $i$\emph{th} vector of the canonical basis in $\mathbb{R}%
^{3}$.
\end{theorem}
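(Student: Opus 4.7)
Theorem~\ref{t4.1} already delivers the three-scale system \eqref{4.20}, the macroscopic constraints \eqref{4.19} and \eqref{4.19'}, and the fact that $p_{0}$ is independent of $y$. What remains is to eliminate the corrector $\boldsymbol{u}_{0}$ and the micro-pressure $q$ so as to close \eqref{4.26}, to identify $\widehat{A}$, and to prove $p_{0}\in H^{1}(\Omega )$, uniqueness of the effective problem, and convergence of the full family. The strategy is a separation-of-scales based on the linearity of \eqref{4.20} in the $\overline{x}$-dependent forcing $\boldsymbol{f}_{1}-\nabla _{\overline{x}}p_{0}$. I first establish well-posedness of the cell problem \eqref{4.27}: setting $V_{\#}=\{\boldsymbol{v}\in \lbrack \mathcal{B}_{\mathcal{A}}^{1,2}(\mathbb{R}^{d-1};H_{0}^{1}(I))]^{d}:\overline{\func{div}}_{y}\boldsymbol{v}=0\}$, the bilinear form
\begin{equation*}
a(\boldsymbol{v},\boldsymbol{w})=\int_{-1}^{1}M(A\overline{\nabla }_{y}\boldsymbol{v}:\overline{\nabla }_{y}\boldsymbol{w})d\zeta +\frac{\mu }{K}\int_{-1}^{1}M(\boldsymbol{v}\cdot \boldsymbol{w})d\zeta
\end{equation*}
is continuous and coercive on $V_{\#}$ thanks to \textbf{(A1)} and the Brinkmann term; Lax-Milgram yields a unique $\boldsymbol{w}_{i}\in V_{\#}$ and Proposition~\ref{p2.1} supplies a pressure $\pi _{i}\in \mathcal{B}_{\mathcal{A}}^{2}(\mathbb{R}^{d-1};L^{2}(I))$. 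The same uniqueness applied to the micro-system \eqref{4.20} at a fixed $\overline{x}$ forces
\begin{equation*}
\boldsymbol{u}_{0}(\overline{x},y)=\sum_{i=1}^{d-1}\bigl(f_{1,i}(\overline{x})-\tfrac{\partial p_{0}}{\partial \overline{x}_{i}}(\overline{x})\bigr)\boldsymbol{w}_{i}(y).
\end{equation*}

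Applying $\int_{-1}^{1}M(\cdot )d\zeta $ to this ansatz and restricting to the first $d-1$ components gives $\boldsymbol{u}^{\prime }=\widetilde{A}(\boldsymbol{f}_{1}-\nabla _{\overline{x}}p_{0})$ with $\widetilde{a}_{ki}=\int_{-1}^{1}M(w_{i,k})d\zeta $. Testing the weak form of \eqref{4.27} for $\boldsymbol{w}_{i}$ against $\boldsymbol{w}_{j}\in V_{\#}$ kills the pressure term and produces
\begin{equation*}
\int_{-1}^{1}M(A\overline{\nabla }_{y}\boldsymbol{w}_{i}:\overline{\nabla }_{y}\boldsymbol{w}_{j})d\zeta +\frac{\mu }{K}\int_{-1}^{1}M(\boldsymbol{w}_{i}\cdot \boldsymbol{w}_{j})d\zeta =\int_{-1}^{1}M(w_{j,i})d\zeta ,
\end{equation*}
i.e.\ $\widehat{a}_{ij}=\widetilde{a}_{ij}$, so $\widetilde{A}=\widehat{A}$. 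Symmetry of $\widehat{A}$ then follows from the symmetry of $A$ in \textbf{(A1)} by swapping $i\leftrightarrow j$; positive-definiteness comes from evaluating on $\boldsymbol{w}_{\xi }=\sum _{i}\xi _{i}\boldsymbol{w}_{i}\in V_{\#}$ and using coercivity of $a$, since $\widehat{A}\xi \cdot \xi =a(\boldsymbol{w}_{\xi },\boldsymbol{w}_{\xi })=0$ forces $\boldsymbol{w}_{\xi }=0$ in $V_{\#}$ and hence $\xi =0$ by uniqueness for \eqref{4.27}.

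Invertibility of $\widehat{A}$ together with $\boldsymbol{u}^{\prime }\in L^{2}(\Omega )^{d-1}$ now yields $\nabla _{\overline{x}}p_{0}=\boldsymbol{f}_{1}-\widehat{A}^{-1}\boldsymbol{u}^{\prime }\in L^{2}(\Omega )^{d-1}$, so $p_{0}\in H^{1}(\Omega )$, with the zero-mean normalization inherited from $L_{0}^{2}(\Omega ^{\varepsilon })$. Combining this Darcy-type law with the incompressibility constraints of \eqref{4.19}, \eqref{4.19'} closes system \eqref{4.26}. Substituting the Darcy law into $\func{div}_{\overline{x}}\boldsymbol{u}^{\prime }=0$ reduces \eqref{4.26} to a uniquely solvable Neumann problem for $p_{0}$ in $H^{1}(\Omega )\cap L_{0}^{2}(\Omega )$, which gives uniqueness of $(\boldsymbol{u}^{\prime },p_{0})$ and thereby upgrades the subsequential convergence of Theorem~\ref{t4.1} to convergence of the full family $(\boldsymbol{u}_{\varepsilon }/\varepsilon ^{2},p_{\varepsilon })$.

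The main obstacle is the rigorous separation of scales: the factorisation of $\boldsymbol{u}_{0}$ relies on uniqueness for the Stokes-Brinkmann cell problem \eqref{4.27} in the non-separable generalised Besicovitch-Sobolev setting, which in turn rests on the De Rham-type Proposition~\ref{p2.1} and Corollary~\ref{c2.1} to reconstruct the pressures $\pi _{i}$ and $q$ from the divergence-free constraint, and on ergodicity of $\mathcal{A}$ together with connectedness of $I$ to pin these pressures down modulo additive constants. Passing to the limit in the nonlinear convective term $(\boldsymbol{u}_{\varepsilon }\cdot \nabla )\boldsymbol{u}_{\varepsilon }$ is not an issue thanks to the scaling estimate \eqref{4.22} already established in the proof of Theorem~\ref{t4.1}.
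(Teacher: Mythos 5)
Your proposal is correct and reaches the same endpoints by essentially the same mechanism as the paper: well-posedness of the cell problem \eqref{4.27} via Lax--Milgram and Proposition~\ref{p2.1} for the pressure, symmetry and positive-definiteness of $\widehat{A}$, recovery of $p_{0}\in H^{1}(\Omega )$ from invertibility of $\widehat{A}$, and upgrading subsequential to full-sequence convergence via uniqueness of the Neumann problem \eqref{4.26}. The one genuine difference is how the Darcy law is extracted from the three-scale system: you posit the pointwise factorization $\boldsymbol{u}_{0}(\overline{x},y)=\sum_{i=1}^{d-1}\bigl(f_{1,i}(\overline{x})-\partial p_{0}/\partial \overline{x}_{i}(\overline{x})\bigr)\boldsymbol{w}_{i}(y)$ and then average, whereas the paper never writes the ansatz down; instead it inserts $\psi\otimes\boldsymbol{w}_{i}$ into the limit variational identity (4.24) and $\boldsymbol{u}_{0}(\overline{x},\cdot)$ into the cell identity (4.27$'$), compares the two using symmetry of $A$, and obtains (4.29$'$) directly as a distributional identity in $\overline{x}$. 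The paper's dual route has the advantage of avoiding the mild circularity lurking in yours: at the moment you write the factorization, $p_{0}$ is only known to lie in $L^{2}(\Sigma )$, so $\nabla _{\overline{x}}p_{0}$ is not yet a function and the pointwise ansatz is not literally meaningful; $H^{1}$-regularity of $p_{0}$ only comes after the Darcy law and invertibility of $\widehat{A}$ are in hand. Your argument closes cleanly if you simply derive the averaged identity (4.29$'$) first (as the paper does) and interpret the pressure term in divergence form, rather than asserting the superposition of $\boldsymbol{u}_{0}$ as a prior step. One further small point worth spelling out: the inference ``$\boldsymbol{w}_{\xi }=0$ forces $\xi =0$ by uniqueness'' should be replaced by the observation that $\boldsymbol{w}_{\xi }=0$ reduces the cell equation to $\overline{\nabla }_{\overline{y},\zeta }\pi _{\xi }=\sum _{i\le d-1}\xi _{i}e_{i}$, a constant with vanishing $\zeta$-component, and gradients of elements of $\mathcal{B}_{\mathcal{A}}^{1,2}$ have zero mean, hence $\xi =0$.
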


\begin{proof}
First and foremost, let us prove that (\ref{4.27}) possesses a unique
solution $\boldsymbol{w}_{i}\in (\mathcal{B}_{\mathcal{A}}^{1,2}(\mathbb{R}%
^{2};H_{0}^{1}(I)))^{3}$. It is a fact that (\ref{4.27}) is equivalent to 
\begin{equation}
\left\{ 
\begin{array}{l}
\int_{-1}^{1}M(A\overline{\nabla }_{y}\boldsymbol{w}_{i}\cdot \overline{%
\nabla }_{y}\boldsymbol{v})dy_{3}+\frac{\mu }{K}\int_{-1}^{1}M(\boldsymbol{w}%
_{i}\boldsymbol{v})dy_{3}=\int_{-1}^{1}M(\boldsymbol{v})e_{i}dy_{3} \\ 
\\ 
\text{for all }\boldsymbol{v}\in (\mathcal{B}_{\mathcal{A}}^{1,2}(\mathbb{R}%
^{2};H_{0}^{1}(I)))^{3}\text{ with }\overline{\func{div}}_{y}\boldsymbol{v}%
=0.%
\end{array}%
\right.  \label{4.27'}
\end{equation}%
In view of the assumption (\textbf{A1}) on $A$, (\ref{4.27'}) possesses a
unique solution in $\mathcal{B}_{\func{div}}^{1,2}=\{\boldsymbol{v}\in (%
\mathcal{B}_{\mathcal{A}}^{1,2}(\mathbb{R}^{2};H_{0}^{1}(I)))^{3}:\overline{%
\func{div}}_{y}\boldsymbol{v}=0\}$.

Next, we recall that by density, (\ref{4.24}) still holds for $\boldsymbol{%
\varphi }\in L^{2}(\Omega ;\mathcal{B}_{\func{div}}^{1,2})$, so that,
choosing in (\ref{4.24}) the test function $\boldsymbol{\varphi }=\psi
\otimes \boldsymbol{w}_{i}$ with $\psi \in \mathcal{C}_{0}^{\infty }(\Omega
) $ and $\boldsymbol{w}_{i}$ ($1\leq i\leq 3$) defined by (\ref{4.27'}), we
obtain, after integrating by parts, 
\begin{eqnarray}
&&\int_{-1}^{1}M(A\overline{\nabla }_{y}\boldsymbol{u}_{0}\cdot \overline{%
\nabla }_{y}\boldsymbol{w}_{i})dy_{3}+\frac{\mu }{K}\int_{-1}^{1}M(%
\boldsymbol{u}_{0}\boldsymbol{w}_{i})dy_{3}+\nabla _{\overline{x}%
}p_{0}\int_{-1}^{1}M(\boldsymbol{w}_{i})dy_{3}  \label{4.28} \\
&=&\boldsymbol{f}(x)\int_{-1}^{1}M(\boldsymbol{w}_{i})dy_{3}.  \notag
\end{eqnarray}%
Taking in (\ref{4.27'}) the test function $\boldsymbol{u}_{0}(\overline{x}%
,\cdot )$, we obtain 
\begin{equation}
\left\{ 
\begin{array}{l}
\int_{-1}^{1}M(A\overline{\nabla }_{y}\boldsymbol{w}_{i}\cdot \overline{%
\nabla }_{y}\boldsymbol{u}_{0})dy_{3}+\frac{\mu }{K}\int_{-1}^{1}M(%
\boldsymbol{u}_{0}\boldsymbol{w}_{i})dy_{3}=\int_{-1}^{1}M(\boldsymbol{u}%
_{0})e_{i}dy_{3} \\ 
\\ 
\ \ \ =\int_{-1}^{1}M(u_{0,i})dy_{3}=u_{i}(\overline{x})\text{, }1\leq i\leq
3\text{.}%
\end{array}%
\right.  \label{4.29}
\end{equation}%
Using the fact that $A$ is symmetric, we obtain 
\begin{equation*}
\int_{-1}^{1}M(A\overline{\nabla }_{y}\boldsymbol{u}_{0}\cdot \overline{%
\nabla }_{y}\boldsymbol{w}_{i})dy_{3}=\int_{-1}^{1}M(A\overline{\nabla }_{y}%
\boldsymbol{w}_{i}:\overline{\nabla }_{y}\boldsymbol{u}_{0})dy_{3},
\end{equation*}%
so that, comparing (\ref{4.28}) and (\ref{4.29}), we get 
\begin{equation}
u_{i}(\overline{x})=\sum_{j=1}^{2}\left( \int_{-1}^{1}M(\boldsymbol{w}%
_{i})e_{j}dy_{3}\right) \left( f_{1,j}(\overline{x})-\frac{\partial p_{0}}{%
\partial \overline{x}_{j}}(\overline{x})\right) .  \label{4.29'}
\end{equation}%
But 
\begin{equation*}
\int_{-1}^{1}M(\boldsymbol{w}_{i})e_{j}dy_{3}=\int_{-1}^{1}M(A\overline{%
\nabla }_{y}\boldsymbol{w}_{i}:\overline{\nabla }_{y}\boldsymbol{w}%
_{j})dy_{3}+\frac{\mu }{K}\int_{-1}^{1}M(\boldsymbol{w}_{i}\boldsymbol{w}%
_{j})dy_{3},
\end{equation*}%
in such a way that, setting 
\begin{equation*}
\widehat{a}_{ij}=\int_{-1}^{1}M(A\overline{\nabla }_{y}\boldsymbol{w}%
_{i}\cdot \overline{\nabla }_{y}\boldsymbol{w}_{j})dy_{3}+\frac{\mu }{K}%
\int_{-1}^{1}M(\boldsymbol{w}_{i}\boldsymbol{w}_{j})dy_{3},
\end{equation*}%
and accounting of (\ref{4.19'}) (that is $\int_{-1}^{1}M(u_{0,3})dy_{3}=0$),
we deduce $\widehat{a}_{i3}=0$ for $1\leq i\leq 2$. Since $\widehat{a}_{ij}=%
\widehat{a}_{ji}$, we also deduce that $\widehat{a}_{3i}=0$ for $1\leq i\leq
2$. This shows that $\widehat{A}=(\widehat{a}_{ij})_{1\leq i,j\leq 2}$ is a $%
2\times 2$ symmetric and positive definite matrix. From (\ref{4.29'}) we get
at once 
\begin{equation}
\boldsymbol{u}^{\prime }(\overline{x})=\widehat{A}(\boldsymbol{f}_{1}(%
\overline{x})-\nabla _{\overline{x}}p_{0}(\overline{x})).\ \ \ \ \ \ \ \ \ \
\ \ \ \ \ \ \   \label{4.30'}
\end{equation}%
Now using (\ref{4.19}) together with the fact that (\ref{4.30'}) holds in
the classical sense of distributions in $\Omega $, we get that $p_{0}$
solves the problem 
\begin{equation}
\func{div}_{\overline{x}}(\widehat{A}(\boldsymbol{f}_{1}(\overline{x}%
)-\nabla _{\overline{x}}p_{0}(\overline{x})))=0\text{ in }\Omega \text{ and }%
(\widehat{A}(\boldsymbol{f}_{1}(\overline{x})-\nabla _{\overline{x}}p_{0}(%
\overline{x})))\cdot \nu =0\text{ on }\partial \Omega ,  \label{4.30''}
\end{equation}%
which shows that $p_{0}\in H^{1}(\Omega )$ is uniquely determined by (\ref%
{4.30''}). The convergence of the whole sequence $(\boldsymbol{u}%
_{\varepsilon }/\varepsilon ^{2},p_{\varepsilon })_{\varepsilon >0}$ is a
consequence of the uniqueness of the solution to (\ref{4.30''}) (and hence
to (\ref{4.26})). This completes the proof of the theorem.
\end{proof}

\subsection{Homogenization results: case when $K_{\protect\varepsilon }\gg 
\protect\varepsilon ^{2}$\label{subsec4.3}}

In view of Propositions \ref{p4.1} and \ref{p4.2}, it holds that 
\begin{eqnarray}
\left\Vert \boldsymbol{u}_{\varepsilon }\right\Vert _{L^{2}(\Omega
^{\varepsilon })^{3}} &\leq &C\varepsilon ^{\frac{5}{2}}\text{, }\left\Vert
\nabla \boldsymbol{u}_{\varepsilon }\right\Vert _{L^{2}(\Omega ^{\varepsilon
})^{3\times 3}}\leq C\varepsilon ^{\frac{3}{2}}\text{ and }\left\Vert
p_{\varepsilon }\right\Vert _{L^{2}(\Omega ^{\varepsilon })^{3}}\leq
C\varepsilon ^{\frac{1}{2}}  \label{4.30} \\
\text{for all }\varepsilon &>&0,  \notag
\end{eqnarray}%
where $C>0$ is independent of $\varepsilon >0$. Thus, given an ordinary
sequence $E$, there exist a subsequence $E^{\prime }$ of $E$ and $%
\boldsymbol{u}_{0}\in (L^{2}(\Omega ;\mathcal{B}_{\mathcal{A}}^{1,2}(\mathbb{%
R}^{2};H_{0}^{1}(I))))^{3}$, $p_{0}\in L^{2}(\Omega ;\mathcal{B}_{\mathcal{A}%
}^{2}(\mathbb{R}^{2};L^{2}(I)))$ such that, when $E^{\prime }\ni \varepsilon
\rightarrow 0$, 
\begin{equation}
\frac{\boldsymbol{u}_{\varepsilon }}{\varepsilon ^{2}}\rightarrow 
\boldsymbol{u}_{0}\text{ in }L^{2}(\Omega ^{\varepsilon })^{3}\text{-weak }%
\Sigma _{\mathcal{A}},  \label{4.31}
\end{equation}%
\begin{equation}
\frac{1}{\varepsilon }\nabla \boldsymbol{u}_{\varepsilon }\rightarrow 
\overline{\nabla }_{y}\boldsymbol{u}_{0}\text{ in }L^{2}(\Omega
^{\varepsilon })^{3\times 3}\text{-weak }\Sigma _{\mathcal{A}}  \label{4.32}
\end{equation}%
and 
\begin{equation}
p_{\varepsilon }\rightarrow p_{0}\text{ in }L^{2}(\Omega ^{\varepsilon })%
\text{-weak }\Sigma _{\mathcal{A}}.  \label{4.33}
\end{equation}%
Defining the function $\boldsymbol{u}$ as in (\ref{4.18}), we have that (\ref%
{4.19}) holds and $u_{3}=0$. The following is the first main result when $%
K_{\varepsilon }\gg \varepsilon ^{2}$.

\begin{theorem}
\label{t4.3}Let $A$ be an ergodic algebra with mean value on $\mathbb{R}^{2}$%
. Assume that \emph{(\textbf{A3})} holds and $K_{\varepsilon }\gg
\varepsilon ^{2}$. Let $(\boldsymbol{u}_{\varepsilon },p_{\varepsilon })$ be
a solution of \emph{(\ref{4.1})}. Then we have \emph{(\ref{4.31})-(\ref{4.33}%
)}. Furthermore there exists $q\in L^{2}(\Omega ;\mathcal{B}_{\mathcal{A}%
}^{2}(\mathbb{R}^{2};L^{2}(I)))$ such that $(\boldsymbol{u}_{0},p_{0},q)$
solves the system 
\begin{equation}
\left\{ 
\begin{array}{l}
-\overline{\func{div}}_{y}\left( A(y)\overline{\nabla }_{y}\boldsymbol{u}%
_{0}\right) +\overline{\nabla }_{y}q=\boldsymbol{f}_{1}-\nabla _{\overline{x}%
}p_{0}\text{ in }\Omega \times \mathbb{R}^{2}\times I, \\ 
\\ 
\overline{\func{div}}_{y}\boldsymbol{u}_{0}=0\text{ in }\Omega \times 
\mathbb{R}^{2}\times I, \\ 
\\ 
\func{div}_{\overline{x}}\left( \int_{-1}^{1}M(\boldsymbol{u}_{0}(\overline{x%
},\cdot ,y_{3}))dy_{3}\right) =0\text{ in }\Omega , \\ 
\\ 
\left( \int_{-1}^{1}M(\boldsymbol{u}_{0}(\overline{x},\cdot
,y_{3}))dy_{3}\right) \cdot \nu =0\text{ on }\partial \Omega \text{.}%
\end{array}%
\right.  \label{4.34}
\end{equation}
\end{theorem}

\begin{proof}
We proceed as in the proof of Theorem \ref{t4.1}. Firstly, we see that $%
p_{0} $ is independent of $y$. Next, choosing in (\ref{4.21}) a test
function $\boldsymbol{\varphi }$ such that $\func{div}_{y}\boldsymbol{%
\varphi }=0$ and then multiplying the resulting equality by $1/\varepsilon $%
, we appeal once again to (\ref{4.31})-(\ref{4.33}) to obtain 
\begin{equation}
\left\{ 
\begin{array}{l}
\int_{\Omega }\int_{-1}^{1}M(A\overline{\nabla }_{y}\boldsymbol{u}%
_{0}:\nabla _{y}\boldsymbol{\varphi })dy_{3}d\overline{x}-\int_{\Omega
}\int_{-1}^{1}p_{0}(\overline{x})M(\func{div}_{\overline{x}}\boldsymbol{%
\varphi })dy_{3}d\overline{x} \\ 
\\ 
\ \ \ =\int_{\Omega }\int_{-1}^{1}M(\boldsymbol{f}\boldsymbol{\varphi }%
)dy_{3}d\overline{x}.%
\end{array}%
\right.  \label{4.35}
\end{equation}%
This yields as before, the existence of $q$ such that (\ref{4.34}) holds.
\end{proof}

As in the previous subsection, let us consider the following Stokes system: 
\begin{equation}
\left\{ 
\begin{array}{l}
\text{Find }\boldsymbol{w}_{j}\in B_{\#\mathcal{A}}^{1,2}(\mathbb{R}%
^{2};H_{0}^{1}(I))^{3}\text{ such that } \\ 
-\overline{\func{div}}_{y}(A\overline{\nabla }_{y}\boldsymbol{w}_{j})+%
\overline{\nabla }_{y}\pi _{j}=e_{j}\text{ in }\mathbb{R}^{2}\times I, \\ 
\overline{\func{div}}_{y}\boldsymbol{w}_{j}=0\text{ in }\mathbb{R}^{2}\times
I%
\end{array}%
\right. \ \ \ \ \ \ \ \ \ \ \ \ \ \ \ \ \ \ \ \ \ \ \ \ \ \ \ \ \ \ 
\label{4.36}
\end{equation}%
Then in view of \cite[Lemma 4.1]{CJW2024}, system (\ref{4.36}) possesses a
unique solution $\boldsymbol{w}_{j}\in B_{\#\mathcal{A}}^{1,2}(\mathbb{R}%
^{2};H_{0}^{1}(I))^{3}$. Also, proceeding as in Subsection \ref{subsec4.2},
we are able to see that, defining $\boldsymbol{u}$ by (\ref{4.18}) and
setting 
\begin{equation}
\widehat{A}=(\widehat{a}_{ij})_{1\leq i,j\leq 2}\text{ with }\widehat{a}%
_{ij}=\int_{-1}^{1}M(A\overline{\nabla }_{y}\boldsymbol{w}_{i}:\overline{%
\nabla }_{y}\boldsymbol{w}_{j})dy_{3},\ 1\leq i,j\leq 2,  \label{e4.38}
\end{equation}
($\widehat{A}=(\widehat{a}_{ij})_{1\leq i,j\leq 2}$ is a symmetric positive
definite $2\times 2$ matrix), we have $u_{3}=0$ and 
\begin{equation}
\left\{ 
\begin{array}{l}
\boldsymbol{u}^{\prime }=\widehat{A}(\boldsymbol{f}_{1}-\nabla _{\overline{x}%
}p_{0})\text{ in }\Omega \\ 
\func{div}_{\overline{x}}\boldsymbol{u}^{\prime }=0\text{ in }\Omega \text{
and }\boldsymbol{u}^{\prime }\cdot \nu =0\text{ on }\partial \Omega .%
\end{array}%
\right.  \label{e4.37}
\end{equation}%
The homogenized result in this case is therefore stated as follows.

\begin{theorem}
\label{t4.6}Under the assumptions of Theorem \emph{\ref{t4.3}}, the sequence 
$(\boldsymbol{u}_{\varepsilon }/\varepsilon ^{2},p_{\varepsilon
})_{\varepsilon >0}$ weakly $\Sigma _{\mathcal{A}}$-converges in $%
L^{2}(\Omega ^{\varepsilon })^{3}\times L_{0}^{2}(\Omega ^{\varepsilon })$
towards $(\boldsymbol{u}_{0},p_{0})$ determined by \emph{(\ref{4.31})-(\ref%
{4.33})}. In addition $p_{0}\in H^{1}(\Omega )$ and, defining $\boldsymbol{u}%
=(\boldsymbol{u}^{\prime },u_{3})$ by \emph{(\ref{4.18})}, one has $u_{3}=0$
and $(\boldsymbol{u}^{\prime },p_{0})$ is the unique solution of the
effective problem \emph{(\ref{e4.37})} where $\widehat{A}$ is given by \emph{%
(\ref{e4.38})}.
\end{theorem}

\subsection{Homogenization results: case when $K_{\protect\varepsilon }\ll 
\protect\varepsilon ^{2}$}

Throughout this subsection, we assume that $\varepsilon ^{2}/K_{\varepsilon
}\rightarrow 0$ as $\varepsilon \rightarrow 0$. In that case the following
estimates hold: 
\begin{eqnarray}
\left\Vert \boldsymbol{u}_{\varepsilon }\right\Vert _{L^{2}(\Omega
^{\varepsilon })^{3}} &\leq &C\varepsilon ^{\frac{3}{2}}K_{\varepsilon }^{%
\frac{1}{2}}\text{, }\left\Vert \nabla \boldsymbol{u}_{\varepsilon
}\right\Vert _{L^{2}(\Omega ^{\varepsilon })^{3\times 3}}\leq C\varepsilon ^{%
\frac{3}{2}}\text{, }\left\Vert p_{\varepsilon }\right\Vert _{L^{2}(\Omega
^{\varepsilon })^{3}}\leq C\frac{\varepsilon ^{\frac{3}{2}}}{K_{\varepsilon
}^{\frac{1}{2}}},  \label{4.43} \\
\text{for all }\varepsilon &>&0,  \notag
\end{eqnarray}%
$C$ being a positive constant independent of $\varepsilon $.

In view of (\ref{4.43}), the following result holds.

\begin{lemma}
\label{l4.2}Given an ordinary sequence $E$, there exists a subsequence $%
E^{\prime }$ of $E$ and functions $\boldsymbol{u}_{0}\in (L^{2}(\Omega ;%
\mathcal{B}_{\mathcal{A}}^{2}(\mathbb{R}^{2};L^{2}(I))))^{3}$, $\boldsymbol{u%
}_{1}\in (L^{2}(\Omega ;B_{\#\mathcal{A}}^{1,2}(\mathbb{R}%
^{2};H^{1}(I))))^{3}$ and $p_{0}\in L^{2}(\Omega ;\mathcal{B}_{\mathcal{A}%
}^{2}(\mathbb{R}^{2};L^{2}(I)))$ such that, as $E^{\prime }\ni \varepsilon
\rightarrow 0$, 
\begin{equation}
\frac{\boldsymbol{u}_{\varepsilon }}{\varepsilon K_{\varepsilon }^{\frac{1}{2%
}}}\rightarrow \boldsymbol{u}_{0}\text{ in }L^{2}(\Omega ^{\varepsilon })^{3}%
\text{-weak }\Sigma _{\mathcal{A}}\ \ \ \ \ \ \   \label{4.44}
\end{equation}%
\begin{equation}
\frac{\nabla \boldsymbol{u}_{\varepsilon }}{\varepsilon }\rightarrow \nabla
_{y}\boldsymbol{u}_{1}\text{ in }L^{2}(\Omega ^{\varepsilon })^{3\times 3}%
\text{-weak }\Sigma _{\mathcal{A}}\ \ \ \ \ \ \   \label{4.45}
\end{equation}%
\begin{equation}
\frac{K_{\varepsilon }^{\frac{1}{2}}}{\varepsilon }p_{\varepsilon
}\rightarrow p_{0}\text{ in }L^{2}(\Omega ^{\varepsilon })\text{-weak }%
\Sigma _{\mathcal{A}}.\ \ \ \ \ \ \   \label{4.46}
\end{equation}%
Moreover 
\begin{equation}
\overline{\func{div}}_{y}\boldsymbol{u}_{0}=0\text{ and }\func{div}_{y}%
\boldsymbol{u}_{1}=0\text{ in }\Omega \times \mathbb{R}^{2}\times I.
\label{4.46'}
\end{equation}
\end{lemma}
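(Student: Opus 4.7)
The plan is to extract weak $\Sigma_{\mathcal{A}}$-limits along a common subsequence using the abstract compactness results of Section~\ref{sec3}, upgrade the regularity of $\boldsymbol{u}_0$ via Proposition~\ref{p2.2}, identify the limit of $\nabla\boldsymbol{u}_\varepsilon/\varepsilon$ as a gradient via Corollary~\ref{c2.1}, and finally derive \eqref{4.46'} by passing to the limit in the incompressibility constraint.

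From \eqref{4.43}, each of $\boldsymbol{u}_\varepsilon/(\varepsilon K_\varepsilon^{1/2})$, $\nabla\boldsymbol{u}_\varepsilon/\varepsilon$, and $p_\varepsilon$ is bounded by $C\varepsilon^{1/2}$ in $L^{2}(\Omega^{\varepsilon})$, so Theorem~\ref{t2.1} furnishes a common subsequence $E'\subset E$ along which these three sequences admit weak $\Sigma_{\mathcal{A}}$-limits, denoted $\boldsymbol{u}_0$, $\mathbf{V}$, and $p_0$, lying in the corresponding $L^{2}(\Sigma;\mathcal{B}_{\mathcal{A}}^{2}(\mathbb{R}^{d-1};L^{2}(I)))$ spaces with the appropriate tensor rank. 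To place $\boldsymbol{u}_0$ in the finer space $L^{2}(\Sigma;\mathcal{B}_{\mathcal{A}}^{1,2}(\mathbb{R}^{d-1};H^{1}(I)))^{d}$ demanded by the statement, I will apply Proposition~\ref{p2.2} to $v_\varepsilon:=\boldsymbol{u}_\varepsilon/(\varepsilon K_\varepsilon^{1/2})$, checking the gradient hypothesis through $\varepsilon^{1/2}\|\nabla v_\varepsilon\|_{L^{2}(\Omega^{\varepsilon})}\leq C\varepsilon/K_\varepsilon^{1/2}=C(\varepsilon^{2}/K_\varepsilon)^{1/2}\to 0$.

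The main obstacle will be showing $\mathbf{V}=\nabla_{y}\boldsymbol{u}_1$ for some $\boldsymbol{u}_1\in L^{2}(\Sigma;B_{\#\mathcal{A}}^{1,2}(\mathbb{R}^{d-1};H^{1}(I)))^{d}$. I will apply Corollary~\ref{c2.1} row by row: for each $k\in\{1,\dots,d\}$, letting $\mathbf{V}_k:=(V_{kj})_{j=1}^{d}$ denote the $k$-th row of $\mathbf{V}$, I need to verify, for every $\varphi\in\mathcal{C}_{0}^{\infty}(\Sigma)$ and every $\boldsymbol{g}\in\mathcal{V}_{\func{div}}$, the identity
\begin{equation*}
\int_{\Sigma}\varphi(\overline{x})\int_{I} M\bigl(\mathbf{V}_k(\overline{x},\cdot,\zeta)\cdot\boldsymbol{g}(\cdot,\zeta)\bigr)\,d\zeta\,d\overline{x}=0.
\end{equation*}
Testing $\nabla u_{\varepsilon,k}$ against $\varphi\,\boldsymbol{g}^{\varepsilon}$ and integrating by parts, the $\func{div}_y$ contribution drops out (since $\func{div}_y\boldsymbol{g}=0$), leaving the remainder
\begin{equation*}
R_\varepsilon:=-\varepsilon^{-2}\int_{\Omega^{\varepsilon}}u_{\varepsilon,k}\,(\boldsymbol{g}')^{\varepsilon}\cdot\nabla_{\overline{x}}\varphi\,dx,
\end{equation*}
where $\boldsymbol{g}'$ denotes the first $d-1$ components of $\boldsymbol{g}$. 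The bound $\|\boldsymbol{u}_\varepsilon\|_{L^{2}}\leq C\varepsilon^{3/2}K_\varepsilon^{1/2}$ stated in \eqref{4.43} controls $R_\varepsilon$ only by $CK_\varepsilon^{1/2}$ and may diverge, so the key step will be to invoke the Poincar\'e-type inequality \eqref{4.2} of Lemma~\ref{l4.1}, which combined with $\|\nabla\boldsymbol{u}_\varepsilon\|_{L^{2}}\leq C\varepsilon^{3/2}$ yields the sharper estimate $\|\boldsymbol{u}_\varepsilon\|_{L^{2}(\Omega^{\varepsilon})}\leq C\varepsilon^{5/2}$; together with $\|(\boldsymbol{g}')^{\varepsilon}\cdot\nabla_{\overline{x}}\varphi\|_{L^{2}(\Omega^{\varepsilon})}=O(\varepsilon^{1/2})$, this gives $R_\varepsilon=O(\varepsilon)\to 0$. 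The orthogonality therefore holds and Corollary~\ref{c2.1} produces the required $\boldsymbol{u}_1$.

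Finally, \eqref{4.46'} is immediate: the trace of $\nabla\boldsymbol{u}_\varepsilon/\varepsilon$ equals $\func{div}\boldsymbol{u}_\varepsilon/\varepsilon=0$ identically, so its weak $\Sigma_{\mathcal{A}}$-limit $\operatorname{tr}(\mathbf{V})=\overline{\func{div}}_y\boldsymbol{u}_1$ vanishes in $\Sigma\times\mathbb{R}^{d-1}\times I$.
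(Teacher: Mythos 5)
Your argument is correct and follows the same skeleton as the paper's proof: extraction of limits along a common subsequence via Theorem~\ref{t2.1}, identification of the weak $\Sigma_{\mathcal{A}}$-limit $\mathbf{V}$ of $\nabla\boldsymbol{u}_\varepsilon/\varepsilon$ as a gradient through Corollary~\ref{c2.1} by verifying orthogonality to $\mathcal{V}_{\func{div}}$, and passage to the limit in $\func{div}\boldsymbol{u}_\varepsilon=0$ to get \eqref{4.46'}. The one substantive difference is the treatment of the remainder $R_\varepsilon$ left after integrating by parts against a divergence-free test field: the paper rewrites it as $-K_\varepsilon^{1/2}\cdot\varepsilon^{-1}\int_{\Omega^\varepsilon}\bigl(\boldsymbol{u}_\varepsilon/(\varepsilon K_\varepsilon^{1/2})\bigr)(\func{div}_{\overline{x}}\Phi)^\varepsilon\,dx$, uses \eqref{4.44}, and invokes the extra assertion ``$K_\varepsilon\to 0$'' to kill the prefactor, whereas you bound $R_\varepsilon=O(\varepsilon)$ directly from the sharper velocity estimate $\|\boldsymbol{u}_\varepsilon\|_{L^2(\Omega^\varepsilon)}\leq C\varepsilon^{5/2}$ obtained by combining \eqref{4.2} with \eqref{4.5}, exactly as recorded in Remark~\ref{r4.1}(i). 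Your version is more robust: the regime $K_\varepsilon>>\varepsilon^{2}$ does not by itself force $K_\varepsilon\to 0$, so the paper's argument quietly adds a hypothesis, while yours relies only on the stated a priori bounds, and it also avoids re-using the weak limit \eqref{4.44} where a plain norm estimate suffices. Finally, your explicit appeal to Proposition~\ref{p2.2} to place $\boldsymbol{u}_0$ in $(L^2(\Sigma;\mathcal{B}_{\mathcal{A}}^{1,2}(\mathbb{R}^{d-1};H^1(I))))^{d}$ fills in a step the paper attributes to Theorem~\ref{t2.1}, which on its own only yields membership at the $L^2$-Besicovitch level.
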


\begin{proof}
Applying Theorem \ref{t2.1}, we derive the existence of a subsequence $%
E^{\prime }$ of $E$ and of a triple $(\boldsymbol{u}_{0},\boldsymbol{v}%
,p_{0})\in (L^{2}(\Omega ;\mathcal{B}_{\mathcal{A}}^{1,2}(\mathbb{R}%
^{2};H^{1}(I))))^{3}\times (L^{2}(\Omega ;\mathcal{B}_{\mathcal{A}}^{1,2}(%
\mathbb{R}^{2};H^{1}(I))))^{3\times 3}\times L^{2}(\Omega ;\mathcal{B}_{%
\mathcal{A}}^{2}(\mathbb{R}^{2};L^{2}(I)))$ such that (\ref{4.44}) and (\ref%
{4.46}) hold, and further 
\begin{equation}
\frac{\nabla \boldsymbol{u}_{\varepsilon }}{\varepsilon }\rightarrow 
\boldsymbol{v}\text{ in }L^{2}(\Omega ^{\varepsilon })^{3\times 3}\text{%
-weak }\Sigma _{\mathcal{A}}\text{. \ \ \ \ \ \ \ \ \ \ \ \ \ \ }
\label{4.45'}
\end{equation}%
Let us characterize $\boldsymbol{v}$. To that end, let $\Phi \in (\mathcal{C}%
_{0}^{\infty }(\Omega )\otimes \mathcal{A}^{\infty }(\mathbb{R}^{2};\mathcal{%
C}_{0}^{\infty }(I)))^{3}$ be such that $\func{div}_{y}\Phi =0$. Then 
\begin{eqnarray}
\frac{1}{\varepsilon }\int_{\Omega ^{\varepsilon }}\frac{1}{\varepsilon }%
\nabla \boldsymbol{u}_{\varepsilon }\cdot \Phi ^{\varepsilon }dx &=&-\frac{1%
}{\varepsilon }\int_{\Omega ^{\varepsilon }}\frac{1}{\varepsilon }%
\boldsymbol{u}_{\varepsilon }(\func{div}_{\overline{x}}\Phi )^{\varepsilon
}dx  \label{4.47} \\
&=&-K_{\varepsilon }^{\frac{1}{2}}\frac{1}{\varepsilon }\int_{\Omega
^{\varepsilon }}\frac{\boldsymbol{u}_{\varepsilon }}{\varepsilon
K_{\varepsilon }^{\frac{1}{2}}}(\func{div}_{\overline{x}}\Phi )^{\varepsilon
}dx  \notag
\end{eqnarray}%
where in (\ref{4.47}), $\Phi ^{\varepsilon }(x)=\Phi (\overline{x}%
,x/\varepsilon )$ for $x\in \Omega ^{\varepsilon }$ and $\nabla \boldsymbol{u%
}_{\varepsilon }\cdot \Phi ^{\varepsilon }$ is viewed as the vector $(\nabla
u_{\varepsilon ,i}\cdot \Phi ^{\varepsilon })_{1\leq i\leq 3}$ and $%
\boldsymbol{u}_{\varepsilon }(\func{div}_{\overline{x}}\Phi )^{\varepsilon }$
is viewed as the vector $(u_{\varepsilon ,i}(\func{div}_{\overline{x}}\Phi
)^{\varepsilon })_{1\leq i\leq 3}$, with $\boldsymbol{u}_{\varepsilon
}=(u_{\varepsilon ,i})_{1\leq i\leq 3}$. Letting $E^{\prime }\ni \varepsilon
\rightarrow 0$ in (\ref{4.47}) using (\ref{4.44}) and (\ref{4.45'}) together
with the fact that $K_{\varepsilon }\rightarrow 0$, we are led to 
\begin{equation*}
\int_{\Omega }\int_{-1}^{1}M(\boldsymbol{v}(\overline{x},\cdot ,y_{3})\cdot
\Phi (\overline{x},\cdot ,y_{3}))dy_{3}d\overline{x}=0\ \ \ \ \ \ \ \ \ \ \
\ \ \ \ \ \ \ \ \ \ \ \ \ \ 
\end{equation*}%
for all $\Phi \in (\mathcal{C}_{0}^{\infty }(\Omega )\otimes \mathcal{A}%
^{\infty }(\mathbb{R}^{2};\mathcal{C}_{0}^{\infty }(I)))^{3}$ with $\func{div%
}_{y}\Phi =0$. We infer from Corollary \ref{c2.1} the existence of $%
\boldsymbol{u}_{1}\in (L^{2}(\Omega ;B_{\#\mathcal{A}}^{1,2}(\mathbb{R}%
^{2};H^{1}(I))))^{3}$ such that $\boldsymbol{v}=\nabla _{y}\boldsymbol{u}%
_{1} $. Next, from the equality $\func{div}\boldsymbol{u}_{\varepsilon }=0$,
we deduce 
\begin{equation*}
\sum_{i=1}^{3}\frac{1}{\varepsilon }\int_{\Omega ^{\varepsilon }}\frac{1}{%
\varepsilon }\frac{\partial u_{\varepsilon ,i}}{\partial x_{i}}(x)\psi (%
\overline{x},\frac{x}{\varepsilon })dx=0\text{ for all }\psi \in \mathcal{C}%
_{0}^{\infty }(\Omega )\otimes \mathcal{A}^{\infty }(\mathbb{R}^{2};\mathcal{%
C}_{0}^{\infty }(I)),
\end{equation*}%
so that, in view of (\ref{4.45}), we have, as $E^{\prime }\ni \varepsilon
\rightarrow 0$, 
\begin{equation*}
\sum_{i=1}^{3}\int_{\Omega }\int_{-1}^{1}M\left( \frac{\partial u_{1,i}}{%
\partial y_{i}}\psi \right) dy_{3}d\overline{x}=0\text{ for all }\psi \in 
\mathcal{C}_{0}^{\infty }(\Omega )\otimes \mathcal{A}^{\infty }(\mathbb{R}%
^{2};\mathcal{C}_{0}^{\infty }(I)).
\end{equation*}%
This amounts to $\func{div}_{y}\boldsymbol{u}_{1}=0$ in $\Omega \times 
\mathbb{R}^{2}\times I$. Also, the same equality $\func{div}\boldsymbol{u}%
_{\varepsilon }=0$ yields 
\begin{equation}
\int_{\Omega ^{\varepsilon }}\boldsymbol{u}_{\varepsilon }\left( (\nabla _{%
\overline{x}}\psi )^{\varepsilon }+\frac{1}{\varepsilon }(\nabla _{y}\psi
)^{\varepsilon }\right) dx=0\text{ for }\psi \text{ as above.}  \label{4.47'}
\end{equation}%
Dividing both members of \ (\ref{4.47'}) by $\varepsilon K_{\varepsilon
}^{1/2}$ and letting $E^{\prime }\ni \varepsilon \rightarrow 0$ gives 
\begin{equation*}
\int_{\Omega }\int_{-1}^{1}M(\boldsymbol{u}_{0}\cdot \nabla _{y}\psi )dy_{3}d%
\overline{x}=0,\ \ \ \ \ \ \ \ \ \ \ \ \ \ \ \ \ \ \ \ 
\end{equation*}%
which amounts to $\overline{\func{div}}_{y}\boldsymbol{u}_{0}=0$ in $\Omega
\times \mathbb{R}^{2}\times I$. This concludes the proof.
\end{proof}

Now we go back to the variational form (\ref{4.21}), i.e. 
\begin{equation}
\left\{ 
\begin{array}{l}
\int_{\Omega ^{\varepsilon }}A\left( \frac{x}{\varepsilon }\right) \nabla 
\boldsymbol{u}_{\varepsilon }\cdot \left[ (\nabla _{\overline{x}}\boldsymbol{%
\varphi })^{\varepsilon }+\frac{1}{\varepsilon }(\nabla _{y}\boldsymbol{%
\varphi })^{\varepsilon }\right] dx+\frac{\mu }{K_{\varepsilon }}%
\int_{\Omega ^{\varepsilon }}\boldsymbol{u}_{\varepsilon }\boldsymbol{%
\varphi }^{\varepsilon }dx \\ 
\\ 
+\frac{\rho }{\phi ^{2}}\int_{\Omega ^{\varepsilon }}(\boldsymbol{u}%
_{\varepsilon }\cdot \nabla )\boldsymbol{u}_{\varepsilon }\boldsymbol{%
\varphi }^{\varepsilon }dx-\int_{\Omega ^{\varepsilon }}p_{\varepsilon }%
\left[ (\func{div}_{\overline{x}}\boldsymbol{\varphi })^{\varepsilon }+\frac{%
1}{\varepsilon }(\func{div}_{y}\boldsymbol{\varphi })^{\varepsilon }\right]
dx \\ 
\\ 
=\int_{\Omega ^{\varepsilon }}\boldsymbol{f}\boldsymbol{\varphi }%
^{\varepsilon }dx.%
\end{array}%
\right.  \label{4.48}
\end{equation}%
We assume that (\ref{4.44}), (\ref{4.45}) and (\ref{4.46}) hold. Since $%
K_{\varepsilon }/\varepsilon ^{2}\rightarrow 0$ when $\varepsilon
\rightarrow 0$, we use (\ref{4.44}) to see that, as $E^{\prime }\ni
\varepsilon \rightarrow 0$, 
\begin{equation*}
\frac{K_{\varepsilon }}{\varepsilon ^{2}}\frac{\mu }{K_{\varepsilon }}%
\int_{\Omega ^{\varepsilon }}\boldsymbol{u}_{\varepsilon }\boldsymbol{%
\varphi }^{\varepsilon }dx=\frac{\mu }{\varepsilon ^{2}}\int_{\Omega
^{\varepsilon }}\boldsymbol{u}_{\varepsilon }\boldsymbol{\varphi }%
^{\varepsilon }dx=\mu \left( \frac{K_{\varepsilon }}{\varepsilon ^{2}}%
\right) ^{\frac{1}{2}}\int_{\Omega ^{\varepsilon }}\frac{\boldsymbol{u}%
_{\varepsilon }}{\varepsilon K_{\varepsilon }^{\frac{1}{2}}}\boldsymbol{%
\varphi }^{\varepsilon }dx\rightarrow 0.
\end{equation*}%
Thus, if we multiply (\ref{4.48}) by $K_{\varepsilon }^{1/2}/\varepsilon $
and we let $E^{\prime }\ni \varepsilon \rightarrow 0$ in the resulting
identity, taking into account (\ref{4.44}), (\ref{4.45}) and (\ref{4.46})
together with the fact that $K_{\varepsilon }\rightarrow 0$ as $\varepsilon
\rightarrow 0$, we get 
\begin{equation*}
\int_{\Omega }\int_{-1}^{1}M(p_{0}\func{div}_{y}\boldsymbol{\varphi })dy_{3}d%
\overline{x}=0,\ \ \ \ \ \ \ \ \ \ \ \ \ \ \ \ \ \ 
\end{equation*}%
thereby showing that $p_{0}$ does not depend on $y$. Next, choosing in (\ref%
{4.48}) a test function $\boldsymbol{\varphi }$ such that $\func{div}_{y}%
\boldsymbol{\varphi }=0$ and then multiplying the resulting equality by $%
K_{\varepsilon }^{1/2}/\varepsilon ^{2}$, we appeal once again to (\ref{4.44}%
)-(\ref{4.46}) to obtain 
\begin{equation}
\mu \int_{\Omega }\int_{-1}^{1}M(\boldsymbol{u}_{0}\boldsymbol{\varphi }%
)dy_{3}d\overline{x}-\int_{\Omega }\int_{-1}^{1}p_{0}(\overline{x})M(\func{%
div}_{\overline{x}}\boldsymbol{\varphi })dy_{3}d\overline{x}=0.
\label{e4.49}
\end{equation}%
This yields as before, the existence of $q\in L^{2}(\Omega ;\mathcal{B}_{%
\mathcal{A}}^{2}(\mathbb{R}^{2};L^{2}(I)))$ such that $(\boldsymbol{u}%
_{0},p_{0},q)$ solves the system 
\begin{equation}
\left\{ 
\begin{array}{l}
\mu \boldsymbol{u}_{0}+\overline{\nabla }_{y}q=-\nabla _{\overline{x}}p_{0}%
\text{ in }\Omega \times \mathbb{R}^{2}\times I \\ 
\overline{\func{div}}_{y}\boldsymbol{u}_{0}=0\text{ in }\Omega \times 
\mathbb{R}^{2}\times I \\ 
\func{div}_{\overline{x}}\left( \int_{-1}^{1}M(\boldsymbol{u}_{0}(\overline{x%
},\cdot ,y_{3}))dy_{3}\right) =0\text{ in }\Omega \\ 
\left( \int_{-1}^{1}M(\boldsymbol{u}_{0}(\overline{x},\cdot
,y_{3}))dy_{3}\right) \cdot \nu =0\text{ on }\partial \Omega \text{.\ }%
\end{array}%
\right. \ \ \ \ \ \ \ \ \ \ \ \ \ \ \ \ \ \ \ \ \   \label{4.49}
\end{equation}

We have therefore proved the following result.

\begin{theorem}
\label{t4.5}Let $\mathcal{A}$ be an ergodic algebra with mean value on $%
\mathbb{R}^{2}$. Assume that \emph{(\textbf{A3})} holds and $K_{\varepsilon
}\ll \varepsilon ^{2}$. Let $(\boldsymbol{u}_{\varepsilon },p_{\varepsilon
}) $ be a solution of \emph{(\ref{4.1})}. Then we have \emph{(\ref{4.44})-(%
\ref{4.46})}. Furthermore there exists $q\in L^{2}(\Omega ;\mathcal{B}_{%
\mathcal{A}}^{2}(\mathbb{R}^{2};L^{2}(I)))$ such that $(\boldsymbol{u}%
_{0},p_{0},q)$ solves the system \emph{(\ref{4.49})}.
\end{theorem}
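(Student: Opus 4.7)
The plan is to mimic the architecture of the proofs of Theorems \ref{t4.1} and \ref{t4.3}, with scalings adjusted to the regime $\varepsilon^2/K_\varepsilon\to 0$. The first step, already carried out in Lemma \ref{l4.2}, is to apply the compactness Theorem \ref{t2.1} to the rescaled triple $(\boldsymbol{u}_\varepsilon/(\varepsilon K_\varepsilon^{1/2}),\ \nabla\boldsymbol{u}_\varepsilon/\varepsilon,\ p_\varepsilon)$, whose $\Sigma_\mathcal{A}$-boundedness is built into the a priori estimates \eqref{4.43}. The weak $\Sigma_\mathcal{A}$-limit of $\nabla\boldsymbol{u}_\varepsilon/\varepsilon$ is then identified with $\nabla_y\boldsymbol{u}_1$ for some corrector $\boldsymbol{u}_1\in L^2(\Sigma;B_{\#\mathcal{A}}^{1,2}(\mathbb{R}^{d-1};H^1(I)))^d$ via Corollary \ref{c2.1}, and the microscopic incompressibility $\func{div}\boldsymbol{u}_\varepsilon=0$ is transported to the fast scale as $\func{div}_y\boldsymbol{u}_1=0$.

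Next I would insert the test functions $\boldsymbol{\varphi}^\varepsilon(x)=\boldsymbol{\varphi}(\bar x, x/\varepsilon)$, with $\boldsymbol{\varphi}\in(\mathcal{C}_0^\infty(\Sigma)\otimes\mathcal{A}^\infty(\mathbb{R}^{d-1};\mathcal{C}_0^\infty(I)))^d$, into the weak form \eqref{4.48} and multiply through by the $\Sigma_\mathcal{A}$-normalization factor $\varepsilon^{-d_2}=\varepsilon^{-1}$. Two terms disappear almost for free. The convective contribution, estimated via \eqref{4.3} and \eqref{4.43} together with $\|\boldsymbol{\varphi}^\varepsilon\|_{L^4(\Omega^\varepsilon)}=O(\varepsilon^{1/4})$, is of order $\varepsilon^{11/4}$ after rescaling. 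The Brinkmann contribution becomes
\begin{equation*}
\frac{\mu\varepsilon}{K_\varepsilon^{1/2}}\cdot\varepsilon^{-1}\int_{\Omega^\varepsilon}\frac{\boldsymbol{u}_\varepsilon}{\varepsilon K_\varepsilon^{1/2}}\boldsymbol{\varphi}^\varepsilon dx = O\!\left(\tfrac{\varepsilon}{K_\varepsilon^{1/2}}\right),
\end{equation*}
which vanishes thanks to $K_\varepsilon\gg\varepsilon^2$. A first passage to the limit with unconstrained $\boldsymbol{\varphi}$ shows that $p_0$ is independent of $y$. Restricting to $\boldsymbol{\varphi}$ with $\overline{\func{div}}_y\boldsymbol{\varphi}=0$ then kills the fast pressure, and an integration by parts in $\bar x$ reduces the limit identity to
\begin{equation*}
\int_\Sigma\!\int_{-1}^{1}\! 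M(A\nabla_y\boldsymbol{u}_1:\nabla_y\boldsymbol{\varphi})d\zeta d\bar x + \int_\Sigma\!\int_{-1}^{1}\!\bigl(\nabla_{\bar x}p_0-\boldsymbol{f}_1\bigr)\cdot M(\boldsymbol{\varphi})d\zeta d\bar x = 0
\end{equation*}
for every such $\boldsymbol{\varphi}\in\mathcal{V}_{\func{div}}$.

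The closing step is a pressure-recovery application of Proposition \ref{p2.1}: the bounded linear functional on $[\mathcal{B}_\mathcal{A}^{1,2}(\mathbb{R}^{d-1};H_0^1(I))]^d$ defined by the left-hand side above vanishes on $\mathcal{V}_{\func{div}}$, so there exists $q\in L^2(\Sigma;\mathcal{B}_\mathcal{A}^2(\mathbb{R}^{d-1};L^2(I)))$ with $\overline{\nabla}_y q=\func{div}_y(A\nabla_y\boldsymbol{u}_1)+\boldsymbol{f}_1-\nabla_{\bar x}p_0$, which is precisely the first line of \eqref{4.49}; combined with $\func{div}_y\boldsymbol{u}_1=0$ from Lemma \ref{l4.2} this delivers the full system. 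The main technical difficulty is the triple bookkeeping of scales---velocity at $\varepsilon K_\varepsilon^{1/2}$, gradient at $\varepsilon$, pressure at $O(1)$---since choosing the wrong rescaling either keeps a lower-order term alive in the limit (destroying the Stokes character of the cell problem) or annihilates the gradient limit. Once the balance is correct, the remainder of the argument collapses to the De Rham-type pressure recovery of Proposition \ref{p2.1}, exactly as in the critical case of Theorem \ref{t4.1}.
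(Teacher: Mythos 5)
Your proposal is correct and follows essentially the same route as the paper: compactness and identification of the limits via Lemma \ref{l4.2}, the scaling argument showing the convective and Brinkmann contributions vanish (with the Brinkmann term rewritten using the $\boldsymbol{u}_\varepsilon/(\varepsilon K_\varepsilon^{1/2})$ normalization and the factor $\varepsilon/K_\varepsilon^{1/2}\to 0$), passing to the limit first with unconstrained $\boldsymbol{\varphi}$ to get $y$-independence of $p_0$ and then with $\overline{\func{div}}_y\boldsymbol{\varphi}=0$, and finally the De Rham-type pressure recovery of Proposition \ref{p2.1}. Your bookkeeping ($\|\boldsymbol{\varphi}^\varepsilon\|_{L^4(\Omega^\varepsilon)}=O(\varepsilon^{1/4})$, the convective term of order $\varepsilon^{11/4}$ after rescaling) is a slightly sharper rendering of the same estimates, and the resulting limit identity agrees with the paper's after integration by parts in $\bar x$.
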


In order to derive the effective equation in the case when $K_{\varepsilon
}<<\varepsilon ^{2}$, we consider the local problems: For $1\leq i\leq 3$,
find $\boldsymbol{w}_{i}\in \mathcal{B}_{\mathcal{A}}^{2}(\mathbb{R}%
^{2};L^{2}(I))^{3}$ such that 
\begin{equation}
\mu \boldsymbol{w}_{i}+\overline{\nabla }_{y}\pi _{i}=e_{i}\text{ in }%
\mathbb{R}^{2}\times I\text{ and }\overline{\func{div}}_{y}\boldsymbol{w}%
_{i}=0\text{ in }\mathbb{R}^{2}\times I.  \label{e4.36}
\end{equation}%
Then the existence and uniqueness of $\boldsymbol{w}_{i}$ satisfying (\ref%
{e4.36}) follows a two steps process described below:

\begin{itemize}
\item[1)] We approximate (\ref{e4.36}) as follows: for any integer $n\geq 1$%
, there exists a unique $\boldsymbol{w}_{i,n}\in \mathcal{B}_{\func{div}%
}^{1,2}$ solution of 
\begin{equation}
\left\{ 
\begin{array}{l}
-\frac{1}{n^{2}}\overline{\Delta }_{y}\boldsymbol{w}_{i,n}+\mu \boldsymbol{w}%
_{i,n}+\overline{\nabla }_{y}\pi _{i,n}=e_{i}\text{ in }\mathbb{R}^{2}\times
I \\ 
\overline{\func{div}}_{y}\boldsymbol{w}_{i,n}=0\text{ in }\mathbb{R}%
^{2}\times I%
\end{array}%
\right.  \label{4.37}
\end{equation}%
in the sense of (\ref{4.27'}), where $\overline{\Delta }_{y}=\overline{\func{%
div}}_{y}(\overline{\nabla }_{y})$. Testing (\ref{4.37}) with $\boldsymbol{w}%
_{i,n}$ yields the estimates 
\begin{equation}
\sup_{n\geq 1}\left( \frac{1}{n}\left\Vert \overline{\nabla }_{y}\boldsymbol{%
w}_{i,n}\right\Vert _{\mathcal{B}_{\mathcal{A}}^{2}(\mathbb{R}%
^{2};L^{2}(I))^{3\times 3}}+\left\Vert \boldsymbol{w}_{i,n}\right\Vert _{%
\mathcal{B}_{\mathcal{A}}^{2}(\mathbb{R}^{2};L^{2}(I))^{3}}\right) \leq C%
\text{,}  \label{4.38}
\end{equation}%
where $C>0$ is independent of $n$.

\item[2)] We pass to the limit in the variational formulation 
\begin{equation*}
\begin{array}{l}
\frac{1}{n^{2}}\int_{-1}^{1}M(\overline{\nabla }_{y}\boldsymbol{w}%
_{i,n}\cdot \overline{\nabla }_{y}\boldsymbol{v})dy_{3}+\mu \int_{-1}^{1}M(%
\boldsymbol{w}_{i,n}\boldsymbol{v})dy_{3}=\int_{-1}^{1}M(\boldsymbol{v}%
)e_{i}dy_{3} \\ 
\text{for all }\boldsymbol{v}\in \mathcal{B}_{\func{div}}^{1,2}%
\end{array}%
\end{equation*}%
using (\ref{4.38}) and the reflexivity of the Hilbert space $\mathcal{B}_{%
\mathcal{A}}^{2}(\mathbb{R}^{2};L^{2}(I))$ to derive the existence and
uniqueness of a $\boldsymbol{w}_{i}\in (\mathcal{B}_{\mathcal{A}}^{2}(%
\mathbb{R}^{2};L^{2}(I)))^{3}$ solution to (\ref{e4.36}).
\end{itemize}

Now, proceeding as in the proof of Theorem \ref{t4.2}, we may choose in (\ref%
{4.49}) the test function $\boldsymbol{\varphi }=\psi \otimes \boldsymbol{w}%
_{i}$; then 
\begin{equation}
\mu \int_{-1}^{1}M(\boldsymbol{u}_{0}\boldsymbol{w}_{i})dy_{3}+\nabla _{%
\overline{x}}p_{0}\int_{-1}^{1}M(\boldsymbol{w}_{i})dy_{3}=0.  \label{4.39}
\end{equation}%
Next, take $\boldsymbol{u}_{0}(\overline{x},\cdot )$ as test function in (%
\ref{e4.36}): 
\begin{equation}
\mu \int_{-1}^{1}M(\boldsymbol{u}_{0}\boldsymbol{w}_{i})dy_{3}=%
\int_{-1}^{1}M(\boldsymbol{u}_{0})e_{i}dy_{3}\equiv u_{i}(\overline{x}).
\label{4.40}
\end{equation}%
Putting together (\ref{4.39}) and (\ref{4.40}) yields 
\begin{equation*}
u_{i}(\overline{x})=-\nabla _{\overline{x}}p_{0}\int_{-1}^{1}M(\boldsymbol{w}%
_{i})dy_{3}=-\sum_{j=1}^{3}\frac{\partial p_{0}}{\partial \overline{x}_{j}}%
\int_{-1}^{1}M(\boldsymbol{w}_{i})e_{j}dy_{3}.
\end{equation*}%
Still in (\ref{e4.36}) with $\boldsymbol{w}_{j}$ taken as test function, we
are led to 
\begin{equation*}
\int_{-1}^{1}M(\boldsymbol{w}_{i})e_{j}dy_{3}=\mu \int_{-1}^{1}M(\boldsymbol{%
w}_{i}\boldsymbol{w}_{j})dy_{3}.
\end{equation*}%
So, setting 
\begin{equation}
\widehat{A}=(\widehat{a}_{ij})_{1\leq i,j\leq 2}\text{ with }\widehat{a}%
_{ij}=\mu \int_{-1}^{1}M(\boldsymbol{w}_{i}\boldsymbol{w}_{j})dy_{3},
\label{4.41}
\end{equation}%
we obtain a $2\times 2$ symmetric matrix. We recall that the fact that $%
u_{d}=0$ amounts to $\widehat{a}_{id}=0$, and so $\widehat{a}_{di}=0$ for $%
1\leq i\leq 2$.

We have thus proved that the following result, which besides is the main
result of the work in the case when $K_{\varepsilon }\ll \varepsilon ^{2}$.

\begin{theorem}
\label{t4.4}Under the assumptions of Theorem \emph{\ref{t4.5}}, the sequence 
$(\frac{\boldsymbol{u}_{\varepsilon }}{\varepsilon K_{\varepsilon }^{1/2}},%
\frac{K_{\varepsilon }^{1/2}}{\varepsilon ^{2}}p_{\varepsilon
})_{\varepsilon >0}$ weakly $\Sigma _{\mathcal{A}}$-converges in $%
L^{2}(\Omega ^{\varepsilon })^{3}\times L_{0}^{2}(\Omega ^{\varepsilon })$
towards $(\boldsymbol{u}_{0},p_{0})$ determined by \emph{(\ref{4.44}) and (%
\ref{4.46})}. Moreover $p_{0}\in H^{1}(\Omega )$ and, setting 
\begin{eqnarray*}
\boldsymbol{u}(\overline{x}) &=&\int_{-1}^{1}M(\boldsymbol{u}_{0}(\overline{x%
},\cdot ,y_{3})dy_{3}\text{\ \ }(\overline{x}\in \Omega ) \\
&=&(u_{i}(\overline{x}))_{1\leq i\leq 3}\text{ and }\boldsymbol{u}^{\prime
}=(u_{i})_{1\leq i\leq 2,}
\end{eqnarray*}%
we have $u_{3}=0$ and $(\boldsymbol{u}^{\prime },p_{0})$ is a solution of
the homogenized problem 
\begin{equation}
\boldsymbol{u}^{\prime }=-\widehat{A}\nabla _{\overline{x}}p_{0}\text{ in }%
\Omega \text{, }\func{div}_{\overline{x}}\boldsymbol{u}^{\prime }=0\text{ in 
}\Omega \text{ and }\boldsymbol{u}^{\prime }\cdot \nu =0\text{ on }\partial
\Omega ,  \label{4.42}
\end{equation}%
where $\widehat{A}$ is defined by \emph{(\ref{4.41})}, $\boldsymbol{w}_{i}$ $%
(1\leq i\leq 2)$ being the unique solution of \emph{(\ref{e4.36})}.
\end{theorem}

\section{Homogenization of the Darcy-Lapwood-Brinkmann equation in thin
heterogeneous domain with highly oscillating boundaries\label{sec5}}

\subsection{Introduction}

In this section we deal with problem (\ref{4.1}), but this time, stated in
the following domain 
\begin{equation}
\Omega ^{\varepsilon }=\left\{ x=(\overline{x},x_{3})\in \Omega \times 
\mathbb{R}:\varepsilon h_{1}\left( \frac{\overline{x}}{\varepsilon }\right)
<x_{3}<\varepsilon h_{2}\left( \frac{\overline{x}}{\varepsilon }\right)
\right\} ,  \label{5.0}
\end{equation}%
where $\varepsilon >0$ is a small parameter, $\Omega \subset \mathbb{R}^{2}$
is a bounded open Lipschitz domain, $h_{1}$, $h_{2}\in W^{1,\infty }(\mathbb{%
R}^{2})$ are two bounded Lipschitz continuous functions on $\mathbb{R}^{2}$
and satisfying 
\begin{equation}
\max_{\mathbb{R}^{2}}h_{1}<\min_{\mathbb{R}^{2}}h_{2};\ \ \ \ \ \ \ \ \ \ \
\ \ \ \ \ \ \ \ \ \ \ \ \ \ \ \ \ \ \ \ \ \ \ \ \ \ \ \ \ \ \   \label{5.1}
\end{equation}%
\begin{equation}
h_{1},h_{2}\in \mathcal{A}\text{ with }M(h_{2}-h_{1})\neq 0\text{.\ \ \ \ \
\ \ \ \ \ \ \ \ \ }  \label{5.2}
\end{equation}%
We set 
\begin{equation}
h_{1}^{-}=\min_{\mathbb{R}^{2}}h_{1}\text{, }h_{2}^{+}=\max_{\mathbb{R}%
^{2}}h_{2}\text{ and }I=(h_{1}^{-},h_{2}^{+}),  \label{5.2'}
\end{equation}%
and we define $G_{\varepsilon }=\Omega \times (\varepsilon
h_{1}^{-},\varepsilon h_{2}^{+})$. Then, as seen in Section \ref{sec3}, $%
G_{\varepsilon }$ has flat lateral boundaries $y_{3}=\varepsilon
h_{1}^{-},\varepsilon h_{2}^{+}$, with $\Omega ^{\varepsilon }\subset
G_{\varepsilon }$. When $\varepsilon \rightarrow 0$, $G_{\varepsilon }$
shrinks to $G_{0}=\Omega \times \left\{ 0\right\} $ which can be identified
to $\Omega $ through the identification $\overline{x}\equiv (\overline{x},0)$%
. This will be the case in the sequel.

The problem to be investigated here is stated by (\ref{4.1}) in the thin
domain $\Omega ^{\varepsilon }$ given above by (\ref{5.0}). As in Section %
\ref{sec4}, the corresponding problem possesses at least a weak solution $%
\boldsymbol{u}_{\varepsilon }\in H_{0}^{1}(\Omega ^{\varepsilon })^{3}$.
Moreover, to each $\boldsymbol{u}_{\varepsilon }$ is associated a unique $%
p_{\varepsilon }\in L_{0}^{2}(\Omega ^{\varepsilon })$ such that (\ref{4.1})
is satisfied by the couple $(\boldsymbol{u}_{\varepsilon },p_{\varepsilon })$%
. We extend $\boldsymbol{u}_{\varepsilon }$ to $G_{\varepsilon }$ by zero
off $\Omega ^{\varepsilon }$, and we still denote this extension by $%
\boldsymbol{u}_{\varepsilon }$; this is fully justified by the fact that $%
\boldsymbol{u}_{\varepsilon }=0$ on $\partial \Omega ^{\varepsilon }$. It is
therefore easily seen that, proceeding exactly as in Subsection \ref%
{subsec4.1}, we derive the existence of a positive constant $C$ independent
of $\varepsilon $ such that 
\begin{equation}
\left\Vert \boldsymbol{u}_{\varepsilon }\right\Vert _{L^{2}(\Omega
^{\varepsilon })^{3}}\leq C\min (\varepsilon ^{\frac{5}{2}},\varepsilon ^{%
\frac{3}{2}}K_{\varepsilon }^{\frac{1}{2}})\text{ and }\left\Vert \nabla 
\boldsymbol{u}_{\varepsilon }\right\Vert _{L^{2}(\Omega ^{\varepsilon
})^{3\times 3}}\leq C\varepsilon ^{\frac{3}{2}},  \label{5.4'}
\end{equation}%
and so, 
\begin{equation}
\left\Vert \boldsymbol{u}_{\varepsilon }\right\Vert _{L^{2}(G_{\varepsilon
})^{3}}\leq C\min (\varepsilon ^{\frac{5}{2}},\varepsilon ^{\frac{3}{2}%
}K_{\varepsilon }^{\frac{1}{2}})\text{ and }\left\Vert \nabla \boldsymbol{u}%
_{\varepsilon }\right\Vert _{L^{2}(G_{\varepsilon })^{3\times 3}}\leq
C\varepsilon ^{\frac{3}{2}}.  \label{5.4}
\end{equation}%
Concerning the pressure, since the domain $\Omega ^{\varepsilon }$ satisfies
the assumptions of \cite[Corollary 3.4]{Casado2020} (in fact, $\Omega $ is
Lipschitz and connected), there exist functions $p_{\varepsilon }^{0}\in
H^{1}(\Omega )$ and $p_{\varepsilon }^{1}\in L^{2}(\Omega ^{\varepsilon })$
such that 
\begin{equation}
p_{\varepsilon }=p_{\varepsilon }^{0}+\varepsilon p_{\varepsilon }^{1}\text{
in }\Omega ^{\varepsilon },\ \ \ \ \ \ \ \ \ \ \ \ \ \ \ \ \ \ \ \ \ \ \ \ \
\ \ \ \ \ \ \   \label{5.5}
\end{equation}%
and 
\begin{equation}
\varepsilon ^{\frac{3}{2}}\left\Vert p_{\varepsilon }^{0}\right\Vert
_{H^{1}(\Omega )}+\varepsilon \left\Vert p_{\varepsilon }^{1}\right\Vert
_{L^{2}(\Omega ^{\varepsilon })}\leq C\left\Vert \nabla p_{\varepsilon
}\right\Vert _{H^{-1}(\Omega ^{\varepsilon })^{3}},  \label{5.6}
\end{equation}%
where the positive constant $C$ in (\ref{5.6}) is independent of $%
\varepsilon $. It remains to find a bound for $\left\Vert \nabla
p_{\varepsilon }\right\Vert _{H^{-1}(\Omega ^{\varepsilon })^{3}}$. For that
purpose, let $\boldsymbol{v}\in H_{0}^{1}(\Omega ^{\varepsilon })^{3}$; then
appealing to (\ref{4.1})$_{1}$, we have 
\begin{eqnarray}
\left\langle \nabla p_{\varepsilon },\boldsymbol{v}\right\rangle
&=&\int_{\Omega ^{\varepsilon }}\boldsymbol{f}\cdot \boldsymbol{v}%
dx-\int_{\Omega ^{\varepsilon }}A^{\varepsilon }\nabla \boldsymbol{u}%
_{\varepsilon }\cdot \nabla \boldsymbol{v}dx-\frac{\rho }{\phi ^{2}}%
\int_{\Omega ^{\varepsilon }}(\boldsymbol{u}_{\varepsilon }\cdot \nabla )%
\boldsymbol{u}_{\varepsilon }\cdot \boldsymbol{v}dx  \label{5.7} \\
&&-\frac{\mu }{K_{\varepsilon }}\int_{\Omega ^{\varepsilon }}\boldsymbol{u}%
_{\varepsilon }\cdot \boldsymbol{v}dx.  \notag
\end{eqnarray}%
It is easy to see that 
\begin{equation*}
\left\vert \int_{\Omega ^{\varepsilon }}\boldsymbol{f}\cdot \boldsymbol{v}%
dx\right\vert \leq C\varepsilon ^{\frac{3}{2}}\left\Vert \nabla \boldsymbol{v%
}\right\Vert _{L^{2}(\Omega ^{\varepsilon })^{3\times 3}},\ \ \ \ \ \ \ \ \
\ \ \ \ \ \ \ \ \ \ \ \ \ \ \ \ \ \ 
\end{equation*}%
\begin{equation*}
\left\vert \int_{\Omega ^{\varepsilon }}A^{\varepsilon }\nabla \boldsymbol{u}%
_{\varepsilon }\cdot \nabla \boldsymbol{v}dx\right\vert \leq C\varepsilon ^{%
\frac{3}{2}}\left\Vert \nabla \boldsymbol{v}\right\Vert _{L^{2}(\Omega
^{\varepsilon })^{3\times 3}},\ \ \ \ \ \ \ \ \ \ \ \ \ 
\end{equation*}%
and 
\begin{equation*}
\left\vert \int_{\Omega ^{\varepsilon }}\boldsymbol{u}_{\varepsilon }\cdot 
\boldsymbol{v}dx\right\vert \leq C\min (\varepsilon ^{\frac{5}{2}%
},\varepsilon ^{\frac{3}{2}}K_{\varepsilon }^{\frac{1}{2}})\left\Vert \nabla 
\boldsymbol{v}\right\Vert _{L^{2}(\Omega ^{\varepsilon })^{3\times 3}}.\ \ \
\ \ \ \ \ \ \ \ \ \ \ \ 
\end{equation*}%
As for the third term on the right-hand side of (\ref{5.7}), one has 
\begin{eqnarray*}
\left\vert \int_{\Omega ^{\varepsilon }}(\boldsymbol{u}_{\varepsilon }\cdot
\nabla )\boldsymbol{u}_{\varepsilon }\cdot \boldsymbol{v}dx\right\vert &\leq
&\left\Vert \boldsymbol{u}_{\varepsilon }\right\Vert _{L^{4}(\Omega
^{\varepsilon })^{3}}\left\Vert \nabla \boldsymbol{u}_{\varepsilon
}\right\Vert _{L^{2}(\Omega ^{\varepsilon })^{3\times 3}}\left\Vert 
\boldsymbol{v}\right\Vert _{L^{4}(\Omega ^{\varepsilon })^{3}} \\
&\leq &C\varepsilon \left\Vert \nabla \boldsymbol{u}_{\varepsilon
}\right\Vert _{L^{2}(\Omega ^{\varepsilon })^{3\times 3}}^{2}\left\Vert
\nabla \boldsymbol{v}\right\Vert _{L^{2}(\Omega ^{\varepsilon })^{3\times 3}}
\\
&\leq &C\varepsilon ^{4}\left\Vert \nabla \boldsymbol{v}\right\Vert
_{L^{2}(\Omega ^{\varepsilon })^{3\times 3}},
\end{eqnarray*}%
where we used Lemma \ref{l4.1} and inequality (\ref{5.4'}). It follows that 
\begin{equation*}
\left\Vert \nabla p_{\varepsilon }\right\Vert _{H^{-1}(\Omega ^{\varepsilon
})^{3}}\leq C\left( \varepsilon ^{\frac{3}{2}}+\varepsilon ^{4}+\frac{%
\varepsilon }{K_{\varepsilon }}\min (\varepsilon ^{\frac{5}{2}},\varepsilon
^{\frac{3}{2}}K_{\varepsilon }^{\frac{1}{2}})\right) .
\end{equation*}%
Now, if $K_{\varepsilon }=O(\varepsilon ^{2})$ or $K_{\varepsilon }\gg
\varepsilon ^{2}$, then 
\begin{equation*}
\frac{\varepsilon }{K_{\varepsilon }}\min (\varepsilon ^{\frac{5}{2}%
},\varepsilon ^{\frac{3}{2}}K_{\varepsilon }^{\frac{1}{2}})\leq C\frac{%
\varepsilon ^{\frac{7}{2}}}{K_{\varepsilon }}\leq C\varepsilon ^{\frac{3}{2}%
},
\end{equation*}%
so that 
\begin{equation*}
\left\Vert \nabla p_{\varepsilon }\right\Vert _{H^{-1}(\Omega ^{\varepsilon
})^{3}}\leq C\varepsilon ^{\frac{3}{2}}.\ \ \ \ \ \ \ \ \ \ \ \ \ \ \ \ \ \
\ \ \ \ \ \ 
\end{equation*}%
If $K_{\varepsilon }\ll \varepsilon ^{2}$, then 
\begin{equation*}
\varepsilon ^{\frac{3}{2}}K_{\varepsilon }^{\frac{1}{2}}<\varepsilon ^{\frac{%
5}{2}}\text{ and }\varepsilon ^{\frac{3}{2}}<\frac{\varepsilon ^{\frac{5}{2}}%
}{K_{\varepsilon }^{\frac{1}{2}}},\ \ \ \ \ \ \ \ \ \ \ \ \ \ \ \ \ \ \ \ \
\ \ \ \ \ \ 
\end{equation*}%
so that 
\begin{equation*}
\left\Vert \nabla p_{\varepsilon }\right\Vert _{H^{-1}(\Omega ^{\varepsilon
})^{3}}\leq C(\varepsilon ^{\frac{3}{2}}+\frac{\varepsilon ^{\frac{5}{2}}}{%
K_{\varepsilon }^{\frac{1}{2}}})\leq C\frac{\varepsilon ^{\frac{5}{2}}}{%
K_{\varepsilon }^{\frac{1}{2}}}.\ \ \ \ \ \ \ \ \ \ \ \ \ \ \ \ \ \ \ \ \ \
\ \ \ \ \ \ \ \ \ \ \ \ \ \ \ 
\end{equation*}%
We have shown that 
\begin{equation}
\begin{array}{cc}
\left\Vert \nabla p_{\varepsilon }\right\Vert _{H^{-1}(\Omega ^{\varepsilon
})^{3}}\leq & \left\{ 
\begin{array}{l}
C\varepsilon ^{\frac{3}{2}}\text{ \ if }K_{\varepsilon }=O(\varepsilon ^{2})%
\text{ or }K_{\varepsilon }\gg \varepsilon ^{2}, \\ 
C\frac{\varepsilon ^{\frac{5}{2}}}{K_{\varepsilon }^{\frac{1}{2}}}\text{ if }%
K_{\varepsilon }\ll \varepsilon ^{2}.%
\end{array}%
\right.%
\end{array}
\label{5.8}
\end{equation}%
It follows from (\ref{5.6}) and (\ref{5.8}) that 
\begin{equation}
\begin{array}{cc}
\left\Vert p_{\varepsilon }^{0}\right\Vert _{H^{1}(\Omega )}\leq & \left\{ 
\begin{array}{l}
C\text{ \ if }K_{\varepsilon }=O(\varepsilon ^{2})\text{ or }K_{\varepsilon
}\gg \varepsilon ^{2}, \\ 
C\frac{\varepsilon }{K_{\varepsilon }^{1/2}}\text{ if }K_{\varepsilon }\ll
\varepsilon ^{2};%
\end{array}%
\right.%
\end{array}
\label{5.9}
\end{equation}%
and 
\begin{equation}
\begin{array}{cc}
\left\Vert p_{\varepsilon }^{1}\right\Vert _{L^{2}(\Omega ^{\varepsilon
})}\leq & \left\{ 
\begin{array}{l}
C\varepsilon ^{\frac{1}{2}}\text{ \ if }K_{\varepsilon }=O(\varepsilon ^{2})%
\text{ or }K_{\varepsilon }\gg \varepsilon ^{2}, \\ 
C\frac{\varepsilon ^{\frac{3}{2}}}{K_{\varepsilon }^{\frac{1}{2}}}\text{ if }%
K_{\varepsilon }\ll \varepsilon ^{2}.%
\end{array}%
\right.%
\end{array}
\label{5.10}
\end{equation}%
We extend $p_{\varepsilon }^{1}$ to $G_{\varepsilon }$ by zero and we denote
the corresponding extension by $\widetilde{p}_{\varepsilon }^{1}$ to see
that inequality (\ref{5.10}) holds mutatis mutandis (change $p_{\varepsilon
}^{1}$ into $\widetilde{p}_{\varepsilon }^{1}$ and $\Omega ^{\varepsilon }$
into $G_{\varepsilon }$). We summarize the estimates obtained above, in the
following lines: 
\begin{equation}
\begin{array}{cc}
\left\Vert \boldsymbol{u}_{\varepsilon }\right\Vert _{L^{2}(G_{\varepsilon
})^{3}}\leq & \left\{ 
\begin{array}{l}
C\varepsilon ^{\frac{5}{2}}\text{ \ if }K_{\varepsilon }=O(\varepsilon ^{2})%
\text{ or }K_{\varepsilon }\gg \varepsilon ^{2}, \\ 
C\varepsilon ^{\frac{3}{2}}K_{\varepsilon }^{\frac{1}{2}}\text{ if }%
K_{\varepsilon }\ll \varepsilon ^{2};%
\end{array}%
\right.%
\end{array}
\label{5.11}
\end{equation}%
\begin{equation}
\left\Vert \nabla \boldsymbol{u}_{\varepsilon }\right\Vert
_{L^{2}(G_{\varepsilon })^{3\times 3}}\leq C\varepsilon ^{\frac{3}{2}};\ \ \
\ \ \ \ \ \ \ \ \ \ \ \ \ \ \ \ \ \ \ \ \ \ \ \ \ \ \ \ \ \ \ \ \ \ \ \ \ \
\ \ \ \ \ \ \ \ \ \ \ \ \ \ \ \ \   \label{5.12}
\end{equation}%
\begin{equation}
\begin{array}{cc}
\left\Vert p_{\varepsilon }^{0}\right\Vert _{H^{1}(\Omega )}\leq & \left\{ 
\begin{array}{l}
C\text{ \ if }K_{\varepsilon }=O(\varepsilon ^{2})\text{ or }K_{\varepsilon
}\gg \varepsilon ^{2}, \\ 
C\frac{\varepsilon }{K_{\varepsilon }^{\frac{1}{2}}}\text{ if }%
K_{\varepsilon }\ll \varepsilon ^{2};%
\end{array}%
\right.%
\end{array}
\label{5.13}
\end{equation}%
and 
\begin{equation}
\begin{array}{cc}
\left\Vert \widetilde{p}_{\varepsilon }^{1}\right\Vert
_{L^{2}(G_{\varepsilon })}\leq & \left\{ 
\begin{array}{l}
C\varepsilon ^{\frac{1}{2}}\text{ \ if }K_{\varepsilon }=O(\varepsilon ^{2})%
\text{ or }K_{\varepsilon }\gg \varepsilon ^{2}, \\ 
C\frac{\varepsilon ^{\frac{3}{2}}}{K_{\varepsilon }^{\frac{1}{2}}}\text{ if }%
K_{\varepsilon }\ll \varepsilon ^{2}.%
\end{array}%
\right.%
\end{array}
\label{5.14}
\end{equation}

In the light of the above estimates, we follow the same steps as in the
preceding section. This is declined below in the following subsection.

\subsection{Passage to the limit and proof of Theorem \protect\ref{t1.3}}

\subsubsection{\textbf{Case when }$K_{\protect\varepsilon }=O(\protect%
\varepsilon ^{2})$}

We assume (\ref{4.12}), i.e. $\frac{K_{\varepsilon }}{\varepsilon ^{2}}%
\rightarrow K$ when $\varepsilon \rightarrow 0$, with $0<K<\infty $. Still
denoting by $\boldsymbol{u}_{\varepsilon }$ the extension of $\boldsymbol{u}%
_{\varepsilon }$ on $G_{\varepsilon }$, we have $\boldsymbol{u}_{\varepsilon
}\in H_{0}^{1}(G_{\varepsilon })^{3}$ with (\ref{5.11})$_{1}$, (\ref{5.12}),
(\ref{5.13})$_{1}$ and (\ref{5.14})$_{1}$. Let $\mathcal{A}$ be an algebra
with mean value on $\mathbb{R}^{2}$. Given an ordinary sequence $E$, we
derive the existence of a subsequence $E^{\prime }$ of $E$ and of functions $%
\boldsymbol{u}_{0}\in L^{2}(\Omega ;\mathcal{B}_{\mathcal{A}}^{1,2}(\mathbb{R%
}^{2};H_{0}^{1}(I)))^{3}$, $p_{0}\in H^{1}(\Omega )$, $p_{1}^{0}\in
L^{2}(\Omega ;B_{\#\mathcal{A}}^{1,2}(\mathbb{R}^{2}))$ and $p_{1}\in
L^{2}(\Omega ;\mathcal{B}_{\mathcal{A}}^{2}(\mathbb{R}^{2};L^{2}(I)))$ such
that, when $E^{\prime }\ni \varepsilon \rightarrow 0$, 
\begin{equation}
\frac{\boldsymbol{u}_{\varepsilon }}{\varepsilon ^{2}}\rightarrow 
\boldsymbol{u}_{0}\text{ in }L^{2}(G_{\varepsilon })^{3}\text{-weak }\Sigma
_{\mathcal{A}},\ \ \ \ \ \ \ \ \ \ \ \ \ \ \   \label{5.15}
\end{equation}%
\begin{equation}
\frac{1}{\varepsilon }\nabla \boldsymbol{u}_{\varepsilon }\rightarrow 
\overline{\nabla }_{y}\boldsymbol{u}_{0}\text{ in }L^{2}(G_{\varepsilon
})^{3\times 3}\text{-weak }\Sigma _{\mathcal{A}},\ \ \ \   \label{5.16}
\end{equation}%
\begin{equation}
p_{\varepsilon }^{0}\rightarrow p_{0}\text{ in }H^{1}(\Omega )\text{-weak
and in }L^{2}(\Omega )\text{-strong,}  \label{5.17}
\end{equation}%
\begin{equation}
\nabla _{\overline{x}}p_{\varepsilon }^{0}\rightarrow \nabla _{\overline{x}%
}p_{0}+\nabla _{\overline{y}}p_{1}^{0}\text{ in }L^{2}(\Omega )^{2}\text{%
-weak }\Sigma ,\ \ \ \ \ \   \label{5.18}
\end{equation}%
\begin{equation}
p_{\varepsilon }^{1}\rightarrow p_{1}\text{ in }L^{2}(G_{\varepsilon })\text{%
-weak }\Sigma _{\mathcal{A}}.\ \ \ \ \ \ \ \ \ \ \ \ \ \ \ \   \label{5.19}
\end{equation}

This being so, let us recall the definition of the set $\mathbb{J}:$%
\begin{equation*}
\mathbb{J}=\left\{ y=(\overline{y},y_{3})\in \mathbb{R}^{3}:\overline{y}\in 
\mathbb{R}^{2}\text{ and }h_{1}(\overline{y})<y_{3}<h_{2}(\overline{y}%
)\right\} .
\end{equation*}%
To any $u\in L_{loc}^{r}(\mathbb{J})$ ($1\leq r<\infty $) is associated the
transform $u^{b}$ defined by 
\begin{equation*}
u^{b}(\overline{y},t)=u(\overline{y},(1-t)h_{1}(\overline{y})+th_{2}(%
\overline{y}))\text{, }\overline{y}\in \mathbb{R}^{2}\text{ and }0<t<1,
\end{equation*}%
so that $u^{b}\in L_{loc}^{r}(\mathbb{R}^{2};L^{r}(0,1))$. This allows us to
define the following \ Besicovitch-type spaces: let $\mathcal{A}$ be an
algebra with mean value on $\mathbb{R}^{2}$ such that $h_{1},h_{2}\in 
\mathcal{A}$. By $B_{\mathcal{A}}^{r}(\mathbb{J})$ we mean the space of
those $u\in L_{loc}^{r}(\mathbb{J})$ such that $u^{b}\in B_{\mathcal{A}}^{r}(%
\mathbb{R}^{2};L^{r}(0,1))$. To $B_{\mathcal{A}}^{r}(\mathbb{J})$ we
associate the Sobolev-Besicovitch space 
\begin{equation*}
B_{\mathcal{A}}^{1,r}(\mathbb{J})=\left\{ u\in B_{\mathcal{A}}^{r}(\mathbb{J}%
):\nabla _{y}u\in B_{\mathcal{A}}^{r}(\mathbb{J})^{3}\right\} .\ \ \ \ \ \ \
\ \ \ \ \ \ \ \ \ 
\end{equation*}%
We recall that each of these spaces is a complete semi-normed space, the
seminorm in $B_{\mathcal{A}}^{r}(\mathbb{J})$ being defined by 
\begin{equation*}
\left\Vert u\right\Vert _{B_{\mathcal{A}}^{r}(\mathbb{J})}=\left\Vert
u^{b}\right\Vert _{B_{\mathcal{A}}^{r}(\mathbb{R}^{2};L^{r}(0,1))}.\ \ \ \ \
\ \ \ \ \ \ \ \ \ \ \ \ \ \ \ 
\end{equation*}%
We also define the Banach counterpart of $B_{\mathcal{A}}^{r}(\mathbb{J})$
that we denote by $\mathcal{B}_{\mathcal{A}}^{r}(\mathbb{J})$, as follows: $%
u\in \mathcal{B}_{\mathcal{A}}^{r}(\mathbb{J})$ iff $u^{b}\in \mathcal{B}_{%
\mathcal{A}}^{r}(\mathbb{R}^{2};L^{r}(0,1))$. Finally, the space 
\begin{equation*}
B_{\#}^{1,r}(\mathbb{J})=\left\{ u\in B_{\mathcal{A}}^{1,r}(\mathbb{J}):u=0%
\text{ on }\partial \mathbb{J}\right\} \ \ \ \ \ \ \ \ \ \ \ \ \ \ \ 
\end{equation*}%
will be of special interest in the sequel. We recall that $\partial \mathbb{J%
}=\{y=(\overline{y},y_{3}):\overline{y}\in \mathbb{R}^{2}$ and $y_{3}=h_{i}(%
\overline{y})$, $i=1,2\}$.

Proceeding as in Subsection \ref{subsec4.2}, we see that $\overline{\func{div%
}}_{y}\boldsymbol{u}_{0}=0$ in $\Omega \times \mathbb{J}$. Moreover, setting 
\begin{eqnarray}
\boldsymbol{u}(\overline{x}) &=&M\left( \int_{h_{1}}^{h_{2}}\boldsymbol{u}%
_{0}(\overline{x},\cdot ,y_{3})dy_{3}\right) ,\ \ \ \overline{x}\in \Omega ,
\label{5.20'} \\
&=&(u_{i}(\overline{x}))_{1\leq i\leq 3}  \notag
\end{eqnarray}%
we have $u_{3}=0$ and $\boldsymbol{u}^{\prime }=(u_{1},u_{2})\in
L^{2}(\Omega )^{2}$ with $\boldsymbol{u}^{\prime }\cdot \nu =0$ on $\partial
\Omega $ and $\func{div}_{\overline{x}}\boldsymbol{u}^{\prime }=0$ in $%
\Omega $.

With this in mind, the first homogenization result in this case reads as
follows.

\begin{theorem}
\label{t5.1}Let $\mathcal{A}$ be an ergodic algebra with mean value on $%
\mathbb{R}^{2}$. Assume that \emph{(\textbf{A3})}, \emph{(\ref{4.12})}, 
\emph{(\ref{5.1})} and \emph{(\ref{5.2})}. Let $(\boldsymbol{u}_{\varepsilon
},p_{\varepsilon })$ be determined by \emph{(\ref{4.1})}. Then $E^{\prime
}\ni \varepsilon \rightarrow 0$, one has \emph{(\ref{5.15})} to \emph{(\ref%
{5.19})}, where the quadruple $(\boldsymbol{u}_{0},p_{0},p_{1}^{0},p_{1})$
solves the system 
\begin{equation}
\left\{ 
\begin{array}{l}
-\overline{\func{div}}_{y}(A\overline{\nabla }_{y}\boldsymbol{u}_{0})+\frac{%
\mu }{K}\boldsymbol{u}_{0}+\overline{\nabla }_{y}(p_{1}^{0}+p_{1})=f-\nabla
_{\overline{x}}p_{0}\text{ in }\Omega \times \mathbb{J}, \\ 
\\ 
\overline{\func{div}}_{y}\boldsymbol{u}_{0}=0\text{ in }\Omega \times 
\mathbb{J}, \\ 
\\ 
\func{div}_{\overline{x}}M\left( \int_{h_{1}}^{h_{2}}\boldsymbol{u}_{0}(%
\overline{x},\cdot ,y_{3})dy_{3}\right) =0\text{ in }\Omega , \\ 
\\ 
M\left( \int_{h_{1}}^{h_{2}}\boldsymbol{u}_{0}(\overline{x},\cdot
,y_{3})dy_{3}\right) \cdot \nu =0\text{ on }\partial \Omega .%
\end{array}%
\right.  \label{5.20}
\end{equation}
\end{theorem}

\begin{proof}
Let the assumptions of Theorem \ref{t5.1} be in force. Let $\varphi \in (%
\mathcal{C}_{0}^{\infty }(\Omega )\otimes B_{\#}^{1,2}(\mathbb{J}))^{3}$.
Defining $\varphi ^{\varepsilon }\in H_{0}^{1}(\Omega ^{\varepsilon })^{3}$
by $\varphi ^{\varepsilon }(x)=\varphi (\overline{x},x/\varepsilon )$ ($x\in
\Omega ^{\varepsilon }$) and choosing $\varphi ^{\varepsilon }$ as test
function in (\ref{4.1}), we get, after dividing both members of the
resulting equality by $\varepsilon $, 
\begin{equation}
\left\{ 
\begin{array}{l}
\frac{1}{\varepsilon }\int_{\Omega ^{\varepsilon }}A^{\varepsilon }\nabla 
\boldsymbol{u}_{\varepsilon }\cdot \left[ (\nabla _{\overline{x}}\varphi
)^{\varepsilon }+\frac{1}{\varepsilon }(\nabla _{y}\varphi )^{\varepsilon }%
\right] dx+\mu \frac{\varepsilon ^{2}}{K_{\varepsilon }}\frac{1}{\varepsilon 
}\int_{\Omega ^{\varepsilon }}\frac{\boldsymbol{u}_{\varepsilon }}{%
\varepsilon ^{2}}\varphi ^{\varepsilon }dx \\ 
\\ 
\ \ \ +\frac{\rho }{\phi ^{2}}\frac{1}{\varepsilon }\int_{\Omega
^{\varepsilon }}(\boldsymbol{u}_{\varepsilon }\cdot \nabla )\boldsymbol{u}%
_{\varepsilon }\varphi ^{\varepsilon }dx+\frac{1}{\varepsilon }\int_{\Omega
^{\varepsilon }}\nabla _{\overline{x}}p_{\varepsilon }^{0}\cdot \varphi
^{\varepsilon }dx \\ 
\\ 
\ \ \ \ \ \ \ -\int_{\Omega ^{\varepsilon }}p_{\varepsilon }^{1}[(\func{div}%
_{\overline{x}}\varphi )^{\varepsilon }+\frac{1}{\varepsilon }(\func{div}%
_{y}\varphi )^{\varepsilon }]dx=\frac{1}{\varepsilon }\int_{\Omega
^{\varepsilon }}\boldsymbol{f}\varphi ^{\varepsilon }dx.%
\end{array}%
\right.  \label{5.21}
\end{equation}%
Let us consider each term in (\ref{5.21}) separately. As for the first term
on the left-hand side, it is equal to 
\begin{equation*}
\int_{\Omega ^{\varepsilon }}A^{\varepsilon }\frac{\nabla \boldsymbol{u}%
_{\varepsilon }}{\varepsilon }\cdot (\nabla _{\overline{x}}\varphi
)^{\varepsilon }dx+\frac{1}{\varepsilon }\int_{\Omega ^{\varepsilon
}}A^{\varepsilon }\frac{\nabla \boldsymbol{u}_{\varepsilon }}{\varepsilon }%
\cdot (\nabla _{y}\varphi )^{\varepsilon }dx=I_{1}+I_{2}.
\end{equation*}%
It is easy to see that $I_{1}\rightarrow 0$ when $E^{\prime }\ni \varepsilon
\rightarrow 0$, while, for $I_{2}$, appealing to Theorem \ref{t3.5}
associated to (\ref{5.16}), we have 
\begin{eqnarray*}
I_{2} &\rightarrow &\int_{\Omega }\int_{h_{1}^{-}}^{h_{2}^{+}}M(\chi _{%
\mathbb{J}}(\cdot ,y_{3})A(\cdot ,y_{3})\overline{\nabla }_{y}\boldsymbol{u}%
_{0}(\overline{x},\cdot ,y_{3})\cdot \nabla _{y}\varphi (\overline{x},\cdot
,y_{3}))dy_{3}d\overline{x} \\
&=&\int_{\Omega }M\left( \int_{h_{1}}^{h_{2}}A\overline{\nabla }_{y}%
\boldsymbol{u}_{0}\cdot \nabla _{y}\varphi dy_{3}\right) d\overline{x}.
\end{eqnarray*}%
It is worth noting that in the last convergence above, we have used $A\nabla
_{y}\varphi $ as test function; indeed, since $A\in (L^{\infty }(\mathbb{R}%
^{3})\cap B_{\mathcal{A}}^{2}(\mathbb{R}^{2};L^{\infty }(I)))^{3\times 3}$
(where here, $I=(h_{1}^{-},h_{2}^{+})$) and $L^{\infty }(\mathbb{R}^{3})\cap
B_{\mathcal{A}}^{2}(\mathbb{R}^{2};L^{\infty }(I))\hookrightarrow L^{\infty
}(\mathbb{R}^{2};L^{\infty }(I))$, we get that $A\nabla _{y}\varphi \in (%
\mathcal{C}_{0}^{\infty }(\Omega )\otimes B_{\mathcal{A}}^{2}(\mathbb{R}%
^{2};L^{2}(I)))^{3\times 3}$, so that it can be taken as test function in
the $\Sigma _{\mathcal{A}}$-convergence. It follows that 
\begin{equation}
\frac{1}{\varepsilon }\int_{\Omega ^{\varepsilon }}A^{\varepsilon }\nabla 
\boldsymbol{u}_{\varepsilon }\cdot \left[ (\nabla _{\overline{x}}\varphi
)^{\varepsilon }+\frac{1}{\varepsilon }(\nabla _{y}\varphi )^{\varepsilon }%
\right] dx\rightarrow \int_{\Omega }M\left( \int_{h_{1}}^{h_{2}}A\overline{%
\nabla }_{y}\boldsymbol{u}_{0}\cdot \nabla _{y}\varphi dy_{3}\right) d%
\overline{x}.  \label{5.22}
\end{equation}%
It is a fact using (\ref{5.15}) that 
\begin{equation}
\frac{1}{\varepsilon }\int_{\Omega ^{\varepsilon }}\frac{\boldsymbol{u}%
_{\varepsilon }}{\varepsilon ^{2}}\varphi ^{\varepsilon }dx\rightarrow
\int_{\Omega }M\left( \int_{h_{1}}^{h_{2}}\boldsymbol{u}_{0}\varphi
dy_{3}\right) d\overline{x},\ \ \ \ \ \ \   \label{5.23}
\end{equation}%
and, as in Section \ref{sec4}, we observe that 
\begin{equation*}
\left\vert \int_{\Omega ^{\varepsilon }}(\boldsymbol{u}_{\varepsilon }\cdot
\nabla )\boldsymbol{u}_{\varepsilon }\varphi ^{\varepsilon }dx\right\vert
\leq C\varepsilon ^{3},\ \ \ \ \ \ \ \ \ \ \ \ \ \ \ \ 
\end{equation*}%
so that 
\begin{equation}
\frac{1}{\varepsilon }\int_{\Omega ^{\varepsilon }}(\boldsymbol{u}%
_{\varepsilon }\cdot \nabla )\boldsymbol{u}_{\varepsilon }\varphi
^{\varepsilon }dx\rightarrow 0\text{ when }E^{\prime }\ni \varepsilon
\rightarrow 0\text{.}  \label{5.24}
\end{equation}%
Concerning the terms involving the pressure, it holds that 
\begin{equation}
\frac{1}{\varepsilon }\int_{\Omega ^{\varepsilon }}\nabla _{\overline{x}%
}p_{\varepsilon }^{0}\cdot \varphi ^{\varepsilon }dx\rightarrow \int_{\Omega
}M\left( \int_{h_{1}}^{h_{2}}(\nabla _{\overline{x}}p_{0}+\nabla _{\overline{%
y}}p_{1}^{0})\cdot \varphi dy_{3}\right) d\overline{x},  \label{5.25}
\end{equation}%
and 
\begin{equation}
\int_{\Omega ^{\varepsilon }}p_{\varepsilon }^{1}[(\func{div}_{\overline{x}%
}\varphi )^{\varepsilon }+\frac{1}{\varepsilon }(\func{div}_{y}\varphi
)^{\varepsilon }]dx\rightarrow \int_{\Omega }M\left(
\int_{h_{1}}^{h_{2}}p_{1}\func{div}_{y}\varphi dy_{3}\right) d\overline{x}.
\label{5.26}
\end{equation}%
Finally, one has, as $E^{\prime }\ni \varepsilon \rightarrow 0$, 
\begin{equation}
\frac{1}{\varepsilon }\int_{\Omega ^{\varepsilon }}\boldsymbol{f}\varphi
^{\varepsilon }dx\rightarrow \int_{\Omega }M\left( \int_{h_{1}}^{h_{2}}%
\boldsymbol{f}\varphi dy_{3}\right) d\overline{x}.\ \ \ \ \ \ \ \ \ \ \ \ \
\ \ \ \ \ \ \   \label{5.27}
\end{equation}%
Collecting the convergence results (\ref{5.22})-(\ref{5.27}), we obtain
(when passing to the limit in (\ref{5.21})) the following variational system 
\begin{equation}
\left\{ 
\begin{array}{l}
\int_{\Omega }M\left( \int_{h_{1}}^{h_{2}}A\overline{\nabla }_{y}\boldsymbol{%
u}_{0}\cdot \nabla _{y}\varphi dy_{3}\right) d\overline{x}+\frac{\mu }{K}%
\int_{\Omega }M\left( \int_{h_{1}}^{h_{2}}\boldsymbol{u}_{0}\varphi
dy_{3}\right) d\overline{x} \\ 
\\ 
\ \ +\int_{\Omega }M\left( \int_{h_{1}}^{h_{2}}\nabla _{\overline{x}%
}p_{0}\cdot \varphi dy_{3}\right) d\overline{x}-\int_{\Omega }M\left(
\int_{h_{1}}^{h_{2}}(p_{1}^{0}+p_{1})\func{div}_{y}\varphi dy_{3}\right) d%
\overline{x} \\ 
\\ 
\ \ \ =\int_{\Omega }M\left( \int_{h_{1}}^{h_{2}}\boldsymbol{f}\varphi
dy_{3}\right) d\overline{x}\text{, for all }\varphi \in (\mathcal{C}%
_{0}^{\infty }(\Omega )\otimes B_{\#}^{1,2}(\mathbb{J}))^{3}.%
\end{array}%
\right.  \label{5.28}
\end{equation}%
By density, (\ref{5.28}) holds true for every $\varphi \in L^{2}(\Omega
;B_{\#}^{1,2}(\mathbb{J}))^{3}$.

Also, proceeding as in Section \ref{sec4}, we derive the following
properties for $\boldsymbol{u}_{0}$: 
\begin{equation}
\func{div}_{\overline{x}}M\left( \int_{h_{1}}^{h_{2}}\boldsymbol{u}_{0}(%
\overline{x},\cdot ,y_{3})dy_{3}\right) =0\text{ in }\Omega ,\ \ \ \ \ \ \ \
\ \ \ \ \ \ \ \ \ \ \ \ \ \ \ \ \ \ \ \   \label{5.29}
\end{equation}%
\begin{equation}
M\left( \int_{h_{1}}^{h_{2}}\boldsymbol{u}_{0}(\overline{x},\cdot
,y_{3})dy_{3}\right) \cdot \nu =0\text{ on }\partial \Omega .\ \ \ \ \ \ \ \
\ \ \ \ \ \ \ \ \ \ \ \ \ \ \ \ \ \ \ \ \ \   \label{5.30}
\end{equation}%
We may also check that $p_{0}\in L_{0}^{2}(\Omega )$. Indeed, since $%
p_{\varepsilon }\in L_{0}^{2}(\Omega ^{\varepsilon })$, we have 
\begin{equation*}
0=\frac{1}{\varepsilon }\int_{\Omega ^{\varepsilon }}p_{\varepsilon
}dx=\int_{\Omega }(h_{2}(\frac{\overline{x}}{\varepsilon })-h_{1}(\frac{%
\overline{x}}{\varepsilon }))p_{\varepsilon }^{0}d\overline{x}+\int_{\Omega
^{\varepsilon }}p_{\varepsilon }^{1}dx.
\end{equation*}%
Letting $E^{\prime }\ni \varepsilon \rightarrow 0$ above yields $%
\int_{\Omega }M(h_{2}-h_{1})p_{0}d\overline{x}=0$, that is, $\int_{\Omega
}p_{0}d\overline{x}=0$, where we have taken (\ref{5.2}) into account. This
shows that $p_{0}\in H^{1}(\Omega )\cap L_{0}^{2}(\Omega )$.

Accounting of (\ref{5.28}), (\ref{5.29}) and (\ref{5.30}), we get readily
that the triple $(\boldsymbol{u}_{0},p_{0},q=p_{1}^{0}+p_{1})$ solves the
system (\ref{5.20}). This completes the proof of the theorem.
\end{proof}

In order to derive the homogenized problem, we follow the same procedure as
in the proof of Theorem \ref{t4.2}. To this end, let $(e_{i})_{1i\leq 3}$ be
the canonical basis in $\mathbb{R}^{3}$. For $1\leq i\leq 3$, consider the
Stokes-Brinkmann system 
\begin{equation}
\left\{ 
\begin{array}{l}
-\overline{\func{div}}_{y}\left( A(y)\overline{\nabla }_{y}\boldsymbol{w}%
_{i}\right) +\frac{\mu }{K}\boldsymbol{w}_{i}+\overline{\nabla }_{y}\pi
_{i}=e_{i}\text{ in }\mathbb{J} \\ 
\overline{\func{div}}_{y}\boldsymbol{w}_{i}=0\text{ in }\mathbb{J}, \\ 
\boldsymbol{w}_{i}=0\text{ on }\partial \mathbb{J}.%
\end{array}%
\right.  \label{5.31}
\end{equation}%
Then (\ref{5.31}) possesses a unique solution in the space 
\begin{equation*}
B_{\#,\func{div}}^{1,2}(\mathbb{J})=\left\{ \boldsymbol{u}\in B_{\#}^{1,2}(%
\mathbb{J})^{3}:\overline{\func{div}}_{y}\boldsymbol{u}=0\text{ in }\mathbb{J%
}\right\} .
\end{equation*}%
With this in mind, if we choose in the variational form of (\ref{5.20}) the
test function $\varphi =\psi \otimes \boldsymbol{w}_{i}$ ($1\leq i\leq 3$)
where $\boldsymbol{w}_{i}$ is determined by (\ref{5.31}) and $\psi \in 
\mathcal{C}_{0}^{\infty }(\Omega )$, and then take in the variational form
of (\ref{5.31}), the test function $\boldsymbol{u}_{0}(\overline{x},\cdot )$%
, and finally we compare the resulting equalities, we get at once 
\begin{equation}
\boldsymbol{u}^{\prime }(\overline{x})=\widehat{A}(\boldsymbol{f}_{1}(%
\overline{x})-\nabla _{\overline{x}}p_{0}(\overline{x}))\text{,\ a.e. }%
\overline{x}\in \Omega \ \ \ \ \ \ \ \ \ \ \ \ \ \ \ \ \ \ \ \ \ \ \ \ 
\label{5.32}
\end{equation}%
where the matrix $\widehat{A}=(\widehat{a}_{ij})_{1\leq i,j\leq 2}$ is
symmetric, positive definite and is defined by 
\begin{equation}
\widehat{a}_{ij}=M\left( \int_{h_{1}}^{h_{2}}A\overline{\nabla }_{y}%
\boldsymbol{w}_{i}\cdot \overline{\nabla }_{y}\boldsymbol{w}%
_{j}dy_{3}\right) +\frac{\mu }{K}M\left( \int_{h_{1}}^{h_{2}}\boldsymbol{w}%
_{i}\cdot \boldsymbol{w}_{j}dy_{3}\right) .  \label{5.33}
\end{equation}%
We have just derived the main result in the case when $K_{\varepsilon
}=O(\varepsilon ^{2})$, and it reads as follows.

\begin{theorem}
\label{t5.2}The assumptions are those of Theorem \emph{\ref{t5.1}}. For any $%
\varepsilon >0$, let $(\boldsymbol{u}_{\varepsilon },p_{\varepsilon
}=p_{\varepsilon }^{0}+\varepsilon p_{\varepsilon }^{1})\in H_{0}^{1}(\Omega
^{\varepsilon })^{3}\times L_{0}^{2}(\Omega ^{\varepsilon })$ be a solution
of \emph{(\ref{4.1})}. Then, still denoting by $\boldsymbol{u}_{\varepsilon
} $ and $p_{\varepsilon }^{1}$ the extension of $\boldsymbol{u}_{\varepsilon
}$ and $p_{\varepsilon }^{1}$ by zero on $G_{\varepsilon }$, the sequence $(%
\boldsymbol{u}_{\varepsilon }/\varepsilon ^{2},p_{\varepsilon })$ weakly $%
\Sigma _{\mathcal{A}}$-converges in $L^{2}(G_{\varepsilon })^{3}\times
L^{2}(G_{\varepsilon })$ towards $(\boldsymbol{u}_{0},p_{0})$ determined by 
\emph{(\ref{5.15})-(\ref{5.18})}. Defining $\boldsymbol{u}=(\boldsymbol{u}%
^{\prime },u_{3})$ by \emph{(\ref{5.20'})}, we have $u_{3}=0$ and $(%
\boldsymbol{u}^{\prime },p_{0})$ is the unique solution of the homogenized
problem 
\begin{equation*}
\left\{ 
\begin{array}{l}
\boldsymbol{u}^{\prime }=\widehat{A}(\boldsymbol{f}_{1}-\nabla _{\overline{x}%
}p_{0})\text{ in }\Omega , \\ 
\func{div}_{\overline{x}}\boldsymbol{u}^{\prime }=0\text{ in }\Omega \text{,
and }\boldsymbol{u}^{\prime }\cdot \nu =0\text{ on }\partial \Omega ,%
\end{array}%
\right.
\end{equation*}%
where the matrix $\widehat{A}$ is defined by \emph{(\ref{5.33})}.
\end{theorem}

\subsubsection{\textbf{Case when }$K_{\protect\varepsilon }\ll \protect%
\varepsilon ^{2}$\textbf{\ or }$K_{\protect\varepsilon }\gg \protect%
\varepsilon ^{2}$}

Following the same steps as in the previous section, we may proceed as in
the previous case to obtain the homogenization result in the two other
regimes. This is left to the reader.

\section{Some concrete applications of Theorems \protect\ref{t1.2} and 
\protect\ref{t1.3}\label{sec6}}

In the previous sections, we have used systematically the algebras with mean
value in assumptions (\textbf{A3}) and (\ref{3.30}) (or (\ref{5.2}))
respectively on the coefficients of the diffusion operator $-\func{div}%
(A^{\varepsilon }\nabla .)$ and on the functions $h_{1}$ and $h_{2}$.
Assumption (\textbf{A3}) shows how the microstructures are distributed
inside the domain $\Omega ^{\varepsilon }$ while (\ref{3.30}) deals with the
way the lateral boundaries behave. We present here below a few concrete
physical situations.

\subsection{Problem 1: pure periodic environment\label{subsec6.1}}

We assume that the heterogeneities are uniformly distributed in $\Omega $.
This means that the distribution function of the microstructures is
periodic, so that the matrix-function $\overline{y}\mapsto A(\overline{y}%
,y_{3})$ is $1$-periodic in each of its $y_{i}$ ($i=1,2$). The underlying
algebra with mean value here is thus the algebra of $Y$-periodic continuous
functions $\mathcal{A}=\mathcal{C}_{per}(Y)$, $Y=(0,1)^{2}$. The mean value
of a function $u\in \mathcal{C}_{per}(Y)$ is given by 
\begin{equation*}
M(u)=\int_{Y}u(y)dy.
\end{equation*}%
The function spaces associated to $\mathcal{A}$ are as follows: $B_{\mathcal{%
A}}^{2}(\mathbb{R}^{2};L^{2}(I))=L_{per}^{2}(Y;L^{2}(I))$ (the space of
functions in $L_{loc}^{2}(\mathbb{R}^{2};L^{2}(I))$ that are $Y$-periodic), $%
B_{\mathcal{A}}^{1,2}(\mathbb{R}^{d-1};H^{1}(I))=H_{per}^{1}(Y;H^{1}(I))$
(the subspace of $H_{loc}^{1}(Y;H^{1}(I))$ consisting of $Y$-periodic
functions). It is worth noting that $B_{\mathcal{A}}^{2}(\mathbb{R}%
^{2};L^{2}(I))=\mathcal{B}_{\mathcal{A}}^{2}(\mathbb{R}^{2};L^{2}(I))$ since 
$L_{per}^{2}(Y;L^{2}(I))$ is a Banach space with the corresponding norm, and
so $B_{\mathcal{A}}^{1,2}(\mathbb{R}^{2};H^{1}(I))=\mathcal{B}_{\mathcal{A}%
}^{1,2}(\mathbb{R}^{2};H^{1}(I))$.

In this case, the sigma-convergence concept is merely the well-known
two-scale convergence method for thin heterogeneous domains defined in \cite%
{RJ2007} (see Remark \ref{r2.2}) as follows: A sequence $(u_{\varepsilon
})_{\varepsilon >0}\subset L^{2}(G_{\varepsilon })$ weakly two-scale
converges in $L^{2}(G_{\varepsilon })$\ towards $u_{0}\in
L^{2}(G_{0};L_{per}^{2}(Y;L^{2}(I)))$\ if, when $\varepsilon \rightarrow 0$, 
\begin{equation*}
\frac{1}{\varepsilon }\int_{G_{\varepsilon }}u_{\varepsilon }(x)f\left( 
\overline{x},\frac{x}{\varepsilon }\right) dx\rightarrow
\int_{G_{0}}\int_{Z}u_{0}(\overline{x},y)f(\overline{x},y)dyd\overline{x}
\end{equation*}%
for any $f\in L^{2}(G_{0};\mathcal{C}_{per}(Y;L^{2}(I)))$, where $Z=Y\times
I $.

The above assumption on the matrix $A$ amounts to $A\in
L_{per}^{2}(Y;L^{2}(I))^{3\times 3}$. We also assume that $h_{1},h_{2}\in
W^{1,\infty }(\mathbb{R}^{2})\cap \mathcal{C}_{per}(Y)$. In order to state
the periodic version of Theorem \ref{t1.3}, we need to define some function
spaces. First of all, let 
\begin{equation*}
\widetilde{Y}=\{y\in \mathbb{R}^{3}:\overline{y}\in Y\text{ and }h_{1}(%
\overline{y})<y_{3}<h_{2}(\overline{y})\}\text{ and }\Gamma =\{y\in \partial 
\mathbb{J}:\overline{y}\in Y\},
\end{equation*}%
where we have assumed that the functions $h_{1}$ and $h_{2}$ are continuous $%
Y$-periodic functions: $h_{1},h_{2}\in \mathcal{C}_{per}(Y)$. The periodic
counterparts of the spaces $B_{\mathcal{A}}^{2}(\mathbb{J})$, $B_{\mathcal{A}%
}^{1,2}(\mathbb{J})$ and $B_{\#}^{1,2}(\mathbb{J})$, denoted below
respectively by $L_{per}^{2}(\widetilde{Y})$, $H_{per}^{1}(\widetilde{Y})$
and $H_{\#}^{1}(\widetilde{Y})$ are defined as follows: 
\begin{equation*}
L_{per}^{2}(\widetilde{Y})=\left\{ u\in L_{loc}^{2}(\mathbb{J}):\int_{%
\widetilde{Y}}\left\vert u\right\vert ^{2}dy<\infty \text{ and }u(\overline{y%
}+k,y_{3})=u(\overline{y},y_{3})\ \forall k\in \mathbb{Z}^{2}\text{, a.e. }%
y\in \mathbb{J}\right\} ,
\end{equation*}%
\begin{equation*}
H_{per}^{1}(\widetilde{Y})=\left\{ u\in H_{loc}^{1}(\mathbb{J}):u\in
L_{per}^{2}(\widetilde{Y}),\ \nabla _{y}u\in L_{per}^{2}(\widetilde{Y}%
)^{3}\right\} ,
\end{equation*}%
\begin{equation*}
H_{\#}^{1}(\widetilde{Y})=\left\{ u\in H_{per}^{1}(\widetilde{Y}):u=0\text{
on }\Gamma \right\} .
\end{equation*}

For the benefit of the reader, we restate the homogenization result in
Theorem \ref{t1.3} in the periodic setting.

\begin{theorem}
\label{t6.1}Assume that $\Omega ^{\varepsilon }$ is given by \emph{(\ref{1.2}%
)} where the functions $h_{1},h_{2}\in \mathcal{C}_{per}(Y)$. Let $(%
\boldsymbol{u}_{\varepsilon },p_{\varepsilon }=p_{\varepsilon
}^{0}+\varepsilon p_{\varepsilon }^{1})\in H_{0}^{1}(\Omega ^{\varepsilon
})^{3}\times L_{0}^{2}(\Omega ^{\varepsilon })$ be a solution of \emph{(\ref%
{1.1})}. Assume that $A\in (L_{per}^{2}(Y;L^{\infty }(I)))^{3\times 3}$.
Then:

\begin{itemize}
\item[(i)] If $K_{\varepsilon }=O(\varepsilon ^{2})$ with $K_{\varepsilon
}/\varepsilon ^{2}\rightarrow K$ when $\varepsilon \rightarrow 0$, $%
0<K<\infty $, then, still denoting by $\boldsymbol{u}_{\varepsilon }$ the
extension by zero of $\boldsymbol{u}_{\varepsilon }$ on $G_{\varepsilon
}=\Omega \times (\varepsilon h_{1}^{-},\varepsilon h_{2}^{+})$, one has 
\begin{equation*}
\frac{\boldsymbol{u}_{\varepsilon }}{\varepsilon ^{2}}\rightarrow 
\boldsymbol{u}_{0}\text{ in }L^{2}(G_{\varepsilon })^{3}\text{-weak }\Sigma
_{\mathcal{A}},\ \ \ \ \ \ \ \ \ \ \ \ \ \ \ 
\end{equation*}%
and 
\begin{equation*}
p_{\varepsilon }^{0}\rightarrow p_{0}\text{ in }H^{1}(\Omega )\text{-weak
and in }L^{2}(\Omega )\text{-strong.}
\end{equation*}%
Defining $\boldsymbol{u}=(\boldsymbol{u}^{\prime },u_{3})$ by $\boldsymbol{u}%
(\overline{x})=\int_{\widetilde{Y}}\boldsymbol{u}_{0}(\overline{x},y)dy$, we
have $u_{3}=0$ and $(\boldsymbol{u}^{\prime },p_{0})$ is the unique solution
of the homogenized problem 
\begin{equation*}
\left\{ 
\begin{array}{l}
\boldsymbol{u}^{\prime }=\widehat{A}(\boldsymbol{f}_{1}-\nabla _{\overline{x}%
}p_{0})\text{ in }\Omega , \\ 
\func{div}_{\overline{x}}\boldsymbol{u}^{\prime }=0\text{ in }\Omega \text{,
and }\boldsymbol{u}^{\prime }\cdot \nu =0\text{ on }\partial \Omega ,%
\end{array}%
\right.
\end{equation*}%
where $\widehat{A}=(\widehat{a}_{ij})_{1\leq i,j\leq 2}$ is a symmetric,
positive definite $2\times 2$ matrix defined by its entries 
\begin{equation*}
\widehat{a}_{ij}=\int_{\widetilde{Y}}A\nabla _{y}\boldsymbol{w}_{i}\cdot
\nabla _{y}\boldsymbol{w}_{j}dy+\frac{\mu }{K}\int_{\widetilde{Y}}%
\boldsymbol{w}_{i}\cdot \boldsymbol{w}_{j}dy.
\end{equation*}%
Here $\boldsymbol{w}_{i}$ ($1\leq i\leq 2$) is the unique solution in $%
H_{\#}^{1}(\widetilde{Y})^{3}$ of the Stokes-Brinkmann system 
\begin{equation*}
\left\{ 
\begin{array}{l}
-\func{div}_{y}\left( A(y)\nabla _{y}\boldsymbol{w}_{i}\right) +\frac{\mu }{K%
}\boldsymbol{w}_{i}+\nabla _{y}\pi _{i}=e_{i}\text{ in }\mathbb{J}, \\ 
\func{div}_{y}\boldsymbol{w}_{i}=0\text{ in }\mathbb{J}, \\ 
\boldsymbol{w}_{i}=0\text{ on }\partial \mathbb{J}.%
\end{array}%
\right.
\end{equation*}%
$e_{i}$ being the $i$\emph{th} vector of the canonical basis in $\mathbb{R}%
^{3}$.

\item[(ii)] If $K_{\varepsilon }\ll \varepsilon ^{2}$, then, up to a
subsequence, one has 
\begin{equation*}
\frac{\boldsymbol{u}_{\varepsilon }}{\varepsilon K_{\varepsilon }^{\frac{1}{2%
}}}\rightarrow \boldsymbol{u}_{0}\text{ in }L^{2}(G_{\varepsilon })^{3}\text{%
-weak }\Sigma _{\mathcal{A}},\ \ \ \ \ \ 
\end{equation*}%
\begin{equation*}
\frac{K_{\varepsilon }^{\frac{1}{2}}}{\varepsilon }p_{\varepsilon
}^{0}\rightarrow p_{0}\text{ in }H^{1}(\Omega )\text{-weak and in }%
L^{2}(\Omega )\text{-strong.}\ \ \ \ \ \ \ 
\end{equation*}%
Furthermore, defining $\boldsymbol{u}$ as in \emph{(i)} above, we have $%
u_{3}=0$ and $(\boldsymbol{u}^{\prime },p_{0})$ is a solution of 
\begin{equation*}
\boldsymbol{u}^{\prime }=-\widehat{A}\nabla _{\overline{x}}p_{0}\text{ in }%
\Omega \text{, }\func{div}_{\overline{x}}\boldsymbol{u}^{\prime }=0\text{ in 
}\Omega \text{ and }\boldsymbol{u}^{\prime }\cdot \nu =0\text{ on }\partial
\Omega ,
\end{equation*}%
where $\widehat{A}$ is a symmetric matrix defined by 
\begin{equation*}
\widehat{A}=(\widehat{a}_{ij})_{1\leq i,j\leq 2}\text{ with }\widehat{a}%
_{ij}=\mu \int_{\widetilde{Y}}\boldsymbol{w}_{i}\boldsymbol{w}_{j}dy,
\end{equation*}%
$\boldsymbol{w}_{i}$ $(1\leq i\leq 2)$ being the unique solution in $%
L_{per}^{2}(\widetilde{Y})^{3}$ of 
\begin{equation*}
\mu \boldsymbol{w}_{i}+\nabla _{y}\pi _{i}=e_{i}\text{ in }\mathbb{J}\text{
and }\func{div}_{y}\boldsymbol{w}_{i}=0\text{ in }\mathbb{J}.
\end{equation*}

\item[(iii)] If $K_{\varepsilon }\gg \varepsilon ^{2}$, then, still denoting
by $\boldsymbol{u}_{\varepsilon }$ and $p_{\varepsilon }^{1}$ the extension
of $\boldsymbol{u}_{\varepsilon }$ and $p_{\varepsilon }^{1}$ by zero on $%
G_{\varepsilon }$,we have, 
\begin{equation*}
\frac{\boldsymbol{u}_{\varepsilon }}{\varepsilon ^{2}}\rightarrow 
\boldsymbol{u}_{0}\text{ in }L^{2}(G_{\varepsilon })^{3}\text{-weak }\Sigma
_{\mathcal{A}},\ \ \ \ \ \ \ \ \ \ \ \ \ \ \ 
\end{equation*}%
\begin{equation*}
\frac{1}{\varepsilon }\nabla \boldsymbol{u}_{\varepsilon }\rightarrow 
\overline{\nabla }_{y}\boldsymbol{u}_{0}\text{ in }L^{2}(G_{\varepsilon
})^{3\times 3}\text{-weak }\Sigma _{\mathcal{A}},\ \ \ \ 
\end{equation*}%
\begin{equation*}
p_{\varepsilon }^{0}\rightarrow p_{0}\text{ in }H^{1}(\Omega )\text{-weak
and in }L^{2}(\Omega )\text{-strong,}
\end{equation*}%
Still defining $\boldsymbol{u}$ as in \emph{(i)} above, it holds that 
\begin{equation*}
\left\{ 
\begin{array}{l}
\boldsymbol{u}^{\prime }=\widehat{A}(\boldsymbol{f}_{1}-\nabla _{\overline{x}%
}p_{0})\text{ in }\Omega \\ 
\func{div}_{\overline{x}}\boldsymbol{u}^{\prime }=0\text{ in }\Omega \text{
and }\boldsymbol{u}^{\prime }\cdot \nu =0\text{ on }\partial \Omega ,%
\end{array}%
\right.
\end{equation*}%
where $\widehat{A}=(\widehat{a}_{ij})_{1\leq i,j\leq 2}$ is given by 
\begin{equation*}
\widehat{a}_{ij}=\int_{\widetilde{Y}}A\nabla _{y}\boldsymbol{w}_{i}\cdot
\nabla _{y}\boldsymbol{w}_{j}dy,\ 1\leq i,j\leq 2,
\end{equation*}%
with $\boldsymbol{w}_{i}$ $(1\leq i\leq 2)$ being the unique solution in $%
H_{\#}^{1}(\widetilde{Y})^{3}$ of the Stokes system 
\begin{equation*}
\left\{ 
\begin{array}{l}
-\func{div}_{y}\left( A(y)\nabla _{y}\boldsymbol{w}_{i}\right) +\nabla
_{y}\pi _{i}=e_{i}\text{ in }\mathbb{J} \\ 
\func{div}_{y}\boldsymbol{w}_{i}=0\text{ in }\mathbb{J}.%
\end{array}%
\right.
\end{equation*}
\end{itemize}
\end{theorem}

\begin{proof}
The result is a mere consequence of the equality $M(u)=\int_{Y}u(y)dy$ for
any $u\in L_{per}^{2}(Y)$.
\end{proof}

\subsection{Problem 2: asymptotic periodic framework}

We assume that the functions $h_{1}$ and $h_{2}$ are asymptotic periodic,
that is each of these functions can be express as a sum of a continuous
periodic function and of a continuous function which vanishes at infinity.
This leads to the consideration of the algebra with mean value $\mathcal{A}=%
\mathcal{C}_{per}(Y)+\mathcal{C}_{0}(\mathbb{R}^{2})\equiv \mathcal{B}%
_{\infty ,per}(\mathbb{R}^{2})$ \cite[Section 5.2.3]{JW2021}, where $%
\mathcal{C}_{0}(\mathbb{R}^{2})$ stands for the Banach algebra of continuous
functions vanishing at infinity. Since $\mathcal{C}_{per}(Y)\subset \mathcal{%
C}_{per}(Y)+\mathcal{C}_{0}(\mathbb{R}^{2})$, we may assume either that $%
A\in L_{per}^{2}(Y;L^{2}(I))^{3\times 3}$ or $A\in B_{\mathcal{B}_{\infty
,per}(\mathbb{R}^{2})}^{2}(\mathbb{R}^{2};L^{2}(I))^{3\times 3}$. Then the
results in Theorems \ref{t1.2} and \ref{t1.3} are obtained with the algebra
wmv $\mathcal{A}=\mathcal{C}_{per}(Y)+\mathcal{C}_{0}(\mathbb{R}^{2})$.

It is very important to note that we may assume different kind of behaviours
on $h_{1}$ and on $h_{2}$. For example, if $h_{1}\in \mathcal{C}_{per}(Y)$
and $h_{2}\in \mathcal{C}_{per}(Y)+\mathcal{C}_{0}(\mathbb{R}^{2})$, then we
reach the same conclusion of Theorems \ref{t1.2} and \ref{t1.3} with $%
\mathcal{A}=\mathcal{C}_{per}(Y)+\mathcal{C}_{0}(\mathbb{R}^{2})$.

\subsection{Problem 3: almost periodic setting}

We assume that the microstructures inside $\Omega $ are distributed in an
almost periodic fashion, that is, the function $\overline{y}\mapsto A(%
\overline{y},y_{3})$ is almost periodic in the Besicovitch sense \cite%
{Besicovitch, Bohr}. The underlying algebra with mean value in $\mathbb{R}%
^{2}$ is the algebra of Bohr continuous almost periodic functions on $%
\mathbb{R}^{2}$ denoted by $\mathcal{A}=\mathrm{AP}(\mathbb{R}^{2})$. It is
worth recalling that $\mathrm{AP}(\mathbb{R}^{2})$ \cite{Besicovitch, Bohr}
is defined as the algebra of functions on $\mathbb{R}^{2}$ that are
uniformly approximated by finite linear combinations of functions in the set 
$\{\cos (k\cdot ),\sin (k\cdot ):k\in \mathbb{R}^{d-1}\}$ where $\cos
(k\cdot )(y)=\cos (2\pi k\cdot y)$ and $\sin (k\cdot )(y)=\sin (2\pi k\cdot
y)$, $y\in \mathbb{R}^{2}$. It is known that $\mathrm{AP}(\mathbb{R}^{2})$
is an algebra wmv called the almost periodic algebra wmv on $\mathbb{R}^{2}$%
. The corresponding generalized Besicovitch space $B_{A}^{p}(\mathbb{R}^{2})$
is precisely the Besicovitch space $\mathcal{B}^{p}(\mathbb{R}^{2})$ defined
in \cite{Besicovitch, Bohr}.

We also assume that the functions $h_{1},h_{2}$ belong to $\mathrm{AP}(%
\mathbb{R}^{2})$. Then under these assumptions, the main results Theorems %
\ref{t1.2} and \ref{t1.3} are valid with the corresponding function spaces.
It is well-known from \cite{Bohr} that the mean value of a function $u\in 
\mathrm{AP}(\mathbb{R}^{2})$ is the unique constant that belongs to the
closed convex hull of the set of translates $\{u(\cdot +a):a\in \mathbb{R}%
^{2}\}$ of $u$.

\subsection{Problem 4: the asymptotic almost periodic environment}

We may deal with the asymptotic almost periodic distribution of
heterogeneities inside $\Omega $ with the corresponding algebra wmv $%
\mathcal{A}=\mathrm{AP}(\mathbb{R}^{2})+\mathcal{C}_{0}(\mathbb{R}^{2})$ 
\cite[Section 5.2.3]{JW2021}. In this case, we may assume that the functions 
$h_{1}$ and $h_{2}$ are either in $\mathrm{AP}(\mathbb{R}^{2})$ or in $%
\mathrm{AP}(\mathbb{R}^{2})+\mathcal{C}_{0}(\mathbb{R}^{2})$. We may also
assume that $h_{1}\in \mathrm{AP}(\mathbb{R}^{2})$ and $h_{2}\in \mathrm{AP}(%
\mathbb{R}^{2})+\mathcal{C}_{0}(\mathbb{R}^{2})$. All this leads to the
validity of Theorems \ref{t1.2} and \ref{t1.3} with $\mathcal{A}=\mathrm{AP}(%
\mathbb{R}^{2})+\mathcal{C}_{0}(\mathbb{R}^{2})$.

\begin{acknowledgement}
\emph{The second author acknowledges the support of the Alexander von
Humboldt Foundation in the framework of the `home office' scheme. He also
acknowledges the support of the Institute of Advanced Scientific Studies
(IHES), Paris, where the revised and complete version of this work has been
done.}
\end{acknowledgement}

\end{document}